\documentclass[oneside, 12pt]{amsart}
\usepackage{amscd,amssymb,enumerate, amsmath,mathrsfs}
\usepackage{url}
\usepackage{hyperref}
\usepackage{color}

\setlength{\textwidth}{16.5cm}
\setlength{\textheight}{9in}
\setlength{\oddsidemargin}{-0.0in}

%========================================================

%\newcommand\mynote[1]{ \marginpar{\includegraphics[scale=0.3]{Bourbaki-Zeichen.eps}\\ \small \tt #1}}

%\newcommand\mylabel[1]{\label{#1}}
\newcommand{\stefan}[1]{{\color{red}#1}} % Stefan's comments

%========================================================

\newtheorem{theorem}{Theorem}[section]

\newtheorem{lemma}[theorem]{Lemma}
\newtheorem{proposition}[theorem]{Proposition}
\newtheorem{corollary}[theorem]{Corollary}

\theoremstyle{definition}
\newtheorem{definition}[theorem]{Definition}
\newtheorem{example}[theorem]{Example}

\newtheorem{remark}[theorem]{Remark}
\newtheorem{emp}[theorem]{}
\newtheorem*{acknowledgement}{Acknowledgement}

\theoremstyle{remark}

%----------------------------------------------
\DeclareFontFamily{U}{wncy}{}
\DeclareFontShape{U}{wncy}{m}{n}{<->wncyr10}{}
\DeclareSymbolFont{mcy}{U}{wncy}{m}{n}
\DeclareMathSymbol{\Sh}{\mathord}{mcy}{"58}
%-----------------------------------------------------

%========================================================

\newcommand{\m}{\mu}
\newcommand{\mm}{\mu}

\newcommand{\MRone}{MR$\,$1}
\newcommand{\MRtwo}{MR$\,$2}
\newcommand{\MRthree}{MR$\,$3}
\newcommand{\MRfour}{MR$\,$4}
\newcommand{\MRfive}{MR$\,$5}

\newcommand{\mnr}	{{\text{\rm mnr}}}
\newcommand{\ci}	{\text{\rm ci}}

\newcommand{\NN}	{\mathbb{N}}
\newcommand{\ZZ}	{\mathbb{Z}}
\newcommand{\QQ}	{\mathbb{Q}}

\newcommand{\FF}	{\mathbb{F}}

\renewcommand{\AA}	{\mathbb{A}}
\newcommand{\GG}	{\mathbb{G}}

\newcommand{\idealq}    {\mathfrak{q}}
\newcommand{\ideala}    {\mathfrak{a}}
\newcommand{\idealb}    {\mathfrak{b}}

\newcommand  {\shG}     {\mathscr{G}}

\newcommand  {\shM}     {\mathscr{M}}

\newcommand  {\shL}     {\mathscr{L}}

%===========================================================

\newcommand  {\Ass}     {\operatorname{Ass}}
\newcommand  {\Aut}     {\operatorname{Aut}}

\newcommand  {\Cl}      {\operatorname{Cl}}

\newcommand  {\Cov} 	{{\rm\text{Cov}}}
\renewcommand{\cong}    {\equiv}

\newcommand  {\depth}   {\operatorname{depth}}

\newcommand  {\edim}    {\operatorname{edim}}

\newcommand  {\End}   	{\operatorname{End}}

\newcommand  {\Ext}     {\operatorname{Ext}}

\newcommand  {\Fil}	{\operatorname{Fil}}

\newcommand  {\Frac}    {\operatorname{Frac}}

\newcommand  {\Gal}     {\operatorname{Gal}}
\newcommand  {\GL}      {\operatorname{GL}}
\newcommand  {\FFGrp}  	{{\text{{\rm FFG}}}}
\newcommand  {\Gr}      {\operatorname{Gr}}

\newcommand  {\Hom}     {\operatorname{Hom}}

\newcommand  {\height}  {\operatorname{ht}}

\newcommand  {\id}      {{\operatorname{id}}}

\newcommand  {\lra}     {\longrightarrow}

\newcommand  {\Mat}     {\operatorname{Mat}}

\newcommand  {\maxid}   {\mathfrak{m}}

\newcommand  {\primid}  {\mathfrak{p}}
\renewcommand{\O}       {\mathscr{O}}

\newcommand  {\ord}     {\operatorname{ord}}

\newcommand  {\Pic}     {\operatorname{Pic}}

\newcommand  {\pr}      {\operatorname{pr}}

\newcommand  {\quadand} {\quad\text{and}\quad}

\newcommand  {\ra}      {\rightarrow}

\newcommand  {\Set}     {{\text{\rm Set}}}

\newcommand  {\Spec}    {\operatorname{Spec}}

\newcommand  {\Trp}     {\text{{\rm Trp}}}

\newcommand  {\Trace}   {\operatorname{Tr}}

\newcommand  {\val}  	{\operatorname{val}}

\newcommand{\SR}{S}

\def\mydate{\number\day\space\ifcase\month \or January\or February\or March\or 
April\or May\or June\or July\or
August\or September\or October\or November\or December\fi \space\number\year}

%----------------------------------------------
\DeclareFontFamily{U}{wncy}{}
\DeclareFontShape{U}{wncy}{m}{n}{<->wncyr10}{}
\DeclareSymbolFont{mcy}{U}{wncy}{m}{n}
\DeclareMathSymbol{\Sh}{\mathord}{mcy}{"58}
%-----------------------------------------------------
\numberwithin{equation}{section}
\numberwithin{figure}{section}
\numberwithin{table}{section}
\if false
\makeatletter
    \let\c@equation\c@thm
    \let\c@figure\c@thm
   \let\c@table\c@thm
\makeatother
\fi
%--------------------------------------------------
%makes something bigger than \bar, but smaller than \overline
\usepackage{amsmath,amssymb}
\makeatletter
\newsavebox\myboxA
\newsavebox\myboxB
\newlength\mylenA

\newcommand*\xoverline[2][0.75]{%
    \sbox{\myboxA}{$\m@th#2$}%
    \setbox\myboxB\null% Phantom box
    \ht\myboxB=\ht\myboxA%
    \dp\myboxB=\dp\myboxA%
    \wd\myboxB=#1\wd\myboxA% Scale phantom
    \sbox\myboxB{$\m@th\overline{\copy\myboxB}$}%  Overlined phantom
    \setlength\mylenA{\the\wd\myboxA}%   calc width diff
    \addtolength\mylenA{-\the\wd\myboxB}%
    \ifdim\wd\myboxB<\wd\myboxA%
       \rlap{\hskip 0.5\mylenA\usebox\myboxB}{\usebox\myboxA}%
    \else
        \hskip -0.5\mylenA\rlap{\usebox\myboxA}{\hskip 0.5\mylenA\usebox\myboxB}%
    \fi}
\makeatother

%% \xoverline{W}
%---------------------------------------------
\numberwithin{equation}{section}

\numberwithin{figure}{section}
\numberwithin{table}{section}

%\if false
\makeatletter
    \let\c@equation\c@theorem
    \let\c@figure\c@theorem
    \let\c@table\c@theorem
\makeatother
%\fi

% ===========================================================================================
\begin{document}

\title[Moderately ramified actions]
      {Moderately ramified  actions in positive characteristic}

\author[Dino Lorenzini]{Dino Lorenzini}
\address{Department of Mathematics, University of Georgia, Athens, GA 30602, USA}
\curraddr{}
\email{lorenzin@uga.edu}

\author[Stefan Schr\"oer]{Stefan Schr\"oer}
\address{Mathematisches Institut, Heinrich-Heine-Universit\"at,
40204 D\"usseldorf, Germany}
\curraddr{}
\email{schroeer@math.uni-duesseldorf.de}

%\thanks{Lorenzini is partially supported by a Simons Collaboration Grant.}
 
\subjclass[2010]{14B05, 14J17, 14L15, 14E22, 13B02}

\dedicatory{\today}%{\mydate}

\begin{abstract}
In characteristic $2$ and dimension $2$,  wild ${\mathbb Z}/2{\mathbb Z}$-actions on $k[[u,v]]$ ramified precisely at the origin
were classified by Artin, who showed in particular that they induce hypersurface singularities. 
We introduce in this article a new class of  wild quotient singularities in any characteristic $p>0$ and dimension $n\geq 2$
arising from  certain  non-linear actions of $\ZZ/p\ZZ$ on the formal power series ring $k[[u_1,\dots,u_n]]$.
These actions are ramified precisely at the origin, and their rings of invariants in dimension $2$ are hypersurface singularities,
with an equation of a  form similar to the form found by Artin when $p=2$. In higher dimension, the rings of invariants are not local complete intersection in general, but remain quasi-Gorenstein.
We establish several structure results for such actions and their corresponding rings of invariants.
\end{abstract}

\maketitle
\tableofcontents

% ===========================================================================================
\section{Introduction}
\label{Introduction}

Given a smooth quasi-projective algebraic variety  $X$ over a field $k$ and a finite subgroup $G$ of automorphisms
of $X$, the quotient $X/G$ exists, and a precise understanding of the singularities of $X/G$ is often crucial in many problems in algebraic geometry, 
including in the geography of surfaces of general type, and in the study of the automorphisms of K3 surfaces and Enriques surfaces. The initial study 
of a quotient singularity on $X/G$ is  local, and consists in the analysis of the action of the isotropy subgroup of a point $x \in X$ on the completion of the regular  local ring ${\mathcal O}_{X,x}$.  

In characteristic zero, the action of a finite group $G$ on the power series ring $A=k[[u_1,\dots,u_n]]$ is always linearizable, an observation going
back to H.\ Cartan \cite{Cartan 1957}. The ring of invariants $A^G$ has then good algebraic properties, 
such as being Cohen--Macaulay \cite{H-E}, and the singularity of $\Spec A^G$ is even rational (\cite{Boutot 1987}, \cite{Bri}).
Watanabe gave in \cite{WatI} and \cite{WatII} an explicit criterion for $A^G$ to be Gorenstein.

When $k$ has positive characteristic $p$ and  the order of  $G$ is divisible by $p$, 
an action of $G$ on $k[[u_1,\dots,u_n]]$ is called {\it wild}. Such actions are much more delicate to study. 
Most wild actions of $G$ on  $k[[u_1,\dots,u_n]]$ are not linearizable,   in which case 
the group $G$  acts via true  power series substitutions. 
The  resulting rings $A^G$ are usually not Cohen--Macaulay when $n\geq 3$ (\ref{properties invariant ring}).  
%an observation due to Fogarty \cite{Fog}.
The elements of order $p$ in the group ${\rm Aut}_k(k[[u_1,\dots,u_n]])$ are completely understood only when $n=1$ and $k$ is finite (\cite{Klo}, \cite{Lub}).
In this article, we will focus on the geometric case where the action is ramified precisely at the origin (\ref{ramifiedprecisely}). 
This type of wild action is never induced by a linear action on the variables (\ref{no invariant parameter}), and thus the large body of results in modular invariant theory is not immediately applicable to their study.

\if false
\stefan{
Most wild actions of $G$ on  $k[[u_1,\dots,u_n]]$ are not linearizable,   in which case 
the group $G$  acts via true \emph{power series substitutions}. In fact, this is  inevitable if the 
action if free in codimension one, and  allows for very complicated  actions.
For this setting, the many results from modular representation theory are often of limited use.
Note that the  resulting rings $A^G$ are usually not     Cohen--Macaulay when $n\geq 3$, an observation due to Fogarty \cite{Fog}.
}
The elements of order $p$ in the group ${\rm Aut}_k(k[[u_1,\dots,u_n]])$ are completely understood only when $n=1$ and $k$ is finite \cite{Klo}.
\fi

The starting point of our article is a  result  of Artin \cite{Artin 1975}, who analyzed all wild actions of $G=\ZZ/ 2\ZZ$, ramified precisely at the origin,
in dimension $n=2$ when $k$ is algebraically closed, and showed that
$$
A^G=k[[x,y,z]]/(z^2-abz-ya^2+xb^2)
$$
for some system of parameters $a,b\in k[[x,y]]$. In particular, this explicit description of $A^G$ shows that it is a complete intersection and, hence, Gorenstein. 
Artin noted  in \cite{Artin 1975} that it would be interesting to extend his result to wild  $\ZZ/ p\ZZ$-actions when $p>2$. 
Peskin \cite{Peskin 1983} subsequently described an explicit class of wild $\ZZ/ 3\ZZ$-actions ramified precisely at the origin with  ring of invariants  also described by an explicit equation when $n=2$. In general, though, most wild $\ZZ/ p\ZZ$-actions on $k[[u_1,u_2]]$ ramified precisely at the origin do not have Gorenstein rings of invariants (see \ref{rem.notGorenstein}).

 In this article, inspired by the work of Artin, 
we introduce a new class of wild  $\ZZ/ p\ZZ$-actions on $k[[u_1,\ldots, u_n]]$ ramified precisely at the origin, for all $n \geq 2$ and all primes $p$,
whose rings of invariants are complete intersection when $n=2$, and {\it quasi-Gorenstein} (\ref{canonical class trivial}) in general. 
%These actions are ramified precisely at the origin,  and thus they are not induced by a linear action on the variables (\ref{no invariant parameter}). 
Our main motivation for introducing these actions, in addition to their intrinsic interest 
as new explicit elements in the mysterious group of automorphisms of $k[[u_1,\ldots, u_n]]$, lies in the fact that  already in dimension $n=2$, we believe that this class of actions is rich enough to potentially solve two open problems concerning resolutions of singularities 
of $\ZZ/ p\ZZ$-quotient singularities of surfaces: determine whether in the class of minimal resolutions of $\ZZ/ p\ZZ$-quotient singularities, 
there is no bound on the number of vertices of valency at least $3$  that the minimal resolution graph can have; 
and determine whether every power of $p$ occurs as the determinant of the intersection matrix of a resolution of some $\ZZ/ p\ZZ$-quotient singularity when $p>2$.
We  address the latter problem in  \cite{Lorenzini Schroeer 2}.

An analysis of  Artin's arguments in \cite{Artin 1975} for $p=n=2$ 
shows that it is possible to formulate  five  axiomatic conditions 
%(\MRone) -- (\MRfive) 
on a $\ZZ/ p\ZZ$-action,
such that any $\ZZ/ p\ZZ$-action satisfying all five of these axioms can be described in a particularly simple way, 
similar to the description obtained by Artin in the case $p=n=2$.  We call such $\ZZ/ p\ZZ$-actions \emph{moderately ramified}. 
As a byproduct, we also obtain a class of $(\ZZ/ p\ZZ)^{n-1}$-actions on $k[[u_1,\ldots, u_n]]$
whose rings of invariants are local complete intersections (\ref{maximal subgroups}).

Our new class of wild  $\ZZ/ p\ZZ$-actions can be described as follows. Start with an action of  $G:=\ZZ/ p\ZZ$ on $A:=k[[u_1,\ldots, u_n]]$ which is 
ramified precisely at the origin.
We call $x_i:= \prod_{\sigma\in G}\sigma(u_i)$ a   norm  element, and we show in \ref{exist admissible} that it is always possible to find 
a regular system of parameters $u_1,\dots, u_n$ in $A$ such that $A$ is finite of rank $p^n$ over the {\it norm subring} $R:=k[[x_1,\dots,x_n]]$.
Our axioms  ensure that the ring $A$ comes with a norm subring $R$
such that $\Frac(A)/\Frac(R)$ is Galois with elementary abelian Galois group $H$ of order $p^n$,
and that $n$ distinguished  subgroups $G_i^\perp$ of index $p$ yield   intermediate rings 
of invariants $R\subset A^{G_i^\perp}\subset A$ that result in a decomposition $A= A^{G_1^\perp}\otimes_R\ldots\otimes_R A^{G_n^\perp}$.
Our description then relies on an analysis of the resulting schemes $Y_i:= \Spec A^{G_i^\perp} \to S:=\Spec R$ endowed with the natural action of $H/G_i^\perp$. 

Extending to base schemes $S$ of dimension bigger than one a construction of Raynaud \cite{Ray1999}, 
which was already further extended in dimension one by Romagny \cite{Romagny 2012} to groups which are not necessarily finite and flat over $S$, 
we attach to the action of $H/G_i^\perp$ on $Y_i$  a finite flat group scheme $\shG_i/S$ and a natural action $\shG_i\times_S Y_i\ra Y_i$.
In keeping with the terminology introduced by Romagny, we call the action $\shG_i \times_S Y_i\to Y_i$ the {\it effective model} 
of the action of the constant group scheme $H/G_i^\perp$ on $Y_i$. %$G_S \times_S Y_i\to Y_i$. 
Our final axiom 
%The crucial axiom (\MRfive) 
imposes that $Y_i/S$ be a torsor under the action of  $\shG_i/S$, for $i=1,\dots, n$. 
We then use the \emph{Tate--Oort classification} of such group schemes $\shG_i/S$
\cite{Tate; Oort 1970}, and obtain explicit equations describing a moderately ramified $G$-action on $A$ in Theorem \ref{structure moderately ramified}.

Our terminology ``moderately ramified $G$-action on $A$" refers to the fact that the ramification
of the associated morphism $\Spec A \to \Spec A^H$ is ``as small as possible". 
Moderately ramified actions are in some sense ``as free as possible''
outside ramification at the closed point.
%When $p=2$, the axiom (\MRfive) automatically holds, and if in addition  $n=2$, 
%all five axioms are automatically satisfied when $k$ is algebraically closed.
When $p=n=2$,  all five axioms are automatically satisfied when $k$ is algebraically closed.
For arbitrary $p\geq 2$, we obtain in dimension $n=2$ a description in \ref{2-dimensional invariant ring} of the ring of invariants $A^G$ which 
generalizes Artin's description when $p=2$ in \cite{Artin 1975}.

In dimension $n>2$, the ring $A^G$ is never a complete intersection since it is known that this ring is not 
Cohen--Macaulay when the action is ramified precisely at the origin. 
Thus an explicit description of $A^G$  by generators and relations is  in general out of reach in higher dimensions.
When $n=3$, we are able to provide  generators for $A^G$ in \ref{thm.n=3} when the system of parameters associated with the $G$-action consists of polynomials rather than general power series. We are also able to provide  a formula
for the embedding dimension.
%\stefan{
Using methods from non-commutative algebra, we show for all $n\geq 2$ in \ref{canonical class trivial}   that $A^G$ is quasi-Gorenstein, that is,
that the dualizing module $K_{A^G}$ is trivial in the class group $\Cl(A^G)$.
%}

\medskip
The paper is organized as follows.
In Section \ref{norm subrings}  we study general properties of  norm subrings $R$ inside $A=k[[u_1,\ldots,u_n]]$.
Section \ref{ramified precisely origin} focuses on wild $G$-actions on $A$ that are ramified precisely at 
the origin, and presents a criterion for the extension $\Frac(A)/\Frac(R)$ to be Galois in terms of the extension 
$\Frac(A^G)/\Frac(R)$. The proof of this criterion uses results on fundamental groups.
Section \ref{extensions torsors} deals with  extensions of group schemes and torsors, and proves the existence 
of the effective model of a group action  over higher-dimensional bases.
These concepts are further developed explicitly in dimension $1$ in Section \ref{wild actions in dimension one}.
%, where the freeness of the effective model can be expressed in terms of ramification break.
Section \ref{moderately ramified} introduces the  central notion of moderately ramified action
%including  a thorough discussion of the five axiomatic conditions (\MRone) -- (\MRfive),
and   the main structure results for such action.
In Section \ref{complete intersection subrings} we study various auxiliary invariant rings that
are attached to moderately ramified actions. We treat the case $n=3$ in Section \ref{linear situation}, 
where we present a set of generators for the ring $A^G$ for certain moderately ramified actions.
We show that the canonical class $[K_{A^G}]\in\Cl(A^G)$ is trivial in Section \ref{class group}.

\begin{acknowledgement}
The authors thank the referee for a careful reading of the manuscript and for useful comments. 
%The authors 
The authors gratefully acknowledges funding support from the 
 research training group \emph{Algebra, Algebraic Geometry, and Number Theory}
at the University of Georgia, from the National Science Foundation RTG grant DMS-1344994, from the Simons Collaboration Grant 245522,
and the research training group
\emph{GRK 2240: Algebro-geometric Methods in Algebra, Arithmetic and Topology}
of the Deutsche Forschungsgemeinschaft.
%as well as from  the   Research and Training Group
%GRK 2240: Algebro-geometric Methods in Algebra, Arithmetic and Topology, funded
%by the Deutsche Forschungsgemeinschaft. 
\end{acknowledgement}

% ===========================================================================================
\section{Norm subrings}
\label{norm subrings}

Let $A$ denote a complete local  noetherian ring that is regular, of dimension $n\geq 1$,
and  with  maximal ideal $\maxid_A$. 
Recall that a field of representatives for $A$ is a subfield $k$ of $A$
such that the composition $k \subset A \to A/\maxid_A$ is an isomorphism of fields. 
When $A/\maxid_A$ is perfect, such a subfield   is unique. 
We will always in this article assume that $A$ contains a field, and we fix a field of 
representatives $k\subset A$ and regard it as ground field.
A set of $n$  elements $x_1,\ldots, x_n\in A$ which generate an  $\maxid_A$-primary
ideal in $A$ is called  \emph{system of parameters}. The following facts are well-known.

\begin{lemma}
\label{parameter system}
If $x_1,\ldots,x_n\in A$ is a system of parameters, then the ensuing
homomorphism of complete local $k$-algebras 
$R:=k[[x_1,\ldots,x_n]]\ra A$ 
is finite and flat. Its degree is the length of $A/(x_1,\ldots,x_n)A$.
\end{lemma}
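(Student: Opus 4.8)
The plan is to deduce finiteness from a completeness argument (the topological form of Nakayama's lemma), to deduce flatness from the miracle-flatness criterion, and then to read off the degree from the fact that a finitely generated flat module over a local ring is free.

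First I would set up the map. Since $x_1,\dots,x_n$ generate an $\maxid_A$-primary ideal, none of them is a unit, so all lie in $\maxid_A$; hence sending the variables of the abstract power series ring to the $x_i$ gives a well-defined local $k$-algebra homomorphism $R = k[[x_1,\dots,x_n]] \to A$, under which $\maxid_R A = (x_1,\dots,x_n)A$. By hypothesis this ideal is $\maxid_A$-primary, so $A/\maxid_R A$ is an Artinian local ring with residue field $k$; write $\ell$ for its length, which equals $\dim_k A/\maxid_R A$ because every composition factor is $A/\maxid_A = k$.

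For finiteness, the point is that the $\maxid_R A$-adic and $\maxid_A$-adic topologies on $A$ coincide, since $\maxid_R A$ is $\maxid_A$-primary. Thus $A$ is separated and complete for the $\maxid_R A$-adic topology, while $R$ is $\maxid_R$-adically complete. Choosing lifts $a_1,\dots,a_\ell \in A$ of a $k$-basis of $A/\maxid_R A$, I would apply the topological Nakayama lemma: from $A = \sum_i R a_i + \maxid_R A$ one gets $A = \sum_i R a_i + \maxid_R^m A$ for all $m$, and completeness of $R$ and of $A$ lets one sum the resulting convergent series of coefficients so as to express every element of $A$ as an $R$-combination of the $a_i$. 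Hence $A$ is a finite $R$-module. This completeness step is where the hypothesis that $A$ is complete is essential, and I expect it to be the main (though standard) obstacle; without completeness one would instead have to argue via Noether normalization.

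Finally, $R$ is regular local of dimension $n$ and $A$ is regular, hence Cohen--Macaulay, also of dimension $n$, while $\dim A/\maxid_R A = 0$; the equality $\dim A = \dim R + \dim A/\maxid_R A$ then yields flatness by the miracle-flatness criterion (alternatively, a system of parameters in the Cohen--Macaulay ring $A$ is a regular sequence, which gives flatness through the local criterion of flatness). A finitely generated flat module over the local ring $R$ is free, and its rank equals $\dim_k(A \otimes_R R/\maxid_R) = \dim_k A/\maxid_R A = \ell$, which is exactly the length of $A/(x_1,\dots,x_n)A$, as claimed.
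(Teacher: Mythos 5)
Your proof is correct and follows essentially the same route as the paper: the paper obtains finiteness by citing Zariski--Samuel (whose underlying argument is exactly your topological Nakayama/completeness step) and freeness by citing Serre's Proposition 22 (which is the miracle-flatness criterion for a regular base and Cohen--Macaulay total ring), then reads off the rank from $\dim_k A/(x_1,\ldots,x_n)A$ just as you do. You have simply unpacked the two cited references into explicit arguments.
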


\proof
The ring $A$, viewed as an $R$-module is finite, according
to \cite{ZS}, Corollary 2 and Remark on page 293.
Since $R$ is regular and $A$ is Cohen--Macaulay, the 
$R$-module $A$ is free of finite rank,
by \cite{Serre 1965}, Proposition 22, page IV-37. 
This rank is the vector space dimension of $A/(x_1,\ldots,x_n)A$
over $k=R/(x_1,\ldots,x_n)$, which coincides with the length of
$A/(x_1,\ldots,x_n)A$.
\qed

\medskip
A set of $n$ elements $u_1,\dots, u_n$ in $A$ which  generate $\maxid_A$
is called a \emph{regular system of parameters}. The resulting homomorphism
$k[[u_1,\dots, u_n]]\ra A$ is then bijective.

Let $G\subset \Aut(A)$ be a finite group of    automorphisms such that
 $k$ lies in the ring of invariants $A^G$. Clearly, $\maxid_A \cap A^G$ is a maximal ideal with residue field $k$. 
 It follows from \cite{Mol}, Th\'eor\`eme 2, 
 %or \cite{Naga}, Theorem 3, 
 that $A^G$ is a complete {\it noetherian} local ring and that $A$ is an $A^G$-module of finite type.
%\footnote{There exist
%examples of actions of $G$ on $A:=k[[u,v]]$ where $G$ acts non-trivially on $k$, fixes both $u$ and $v$, and such that
%$A$ is not a finitely generated $A^G$-module and $A^G$ is not noetherian (see, e.g., \cite{Baba}, Example 4.7, generalizing an example in \cite{Naga}).}. 
Choose   a regular system of parameters $u_1,\ldots,u_n\in A$, 
and  consider the \emph{norm elements}
$$
x_i:=N_{A/A^G}(u_i)=\prod_{\sigma\in G} \sigma(u_i),\quad 1\leq i\leq n.
$$
These elements are obviously $G$-invariant, and we can consider the complete local $k$-subalgebra $R\subset  A^G$
generated by $x_1,\dots,x_n$. 
Let us call $R$ a {\it norm subring} of $A$.
The ring extensions $R\subset A^G\subset A$ play a crucial role in this article.
In this section, we determine under what conditions the extension $R\subset A$ is finite.

\begin{definition} \label{admissible}
A regular system of parameters $u_1,\ldots,u_n\in A$ is called  \emph{weakly admissible} (resp.  \emph{admissible}) with respect
to the $G$-action  if,  
for each $(\sigma_1,\ldots,\sigma_n)\in G^n$, the elements
$\sigma_1(u_1),\ldots,\sigma_n(u_n) \in A$ form a  system of parameters (resp., form a regular system of parameters).
\end{definition}
The main justification for introducing this notion is the following result:

\begin{proposition} \label{norm elements}
Let $u_1,\ldots,u_n$ be a regular system of parameters of $A$.
Then the associated norm elements $x_1,\ldots,x_n $ 
form a   system of parameters in $A$ if and only if the elements $u_1,\ldots,u_n$ are weakly admissible with respect to the action of $G$.
If the above equivalent conditions hold, then the homomorphism of $k$-algebras 
$k[[x_1,\ldots,x_n]]\ra A$ 
is finite and flat of degree at least $|G|^n$, with equality when the regular system is admissible. 
\end{proposition}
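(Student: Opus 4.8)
The plan is to reduce both claims to a set-theoretic computation of the common vanishing locus of the norm elements, followed by an additivity property of colengths of parameter ideals. Throughout, write $V(f_1,\dots,f_r)\subseteq\Spec A$ for the closed set of primes containing $f_1,\dots,f_r$; since $A$ is local of dimension $n$, a set of $n$ elements $f_1,\dots,f_n$ is a system of parameters precisely when $V(f_1,\dots,f_n)=\{\maxid_A\}$. Each automorphism $\sigma\in G$ fixes the unique maximal ideal, so $\sigma(u_i)\in\maxid_A$ and each $V(\sigma(u_i))$ contains the closed point. Because $V$ turns products into unions, $V(x_i)=\bigcup_{\sigma\in G}V(\sigma(u_i))$, and distributing the intersection over the union gives
$$
V(x_1,\dots,x_n)=\bigcap_{i=1}^n\bigcup_{\sigma\in G}V(\sigma(u_i))=\bigcup_{(\sigma_1,\dots,\sigma_n)\in G^n}V(\sigma_1(u_1),\dots,\sigma_n(u_n)).
$$

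For the equivalence, observe that every set $V(\sigma_1(u_1),\dots,\sigma_n(u_n))$ on the right contains $\maxid_A$. Hence the union equals $\{\maxid_A\}$ if and only if each of its members does, i.e.\ if and only if $\sigma_1(u_1),\dots,\sigma_n(u_n)$ is a system of parameters for every $(\sigma_1,\dots,\sigma_n)\in G^n$. This is exactly weak admissibility, so $x_1,\dots,x_n$ is a system of parameters iff $u_1,\dots,u_n$ is weakly admissible. When this holds, Lemma \ref{parameter system} shows that $k[[x_1,\dots,x_n]]\to A$ is finite and flat of degree equal to $\operatorname{length}(A/(x_1,\dots,x_n)A)$, so it remains to estimate this length.

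To compute the length I would peel off the factors of each $x_i=\prod_{\sigma\in G}\sigma(u_i)$ one at a time, using the following additivity: if $f_2,\dots,f_n$ together with $g$, with $g'$, and with $gg'$ each form a system of parameters, then, setting $\bar{A}:=A/(f_2,\dots,f_n)A$ (a one-dimensional Cohen--Macaulay ring, since $A$ is regular and $f_2,\dots,f_n$ is part of a system of parameters, hence a regular sequence, on which $g$ is a nonzerodivisor), multiplication by $g$ yields a short exact sequence
$$
0\to \bar{A}/(g')\xrightarrow{\ \cdot g\ }\bar{A}/(gg')\to \bar{A}/(g)\to 0,
$$
whence $\operatorname{length}(A/(gg',f_2,\dots,f_n))=\operatorname{length}(A/(g,f_2,\dots,f_n))+\operatorname{length}(A/(g',f_2,\dots,f_n))$. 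Every partial product arising in this process, together with the remaining $x_j$, still cuts out only the closed point by the same distributivity argument as above, so all intermediate ideals are genuine parameter ideals and the additivity applies at each step. Iterating over all factors of $x_1,\dots,x_n$ then gives
$$
\operatorname{length}(A/(x_1,\dots,x_n)A)=\sum_{(\sigma_1,\dots,\sigma_n)\in G^n}\operatorname{length}\big(A/(\sigma_1(u_1),\dots,\sigma_n(u_n))\big).
$$

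Finally, each summand is the length of a nonzero Artinian quotient of $A$, hence is at least $1$, and it equals $1$ exactly when $(\sigma_1(u_1),\dots,\sigma_n(u_n))=\maxid_A$, i.e.\ when $\sigma_1(u_1),\dots,\sigma_n(u_n)$ is a \emph{regular} system of parameters. Summing over the $|G|^n$ tuples yields $\operatorname{length}(A/(x_1,\dots,x_n)A)\geq |G|^n$, with equality if and only if every summand equals $1$, that is, if and only if $u_1,\dots,u_n$ is admissible. The step that will require the most care is the additivity identity and the attendant check that every partial-product ideal is a parameter ideal; once the vanishing-locus decomposition is in hand the latter is immediate, and the regularity (in particular Cohen--Macaulayness) of $A$ is precisely what makes the displayed short exact sequence exact.
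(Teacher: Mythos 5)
Your proof is correct. The first half --- the equivalence with weak admissibility via the decomposition of $V(x_1,\dots,x_n)$ into the sets $V(\sigma_1(u_1),\dots,\sigma_n(u_n))$, followed by the appeal to Lemma \ref{parameter system} --- is exactly the paper's argument. For the degree computation, however, you take a genuinely different route. The paper proves the two inequalities separately: the lower bound $\operatorname{length}(A/(x_1,\dots,x_n))\geq |G|^n$ comes from Hilbert--Samuel multiplicities, using $\operatorname{length}(A/I)=e(I,A)$ for parameter ideals of a Cohen--Macaulay ring together with the inclusion $I\subseteq\maxid_A^{|G|}$ and $e(\maxid_A^{|G|},A)=|G|^n$; the upper bound under admissibility is then obtained from a descending filtration of $(x_1,\dots,x_n)$ by ideals $I_{\ell,r}$, bounding the length of each successive cyclic quotient by $|G|^{\ell-1}$ through an induction on $\dim(A)$. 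Your additivity identity --- the short exact sequence $0\to\bar{A}/(g')\to\bar{A}/(gg')\to\bar{A}/(g)\to 0$ in the one-dimensional Cohen--Macaulay quotient $\bar{A}$ --- replaces both halves by the single exact formula $\operatorname{length}(A/(x_1,\dots,x_n))=\sum_{(\sigma_1,\dots,\sigma_n)\in G^n}\operatorname{length}\bigl(A/(\sigma_1(u_1),\dots,\sigma_n(u_n))\bigr)$, from which both the inequality and the case of equality are read off term by term. The two points needing care do hold: every partial-product ideal cuts out only the closed point by the same distributivity of $V$ over products, and the nonzerodivisor hypothesis for exactness follows because any permutation of a system of parameters of the Cohen--Macaulay ring $A$ is a regular sequence. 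What your approach buys is a cleaner, multiplicity-free argument and a slightly stronger conclusion: equality $|G|^n$ holds \emph{if and only if} the regular system is admissible, whereas the statement (and the paper's proof) only asserts the "if" direction; what the paper's approach buys is that the multiplicity bound $e(I,A)\geq e(\maxid_A^{|G|},A)$ yields the lower bound with no analysis of intermediate ideals at all.
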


\proof
Let $g:=|G|$. Choose an enumeration $\sigma_1, \dots, \sigma_g$ of the elements of $G$ such that $\sigma_1:=e$
is the neutral element.
Define $f_{i,j}:=\sigma_j(u_i)$, with $1\leq j\leq g$ and $1\leq i\leq n$. In particular,
$f_{i,1}=u_i$ and $x_i=\prod_{j=1}^g f_{i,j}$.
Let $X:=\Spec(A)$ and consider the   closed subsets $V(f_{i,j})\subset X$. Clearly, 
$$
V(x_1,\dots,x_n)= \bigcap_{i=1}^n \left( \bigcup_{j=1}^g V(f_{i,j}) \right).
$$
Let $J:=\{1,\ldots,g\}$. Using the distributive properties of $\cap $ and $\cup$, we find that 
\begin{equation} \label{cupcap}
V(x_1,\dots,x_n)=  \bigcup_{(j_1,\ldots,j_n)\in J^n} \left(\bigcap_{i=1}^n V(f_{i,j_i}) \right) = \bigcup_{(j_1,\ldots,j_n)\in J^n} V(f_{1,j_1}, \dots, f_{n,j_n}).
\end{equation}
If the elements $x_1,\ldots,x_n\in A$ form a   system of parameters,
then $V(x_1,\dots,x_n)$ contains only the closed point and, 
thus, each $V(f_{1,j_1}, \dots, f_{n,j_n})$
consists only of the closed point. 
Hence, for each tuple $(j_1,\ldots,j_n)$, the elements  $f_{i,j_1}, \dots, f_{i,j_n} \in A$ form a   system of parameters.
In other words, the elements $u_1,\ldots,u_n\in A$ are weakly admissible. Conversely, when the elements $u_1,\ldots,u_n\in A$ are weakly admissible, 
it immediately follows from \eqref{cupcap} that the elements $x_1,\ldots,x_n\in A$ form a   system of parameters.

Now suppose that the regular system of parameters $u_1,\ldots,u_n\in A$ is weakly admissible,
such  that the associated norm elements $x_1,\ldots,x_n $ form a system of parameters in $A$.
According to Lemma \ref{parameter system}, 
the ring $A$ is a finite and flat module over $R=k[[x_1,\ldots,x_n]]$.
To determine its rank, we need to 
compute the length of  $A/I$, where  $I:=(x_1,\ldots,x_n)A$. We now establish  the 
inequality
$\text{\rm length}(A/I)\geq g^n$ using general facts on the \emph{Hilbert--Samuel multiplicity} $e(I,A)$ of $I$. 
Since we assume that the elements $u_1,\ldots,u_n\in A$ are weakly admissible, 
the ideal $I\subset A$ is generated by a system of parameters.
Since the ring $A$ is Cohen--Macaulay, the formula  
$\text{\rm length}(A/I)= e(I,A)$ holds (\cite{Auslander; Buchsbaum 1958}, Proposition 5.9).
For any $\maxid_A$-primary ideal $J\subset A$,  we have 
$e(J^s,A)=s^n e(J,A)$ (\cite{Matsumura 1989}, page 108). 
Applying this to $J:={\mathfrak m}_A^g$ and using the fact that 
$e({\mathfrak m}_A, A)=1$ since $A$ is regular, 
we find that $e(J, A)=g^n$. 
Finally, the inclusion $I\subset J$ gives   $e(I, A) \geq e(J,A)$ (\cite{Matsumura 1989}, page 109),
and the desired inequality follows.

Finally, assume that the regular system of parameters $u_1,\ldots,u_n\in A$ is admissible.
To proceed, it is convenient 
to consider the slightly more general situation where we omit the precise definition of the element $f_{i,j}$, 
and keep only the following hypothesis: for each element  $(j_1,\ldots,j_n) \in J^n$, the 
elements $f_{1,j_1},\ldots, f_{n,j_n}$ form a regular system of parameters of $A$. 
In particular, 
$f_{i,j}\in \maxid_A \setminus \maxid_A^2$ for each $i \in [1,n] $ and $j \in [1,g]$.
Letting $u_i:=f_{i,1}$, we find that $u_1,\dots, u_n$ form a regular system of parameters of $A$.

For each $\ell \in [1,n]$ and $r \in [1,g]$, define $g_{\ell,r}:=\prod_{j=1}^{r}f_{\ell,j}$, and denote as previously $x_\ell:=g_{\ell,g}$.
When $2 \leq \ell \leq n-1$, consider the   ideal
$$
I_{\ell,r}:=(x_1,\ldots,x_{\ell-1}, g_{\ell,r},u_{\ell+1},\dots, u_{n})\subset A.
$$
Define similarly the ideals $I_{1,r}=(g_{1,r},u_{2},\dots, u_{n})$ and $I_{n,r}=(x_1,\ldots,x_{n-1}, g_{n,r})$ for each $r \in [1,g]$. With respect to the lexicographic ordering
on the set of all pairs $(\ell, r)$ in $[1,n]\times [1,g]$, the $gn$ ideals $
I_{\ell,r} $ form a decreasing sequence
$$
I_{\ell,r}=(x_1,\ldots,x_{\ell-1}, g_{\ell,r},u_{\ell+1},\dots, u_{n}) \supseteq
(x_1,\ldots,x_{\ell-1}, g_{\ell,r+1},u_{\ell+1},\dots, u_{n})= I_{\ell,r+1}
$$
between  $I_{1,1}=(u_1,\ldots,u_n)$ and $I_{n,g}=(x_1,\ldots,x_n)$. Note  that this
sequence of ideals contains the repetitions $I_{\ell,g}=I_{\ell+1,1}$.

Since the module $I_{\ell,r} / I_{\ell,r+1}$ is generated by $g_{\ell,r}$, 
and because $g_{\ell,r+1}= g_{\ell,r} f_{\ell,r+1}$, we find that $I_{\ell,r} / I_{\ell,r+1}$ is annihilated by 
$$
J_{\ell,r+1}:=(x_1,\ldots,x_{\ell-1}, f_{\ell,r+1},u_{\ell+1},\dots, u_{n}).
$$
It follows that $\text{\rm length} (A/J_{\ell,r+1}) \geq \text{\rm length} (I_{\ell,r} / I_{\ell,r+1})$.
We claim that 
$$
\text{\rm length} (A/J_{\ell,r+1}) \leq g^{\ell-1} 
$$ 
for all $r \in [0,g-1]$ and $\ell \in [1,n]$. Assuming this claim, we find the upper bound
\begin{equation} \begin{array}{rcl}
\label{eq.length}
\text{\rm length}(A/(x_1,\ldots,x_n)) 
&=& 
1+\sum_{\ell=1}^n \sum_{r=1}^{g-1} \text{\rm length} (I_{\ell,r} / I_{\ell,r+1}) \\
&\leq& 1+\sum_{\ell=1}^n (g-1)g^{\ell-1}
= g^n.
\end{array}
\end{equation}
To verify the claim, we  proceed by   induction on $n=\dim(A)$. 
Assume that $n=1$. By construction, $J_{1,r+1}=(f_{1,r+1})$. By hypothesis, $f_{1,r+1} \in \maxid_A \setminus \maxid_A^2$.
Hence, $f_{1,r+1}$ is a uniformizer and
$A/(f_{1,r+1})$ has length one, as desired. 

Assume now that $n>1$, and that the assertion holds for $n-1$. Since $f_{\ell,r+1}$ is part of a regular system of parameters, 
we find that the ring $\overline{A}:=A/(f_{\ell,r+1})$ is a complete regular local ring of dimension $n-1$. 
When $a \in A$, denote by $\overline{a}$ its class in $\overline{A}$. 
When $2 \leq \ell \leq n-1$, we have 
\begin{equation} \label{length}
\text{\rm length} (A/J_{\ell,r+1}) =  
\text{\rm length} (\overline{A}/(\overline{x}_1,\ldots,\overline{x}_{\ell-1}, \overline{u}_{\ell+1},\dots, \overline{u}_{n})).
\end{equation}
Similarly, $
A/J_{1,r+1}$  and $\overline{A}/(\overline{u}_{2},\dots, \overline{u}_{n}))$ have the same length,
and
$A/J_{n,r+1}$  and $\overline{A}/(\overline{x}_1,\ldots,\overline{x}_{n-1})$ also have same length.
We now observe that we can apply our induction hypothesis to bound the right-hand side of \eqref{length}.
Indeed, the set
of elements $ \overline{f}_{i,r}\in\overline{A} $ with $ i \in [1,n]$ and $i \neq \ell$, and with $r \in [1,g]$, inherits the property
that every sequence $\overline{f}_{1,j_1}, \ldots, \overline{f}_{n,j_n}$ of $n-1$ elements
(where no term  $\overline{f}_{\ell,j_\ell}$ appears) is a  regular system of parameters in $\overline{A}$. 
In case  $\ell< n$, we can view $\overline{u}_{\ell+1}$ as $\overline{f}_{\ell +1 ,1}$  and  conclude by induction.
In the boundary case   $ \ell =n$, it suffices to show that  $\text{\rm length} (\overline{A}/(\overline{x}_1,\ldots,\overline{x}_{n-1})\leq g^{n-1}$.
To prove this inequality we use the argument in \eqref{eq.length} and use induction to justify each inequality found in that argument.
\qed

\medskip
Recall that a $G$-action   on $A=k[[u_1,\ldots,u_n]]$ is called \emph{linear} if each $\sigma\in G$
acts as a substitution of variables $u_j\mapsto \sum_{i=1}^n\lambda_{ij}^\sigma u_i$ for some
linear representation $G\ra\GL_n(k)$, $\sigma\mapsto (\lambda_{ij}^\sigma)$.
For instance, the ${\mathbb Z}/2{\mathbb Z}$-action on $k[[u,v]]$ which permutes $u$ and $v$ is linear, 
and our next lemma implies that $u,v$ is not weakly admissible for this action.

\begin{lemma}
Suppose that the $G$-action on $A=k[[u_1,\ldots,u_n]]$ is linear.
If there is an element $\sigma\in G$ whose matrix $(\lambda_{ij}^\sigma)$
has a zero on the diagonal, then the regular system of parameters
$u_1,\ldots,u_n\in A$ is not weakly admissible with respect to the action of $G$.
\end{lemma}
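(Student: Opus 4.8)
The plan is to exhibit a single tuple $(\sigma_1,\ldots,\sigma_n)\in G^n$ for which the elements $\sigma_1(u_1),\ldots,\sigma_n(u_n)$ fail to generate an $\maxid_A$-primary ideal; by Definition \ref{admissible} this immediately shows that $u_1,\ldots,u_n$ is not weakly admissible. Suppose $\sigma\in G$ has a zero on the diagonal, say $\lambda_{\ell\ell}^\sigma=0$ for some index $\ell\in\{1,\ldots,n\}$. The natural move is to perturb only the $\ell$-th slot: set $\sigma_\ell:=\sigma$ and $\sigma_i:=e$ for all $i\neq\ell$.

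With this choice $\sigma_i(u_i)=u_i$ for $i\neq\ell$, while the $\ell$-th element is
$$
\sigma_\ell(u_\ell)=\sigma(u_\ell)=\sum_{i=1}^n\lambda_{i\ell}^\sigma u_i=\sum_{i\neq\ell}\lambda_{i\ell}^\sigma u_i,
$$
where the last equality uses precisely the vanishing $\lambda_{\ell\ell}^\sigma=0$. The key observation is that $\sigma(u_\ell)$ then lies in the ideal generated by the remaining variables $u_i$ with $i\neq\ell$, so the generator $\sigma(u_\ell)$ is redundant and
$$
(\sigma_1(u_1),\ldots,\sigma_n(u_n))=(u_1,\ldots,u_{\ell-1},u_{\ell+1},\ldots,u_n).
$$

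This ideal is generated by only $n-1$ of the variables, so by Krull's height theorem it has height at most $n-1$; equivalently, and in the geometric style of the proof of Proposition \ref{norm elements}, the closed subset $V(u_1,\ldots,u_{\ell-1},u_{\ell+1},\ldots,u_n)=\Spec k[[u_\ell]]\subset\Spec A$ is one-dimensional and hence strictly larger than the closed point. Therefore this ideal is not $\maxid_A$-primary, the $n$ elements $\sigma_1(u_1),\ldots,\sigma_n(u_n)$ do not form a system of parameters, and $u_1,\ldots,u_n$ is not weakly admissible.

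The argument is entirely elementary, and the only point requiring genuine care is the bookkeeping of the index convention: since $\sigma$ acts by $u_j\mapsto\sum_i\lambda_{ij}^\sigma u_i$, the expansion of $\sigma(u_\ell)$ is read off the $\ell$-th column of the matrix, and it is exactly the diagonal entry $\lambda_{\ell\ell}^\sigma$ that governs whether $u_\ell$ itself reappears in $\sigma(u_\ell)$. Once this is pinned down there is no real obstacle. As a sanity check, this recovers the remark preceding the lemma: for the swap of $u$ and $v$ on $k[[u,v]]$ the matrix is antidiagonal, both diagonal entries vanish, and indeed $u,v$ is not weakly admissible.
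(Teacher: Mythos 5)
Your proof is correct and is essentially the paper's own argument: the paper also takes the tuple with $\sigma$ in the slot of the vanishing diagonal entry (after reordering so that $\ell=1$) and the identity elsewhere, and observes that the resulting elements generate only the ideal of the remaining $n-1$ variables, hence do not form a system of parameters. The index bookkeeping you emphasize ($\sigma(u_\ell)$ read off the $\ell$-th column) is handled correctly.
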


\proof
After reordering the $u_1,\ldots,u_n\in A$,
we may assume that  the non-zero diagonal entry is $\lambda_{11}^\sigma=0$.  
For the  tuple $(\sigma, e,\ldots, e)\in G^n$, where $e\in G$ denotes the neutral element,
the resulting elements  $\sigma(u_1),u_2, \ldots, u_n\in A$ 
do not form a system of parameters, because they generate the  ideal $(u_2,\ldots,u_n)\subset A$.
\qed

\medskip
Any regular system of parameters  $u_1,\ldots,u_n\in A$  induces a basis $\overline{u}_1,\dots, \overline{u}_n$ of the cotangent space $\maxid_A/\maxid_A^2$.
Fixing this basis of $\maxid_A/\maxid_A^2$, we obtain an induced linear representation $G\ra\GL(\maxid_A/\maxid_A^2)= \GL_n(k)$.

\begin{lemma} \label{lem.Borel}
Let $u_1,\ldots, u_n\in A$ be a regular system of parameters
with induced linear representation $G\ra\GL(\maxid_A/\maxid_A^2) = \GL_n(k)$.
If the image of $G$ is contained in the Borel subgroup of upper triangular matrices,
then $u_1,\ldots,u_n$ is admissible with respect to the action of $G$.
\end{lemma}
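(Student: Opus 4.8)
The plan is to reduce the claim to a statement of linear algebra in the cotangent space $\maxid_A/\maxid_A^2$. The starting observation is that, since $A$ is regular of dimension $n$, a collection of $n$ elements $v_1,\ldots,v_n\in\maxid_A$ forms a regular system of parameters if and only if their residues $\overline{v}_1,\ldots,\overline{v}_n$ form a $k$-basis of the $n$-dimensional space $\maxid_A/\maxid_A^2$. Indeed, by Nakayama's lemma the $v_i$ generate $\maxid_A$ exactly when the $\overline{v}_i$ span $\maxid_A/\maxid_A^2$, and for dimension reasons spanning an $n$-dimensional space by $n$ vectors is equivalent to being a basis. Thus admissibility of $u_1,\ldots,u_n$ amounts to checking, for every tuple $(\sigma_1,\ldots,\sigma_n)\in G^n$, that the residues $\overline{\sigma_1(u_1)},\ldots,\overline{\sigma_n(u_n)}$ are linearly independent in $\maxid_A/\maxid_A^2$.

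Next I would unwind the hypothesis. Writing the induced representation as $\sigma\mapsto(\lambda_{ij}^\sigma)$ with respect to the basis $\overline{u}_1,\ldots,\overline{u}_n$, so that $\overline{\sigma(u_j)}=\sum_{i=1}^n\lambda_{ij}^\sigma\overline{u}_i$, the assumption that the image of $G$ lies in the Borel subgroup of upper triangular matrices means $\lambda_{ij}^\sigma=0$ whenever $i>j$. In particular $\overline{\sigma_j(u_j)}=\lambda_{jj}^{\sigma_j}\overline{u}_j+\sum_{i<j}\lambda_{ij}^{\sigma_j}\overline{u}_i$ involves only the $\overline{u}_i$ with $i\le j$.

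Assembling these vectors as the columns of an $n\times n$ matrix $M$ over $k$, whose $(i,j)$-entry is $\lambda_{ij}^{\sigma_j}$, we obtain an upper triangular matrix with diagonal entries $\lambda_{jj}^{\sigma_j}$. Each such diagonal entry is nonzero: the matrix $(\lambda_{ij}^{\sigma_j})_{i,j}$ representing $\sigma_j$ is invertible and upper triangular, so its determinant equals the product of its diagonal entries, forcing every $\lambda_{jj}^{\sigma_j}\neq 0$. Hence $\det M=\prod_{j=1}^n\lambda_{jj}^{\sigma_j}\neq 0$, the residues $\overline{\sigma_1(u_1)},\ldots,\overline{\sigma_n(u_n)}$ form a basis of $\maxid_A/\maxid_A^2$, and the tuple therefore yields a regular system of parameters. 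As the tuple was arbitrary, $u_1,\ldots,u_n$ is admissible.

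The argument is essentially a bookkeeping of upper triangularity, so I do not anticipate a serious obstacle; the only point requiring care is to notice that the relevant diagonal coefficients $\lambda_{jj}^{\sigma_j}$ are drawn from $n$ possibly different group elements $\sigma_j$, while the matrix $M$ assembled from their respective $j$-th columns is nonetheless upper triangular, and that each $\lambda_{jj}^{\sigma_j}$ is nonzero simply because the corresponding $\sigma_j$ acts invertibly on the cotangent space. It is worth remarking that this also clarifies, in light of the preceding lemma, why a zero on the diagonal obstructs weak admissibility: the Borel hypothesis guarantees precisely the nonvanishing of all diagonal entries that the present argument requires.
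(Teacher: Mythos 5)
Your proof is correct and follows the same route as the paper's: reduce admissibility to linear independence of the residues in the cotangent space, then observe that upper triangularity of each $\sigma_j$ forces the matrix with columns $\overline{\sigma_j(u_j)}$ to be upper triangular with nonzero diagonal. The paper states this last step as ``clear''; you have simply written out the bookkeeping it leaves implicit.
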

\proof Given $(\sigma_1,\ldots,\sigma_n)\in G^n$, we need to show that the elements
$\sigma_1(u_1),\ldots,\sigma_n(u_n)$ form a regular system of parameters of $A$. For this, it suffices to show that
the images $\overline{\sigma_1(u_1)},\ldots,\overline{\sigma_n(u_n)} $ in $\maxid_A/\maxid_A^2$ form a basis.
That they form a basis is clear from the assumption
that each $\sigma_i$ induces in the basis $\overline{u}_1,\dots, \overline{u}_n$ a matrix which is upper triangular.
\qed

\medskip
Note in particular that it follows from Lemma \ref{lem.Borel} that
if the induced representation of $G$ on the cotangent space is trivial, 
then any regular system of parameters is automatically admissible.  

\begin{proposition} \label{exist admissible} 
If the ring $A$ has characteristic $p>0$ and the group $G$ 
is a finite $p$-group, then  $A$ contains an admissible regular system of parameters.
\end{proposition}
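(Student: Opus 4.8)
The plan is to reduce everything to Lemma \ref{lem.Borel} by producing a regular system of parameters whose induced linear representation on the cotangent space is upper triangular. Since every element of $G$ fixes the field of representatives $k$, the induced map $\rho\colon G\to\GL(\maxid_A/\maxid_A^2)$ is a $k$-linear representation of the finite $p$-group $G$ over the field $k$ of characteristic $p$. The crux of the argument is the classical fact that such a representation is \emph{unipotent}, i.e.\ that there is a complete flag
$$0=V_0\subset V_1\subset\cdots\subset V_n=\maxid_A/\maxid_A^2$$
of $G$-invariant subspaces with $\dim_k V_i=i$; equivalently, one can choose a $k$-basis of $\maxid_A/\maxid_A^2$ in which every $\rho(\sigma)$ is upper triangular.

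First I would establish this unipotence. It suffices to show that every nonzero finite-dimensional $k[G]$-module $V$ has a nonzero fixed vector $V^G\neq 0$: given that, one picks a line $V_1\subset V^G$, passes to $V/V_1$, and inducts on $\dim_k V$ to build the flag. To produce a fixed vector, I would use that the center of the nontrivial $p$-group $G$ contains an element $\sigma$ of order $p$. In characteristic $p$ one has, for commuting operators, $(\rho(\sigma)-\id)^p=\rho(\sigma)^p-\id=\id-\id=0$, so $\rho(\sigma)-\id$ is nilpotent and hence $V^{\langle\sigma\rangle}\neq 0$. As $\langle\sigma\rangle$ is central, the subspace $V^{\langle\sigma\rangle}$ is $G$-stable, and an induction on $|G|$ applied to the $G/\langle\sigma\rangle$-module $V^{\langle\sigma\rangle}$ yields $V^G\neq 0$.

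Granting the invariant flag, I would choose a $k$-basis $\overline{v}_1,\dots,\overline{v}_n$ of $\maxid_A/\maxid_A^2$ adapted to it, so that $\overline{v}_1,\dots,\overline{v}_i$ span $V_i$ and $\rho$ becomes upper triangular in this basis. Lifting each $\overline{v}_i$ to an element $v_i\in\maxid_A$, Nakayama's lemma shows that $v_1,\dots,v_n$ generate $\maxid_A$, and hence form a regular system of parameters. By construction, its induced representation on the cotangent space is $\rho$ written in the basis $\overline{v}_1,\dots,\overline{v}_n$, which is upper triangular; thus the image of $G$ lies in the Borel subgroup and Lemma \ref{lem.Borel} shows that $v_1,\dots,v_n$ is admissible.

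The only genuinely delicate point is the unipotence of $\rho$, and this is exactly where both the characteristic-$p$ and the $p$-group hypotheses enter: over a field whose characteristic does not divide $|G|$, or for a group that is not a $p$-group, no invariant flag need exist and the reduction collapses. Everything else is formal, the passage between bases of $\maxid_A/\maxid_A^2$ and regular systems of parameters being routine via Nakayama and the admissibility being immediate from Lemma \ref{lem.Borel}.
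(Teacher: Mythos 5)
Your proof is correct and follows essentially the same route as the paper: both arguments reduce to Lemma \ref{lem.Borel} by observing that the induced representation of the $p$-group $G$ on the cotangent space $\maxid_A/\maxid_A^2$ over the characteristic-$p$ field $k$ is unipotent, hence simultaneously upper-triangularizable. The only difference is that you prove this unipotence from scratch (correctly, via a central element of order $p$ and induction), whereas the paper simply cites Proposition 26 of Serre's \emph{Linear representations of finite groups}.
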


\proof Let $u_1, \dots, u_n$ be any regular system of parameters of $A$ with induced linear representation $G\ra\GL(\maxid_A/\maxid_A^2)= \GL_n(k)$.
Since $G$ is a finite $p$-group, it is possible to change basis in $\maxid_A/\maxid_A^2$ so that in the new basis, every element in the image of $G$
is an upper triangular matrix whose diagonal coefficients are all $1$ (\cite{Ser}, Proposition 26 on  page 64).
The proposition then follows from \ref{lem.Borel}. \qed

% ===========================================================================================
%\newpage
\section{Actions ramified precisely at the origin}
\label{ramified precisely origin}

Let $A$ be a complete   local noetherian ring that is regular, of dimension $n\geq 2$,  
and characteristic $p>0$, with maximal ideal $\maxid_A$ and field of representatives $k$.
Let $G$ be a finite cyclic group of order $p$, and assume that $A$ is endowed with a 
faithful     action of $G$ so that $k$ lies in the ring of invariants $A^G$.
For each prime ideal $\primid\subset A$, we write $I_\primid\subset G$
for the inertia subgroup
consisting of all $\sigma \in G$ with $ \sigma(\primid)=\primid $
and such that the induced morphism $\overline{\sigma}:A/\primid \to A/\primid $ is the identity.

\begin{emp} \label{ramifiedprecisely} We say that the $G$-action is \emph{ramified precisely at the origin} if
$I_{\maxid_A}=G$ and  $I_\primid=\{{\rm id} \}$ for every non-maximal prime ideal $\primid$.
\end{emp}

%We assume  throughout this section that the given $G$-action on $A$ is ramified precisely at the origin.
We discuss in this section several algebraic properties of the ring  of invariants $A^G$ and of the  field extension $\Frac(A)/\Frac(R)$
induced by a norm subring $R\subset A$.
Let us start by recalling the following well-known facts.

\begin{proposition}
\label{properties invariant ring} 
Suppose that the $G$-action on $A$ is  ramified precisely at the origin.
\begin{enumerate}
%[\rm (a)]
[\rm (i)]
\item The ring  of invariants $A^G$ is a complete     local noetherian domain that is normal, 
%and we have
with $\dim(A^G)=n$ and $\depth(A^G)=2$.
\item The ring extension $A^G\subset A$ is   local, finite of  degree $p$, but not flat.
The induced map  on residue fields is an isomorphism.  
\item
The singular locus of $\Spec(A^G)$ consists of 
the closed point $\maxid_{A^G}$. The   morphism $\Spec(A)\setminus \{ \maxid_A\} \ra\Spec(A^G)\setminus \{ \maxid_{A^G}\}$ is a $G$-torsor.
\end{enumerate}
\end{proposition}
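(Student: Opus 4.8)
The plan is to handle the three assertions together, deducing first the formal properties of the extension $A^G\subset A$ and of the punctured spectra, and saving the depth computation — the one genuinely delicate point — for the end. Begin with the soft parts of (i) and (ii). Since $A$ is a regular local ring it is a normal domain, and $A^G=A\cap\Frac(A^G)$ is integrally closed in its fraction field, so $A^G$ is a normal domain; that it is a complete noetherian local ring with $A$ a finite $A^G$-module is the consequence of \cite{Mol} already quoted above. Finiteness gives $\dim A^G=\dim A=n$. The extension is local because $\maxid_A$ is $G$-invariant and $G$ permutes transitively the primes of $A$ lying over $\maxid_{A^G}$, so $\maxid_A$ is the unique such prime; the induced residue extension is then the identity $k\to k$. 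Finally $\Frac(A)^G=\Frac(A^G)$ and the action is faithful, so $\Frac(A)/\Frac(A^G)$ is Galois with group $G$ and the degree of $A^G\subset A$ equals $|G|=p$. This settles (i) and (ii) apart from $\depth(A^G)=2$ and non-flatness.

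Next, (iii). Write $U:=\Spec A\smallsetminus\{\maxid_A\}$ and $V:=\Spec A^G\smallsetminus\{\maxid_{A^G}\}$; by the locality just shown, $\maxid_A$ is the unique point over $\maxid_{A^G}$, so the restriction $\pi\colon U\to V$ of $\Spec A\to\Spec A^G$ is the quotient by $G$. The hypothesis that the action is ramified precisely at the origin says exactly that $G$ acts freely on $U$, all inertia groups at non-maximal primes being trivial. A free action of the finite constant group $G$ with quotient $U/G=V$ makes $\pi$ a $G$-torsor, and since constant groups are \'etale, $\pi$ is finite \'etale — this is (iii). As $U$ is regular and $\pi$ is \'etale and surjective, regularity descends to $V$, so $\Spec A^G$ is regular away from $\maxid_{A^G}$ and its singular locus is contained in the closed point. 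That it equals the closed point follows by contradiction: if $A^G$ were regular, then $A^G\subset A$ would be a finite extension of regular local rings, \'etale in codimension one (it is \'etale on $V$, whose complement has codimension $n\ge 2$), hence \'etale everywhere by Zariski--Nagata purity of the branch locus, contradicting $I_{\maxid_A}=G\ne\{1\}$.

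Non-flatness in (ii) uses the same geometry. Were $A^G\subset A$ flat, it would be finite locally free of degree $p$, so both $A[G]$ and $A\otimes_{A^G}A$ would be free $A$-modules of rank $p$, and the torsor comparison map between them — an isomorphism over $U$ by (iii) — would have a determinant in $A$ invertible on $U$, hence a unit, since $A$ is regular of dimension $\ge 2$. The map would then be an isomorphism over all of $\Spec A$, making $\Spec A\to\Spec A^G$ a $G$-torsor, in particular \'etale, again contradicting the ramification at $\maxid_A$. For $n\ge 3$ one may alternatively note that flatness would force $\depth A^G=\depth_{A^G}A=n>2$.

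The remaining point, $\depth(A^G)=2$, is the main obstacle. The lower bound is immediate: $A^G$ is normal of dimension $n\ge 2$, hence satisfies $S_2$, so $\depth_{\maxid_{A^G}}(A^G)\ge\min(2,n)=2$. For $n=2$ the ring $A^G$ is a two-dimensional normal local ring, hence Cohen--Macaulay, and equality holds. For $n\ge 3$ I would compute $H^i_{\maxid_{A^G}}(A^G)\simeq H^{i-1}(V,\O_V)$ for $i\ge 2$ and show $H^1(V,\O_V)\ne 0$. Feeding the \'etale $G$-cover $\pi\colon U\to V$ into the Cartan--Leray spectral sequence $H^a(G,H^b(U,\O_U))\Rightarrow H^{a+b}(V,\O_V)$, together with $H^b(U,\O_U)\simeq H^{b+1}_{\maxid_A}(A)=0$ for $1\le b\le n-2$ (from regularity of $A$), the low-degree terms collapse to $H^1(V,\O_V)\simeq H^1(G,A)$. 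The problem thus reduces to the non-vanishing of $H^1(\ZZ/p\ZZ,A)=\Ker(N)/\im(\sigma-1)$, with $N=1+\sigma+\dots+\sigma^{p-1}$, which is precisely a manifestation of the wildness of the ramification at the origin. Rather than verify this by hand, the cleanest route is to invoke the depth computation of Ellingsrud--Skjelbred for $\ZZ/p\ZZ$ acting on a regular local ring freely on the punctured spectrum, which gives $\depth(A^G)=2$ since here the fixed locus is the single closed point. Everywhere else the arguments are formal manipulations with finite extensions, torsors and purity; this depth upper bound is the only step requiring real work.
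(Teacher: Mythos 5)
Your proof is correct and follows essentially the same route as the paper's: soft arguments give normality, locality, dimension and degree; the torsor statement on the punctured spectra plus Zariski--Nagata purity rules out regularity of $A^G$; and the equality $\depth(A^G)=2$ -- correctly identified as the only non-formal step -- is delegated to the literature (the paper cites Fogarty and Ellingsrud--Skjelbred at exactly this point). The one local difference is the non-flatness of $A^G\subset A$: you obtain it from a determinant argument on the torsor comparison map $A\otimes_{A^G}A\to\prod_{g\in G}A$, whereas the paper deduces it in one line from the non-regularity of $A^G$ via descent of regularity along flat local homomorphisms (\cite{EGA}, IV.6.5.1); both are valid, yours being self-contained where the paper's is shorter.
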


\proof It is standard that $A^G$ is an integrally closed domain, and that 
%The subring $A^G$ is obviously integral and integrally closed. According to \cite{SGA 1}, Expose V, Theorem 4.1,
the ring extension $A^G\subset A$ is integral.
In light of the Going-Up Theorem, the ring $A^G$ must be local,
the ring extension $A^G\subset A$ is local, and $\dim(A^G)=\dim(A)$.

Let us show that $A^G$ is noetherian (see also \cite{Mol}, Th\'eor\`eme 2).
According to \ref{exist admissible}, there exists a regular system of parameters $u_1,\ldots,u_n\in A$
that is admissible with respect to the $G$-action. Let 
$R=k[[x_1,\ldots,x_n]]$ be the resulting norm subring. Then the
extension $R\subset A$ if finite of degree $p^n$ (\ref{norm elements}). It follows that $A^G$ is  also
a finitely generated $R$-module and, hence,  $A^G$ is
noetherian.

Now that we know that $A^G$ is noetherian,   Proposition 2 and Proposition 4 in \cite{Fog} show that  $A^G$ has depth $2$ (see also \cite{Ellingsrud; Skjelbred 1980}, 2.4,  or \cite{Peskin 1983}, Corollary 1.6).
Since $A$ is complete, the completion $B$ of $A^G$ with respect to its maximal ideal maps to $A$, and the image of $B$ in $A$ is $A^G$. 
Thus, $A^G$ is complete since it is the image of a complete ring. This completes the proof of (i).

By our overall assumption we have
$k\subset A^G$, so the inclusion $A^G\subset A$ has trivial residue field extension. 
If $A^G$ is regular, then $A $ would be flat over $A^G$ (\cite{Matsumura 1989}, Theorem 23.1). 
But then the Zariski--Nagata Theorem on the Purity of the Branch Locus (\cite{A-K}, Chapter VI, Theorem 6.8)
would imply that the branch locus is pure of codimension $1$, which would contradict our hypothesis that $A^G\subset A$ is ramified precisely at the origin.
Hence, the ring $A^G$ is not regular, and it follows from \cite{EGA}, IV.6.5.1 (i), that $A^G\subset A$ is not flat. This shows (ii).
\if false. Otherwise, let $L_n\ra \ldots L_0\ra k\ra 0$
be a resolution of the residue field with free $A^G$-modules of finite rank;
the kernel of $L_n\ra L_{n-1}$ would become free after tensoring with $A$, whence
must be free itself, contradicting that $A^G$ is not regular. This shows (ii).
\fi
Part (iii) follows from \cite{SGA 1}, Expos\'e V, Proposition 2.6.
%Furthermore, by loc.\ cit.\ the morphism $\Spec(A)\ra\Spec(A^G)$ is a $G$-torsor outside the closed points. 
\qed

\medskip
The following lemma will be used in \ref{proofstructure}. 

\begin{lemma}
\label{no invariant parameter} Suppose that the $G$-action on $A$ is ramified precisely at the origin.
Then no element $v\in\maxid_A\smallsetminus \maxid_A^2$ is $G$-invariant. In particular, the action of $G$ on $A$ is not linear.
\end{lemma}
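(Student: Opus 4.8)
The plan is to argue by contradiction, converting the hypothesis ``ramified precisely at the origin'' into a purely numerical statement about heights of ideals. Fix a generator $\sigma$ of the cyclic group $G$ of order $p$ and consider the ideal
$$
\mathfrak{a} := (\sigma(a)-a \,:\, a\in A)\subset A
$$
measuring the deviation of $\sigma$ from the identity. The first step is to show that the ramification hypothesis is equivalent to $\mathfrak a$ being $\maxid_A$-primary, so that $\sqrt{\mathfrak a}=\maxid_A$ and hence $\height(\mathfrak a)=\height(\maxid_A)=n$. Indeed, for a prime $\primid\subset A$ one has $\mathfrak a\subseteq\primid$ if and only if $\sigma(a)-a\in\primid$ for all $a$, which says precisely that $\overline\sigma$ is the identity on $A/\primid$; this inclusion also forces $\sigma(\primid)=\primid$, so $\mathfrak a\subseteq\primid$ is equivalent to $\sigma\in I_\primid$. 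Since $G$ has prime order $p$, this means $I_\primid=G$, and by hypothesis the only such prime is $\maxid_A$. As $\sigma$ fixes the residue field $k$, we have $\mathfrak a\subseteq\maxid_A$, so $V(\mathfrak a)=\{\maxid_A\}$, giving $\sqrt{\mathfrak a}=\maxid_A$.

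Now suppose, for contradiction, that some $v\in\maxid_A\smallsetminus\maxid_A^2$ is $G$-invariant. Since its image in the cotangent space $\maxid_A/\maxid_A^2$ is nonzero, $v$ extends to a regular system of parameters $v=u_1,u_2,\dots,u_n$, and $A=k[[u_1,\dots,u_n]]$. The key computation is that $\mathfrak a$ is then generated by only $n-1$ elements. Because $\sigma(u_1)=\sigma(v)=v=u_1$, for every monomial one has
$$
\sigma(u_1^{a_1}\cdots u_n^{a_n})-u_1^{a_1}\cdots u_n^{a_n}
= u_1^{a_1}\bigl(\sigma(u_2)^{a_2}\cdots\sigma(u_n)^{a_n}-u_2^{a_2}\cdots u_n^{a_n}\bigr),
$$
and the parenthesized difference lies in $J:=(\sigma(u_2)-u_2,\dots,\sigma(u_n)-u_n)$ by a telescoping argument. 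Passing from monomials to arbitrary power series by continuity of $\sigma$, using that the ideal $J$ is closed in the $\maxid_A$-adic topology, one obtains $\sigma(f)-f\in J$ for all $f\in A$, whence $\mathfrak a=J$ is generated by the $n-1$ elements $\sigma(u_2)-u_2,\dots,\sigma(u_n)-u_n$.

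The contradiction is then immediate from Krull's height theorem: an ideal generated by $n-1$ elements in the noetherian ring $A$ has height at most $n-1$, whereas we showed $\height(\mathfrak a)=n$. Since $n\geq 2$ this is impossible, so no $G$-invariant $v\in\maxid_A\smallsetminus\maxid_A^2$ exists. The final assertion follows from this: a $p$-group acting $k$-linearly on a nonzero $k$-vector space in characteristic $p$ always has a nonzero fixed vector, so the representation of $G$ on $\maxid_A/\maxid_A^2$ has a nonzero fixed covector; and for a \emph{linear} action the invariant covectors are exactly the $G$-invariant linear forms $v=\sum_i c_iu_i\in\maxid_A\smallsetminus\maxid_A^2$. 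The nonexistence of such $v$ therefore rules out linearity.

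I expect the main obstacle to be the first step, namely the careful bookkeeping that identifies $V(\mathfrak a)$ with the locus where $I_\primid=G$: one must verify both that $\mathfrak a\subseteq\primid$ implies $\sigma(\primid)=\primid$ (so that $\sigma$ genuinely lies in the inertia group, and not merely that it acts trivially on $A/\primid$ in some weaker sense), and that the primality of $p$ upgrades $\sigma\in I_\primid$ to $I_\primid=G$. Once this dictionary is in place, both the reduction $\mathfrak a=J$ and the dimension count via Krull's theorem are routine.
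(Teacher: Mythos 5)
Your proposal is correct and follows essentially the same route as the paper: extend the invariant $v$ to a regular system of parameters, observe that the ideal of the fixed scheme is then generated by $n-1$ elements while the ramification hypothesis forces it to be $\maxid_A$-primary, and conclude by a dimension/height count; the non-linearity claim is likewise deduced from the existence of a fixed covector for a $p$-group in characteristic $p$. Your treatment is somewhat more explicit than the paper's (in particular the telescoping argument identifying the ideal generated by all $\sigma(a)-a$ with the one generated by the $\sigma(u_i)-u_i$, which the paper uses implicitly), but the underlying argument is identical.
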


\proof
Suppose that there exists $v\in\maxid_A\smallsetminus \maxid_A^2$ which is $G$-invariant.
Extend $v$ to a regular system of parameters $ u_1,\ldots,u_n$ of $ A$, with $u_1=v$.
Let $\sigma$ denote a generator of $G$, and
consider the ideal $\ideala\subset A$ generated by the elements $\sigma(u_i) - u_i$, $i=1,\dots,n$.
\if false
This defines the largest $G$-invariant closed subscheme of $\Spec(A)$ on which the $G$-action is trivial ($\Spec A/\ideala$ is called the 
the \emph{fixed scheme} of the action). From this we see that $\ideala$ does not depend
on the choice of the regular system of parameters. 
\fi 
Since any prime ideal $\mathfrak p$ of $A$ which contains $\ideala$ 
is such that $I_{\mathfrak p} \neq (0)$, we find that $V(\ideala) = \{ {\mathfrak m}_A\}$. In particular, $\dim(A/\ideala)=0$.
% THIS IS PROVED IN THE PREVIOUS PROPOSITION Since $\Spec(A)\ra\Spec(A^G)$ is a $G$-torsor outside the closed point, the ideal $\ideala\subset A$ just defines the closed point.
On the other hand, by construction, the ideal $\ideala$ is generated by at most $n-1$ elements because $\sigma(u_1)-u_1=0$.
Thus, since $\dim(A)=n$, we find that $\dim(A/\ideala)>0$, a contradiction.
%By Krull's Principal Ideal Theorem, the elements $\sigma(u_i)-u_i$ must be nonzero. However, we have $\sigma(u_1)=u_1$, contradiction.

\if false 
Recall that a  $G$-action   on $A=k[[u_1,\ldots,u_n]]$ is  linear if one generator $\sigma\in G$ acts via
substitution of variables $u_j\mapsto \sum\lambda_{ij}u_i$ for some
invertible matrix $(\lambda_{ij})\in\GL_n(k)$.
\fi

It follows that an action of $G$ on $A$ which is ramified precisely at the origin is \emph{never}
linear, because a matrix of order $p$ 
always has an eigenvector for the eigenvalue $\lambda=1$ in characteristic $p$. Note
that this   statement about non-linearity was noted already in \cite{Peskin 1983}, Proposition 2.1. 
\qed

\medskip
Our next theorem is used in \ref{artin case}.

\if false
\begin{theorem}  \label{galois extension}
Let $A$ be endowed with an action of $G$ which is ramified precisely at the origin. 
Choose a regular system of parameters $u_1,\ldots,u_n \in A$ which is admissible with respect to the action of $G$,
and consider the corresponding norm subring $R=k[[x_1,\ldots,x_n]]$. Then
%\begin{enumerate}[\rm (a)]
%\item 

\noindent {\rm (a)} The extension $\Frac(A^G)/ \Frac(R)$ is not purely inseparable. 
%\end{enumerate}

\smallskip
Assume that $k$ is such that there exist no separable extension $k'/k$ of degree at most $p$. Then
%\begin{enumerate}[\rm (b)]
%\item 

\smallskip
\noindent {\rm (b)} If $\Frac(A^G)/\Frac(R)$ is Galois, %and that $k$ is such that there exists no separable extension $k'/k$ of degree at most $p$.
%non-trivial Galois extension $k'/k$ whose Galois group is isomorphic to a subgroup of the symmetric group $S_p$.
then the  extension $\Frac(A)/\Frac(R)$ is Galois.

\noindent {\rm (c)} If $\varphi: A^G \to A^G$ is an automorphism of $k$-algebras, then there exists  an automorphism $\varphi': A \to  A$ of $k$-algebras such that  $\varphi'_{\mid A^G}=\varphi$. 
%\end{enumerate}
\end{theorem}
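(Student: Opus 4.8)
The plan is to prove (a) directly, and then to derive (b) from (c), treating (c) as the technical core. Throughout set $U=\Spec(A)\smallsetminus\{\maxid_A\}$, $V=\Spec(A^G)\smallsetminus\{\maxid_{A^G}\}$ and $W=\Spec(R)\smallsetminus\{\maxid_R\}$, with finite morphisms $U\xrightarrow{\pi}V\xrightarrow{q}W$. By \ref{properties invariant ring}(iii), $\pi$ is a connected $G$-torsor, in particular finite étale of degree $p$, and it is nontrivial because $A$ is a domain; moreover $A=\Gamma(U,\O_U)$ and $A^G=\Gamma(V,\O_V)$, since $A$ is regular and $A^G$ has depth $2$ (\ref{properties invariant ring}(i)). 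For (a), suppose $\Frac(A^G)/\Frac(R)$ were purely inseparable. As $W$ is normal and $V$ is integral, the finite surjective map $q\colon V\to W$ is then radicial, hence a universal homeomorphism. Topological invariance of the finite étale site shows that $\pi$ is the pullback of a connected finite étale cover $\tilde W\to W$ of degree $p$; since $R$ is regular and $\{\maxid_R\}$ has codimension $n\ge 2$, the Zariski--Nagata purity theorem extends $\tilde W$ to a connected finite étale cover of $\Spec(R)$, which, as $R$ is complete local with residue field $k$, must be $\Spec(R\otimes_k k')$ for a separable extension $k'/k$ of degree $p$. But then $k'\hookrightarrow\Frac(A)$, forcing $k'$ into the residue field of $A$, contrary to \ref{properties invariant ring}(ii). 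This proves (a).

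For (c), a $k$-algebra automorphism $\varphi$ of the local ring $A^G$ fixes $\maxid_{A^G}$ and restricts to an automorphism $\varphi_V$ of $V$. It suffices to produce an isomorphism $\varphi_V^*U\cong U$ of $\ZZ/p\ZZ$-covers of $V$: composing it with the projection $\varphi_V^*U\to U$ gives an automorphism $\tilde\varphi$ of $U$ lying over $\varphi_V$, and passing to global sections (using $A=\Gamma(U,\O_U)$ and $A^G=\Gamma(V,\O_V)$) produces an automorphism $\varphi'$ of $A$ with $\varphi'|_{A^G}=\varphi$. To obtain such an isomorphism I will show that $V$ carries, up to isomorphism, a unique nontrivial connected $\ZZ/p\ZZ$-cover. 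Here the hypotheses enter: by purity applied to the regular ring $A$ one has $\pi_1(U)\cong\pi_1(\Spec A)\cong\Gal(k^{\sep}/k)$, and the assumption on $k$ gives $\Hom(\Gal(k^{\sep}/k),\ZZ/p\ZZ)=0$. Plugging the extension $1\to\pi_1(U)\to\pi_1(V)\to\ZZ/p\ZZ\to1$ attached to the Galois cover $\pi$ into the inflation--restriction sequence yields $H^1(V,\ZZ/p\ZZ)=\Hom(\pi_1(V),\ZZ/p\ZZ)\cong\ZZ/p\ZZ$. Thus all nontrivial connected $\ZZ/p\ZZ$-covers of $V$ are isomorphic to $U$; applying this to the nontrivial connected cover $\varphi_V^*U$ gives $\varphi_V^*U\cong U$, as needed.

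Finally, (b) follows from (c). Write $F=\Frac(R)\subset E=\Frac(A^G)\subset L=\Frac(A)$, so $[L:F]=p^n$ by \ref{norm elements}, $[L:E]=p$ with $L/E$ Galois of group $G$, and $[E:F]=p^{n-1}$. Since $A^G$ is the integral closure of $R$ in $E$, each $\tau\in\Gal(E/F)$ fixes $R$ and preserves $A^G$, hence restricts to a $k$-algebra automorphism of $A^G$; by (c) it lifts to an automorphism of $A$, and thus to an element of $\Aut_F(L)$ restricting to $\tau$ on $E$. As $E/F$ is normal, restriction gives a homomorphism $\Aut_F(L)\to\Gal(E/F)$ with kernel $\Gal(L/E)=G$, and it is surjective by the preceding sentence; therefore $|\Aut_F(L)|=|G|\cdot[E:F]=p^n=[L:F]$, so $L/F$ is Galois. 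The one delicate point in the whole argument is the uniqueness step in (c): it rests entirely on the isomorphism $H^1(V,\ZZ/p\ZZ)\cong\ZZ/p\ZZ$, which is precisely where the regularity of $A$ (via purity) and the arithmetic hypothesis on $k$ (via the vanishing of $\Hom(\Gal(k^{\sep}/k),\ZZ/p\ZZ)$) are both indispensable.
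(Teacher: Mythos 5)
Your proof is correct and follows essentially the same route as the paper: topological invariance of the \'etale site plus Zariski--Nagata purity for (a), and for (b)--(c) the identification $\pi_1(U)\cong\pi_1(\Spec A)\cong\Gal(k^{\sep}/k)$ via purity and henselianity, combined with the vanishing of $\Hom(\Gal(k^{\sep}/k),\ZZ/p\ZZ)$, to show $U$ and $\tau^*U$ are isomorphic covers of $V$, followed by the same automorphism count for (b). Your inflation--restriction computation of $H^1(V,\ZZ/p\ZZ)\cong\ZZ/p\ZZ$ and your residue-field contradiction at the end of (a) are only cosmetic repackagings of the paper's steps (the paper instead shows the two kernels are both sections of $\pi_1(V)\to\pi_1(k)$ differing by a necessarily trivial homomorphism into $\ZZ/p\ZZ$, and concludes (a) from the non-\'etaleness of $\Spec A\to\Spec A^G$).
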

\fi

\begin{theorem}  \label{galois extension}
Assume that the $G$-action on $A$ is ramified precisely at the origin. 
\begin{enumerate}
[\rm (i)]
\item
Choose a regular system of parameters $u_1,\ldots,u_n \in A$ which is admissible with respect to the $G$-action,
and consider the corresponding norm subring $R=k[[x_1,\ldots,x_n]]$. Then
the extension $\Frac(A^G)/ \Frac(R)$ is not purely inseparable. 
\item
Assume that the field $k$ has no Galois extension of degree $p$.
Then any $k$-automorphism $\varphi: A^G \to A^G$ extends to an automorphism $\varphi': A \to  A$.
Furthermore, if $A^G$ contains a subring $R$ such that the extension $R\subset A^G$ is finite and  $\Frac(A^G)/\Frac(R)$ is Galois, then
the  extension $\Frac(A)/\Frac(R)$ is Galois as well.
\end{enumerate}
\end{theorem}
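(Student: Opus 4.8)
The plan is to translate everything into the étale fundamental groups of punctured spectra and to exploit the torsor structure from Proposition \ref{properties invariant ring}(iii). Write $F:=\Frac(R)$, $E_0:=\Frac(A^G)$, $E:=\Frac(A)$, and set
$$
V:=\Spec(A)\smallsetminus\{\maxid_A\},\quad U:=\Spec(A^G)\smallsetminus\{\maxid_{A^G}\},\quad W:=\Spec(R)\smallsetminus\{\maxid_R\}.
$$
Since $A$ and $R$ are regular and $A^G$ has $\depth(A^G)=2$ by \ref{properties invariant ring}(i), all three rings have depth $\ge 2$, so taking global sections over these punctured spectra returns the rings themselves; this lets me pass freely between automorphisms of schemes and of rings. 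The first input is that $V\to U$ is a connected étale $\ZZ/p\ZZ$-torsor by \ref{properties invariant ring}(iii), both spaces being connected as punctured spectra of normal local domains of dimension $n\ge 2$. The second, crucial, input is a fundamental-group computation via Zariski--Nagata purity: because $A$ and $R$ are regular complete local rings with residue field $k$ and $n\ge 2$, every finite étale cover of $V$ (resp.\ $W$) extends across the codimension-$\ge 2$ puncture to $\Spec(A)$ (resp.\ $\Spec(R)$), and such covers correspond to finite separable extensions of $k$; hence $\pi_1(V)\cong\pi_1(W)\cong\Gal(k^{\sep}/k)$. The torsor then yields a short exact sequence $1\to\pi_1(V)\to\pi_1(U)\to\ZZ/p\ZZ\to 1$.

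For (i) I argue by contradiction. If $\Frac(A^G)/\Frac(R)$ were purely inseparable, then $\Spec(A^G)\to\Spec(R)$ would be finite, surjective and radicial, hence a universal homeomorphism, and so would be its restriction $U\to W$. By topological invariance of the étale site this induces an isomorphism $\pi_1(U)\cong\pi_1(W)\cong\Gal(k^{\sep}/k)$. Transporting the nontrivial $\ZZ/p\ZZ$-torsor $V\to U$ through this isomorphism produces a nontrivial continuous surjection $\Gal(k^{\sep}/k)\to\ZZ/p\ZZ$, that is, a degree-$p$ Galois extension $k'/k$, whose associated connected cover of $W$ extends by purity to $\Spec(k'[[x_1,\dots,x_n]])$. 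Pulling back along $U\to W$ recovers $V\to U$, so taking global sections over the punctured spectra exhibits $k'[[x_1,\dots,x_n]]$, and in particular $k'$, as a $k$-subalgebra of $A$. This contradicts the fact that $A$ has residue field $k$, since a field extension $k'/k$ of degree $p>1$ cannot embed into a local $k$-algebra with residue field $k$.

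For the extension statement in (ii), the point is a rigidity of the cover $V\to U$. Applying $\Hom(-,\ZZ/p\ZZ)$ to the short exact sequence above and using the hypothesis that $k$ has no degree-$p$ Galois extension, so that $\Hom(\pi_1(V),\ZZ/p\ZZ)=\Hom(\Gal(k^{\sep}/k),\ZZ/p\ZZ)=0$, I find that inflation gives an isomorphism $\Hom(\ZZ/p\ZZ,\ZZ/p\ZZ)\cong\Hom(\pi_1(U),\ZZ/p\ZZ)$: every continuous surjection $\pi_1(U)\to\ZZ/p\ZZ$ has kernel $\pi_1(V)$. Hence $V\to U$ is, up to isomorphism of covers, the \emph{unique} connected $\ZZ/p\ZZ$-cover of $U$. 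Now a $k$-automorphism $\varphi\colon A^G\to A^G$ induces an automorphism of $U$ fixing the closed point, so the pullback $\varphi^*V\to U$ is again a connected $\ZZ/p\ZZ$-cover and is therefore isomorphic to $V\to U$. Such an isomorphism lifts $\varphi$ to an automorphism $\psi\colon V\to V$ with $\pi_V\circ\psi=\varphi\circ\pi_V$, and passing to global sections yields the desired $k$-automorphism $\varphi'\colon A\to A$ with $\varphi'|_{A^G}=\varphi$.

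Finally, the Galois descent in (ii) follows formally from the extension result. Since $A^G$ is the integral closure of $R$ in $E_0$, every element of $\Gamma_0:=\Gal(E_0/F)$ preserves $A^G$ and fixes $R\supseteq k$, hence restricts to a $k$-automorphism of $A^G$ fixing $R$; by the previous paragraph it extends to an $R$-automorphism of $A$, i.e.\ to an $F$-automorphism of $E$ preserving $E_0$. Let $H\subseteq\Aut(E/F)$ be the subgroup generated by these lifts together with $G=\Gal(E/E_0)$. Restriction to $E_0$ gives a homomorphism $H\to\Gamma_0$ that is surjective by construction and whose kernel is $H\cap\Aut(E/E_0)=G$; therefore $|H|=p\,|\Gamma_0|=[E:F]$. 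As $E/F$ is separable, being a tower of the separable extensions $E/E_0$ and $E_0/F$, the bound $|\Aut(E/F)|\le[E:F]$ is attained, so $E/F$ is Galois. The main obstacle throughout is the fundamental-group input: identifying $\pi_1(V)$ and $\pi_1(W)$ with $\Gal(k^{\sep}/k)$ via purity, and, for (i), the passage through the universal homeomorphism $U\to W$; once these are in place, both parts become essentially formal.
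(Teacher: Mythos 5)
Your proof is correct and follows essentially the same route as the paper: topological invariance of the \'etale site under the universal homeomorphism plus Zariski--Nagata purity for (i), the identification of $\pi_1$ of the punctured spectra with $\Gal(k^{\sep}/k)$ and the hypothesis on $k$ to rigidify the $\ZZ/p\ZZ$-cover for the extension statement, and the order-counting argument for the Galois descent. The only differences are cosmetic: you extract the contradiction in (i) from the residue field rather than from the non-\'etaleness of $\Spec(A)\to\Spec(A^G)$, and in (ii) you phrase the rigidity via $\Hom(-,\ZZ/p\ZZ)$ and inflation rather than via sections of $\pi_1(V)\to\pi_1(\Spec k)$.
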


\proof
(i) Consider the local schemes
$
X:=\Spec(A)$,  $Y:=\Spec(A^G)$, and $ S:=\Spec(R)$,
and write $U\subset X$, $V\subset Y$, and $W\subset S$ for the complements of their closed points.
Suppose  that $\Frac(R)\subset \Frac(A^G)$ is   purely 
inseparable, so that the finite morphism $Y\ra S$ is 
a universal homeomorphism. 
According to \cite{SGA 1}, Expos\'e IX, Theorem 4.10, the \'etale covering $U\ra V$ would be the base-change
of an \'etale covering $W'\ra W$. The latter is the restriction of some \'etale covering $S'\ra S$
by the Zariski--Nagata Purity Theorem. Consequently, $U\ra V$ is the restriction of some \'etale
covering $Y'\ra Y$. Since the schemes $X$ and $Y'$ are normal, 
both restriction maps $\Gamma(Y',\O_{Y'})\ra\Gamma(U,\O_U)\leftarrow\Gamma(X,\O_X)$
are bijective, whence $Y'=X$, contradicting the fact that $X\ra Y$ is not \'etale.

(ii)
In order to extend $\varphi:A^G\ra A^G$ to $A$, let
$\tau:V\ra V$ be the morphism induced by $\varphi$.
Define the scheme $\tau^*(U)$ by requiring that the following diagram be Cartesian: 
$$
\begin{CD}
\tau^*(U) @>>> U\\
@VVV @VVV\\
V @>>\tau> V.
\end{CD}
$$
Our task is to show 
the existence of a $V$-isomorphism  $U\to \tau^*(U)$. Composing this isomorphism 
with the given morphism $\tau^*(U) \to U$ produces an  extension of the morphism $\tau$,
giving the desired automorphism $\varphi'$ of $A=\Gamma(U,\O_U)$.

Choose a separable closure   $\Frac(A^G)\subset\Omega$. In turn, we get  a geometric point $b:\Spec(\Omega)\ra V$. 
Recall that the algebraic fundamental group $\pi_1(V,b)$ is defined as the group of automorphisms for the fiber functor
$(\Cov/V)\ra(\Set)$ that sends a finite \'etale $V'\ra V$
to the underlying set of the base-change $V'\otimes_V\Omega$. Here $(\Cov/V)$ denotes the category of all finite \'etale $V$-schemes,
which thus becomes equivalent to the category of finite sets endowed with
a continuous action of $\pi_1(V,b)$ (\cite{SGA 1}, Expos\'e V, Section 7).
By abuse of notation, we also write $\pi_1(V,\Omega)$ for the fundamental group with respect to the base-point $b:\Spec(\Omega)\ra V$.

Up to isomorphism, the  two finite \'etale $G$-coverings $U_1=U$ and $U_2=\tau^*(U)$ correspond   
to the finite set $G$, endowed with  actions via 
group homomorphisms $h_i:\pi_1(V,\Omega)\ra G$ for $i=1,2$. Let $H_i\subset\pi_1(V,\Omega)$ be their kernels.
The task is to show that the two actions on $G$ are isomorphic, in other words, that $H_1=H_2$.
To proceed, choose a lifting of the geometric point $b:\Spec(\Omega)\ra V$ along $U_i\ra V$.
This gives an identification $H_i=\pi_1(U_i,\Omega)$.
Clearly, $\Gamma(U_i,\O_{U_i})$ is a complete local noetherian ring that is regular. Let $X_i$ be its spectrum.
The commutative diagram
$$
\begin{CD}
U_i	@>>>	V\\
@VVV		@VVV\\
X_i	@>>>	\Spec(k)
\end{CD}
$$
induces a commutative diagram
of fundamental groups 
$$
\begin{CD}
\pi_1(U_i,\Omega)	@>>>	\pi_1(V,\Omega)\\
@VVV			@VVV\\
\pi_1(X_i,\Omega)	@>>>	\pi_1(k,\Omega).
\end{CD}
$$
The map on the left is bijective, because $X_i$ is regular and $X_i\smallsetminus U_i$ has codimension two (\cite{SGA 1}, Expos\'e X, Corollary 3.3).
The lower map is bijective as well, because $X_i$ is local henselian (\cite{SGA 1}, Expos\'e X, Th\'eor\`eme 2.1).
It follows that the kernel $N\subset \pi_1(V,\Omega)$ for the map on the right is isomorphic to $G$,
and that both $H_i=\pi_1(U_i,\Omega)$ are sections for   $\pi_1(V,\Omega)\ra\pi_1(k,\Omega)$.
Any two   sections differ by a homomorphism $\pi_1(k,\Omega)\ra N$. The latter must be zero, by assumption on
the field $k$. In turn, $H_1=H_2$.

Now suppose that $R\subset A^G$ is finite and $\Frac(R)\subset\Frac(A^G)$ is Galois, say of degree $d\geq 1$.
To check that $\Frac(R)\subset\Frac(A)$ is Galois, it suffices to extend
each $\Frac(R)$-automorphism $\varphi:\Frac(A^G)\ra\Frac(A^G)$ to an automorphism $\varphi':\Frac(A)\ra\Frac(A)$.
Indeed, the group $H$ of automorphisms of $\Frac(A)$ over $\Frac(R)$ then contains
at least $dp$ elements, obtained as products of extension $\varphi'$ and $\sigma\in G$.
Since $[\Frac(A):\Frac(R)]=dp$, we find that $\Frac(R)\subset\Frac(A)$ is Galois.
The desired extensions $\varphi'$ do exist, because then the morphism $\varphi$ extends to the 
integral closure $A^G$ of $R$ with respect to $R\subset A^G$, and thus to $A$.
\qed

\begin{remark} (i) 
%We show in \ref{counterexample galois} 
We do not know whether the conclusion of Theorem \ref{galois extension} (ii) holds without 
the additional hypothesis on the field of representatives $k$.

(ii) We do not know of an example of an action of $G={\mathbb Z}/p{\mathbb Z}$ on $A=k[[u_1,\ldots,u_n]]$  ramified precisely at the origin
where $\Frac(A^G)/\Frac(R)$ is not separable. 
\if false
We  present in \cite{Lorenzini Schroeer 2} %\ref{not galois} 
an example inspired by the work of Peskin \cite{Peskin 1983} of an action ramified precisely at the origin 
where $\Frac(A^G)/\Frac(R)$ is separable, but not Galois.
\fi 
When the action is not ramified precisely at the origin, it is easy to find examples where $\Frac(A^G)/\Frac(R)$ is not separable. Indeed, 
if $\sigma(u_i)=u_i$, then $u_i \in A^G$ and $x_i:={\rm Norm}(u_i)=u_i^p \in R$.

\if false
(iv) The condition that $\Frac(A)/\Frac(R)$ is separable depends on the choice of the admissible regular system of parameters
$u_1,\ldots,u_n\in A$, at least when the action is not ramified only at the origin. 
%It would be interesting to know whether the condition that $\Frac(A)/\Frac(R)$ is Galois  depends on the choice of the admissible regular system of parameters.
Here I do not see anymore why I wrote that. It seems to me that the following is not clear:
Suppose that the extension $\Frac(A^G)/\Frac(R)$ is purely inseparable. So we know that there is ramification somewhere, 
so the ideal of the fixed scheme is not primary to the maximal ideal. Does it follow that there exists $u$ in a regular system of parameters of $A$ 
such that $\sigma(u)=u$?
\fi
\end{remark}

\if false
\begin{example} Let $k$ be of characteristic $2$.
We exhibit in this example  a faithful action of $G:={\mathbb Z}/4{\mathbb Z}$ on $A:=k[[u,v]]$
where the extension $\Frac(A^G)/\Frac(R)$ is not separable. 
%This action is not ramified precisely at the origin.
In the ring $k[[v,y]][U]$, let 
$$
g:=U(U+v)(U+y)(U+v+y)-y,
$$ 
and consider the ring $A:=k[[v,y]][U]/(g)$. Let $u$ denote the class of $U$ in $A$. Then 
$\sigma: A \to A$, with $\sigma(u)=u+v$ and $\sigma(v)=v+y$, is a  $k$-automorphism of $A$ of order $4$. We let $G=\langle \sigma \rangle$.
The elements $u,v$ form a regular system of parameters, so that $A$ is isomorphic to $k[[u,v]]$. 
\if false
The ideal of the fixed point scheme is $(\sigma(u)-u, \sigma(v)-v)=(v,y)$, and since this ideal is primary to the maximal ideal $(u,v,y)$, the action is ramified precisely at the origin. We note in passing that the action of $\langle \sigma^2\rangle$
is not ramified precisely at the origin.
\fi

By construction, we have $N_{A/A^G}(u)=y$. We set $x:=N_{A/A^G}(v)=v^2(v+y)^2$, and $R:=k[[x,y]]$.
A simple computation shows that $u,v$ is an admissible system of parameters of $A$.
%the radical of $(x,y)$ in $A$ is equal to $(u,v)$. 
It follows from Proposition \ref{norm elements} that 
%$u,v$ is an admissible system of parameters of $A$, and that 
$[\Frac(A):\Frac(R)]=16$.
Clearly, $A^G$ contains $v(v+y)$ and $y$. Thus, we find that $\Frac(A^G)$ contains the root $v(v+y)$  of the inseparable irreducible polynomial $z^2-x \in \Frac(R)[z]$.
The element $t:= u^2y + uy^2 + (v^3 +  vy^2)$ is also invariant, and thus $\Frac(R)(v(v+y),t) = \Frac( A^G)$.
For completeness, we note that $t$ satisfies the equation $t^2+yv(v+y)t+(v(v+y))^3+y^3=0$, and that the inertia group of the prime ideal $(u)$ contains $\sigma^2$.
\end{example}
\fi 

\if false
We consider in section \ref{moderately ramified} a class of $G$-actions where the extension 
$\Frac(A)$ is Galois over $k((x_1,\ldots,x_n))$.
This hypothesis alone however does not seem  to be strong enough to allow for a nice 
classification of the   rings of invariants $A^G$. To be able to state 
a further hypothesis leading to a nice classification, 
we introduce in the next section the notion of effective model of a  group action.
\fi

% ===========================================================================================
%\newpage
\section{The effective model of a group action}
\label{extensions torsors}

Let $R:=k[[x_1,\dots, x_n]]$, and let $L/\Frac(R)$ be a Galois extension of degree $p$. Let $B$ denote the integral closure of $R$ in $L$.
It is natural to wonder what conditions can be imposed on the Galois extension  $L/\Frac(R)$ to guarantee that the ring $B$ is simple,
say $B\simeq R[u]/(f(u))$ for some easily described $f(u) \in R[u]$. To give an answer to this question that will be useful in \ref{MR4} in our study
of normal forms of ${\mathbb Z}/p{\mathbb Z}$-actions on $A=k[[u_1,\dots, u_n]]$,  we introduce below 
the notion of effective model of a group action. 

Let $G$ denote the Galois group of $L/\Frac(R)$. Clearly, $G$ also acts on $B$. Letting $S=\Spec R$ and $Y:=\Spec B$,  we obtain an action of the constant group scheme $G_S/S$ on $Y/S$. Assume now that $Y/S$ is flat. Then, associated with this action on $Y/S$ is a uniquely defined group scheme ${\mathcal G}/S$
with an $S$-action ${\mathcal G} \times_S Y \to Y$. The group scheme ${\mathcal G}/S$ is called {\it the effective model of the action of $G$ on $Y$}.
It was considered first by Raynaud in \cite{Ray1999}, Proposition 1.2.1, in cases where $R$ is a discrete valuation ring. This theory was extended, still in the case where $R$ is a discrete valuation ring, to cases where $G_S$ is not constant, and even not finite over $S$, in \cite{Rom2006} and \cite{Romagny 2012}. 
In the case where $G_S$ is constant, some cases where the base scheme $S$ is not of dimension $1$ are considered in \cite{Abr}, 2.2. 
The main result in this section is Theorem \ref{effective model} below, which proves the existence of the effective model when $G_S/S$ is constant but the base $S$ is not necessarily of dimension 1. 

Fix a noetherian base scheme $S$ and some prime number $p>0$.
Let $Y \to S$ be a finite flat morphism of degree $p$. Denote by ${\rm Aut}(Y/S)$ the group of $S$-automorphisms of $Y$.
Consider the group-valued functor $\Aut_{Y/S}$, with $\Aut_{Y/S}(T):=\Aut(Y_T/T)$ for any $S$-scheme $T$.
Using the existence of Hilbert schemes 
(\cite{FGA IV}, 221-19), such functor is shown to be representable, by a scheme denoted ${\rm Aut}_{Y/S}/ S$. In our situation, 
we will need to use the fact that the structure morphism ${\rm Aut}_{Y/S}\ra S$ is affine. 
For convenience, we recall a proof of this fact below. This proof can be easily modified to 
also give a proof of the existence of ${\rm Aut}_{Y/S}/ S$.

\begin{lemma} 
\label{lemma.affine}
The structure morphism ${\rm Aut}_{Y/S}\ra S$ is affine.
\end{lemma}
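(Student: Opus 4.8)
The plan is to reduce to the case of an affine base over which the finite flat algebra becomes free, and then realize $\Aut_{Y/S}$ as a principal open subscheme of a closed subscheme of an affine space over $S$. Since being affine is local on the target and the formation of $\Aut_{Y/S}$ commutes with base change along any $S'\to S$, I would first cover $S$ by affine opens $\Spec(R)$ over which the $\O_S$-algebra $\shA:=f_*\O_Y$ is free, where $f\colon Y\to S$ is the structure morphism; this is possible because $Y\to S$ is finite flat over the noetherian base $S$, so $\shA$ is finite locally free of rank $p$. It then suffices to prove that $\Aut_{Y_R/R}\to\Spec(R)$ is affine, for a free $R$-algebra $\shA$ with basis $e_1,\ldots,e_p$.

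Next, I would record the structure constants $c_{ij}^\ell\in R$ determined by $e_ie_j=\sum_\ell c_{ij}^\ell e_\ell$ together with the coefficients $b_i\in R$ of the unit $1=\sum_i b_i e_i$. Writing an $R$-module endomorphism of $\shA$ as a matrix $(a_{ij})$ identifies $\End_{\O_S}(\shA)$ with the affine space $\AA^{p^2}_S=\Spec R[a_{ij}]$. The condition that the universal such endomorphism be a homomorphism of $R$-algebras — namely that it fix the unit and respect multiplication — translates into a finite list of polynomial identities in the $a_{ij}$ whose coefficients are built from the $c_{ij}^\ell$ and $b_i$. These identities cut out a closed subscheme $Z\subset\AA^{p^2}_S$, which is therefore affine over $S$, and by construction $Z$ represents the functor of $S$-algebra endomorphisms of $Y$.

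Finally, I would impose invertibility. The determinant $d:=\det(a_{ij})$ is a global function on the affine scheme $Z=\Spec(B)$, and the locus where it is a unit is the principal open $Z_d=\Spec(B_d)$, again affine over $S$. The key point identifying $Z_d$ with the automorphism functor is that a homomorphism of finite free algebras of equal rank is an isomorphism if and only if it is bijective as a module map, i.e.\ if and only if its determinant is a unit, and that the module-theoretic inverse of an algebra homomorphism is automatically an algebra homomorphism. Hence, for $T$ over $\Spec(R)$, the set $Z_d(T)$ is exactly the set of $T$-algebra automorphisms of $Y_T$, so that $\Aut_{Y_R/R}\simeq Z_d$ is affine over $\Spec(R)$; as this holds over each member of the affine cover, the structure morphism $\Aut_{Y/S}\to S$ is affine.

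The main obstacle — indeed essentially the only nontrivial point — is this last identification: one must be careful that the automorphism locus is precisely the principal open where the determinant is invertible, so that it is affine as the localization $\Spec(B_d)$ rather than merely an open subscheme of an affine scheme (which need not be affine), and that invertibility as a module map already forces an algebra automorphism. Everything else is the routine translation of the unit and multiplicativity axioms into closed polynomial conditions. As a byproduct, this construction exhibits $\Aut_{Y/S}$ directly as an affine $S$-scheme, which also yields the representability asserted above.
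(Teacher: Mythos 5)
Your proof is correct and follows essentially the same route as the paper: reduce to an affine base where $f_*\O_Y$ is free, cut out the algebra-homomorphism conditions by polynomial equations in the matrix coordinates, and invert the determinant (the paper presents this as a closed subscheme of $\underline{\GL}_{p,R}$ rather than a principal open in a closed subscheme of $\AA^{p^2}_S$, which is the same scheme). Your explicit justification that invertibility as a module map already yields an algebra automorphism is a point the paper leaves implicit, but there is no substantive difference in the argument.
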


\proof
The question is local in $S$, so it suffices to treat the case where the schemes  $S=\Spec(R)$ and $Y=\Spec(B)$ are  affine.
Furthermore, we may assume that  $B$ admits a basis $b_1,\ldots,b_p\in B$ as $R$-module, and  that $b_1=1_B$.
% where the first member equals the unit element $1\in B$.
Let $\underline{\GL}(B)=\underline{\GL}_{p,R}$ be the group scheme of $R$-linear automorphisms.
Its underlying scheme is the spectrum of the localization $R[T_{ij}]_{\det}$, where $T_{11},\ldots, T_{pp}$ are $p^2$ indeterminates
and $\det=\det(T_{ij})$ is the determinant of the matrix $(T_{ij})_{1 \leq i,j\leq p}$.
We have a canonical monomorphism ${\rm Aut}_{Y/S}\subset\underline{\GL}_{p,R}$.
To show that the scheme ${\rm Aut}_{Y/S}\ra S$ is affine, 
it suffices to show that the monomorphism is a closed embedding. 

Let $\tau=(\tau_{ij})$ be an $R$-linear map $B\ra B$.
Write  $\mu:B\otimes B\ra B$ for  the algebra multiplication, 
with 
%corresponding multiplication law 
$\mu(b_i \otimes b_j)=\sum \mu_{kij} b_k$.
Then the linear map $\tau $ is an algebra homomorphism if and only 
if $\mu \circ (\tau\otimes\tau) =\tau\circ\mu$ and  $\tau(b_1)=b_1$.
The latter means $\tau_{21}=\ldots=\tau_{p1}=0$. 
%We leave it to the reader to use the explicit equality
As to the former condition, we have
$
\left(\mu\circ(\tau\otimes\tau)\right)(b_i\otimes b_j)= \sum_k (\sum_{r,s}\tau_{ri}\tau_{sj}\mu_{krs}) b_k
$
and 
$
\left(\tau\circ\mu\right)(b_i\otimes b_j) = \sum_k (\sum_t\mu_{ijt}\tau_{kt})b_k$.
Comparing coefficients gives $\sum_{r,s}\tau_{ri}\tau_{sj}\mu_{krs}=\sum_t\mu_{ijt}\tau_{kt}$
for all $1\leq k\leq n$. Replacing the scalars $\tau_{ri}$ by the indeterminates $T_{ij}$,
we obtain equations that define a  closed subscheme inside $\underline{\GL}_{p,R}$.
Applying the above computations over arbitrary $R$-algebras $R'$, we infer that
this closed subscheme represents the functor ${\rm Aut}_{Y/S}$.
\qed

\medskip
Let now $G$ be an abstract group of order $p$ acting on $Y$ via $S$-automorphisms. In other words,
$G$ is endowed with a group homomorphism $G \to {\rm Aut}_S(Y/S)$.
The quotient scheme $Y/G $ exist, and {\it we assume that the structure map $Y/G \to S$ is an isomorphism}.
Write  $G_S/S$ for the constant group scheme associated to $G$. The action of $G$ on $Y/S$ yields a natural   homomorphism of group schemes \begin{equation} 
\label{action}
f:G_S\ra\Aut_{Y/S}.
\end{equation}
Since $\Aut_{X/S}\ra S$ is separated and $G_S\to S$ is proper,
the morphism $G_S\ra \Aut_{Y/S}$ is proper, and its schematic image $f(G_S)\subset \Aut_{Y/S}$
is finite over $S$ (use \cite{EGA}, Chapter II, Corollary 5.4.3).

Let $s \in S$ be any point, and consider the natural morphism $\Spec \O_{S,s} \to S$. 
We denote by $f(G_S)\otimes\O_{S,s}$ the base change of $f(G_S) \to S$ by   $\Spec \O_{S,s} \to S$.
Both the construction of $\Aut_{Y/S}$ and of the schematic closure
of $f(G_S)$ commute with the base change by $\Spec \O_{S,s} \to S$.
Let now $s \in S$ be a regular point of codimension $1$, so that $\O_{S,s}$ is a discrete valuation ring. We are in a position 
to apply a theorem of  Romagny (\cite{Romagny 2012}, Theorem 4.3.4)
to obtain that the
scheme  $f(G_S)\otimes\O_{S,s}$ is in fact a subgroup scheme of the group scheme $ \Aut_{Y/S}\otimes\O_{S,s}$. The group scheme $f(G_S)\otimes\O_{S,s}$ 
over $\Spec \O_{S,s}$
is called the \emph{effective model} for the $G$-action at the 
%Stefan: got rid of  codimension one 
point $s\in S$. Our next theorem slightly extends this result. Note that Theorem 4.3.4 in \cite{Romagny 2012} is stated for  group schemes $\mathcal G\to \Spec \O_{S,s}$ that need not be finite. The hypothesis in Theorem 4.3.4 that $\mathcal G/ \Spec \O_{S,s}$ is universally affinely dominant (see \cite{Romagny 2012}, 3.1.1) is automatically satisfied in our situation where we require $ \mathcal G\to \Spec \O_{S,s}$ to be finite and flat. The same is true for the hypothesis that $\mathcal G\to \Spec \O_{S,s}$ is pure (see \cite{Romagny 2012}, 2.1.1 and 2.1.3). 

Let $\zeta\in\ZZ_p$ denote a primitive $(p-1)$-th root of unity,
and define  $\Lambda_p$ to be the subring $\ZZ[\zeta,1/p(p-1)] \cap \ZZ_p$ of $\QQ_p$.
Note that all schemes $S \to \Spec \FF_p$ of characteristic $p$ are in fact schemes over $\Spec \Lambda_p$
in a unique way, after composing with the natural homomorphism $\Lambda_p\subset\ZZ_p\ra\FF_p$.

\begin{theorem}
\label{effective model} Let $p>0$ be prime. Let $S$ be a noetherian locally factorial scheme over $  \Lambda_p$. Keep the above notation. In particular,
let $Y \to S$ be a finite flat morphism of degree $p$. Let $G$ be an abstract group of order $p$ acting on $Y$ via $S$-automorphisms.
Then there is a finite flat $S$-group scheme $\shG$ of degree $p$ and
a $S$-homomorphism $h:\shG\ra\Aut_{Y/S}$ such that, given any point $s \in S$ of codimension at most $1$, 
the base change of $h$ over $\Spec(\O_{S,s})$, $\shG\otimes\O_{S,s}\ra \Aut_{Y/S} \otimes\O_{S,s}$, induces   an isomorphism of $\Spec(\O_{S,s})$-group schemes
$$
\shG\otimes\O_{S,s}\lra f(G_S)\otimes\O_{S,s}.
$$
The pair $(\shG,h)$ is unique up to unique isomorphism.
Moreover, $Y/S$ is a torsor for the $\shG$-action if  and only if 
the fiber $Y\otimes k(s) $ is a torsor for the action of $\shG\otimes\kappa(s)$ for each point $s\in S$
of codimension at most $ 1$.
\end{theorem}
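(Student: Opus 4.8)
The plan is to build $\shG$ first over a large open $V\subseteq S$ whose complement has codimension at least $2$, where the local effective models of Romagny spread out, and then to extend $\shG$ and $h$ across that complement using the Tate--Oort classification together with the local factoriality of $S$. First I would record, by Lemma \ref{lemma.affine}, that the schematic image $\mathcal{F}:=f(G_S)\subset\Aut_{Y/S}$ is finite over $S$, say $\mathcal{F}=\Spec\mathcal{B}$ for a finite $\O_S$-algebra $\mathcal{B}$. Let $V\subseteq S$ be the locus over which $\mathcal{F}\to S$ is flat of degree $p$; this is open since $\mathcal{F}\to S$ is finite. The action is faithful (otherwise the order-$p$ group $G$ would act trivially, contradicting $Y/G=S$ with $\deg(Y/S)=p$), so at each generic point $f$ is a monomorphism of constant group schemes over a field and $\mathcal{F}$ has degree $p$ there; at each codimension-$1$ point $s$ the fibre $\mathcal{F}\otimes\O_{S,s}$ is the effective model supplied by Romagny, hence finite flat of degree $p$. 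Thus $V$ contains all points of codimension $\le 1$ and $\operatorname{codim}(S\setminus V)\ge 2$. Since $f$ is a homomorphism of $S$-group schemes, its flat schematic image $\shG_V:=\mathcal{F}|_V$ is a finite flat subgroup scheme of $\Aut_{Y/S}|_V$ of degree $p$; write $h_V$ for the inclusion.

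Next I would invoke Tate--Oort over the $\Lambda_p$-scheme $V$ to present $\shG_V$ by a triple $(\mathcal{L}_V,a_V,b_V)$, where $\mathcal{L}_V$ is a line bundle and $a_V\colon\mathcal{L}_V^{\otimes(p-1)}\to\O_V$, $b_V\colon\O_V\to\mathcal{L}_V^{\otimes(p-1)}$ satisfy $a_V\circ b_V=w_p$. As $S$ is locally factorial, $\Pic(S)\to\Pic(V)$ is an isomorphism (both compute the divisor class group, which ignores the codimension-$\ge 2$ locus), so $\mathcal{L}_V$ extends uniquely to a line bundle $\mathcal{L}$ on $S$. Regarding $a_V,b_V$ as sections of $\mathcal{L}_V^{\otimes-(p-1)}$ and $\mathcal{L}_V^{\otimes(p-1)}$ respectively, and using that $S$ is normal with $\operatorname{codim}(S\setminus V)\ge 2$, these sections extend uniquely to $a,b$ over $S$ by Hartogs for the reflexive (here locally free) sheaves; the identity $a\circ b=w_p$ holds on the dense open $V$ and hence on $S$. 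The resulting Tate--Oort data define a finite flat group scheme $\shG$ of degree $p$ over $S$ with $\shG|_V\simeq\shG_V$.

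To extend the homomorphism, write $\shG=\Spec\mathcal{C}$ with $\mathcal{C}$ locally free over $\O_S$, and recall $\Aut_{Y/S}=\Spec\mathcal{A}$ is affine over $S$. The $\O_V$-algebra map corresponding to $h_V$ is a morphism $\mathcal{A}|_V\to\mathcal{C}|_V$; since $\mathcal{C}$ is reflexive and $\operatorname{codim}(S\setminus V)\ge 2$, it extends uniquely to an $\O_S$-algebra homomorphism $\mathcal{A}\to\mathcal{C}$, i.e. to an $S$-morphism $h\colon\shG\to\Aut_{Y/S}$, the homomorphism property persisting because it holds over the dense $V$. By construction $h|_V=h_V$, so over each $\O_{S,s}$ with $s$ of codimension $\le 1$ the map $h$ recovers the effective model, giving the asserted isomorphism $\shG\otimes\O_{S,s}\to f(G_S)\otimes\O_{S,s}$. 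For uniqueness, any pair $(\shG',h')$ with the stated property agrees with $(\shG,h)$ over every local ring at a point of codimension $\le 1$, hence over $V$; and a finite flat group scheme over the locally factorial $S$, with a homomorphism to the affine $\Aut_{Y/S}$, is determined by its restriction to $V$ by the same Hartogs argument.

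For the torsor criterion, the forward implication is immediate, as torsors are stable under the base changes $\Spec\kappa(s)\to S$. For the converse, consider the $Y$-morphism $\Psi\colon\shG\times_S Y\to Y\times_S Y$, $(g,y)\mapsto(gy,y)$, formed over the second projections; source and target are finite locally free of degree $p$ over $Y$, so $\delta:=\det(\Psi^{\#})$ is a section of a line bundle on $Y$, and $\Psi$ is an isomorphism exactly where $\delta$ is nonvanishing. Pushing forward along the finite flat $\pi\colon Y\to S$, the norm $N:=N_{Y/S}(\delta)$ is a section of a line bundle on $S$ with $N(s)\ne 0$ if and only if $\delta$ is nonvanishing on the fibre $Y\otimes\kappa(s)$, i.e. if and only if $Y\otimes\kappa(s)$ is a $\shG\otimes\kappa(s)$-torsor. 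By hypothesis $N$ is nonvanishing at every point of codimension $\le 1$; in particular $N$ vanishes identically on no component, so on the normal scheme $S$ its zero locus is empty or of pure codimension $1$ by Krull's principal ideal theorem. Since this zero locus meets no point of codimension $\le 1$, it is empty, whence $\Psi$ is an isomorphism and $Y\to S$, being finite flat surjective of degree $p$, is a $\shG$-torsor. I expect the main obstacle to be the extension step: away from codimension $1$ the schematic image $f(G_S)$ need be neither flat nor a subgroup scheme, and it is precisely the Tate--Oort reduction to a line bundle with two sections—extendable across codimension $\ge 2$ because $S$ is locally factorial and normal—that produces the global $\shG$.
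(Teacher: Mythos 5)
Your proposal is correct and follows essentially the same route as the paper's proof: construct the effective model over an open set containing all points of codimension at most one via the schematic image and Romagny's theorem, extend across the codimension-$\geq 2$ complement using the Tate--Oort presentation together with local factoriality and a depth-two argument, and establish the torsor criterion by the determinant-plus-Krull argument of Proposition \ref{torsors}. The only cosmetic differences are that you verify the subgroup-scheme property of the flat schematic image via schematic density of the generic fibre (the paper instead proves openness of the ``is a subgroup scheme'' condition directly by factoring the comultiplication) and that you extend the homomorphism $h$ rather than the action $\mu$; both variants are sound.
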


\medskip
Extending Romagny's terminology, we call the finite flat $S$-group scheme $\shG$, together with its action on $Y$,
the \emph{effective model} for the $G$-action on $Y$. The proof of Theorem \ref{effective model} is postponed to \ref{proof.effective model}.
We start with several preliminary propositions.

\begin{lemma} \label{affine} Let $S$ be a noetherian scheme.
Let  $U\subset S$ be a dense open subscheme 
which contains all points of $S$ which do not satisfy Serre's Condition 
$(S_1)$. Let $G_1\to S$ and $G_2\to S$ be two 
finite and flat 
%affine 
morphisms. Let $\varphi: G_1 \to G_2$ and $\psi:G_1 \to G_2$ be two $S$-morphisms
such that $\varphi_{\mid U}$ and $\psi_{\mid U}$ are equal as morphisms from $G_1 \times_S U$ to $G_2 \times_S U$. 
Then $\varphi=\psi$.
\end{lemma}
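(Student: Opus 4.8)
The plan is to prove the statement by reducing the equality of morphisms to an equality of sections of a coherent sheaf, and then invoking the hypothesis that the non-$(S_1)$ locus is contained in the dense open $U$. Since $G_1 \to S$ and $G_2 \to S$ are finite, the morphisms $\varphi$ and $\psi$ correspond to homomorphisms of the associated quasi-coherent $\O_S$-algebras, and verifying $\varphi = \psi$ amounts to checking that two $\O_S$-algebra homomorphisms $\psi^\sharp, \varphi^\sharp : \calB_2 \to \calB_1$ coincide, where $\calB_i$ is the pushforward of $\O_{G_i}$ to $S$. The key point is that both $\calB_1$ and $\calB_2$ are locally free $\O_S$-modules of finite rank, since $G_i \to S$ is finite and flat; in particular $\calB_1$ is a locally free, hence torsion-free, $\O_S$-module that satisfies Serre's condition $(S_1)$ at every point where $S$ itself does.

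The central step is therefore the following separatedness-type assertion: a section of a coherent $\O_S$-module that satisfies $(S_1)$, and which vanishes on a dense open subscheme containing the non-$(S_1)$ locus, must vanish. Concretely, I would set $\delta^\sharp := \varphi^\sharp - \psi^\sharp : \calB_2 \to \calB_1$, an $\O_S$-module map (not an algebra map, but the difference of two algebra maps is still $\O_S$-linear), and observe that by hypothesis $\delta^\sharp$ restricts to zero on $U$. The image of $\delta^\sharp$ is a subsheaf of $\calB_1$ supported on $S \setminus U$. But $\calB_1$, being locally free of finite rank over an $\O_S$ satisfying $(S_1)$ on $U^c \subset U$... more precisely, I would argue that a local section of $\calB_1$ vanishing on a dense open that contains all associated points of $S$ must be zero. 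Since $U$ contains every point of $S$ failing $(S_1)$, and a point fails $(S_1)$ exactly when it is an embedded associated point, the open $U$ contains all associated points of $\O_S$; a locally free $\O_S$-module then has all its associated points lying over associated points of $S$, all of which lie in $U$. A section of such a sheaf vanishing on $U$ thus has support disjoint from its own associated points, forcing it to be zero.

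The main technical obstacle will be handling the bookkeeping between $(S_1)$ for the \emph{scheme} $S$ and the vanishing of sections of the locally free sheaf $\calB_1$ over $S$. The clean way to phrase this is: for a noetherian scheme $S$, the condition $(S_1)$ at a point $x$ is equivalent to $\O_{S,x}$ having no embedded primes, i.e.\ every associated prime of $\O_{S,x}$ being minimal; hence $\Ass(\O_S) \subset U$ once $U$ contains the non-$(S_1)$ locus together with a dense set of generic points (and one checks $U$ dense guarantees it meets every irreducible component, so it contains every generic point). Then for the locally free sheaf $\calB_1$ one has $\Ass(\calB_1) = \Ass(\O_S)$, so any section of $\calB_1$ whose restriction to $U$ is zero has support meeting no associated point and is therefore zero. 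Applying this to each matrix entry of $\delta^\sharp$ (after choosing local bases of $\calB_1$ and $\calB_2$) gives $\delta^\sharp = 0$, hence $\varphi^\sharp = \psi^\sharp$ and so $\varphi = \psi$.

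I would carry out the steps in this order: first reduce to the affine local situation $S = \Spec R$ with $G_i = \Spec B_i$ and translate the claim into equality of ring homomorphisms; second record that $B_1$ is free over $R$ and that $\Ass(B_1) = \Ass(R)$; third show $U$ contains all associated points of $R$ using density together with the hypothesis on the non-$(S_1)$ locus; and finally conclude that the $R$-linear difference $\varphi^\sharp - \psi^\sharp$, vanishing after restriction to the dense open $U \supset \Ass(R)$, must vanish identically. The only subtlety to watch is ensuring that ``equal after restriction to $U$'' genuinely means the sections of $\Hom_R(B_2, B_1)$ agree on $U$, which is immediate since restriction of morphisms of affine $S$-schemes corresponds to localization of the algebra homomorphisms.
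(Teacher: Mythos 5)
Your proof is correct and follows essentially the same route as the paper's: reduce to the affine case, translate to a map of free modules, and show the difference vanishes because $U$ contains all associated points of $S$ (the generic points by density, the embedded points by the $(S_1)$ hypothesis). The only cosmetic difference is that the paper packages the final injectivity step via Prime Avoidance, producing a single regular element $f$ with $\Ass(R)\subseteq D(f)\subseteq U$ and invoking injectivity of localization at $f$, whereas you argue directly that a section of a locally free sheaf whose support misses every associated point must vanish; the two are interchangeable.
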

\proof The equality can be checked locally on the base. Thus we may assume that $S=\Spec R$ is affine, in which case
$G_i=\Spec(A_i)$ for some $R$-algebra $A_i$ which are free of finite rank as $R$-module, and the two given morphisms corresponds to morphisms $\varphi^*,\psi^*: A_2 \to A_1$ of $R$-algebras.
Since $S$ is noetherian, the set $\Ass(R)$ is finite. Thus the Prime Avoidance Lemma lets us  find $f \in R$ such that $\Ass(R)$ is contained in the special open set $D(f)$ of $S$, and such that $D(f) \subseteq U$. 
Then $f\in R$ is regular  
%(\cite{EGA IVb}, 
(\cite{EGA}, Chapter IV, Corollary 3.1.9). Since $A_1/R$ and $A_2/R$ are free,   the localization map
$$
\Hom_R(A_2,A_1)\lra \Hom_R(A_2,A_1)_f=\Hom_{R_f}((A_2)_f, (A_1)_f)
$$
is injective. It follows that $\varphi^*=\psi^*$.
\qed

\smallskip
Let $(\FFGrp/S)$ denote the category of finite flat group schemes over $S$.
\begin{proposition}
\label{full and faithful}
Let $S$ be a noetherian scheme. Let  $U\subset S$ be a dense open subscheme and 
consider the restriction functor $(\FFGrp/S)\ra(\FFGrp/U)$.
\begin{enumerate}[\rm (i)]
\item 
If $U$ contains 
%all  generic points  of $ S$  and 
all points of $S$ which do not satisfy Serre's Condition 
$(S_1)$, then the restriction functor is faithful.
\item 
If $U$ contains all points of $S$ which have codimension at most $ 1$ and all points which
do not satisfy Serre's condition $(S_2)$, then the restriction functor is full.
\end{enumerate}
\end{proposition}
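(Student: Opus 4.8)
The plan is to treat the two parts separately. Faithfulness in (i) is essentially a restatement of Lemma \ref{affine}, while fullness in (ii) rests on a Hartogs-type extension argument for a locally free $\shHom$-sheaf, combined once more with the injectivity underlying (i).

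\textbf{(i)} A homomorphism of finite flat $S$-group schemes is in particular a morphism of the underlying finite flat $S$-schemes. Hence if two homomorphisms $\varphi,\psi\colon G_1\to G_2$ in $(\FFGrp/S)$ have equal restriction to $U$, then, regarded as $S$-morphisms agreeing over $U$, they coincide by Lemma \ref{affine}, whose hypothesis that $U$ contains every point failing $(S_1)$ is exactly what we assume here. Thus the restriction functor is faithful.

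\textbf{(ii)} Write $\mathcal{B}_i:=\pi_{i*}\O_{G_i}$ for the pushforward of the structure sheaf along $\pi_i\colon G_i\to S$. As $\pi_i$ is finite and flat, $\mathcal{B}_i$ is a locally free $\O_S$-module of finite rank carrying an $\O_S$-Hopf-algebra structure, and giving an $S$-group-scheme homomorphism $G_1\to G_2$ is the same as giving an $\O_S$-Hopf-algebra homomorphism $\alpha\colon\mathcal{B}_2\to\mathcal{B}_1$. A morphism $\phi_U$ in $(\FFGrp/U)$ corresponds to such an $\alpha_U\colon\mathcal{B}_2|_U\to\mathcal{B}_1|_U$ over $U$, and my task is to extend $\alpha_U$ to a Hopf-algebra homomorphism over $S$. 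The main obstacle is extending the underlying $\O_S$-linear map, which I handle as follows. Set $\shH:=\shHom_{\O_S}(\mathcal{B}_2,\mathcal{B}_1)$, a locally free sheaf, so that $\depth\shH_x=\depth\O_{S,x}$ at every point, and let $j\colon U\to S$ with $Z:=S\setminus U$. Since $U$ contains all points of codimension at most $1$, every $x\in Z$ has $\dim\O_{S,x}\geq 2$; since $U$ contains all points failing $(S_2)$, every $x\in Z$ satisfies $(S_2)$, whence $\depth\O_{S,x}\geq 2$ and therefore $\depth\shH_x\geq 2$ for all $x\in Z$. By the standard local-cohomology criterion for extending sections across a closed set along which a coherent sheaf has depth at least $2$ (see \cite{EGA}, Chapter IV), the canonical map $\shH\to j_*j^*\shH$ is an isomorphism; passing to global sections, the restriction $\Hom_{\O_S}(\mathcal{B}_2,\mathcal{B}_1)\to\Hom_{\O_U}(\mathcal{B}_2|_U,\mathcal{B}_1|_U)$ is bijective, so $\alpha_U$ extends uniquely to an $\O_S$-linear map $\alpha\colon\mathcal{B}_2\to\mathcal{B}_1$.

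It then remains to check that $\alpha$ is compatible with unit, multiplication, comultiplication, and counit, hence a genuine Hopf-algebra homomorphism. Each compatibility is an equality of $\O_S$-linear maps between locally free sheaves; for instance multiplicativity reads $\alpha\circ\mu_2=\mu_1\circ(\alpha\otimes\alpha)$ as sections of $\shHom_{\O_S}(\mathcal{B}_2\otimes\mathcal{B}_2,\mathcal{B}_1)$, and comultiplication-compatibility reads $\Delta_1\circ\alpha=(\alpha\otimes\alpha)\circ\Delta_2$ as sections of $\shHom_{\O_S}(\mathcal{B}_2,\mathcal{B}_1\otimes\mathcal{B}_1)$. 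Both sides of each identity agree after restriction to $U$, since $\alpha_U$ is a Hopf-algebra homomorphism. Because $U$ contains every point failing $(S_2)$, hence every point failing $(S_1)$, the relevant locally free sheaves have depth at least $1$ along $Z$, so the restriction on their global sections is injective (this is precisely the mechanism of Lemma \ref{affine}). Consequently each identity already holds on all of $S$, so $\alpha$ is a Hopf-algebra homomorphism and defines a group-scheme homomorphism $\phi\colon G_1\to G_2$ with $\phi|_U=\phi_U$. This proves fullness. The one genuinely technical point is the depth computation feeding the extension isomorphism in the third paragraph; the Hopf-algebra bookkeeping afterwards is routine given the injectivity already established for (i).
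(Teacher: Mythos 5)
Your proof is correct, and part (i) is exactly the paper's argument: a homomorphism of group schemes is in particular a morphism of the underlying finite flat $S$-schemes, so Lemma \ref{affine} applies verbatim. For part (ii) the underlying mechanism is the same as the paper's --- extend the $\O_S$-linear map using vanishing of local cohomology in degrees $0$ and $1$ under the depth-$2$ hypothesis, then verify the Hopf compatibilities via the injectivity of restriction supplied by the $(S_1)$-type hypothesis --- but your packaging differs in one genuine respect: you carry out the extension globally, by showing the canonical map $\shH \to j_*j^*\shH$ is an isomorphism for the locally free sheaf $\shH=\shHom_{\O_S}(\mathcal{B}_2,\mathcal{B}_1)$, with the depth bound checked at every point of $Z=S\setminus U$. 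The paper instead first reduces to the case where $S$ is local and $U$ is the punctured spectrum: it takes a maximal open subset over which $\varphi_U$ extends, picks a generic point $s$ of the complement, extends over $\Spec(\O_{S,s})$, spreads this extension out to an open neighborhood $V$ of $s$, and glues it to $\varphi_U$ using part (i); only then does it run the local cohomology sequence $\Gamma_s(S,\shM)\to\Gamma(S,\shM)\to\Gamma(U,\shM)\to H^1_s(S,\shM)$ over the local ring. Your global formulation buys a shorter argument with no spreading-out or gluing step (and uniqueness of the extension comes for free from the sheaf isomorphism); the paper's reduction buys an entirely elementary local-cohomology computation, with supports in a single closed point of an affine local scheme, at the cost of the noetherian-maximality bookkeeping. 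Both routes rest on the identical depth computation: points of $S\setminus U$ have codimension at least $2$ and satisfy $(S_2)$, hence have depth at least $2$.
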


\proof
(i) Follows immediately from \ref{affine}.
\if false
  Let $\varphi:G_1\ra G_2$ be a homomorphism between  finite flat group schemes
over $S$ so that $\varphi_U$ is the zero map.
We have to check that $\varphi$ is the zero map.
This is a local problem, so we may assume that $S=\Spec(R)$ is a local scheme.
Then $G_i=\Spec(A_i)$ for some Hopf $R$-algebra $A_i$ such that the underlying $R$-modules are free
of finite rank. The morphism $\varphi$ corresponds to an $R$-linear Hopf algebra map $A_2\ra A_1$.
Choose an element $f\in R$ so that $\Spec(R_f)\subset S$ is contained in $U$ and contains the finite set $\Ass(\O_S)\subset U$.
Then $f\in A$ is regular  
%(\cite{EGA IVb}, 
(\cite{EGA}, Chapter IV, Corollary 3.1.9), whence the localization map
$$
\Hom_R(A_2,A_1)\lra \Hom_R(A_2,A_1)_f=\Hom_{R_f}((A_2)_f, (A_1)_f)
$$
is injective. In turn, $\varphi$ is determined by $\varphi_U$.
\fi

(ii) Let $G_1,G_2$ be two finite flat group schemes
over $S$, and $\varphi_U:(G_1)_U\ra (G_2)_U$ be a homomorphism over $U$. We have to extend it to $S$.
Since $S$ is noetherian, there is a maximal open
subset over which $\varphi_U$ extends. 
It suffices to treat the case where $U\subset S$ itself is maximal.
Seeking a contradiction, we assume $U\neq S$  and choose
a generic point $s$ in the closed subset $S\smallsetminus U$.
One easily sees that if $(\varphi_U)_{U\cap\Spec(\O_{S,s})}$ extends over $\Spec(\O_{S,s})$, then it extends
to some $\varphi_V$ over some open neighborhood $V$ of $s\in S$.
Using (i), we may shrink $V$ so that  $\varphi_U$ and $\varphi_V$ coincide on the overlap $U\cap V$.
Hence, $\varphi_U$ extends to $U\cup V$, which is a contradiction.
This reduces us to the case where $S=\Spec(R)$ is local,
and $U\subset S$ is the complement of the closed point $s\in S$.
Then $G_i=\Spec(A_i)$ for some Hopf $R$-algebra $A_i$, such that the underlying $R$-modules are free
of finite rank. The morphism $\varphi$ that we seek corresponds to an $R$-linear Hopf algebra map $A_2\ra A_1$.
Consider the free $R$-module $M=\Hom_R(A_2,A_1)$ of finite rank
and the corresponding coherent sheaf $\shM$.
The long exact sequence of local cohomology yields
\begin{equation}
\label{local cohomology}
\Gamma_s(S,\shM)\lra \Gamma(S,\shM)\lra \Gamma(U,\shM)\lra H^1_s(S,\shM).
\end{equation}
By assumption, the local ring $R$ has   depth $\geq 2$.
Whence the outer terms vanish, and the homomorphism of Hopf algebras
$\varphi_U$ extends to a homomorphism of $R$-modules $\varphi:A_2\ra A_1$.
Arguing in the same way as for  Lemma \ref{affine}, one sees that the morphism $\varphi$
is compatible with the Hopf  structures.
\qed

\begin{proposition}
\label{extension torsors} Let $S$ be a noetherian scheme. Let  $\shG$ be a finite flat $S$-group scheme, and let $Y$ be a finite flat $S$-scheme.
Assume that there exists an open subscheme $U$ of $S$ such that $Y_U$ is endowed with a $U$-group scheme action 
 $\mu_U: \shG_U\times_U Y_U\ra Y_U$ of $\shG_U$.
If $U $ contains all points of $ S$ that are of codimension at most $1$ and all points of $S$  which do not satisfy Serre's Condition $(S_2)$,
then the given $U$-group scheme action $\mu_U $ extends to a unique $S$-group scheme action $\mu: \shG\times_S Y\ra Y$ of 
$\shG$ on $Y$. 
\end{proposition}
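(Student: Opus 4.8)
The plan is to run the argument of Proposition \ref{full and faithful}(ii) once more, now with the co-action attached to $\mu_U$ playing the role of the Hopf-algebra homomorphism treated there. First, uniqueness is essentially free. The schemes $\shG\times_S Y$ and $Y$ are both finite and flat over $S$, and since Serre's condition $(S_2)$ implies $(S_1)$, the hypothesis that $U$ contains every point not satisfying $(S_2)$ forces $U$ to contain every point not satisfying $(S_1)$. Applying Lemma \ref{affine} with $G_1=\shG\times_S Y$ and $G_2=Y$ then shows that any two $S$-morphisms $\shG\times_S Y\ra Y$ agreeing over $U$ coincide. In particular an extension of $\mu_U$, if one exists, is unique, and extensions defined over overlapping opens automatically glue.

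For existence I would reduce to the local situation exactly as in the proof of Proposition \ref{full and faithful}(ii). Since $S$ is noetherian, there is a largest open $W\supseteq U$ over which $\mu_U$ extends to an action $\mu_W$; if $W\neq S$, one picks a generic point $s$ of $S\smallsetminus W$ and shows that an extension over $\Spec(\O_{S,s})$ spreads out to a neighborhood of $s$, which after gluing by the uniqueness just established contradicts the maximality of $W$. This reduces the problem to the case where $S=\Spec(R)$ is local with closed point $s$ and $U=S\smallsetminus\{s\}$. Because $U$ contains all points of codimension at most $1$ and $s\in S\smallsetminus W\subseteq S\smallsetminus U$, the point $s$ has codimension at least $2$; and because $U$ contains all points failing $(S_2)$, the ring $R=\O_{S,s}$ satisfies $(S_2)$, whence $\depth(R)\geq 2$.

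Now write $\shG=\Spec(C)$ and $Y=\Spec(B)$ with $C$ and $B$ free $R$-modules of finite rank, so that the action $\mu_U$ corresponds to an $\O_U$-algebra homomorphism $\rho_U\colon B_U\ra (C\otimes_R B)_U$ satisfying the usual co-action identities. As $C\otimes_R B$ is again free of finite rank, the $R$-module $M:=\Hom_R(B,C\otimes_R B)$ is free of finite rank; let $\shM$ be the associated coherent sheaf. From $\depth(R)\geq 2$ the outer terms of the local cohomology sequence
$$
\Gamma_s(S,\shM)\lra \Gamma(S,\shM)\lra \Gamma(U,\shM)\lra H^1_s(S,\shM)
$$
vanish, so the restriction $\Gamma(S,\shM)\ra\Gamma(U,\shM)$ is bijective and $\rho_U$ extends uniquely to an $R$-linear map $\rho\colon B\ra C\otimes_R B$.

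It then remains to check that $\rho$ is multiplicative and unital and satisfies the co-action axioms expressing compatibility with the comultiplication $C\ra C\otimes_R C$ and the counit $C\ra R$. Each of these is an equality between two $R$-linear maps of free $R$-modules of finite rank that holds after restriction to the dense open $U$; arguing as in Lemma \ref{affine}, the injectivity of the relevant localization map forces the equalities over all of $R$. Hence $\rho$ is the co-action of a genuine $\shG$-action $\mu\colon \shG\times_S Y\ra Y$ extending $\mu_U$. The one substantive step is the depth and local-cohomology estimate producing $\rho$ as an $R$-linear map across the codimension-$\geq 2$ locus; this is precisely the mechanism already used in Proposition \ref{full and faithful}, and the verification of the action axioms is then a formal transcription via the faithfulness of restriction to $U$.
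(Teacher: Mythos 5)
Your proposal is correct and follows essentially the same route as the paper: reduce to the case of a local base with $U$ the punctured spectrum, extend the linear map $B\ra C\otimes_R B$ across the closed point using $\depth(R)\geq 2$ and the local cohomology sequence, and then verify the co-action axioms and uniqueness via the faithfulness of restriction to $U$ as in Lemma \ref{affine}. The only difference is that you spell out the reduction to the local case and the axiom-checking in more detail than the paper does.
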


\proof
As in the proof for Proposition \ref{full and faithful}, it suffices to treat the case
that  $S=\Spec(R)$ is local, and $U\subset S$ is the complement of the closed point.
Then  $Y=\Spec(B)$ and $\shG=\Spec(A)$ are given
by $R$-algebras that are free of finite rank as $R$-modules.
The desired group scheme action $\mu:\shG\times Y\ra Y$ corresponds
to a linear map $B\ra A\otimes_R B$ satisfying certain axioms.
Consider the free $R$-module $M=\Hom_R(B,A\otimes_RB)$; arguing with
a short exact sequence in local cohomology like (\ref{local cohomology}), 
we see that the desired homomorphism exists.
The uniqueness of the extension follows from Lemma \ref{affine}.
\qed

\begin{proposition} \label{torsors} 
Let $S$ be a noetherian scheme. Let  $\shG$ be a finite flat $S$-group scheme, and let $Y$ be a finite flat $S$-scheme endowed with 
 a $S$-group scheme action $\mu: \shG\times_S Y\ra Y$.
If for each point $s\in S$ of codimension at most $ 1$, the fiber $Y\otimes\kappa(s)$ is a  torsor over $\shG\otimes\kappa(s)$, then $Y$ is a $\shG$-torsor.
\end{proposition}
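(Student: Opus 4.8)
The plan is to verify the two defining properties of a torsor directly: that the structure morphism $f\colon Y\ra S$ is faithfully flat, and that the map
$$
\Phi=(\mu,\pr_2)\colon \shG\times_S Y\lra Y\times_S Y,\qquad (g,y)\longmapsto (gy,y),
$$
is an isomorphism. Faithful flatness is the easy half. Since $f$ is already finite and flat, $f_*\O_Y$ is a locally free $\O_S$-module, and it suffices to check that $f$ is surjective. The fibers of $f$ over the generic points of the irreducible components of $S$ (points of codimension $0$) are torsors by hypothesis, in particular non-empty; since the rank of $f_*\O_Y$ is locally constant, it is everywhere positive, and thus $f$ is surjective, hence faithfully flat.

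For the isomorphism statement, being an isomorphism is local on the base, so I would reduce to the case $S=\Spec(R)$ with $R$ noetherian local. Then $\shG=\Spec(A)$ and $Y=\Spec(B)$ with $A$ and $B$ finite free over $R$, and $\Phi$ corresponds to an $R$-algebra homomorphism $\Phi^*\colon B\otimes_R B\ra A\otimes_R B$ between finite free $R$-modules. The key preliminary point is that these two modules have the same rank: at any minimal prime $\mathfrak p$ of $R$ (a codimension-$0$ point) the fiber $Y\otimes\kappa(\mathfrak p)$ is a torsor, so $\Phi\otimes\kappa(\mathfrak p)$ is an isomorphism, which forces $\rank(A)=\rank(B)=:r$. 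Hence $B\otimes_R B$ and $A\otimes_R B$ are both free of rank $r^2$.

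The crux is then to show that $\delta:=\det(\Phi^*)\in R$ is a unit, since for free modules of equal rank the map $\Phi^*$ is an isomorphism precisely when $\delta$ is invertible. Here I would combine the fiberwise hypothesis with Krull's principal ideal theorem. For every prime $\mathfrak p\subset R$ of height at most $1$, the fiber $Y\otimes\kappa(\mathfrak p)$ is a torsor over $\shG\otimes\kappa(\mathfrak p)$, so the base change $\Phi^*\otimes\kappa(\mathfrak p)$ is an isomorphism; as the formation of the determinant commutes with base change, this says $\delta\notin\mathfrak p$. Now suppose $\delta$ were a non-unit. Then $(\delta)$ is a proper ideal of the noetherian ring $R$, so it admits a minimal prime $\mathfrak p\supseteq(\delta)$, and by Krull's Hauptidealsatz $\height(\mathfrak p)\le 1$. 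But $\delta\in\mathfrak p$ then contradicts the previous sentence. Hence $\delta$ is a unit, $\Phi^*$ is an isomorphism, and $Y$ is a $\shG$-torsor.

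I expect the only genuinely delicate point to be bookkeeping rather than substance: one must confirm that forming $\Phi$, choosing bases, and taking the determinant are all compatible with base change to $\kappa(\mathfrak p)$, and that the torsor condition over $\kappa(\mathfrak p)$ translates exactly into invertibility of $\Phi^*\otimes\kappa(\mathfrak p)$. It is worth emphasizing that, in contrast to the extension results \ref{full and faithful} and \ref{extension torsors}, no Serre condition on $S$ is needed here, and an openness argument for the torsor locus does not suffice by itself (an open set containing all codimension-$\le 1$ points need not be everything). What makes the codimension-$\le 1$ hypothesis propagate to all of $S$ is precisely the principal ideal theorem, which guarantees that a proper principal ideal is contained in a prime of codimension at most $1$.
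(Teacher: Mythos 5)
Your proposal is correct and follows essentially the same route as the paper: reduce to $S=\Spec(R)$ local, view $\Phi^*\colon B\otimes_R B\ra A\otimes_R B$ as a map of free modules of equal rank, and use Krull's principal ideal theorem to show that a minimal prime over $(\det\Phi^*)$ would have height at most one, contradicting the fiberwise torsor hypothesis there. Your additional checks (equality of ranks via the codimension-zero fibers, and surjectivity of $Y\ra S$) are sensible bookkeeping that the paper leaves implicit.
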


\proof
We have to show that the canonical map $\mu\times \pr_2:\shG\times_S Y\ra Y\times_S Y$ is
an isomorphism.
The question is local, so we may assume that $S=\Spec(R)$ is the spectrum of a local ring,
and write $Y=\Spec(B)$ and $\shG=\Spec(A)$.  Let $f:B\otimes_R B\ra A\otimes_R B$ be the resulting
map between free modules of finite rank,
and  consider its determinant $\det(f)\in R$. Suppose the latter is not a unit,
and choose a minimal prime ideal $\primid\subset R$ annihilating $R/\det(f)R$.
Replacing $R$ by the localization  $R_\primid$, we arrive at the situation 
that $\det(f)$ is invertible precisely outside the closed point $s\in S=\Spec(R)$.
By Krull's Principal Ideal Theorem, the point $s\in S$ is of codimension at most $ 1$.
By assumption, the class of  $\det(f)$ in $\kappa(s)$ is nonzero, contradiction.
\qed

\begin{remark} The above propositions might be known to the experts, but we did not find an appropriate reference for them in the literature. 
Lemme 2 in \cite{Mor}, stated without proof, asserts that the restriction functor induces an equivalence of categories between the category of torsors
under $\shG/S$ and the category of torsors under $\shG_U/U$, for any dense open set $U$ of a regular noetherian scheme $S$ which contains all points of codimension 1 in $S$. This statement is proved in \cite{Mar}, 3.1.
\end{remark}

\begin{emp} \label{FFG}
Denote by $(\FFGrp^p/S)$ the full subcategory of $(\FFGrp/S)$ consisting of all finite flat $S$-group schemes
of  degree $p$. According to \cite{Tate; Oort 1970}, Theorem 1,
such  group schemes $G/S$ take as values commutative groups annihilated by $p$.
To apply further results of \cite{Tate; Oort 1970}, we assume now 
that the base scheme $S$ is a scheme over $\Spec \Lambda_p$.

Certain elements  $w_i\in\Lambda_p$, $1\leq i\leq p$  are introduced
in \cite{Tate; Oort 1970} on page 9,  with the property that $w_i\cong i!$ modulo $p$.
%and in particular $w_p\cong 0$ modulo $p$.
When $S \to \Spec \Lambda_p$ is given, we also denote by $w_i$ the image of $w_i$ in $\Gamma(S,\O_S)$.
Consider now triples $(\shL,\alpha,\beta)$, where $\shL$ is an invertible sheaf on the scheme $S$, endowed with global sections
$\alpha\in\Gamma(S,\shL^{\otimes (p-1)})$ and  $ \beta\in\Gamma(S,\shL^{\otimes(1-p)})$
 so that $\alpha\otimes \beta = w_p$. Here the equality is obtained after identifying in the natural way $\shL^{\otimes (p-1)} \otimes \shL^{\otimes(1-p)}$
with $\O_S$.

An abelian sheaf $\shG_{\alpha,\beta}^\shL$,  annihilated by $p$, is associated to such triple $(\shL,\alpha,\beta)$ as follows.  
For any $T/S$, let 
$$
\shG_{\alpha,\beta}^\shL(T):=\left\{x\in \Gamma(T,\shL\otimes_{\O_S}\O_T) \mid x^{\otimes p}=\alpha\otimes x\right\},
$$
and the group law is given by the formula
$$
x\star x' := x+x'+\frac{\beta}{w_{p-1}}D_p(x\otimes 1, 1\otimes x'),
$$
where $D_p$ is a certain polynomial in two variables described in \cite{Tate; Oort 1970}, page 14.
It satisfies $D_p(X_1,X_2)\cong \sum_{i=1}^{p-1} \frac{(p-1)!}{i! (p-i)!} X_1^iX_2^{p-i}$ modulo $p$.
The sheaf $\shG_{\alpha,\beta}^\shL$ is representable by a finite flat $S$-group scheme of degree $p$,
denoted by the same letter $\shG_{\alpha,\beta}^\shL $. 
We can   assemble a category $(\Trp/S)$ whose objects are   triples $(\shL,\alpha,\beta)$ as above,
and whose morphisms are homomorphisms between invertible sheaves respecting the sections.
According to \cite{Tate; Oort 1970}, Theorem 2, we have:
\end{emp}
\begin{proposition}
\label{tate--oort}
The functor $(\Trp/S)\ra (\FFGrp^p/S)$, given by  
$(\shL,\alpha,\beta)\mapsto \shG_{\alpha,\beta}^\shL$, is essentially surjective.
\end{proposition}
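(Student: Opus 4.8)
The statement is the essential content of Theorem~2 in \cite{Tate; Oort 1970}, and the plan is to recall how one reconstructs a triple $(\shL,\alpha,\beta)$, together with an isomorphism $G\simeq\shG_{\alpha,\beta}^\shL$, from an arbitrary finite flat $S$-group scheme $G=\Spec\shA$ of degree $p$. Here $\shA$ is a sheaf of $\O_S$-Hopf algebras that is locally free of rank $p$, with augmentation ideal $\shI\subset\shA$ locally free of rank $p-1$. By \cite{Tate; Oort 1970}, Theorem~1, the group scheme $G$ is commutative and annihilated by $p$; consequently each multiplication-by-$n$ endomorphism $[n]\colon G\to G$ with $n\in\FF_p^\times$ is an automorphism, and $n\mapsto[n]$ equips $G$, and hence $\shA$ and $\shI$, with an action of the cyclic group $\FF_p^\times$ of order $p-1$.

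The first key step exploits the hypothesis that $S$ lies over $\Spec\Lambda_p$: since $\Lambda_p$ contains both the primitive $(p-1)$-th root of unity $\zeta$ and the inverse of $p-1$, the group algebra of $\FF_p^\times$ over $\O_S$ is split, and its idempotents decompose the augmentation ideal into character eigensheaves $\shI=\bigoplus_{i=1}^{p-1}\shL_i$, where $[n]$ acts on $\shL_i$ through $\omega(n)^i$ for the Teichm\"uller character $\omega\colon\FF_p^\times\to\Lambda_p^\times$ determined by $\zeta$. One then checks that each $\shL_i$ is locally free of rank $1$, that the algebra multiplication of $\shA$ induces isomorphisms $\shL_i\otimes\shL_j\xrightarrow{\ \sim\ }\shL_{i+j}$ for $i+j\le p-1$, and hence that $\shL:=\shL_1$ is invertible with $\shL_i\simeq\shL^{\otimes i}$ canonically for $1\le i\le p-1$.

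The remaining step extracts the two sections. The algebra structure of $\shA$, which expresses the $p$-fold product of a weight-one element as a multiple of that element, provides $\alpha\in\Gamma(S,\shL^{\otimes(p-1)})$ (a Frobenius-type datum), while the comultiplication of $\shA$, restricted to the weight-one part, encodes the group law and provides $\beta\in\Gamma(S,\shL^{\otimes(1-p)})$ (a Verschiebung-type datum); the Hopf-algebra axioms, reflecting the vanishing $[p]=0$, translate into the normalization $\alpha\otimes\beta=w_p$. Comparing these data with the definition of $\shG_{\alpha,\beta}^\shL$ recalled in \ref{FFG} yields the desired isomorphism $G\simeq\shG_{\alpha,\beta}^\shL$. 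I expect the main obstacle to be the precise bookkeeping that matches the algebra and coalgebra structures against the universal polynomial $D_p$ and the constants $w_i\in\Lambda_p$ so as to obtain exactly $\alpha\otimes\beta=w_p$; this is the computational heart carried out in \cite{Tate; Oort 1970}. By contrast, the passage from the local to the global statement is automatic: the eigensheaf decomposition and the extracted sections are canonical, hence compatible with localization and glue over $S$.
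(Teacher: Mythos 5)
The paper offers no argument for this proposition beyond citing Theorem~2 of \cite{Tate; Oort 1970}, and your proposal is a faithful outline of how that cited theorem is actually proved: the $\FF_p^\times$-action by multiplication (legitimate because $G$ is commutative and killed by $p$ by Theorem~1 of loc.\ cit.), the eigensheaf decomposition of the augmentation ideal made possible by the invertibility of $p-1$ and the presence of $\zeta$ in $\Lambda_p$, the identification $\shL_i\simeq\shL^{\otimes i}$ via the multiplication maps, and the extraction of $\alpha$ and $\beta$ from the algebra and coalgebra structures with $\alpha\otimes\beta=w_p$. So you are taking essentially the same route as the paper --- deferring the computational core to \cite{Tate; Oort 1970} --- only with more of the mechanism spelled out.
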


From this one deduces:

\begin{proposition}
\label{extension group schemes}
Suppose that   $S$ is a locally factorial scheme over $ \Lambda_p$, and let  $U\subset S$ be an open subscheme
containing  all points of codimension at most $ 1$. Then the restriction functor $(\FFGrp^p/S)\ra(\FFGrp^p/U)$ 
is an equivalence of categories.
\end{proposition}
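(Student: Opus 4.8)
The plan is to derive the statement from the two tools already in hand: the full-faithfulness criterion of Proposition~\ref{full and faithful} and the essential surjectivity of the Tate--Oort parametrization in Proposition~\ref{tate--oort}. First I would record that a locally factorial scheme is normal, since each local ring, being a unique factorization domain, is integrally closed; hence $S$ satisfies Serre's conditions $(S_1)$ and $(S_2)$ at every point. Because $U$ contains all points of codimension at most $1$ by hypothesis, the supplementary requirements in Proposition~\ref{full and faithful} (that $U$ contain all points failing $(S_1)$, respectively all points failing $(S_2)$) are automatically met, indeed vacuously. Thus the restriction functor $(\FFGrp/S)\ra(\FFGrp/U)$ is fully faithful, and passing to the full subcategories of degree-$p$ objects shows $(\FFGrp^p/S)\ra(\FFGrp^p/U)$ is fully faithful as well. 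It therefore remains only to establish essential surjectivity.

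For this, let $G_U$ be a finite flat group scheme of degree $p$ over $U$. Since $U$ is again locally factorial and lies over $\Spec\Lambda_p$, Proposition~\ref{tate--oort} applied over $U$ furnishes a triple $(\shL_U,\alpha_U,\beta_U)$, with $\shL_U$ invertible on $U$ and sections $\alpha_U\in\Gamma(U,\shL_U^{\otimes(p-1)})$, $\beta_U\in\Gamma(U,\shL_U^{\otimes(1-p)})$ satisfying $\alpha_U\otimes\beta_U=w_p$, such that $G_U\iso\shG_{\alpha_U,\beta_U}^{\shL_U}$. The strategy is to extend this triple over all of $S$ and then reconstruct the group scheme there; the restriction of the resulting group scheme to $U$ will recover $G_U$, because the construction of $\shG_{\alpha,\beta}^{\shL}$ is given by a functor-of-points formula that commutes with base change.

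Extending the triple proceeds in three steps. First, the invertible sheaf $\shL_U$ extends to an invertible sheaf $\shL$ on $S$, because for a locally factorial noetherian scheme the restriction map $\Pic(S)\ra\Pic(U)$ is an isomorphism once $S\smallsetminus U$ has codimension at least $2$: the pushforward of $\shL_U$ is reflexive, and a reflexive sheaf of rank one on a locally factorial scheme is invertible. Second, the sections extend: as $S$ is normal and $S\smallsetminus U$ has codimension at least $2$, the restriction maps $\Gamma(S,\shL^{\otimes(p-1)})\ra\Gamma(U,\shL_U^{\otimes(p-1)})$ and $\Gamma(S,\shL^{\otimes(1-p)})\ra\Gamma(U,\shL_U^{\otimes(1-p)})$ are bijective by the Hartogs-type extension property of locally free (hence $(S_2)$) sheaves across a closed subset of codimension at least $2$; this produces $\alpha,\beta$ over $S$ restricting to $\alpha_U,\beta_U$. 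Third, the relation $\alpha\otimes\beta=w_p$ holds on $S$, since both sides are sections of $\O_S$ agreeing on $U$ and $\Gamma(S,\O_S)\ra\Gamma(U,\O_S)$ is injective for the same reason. Applying the construction of Proposition~\ref{tate--oort} to $(\shL,\alpha,\beta)$ yields a finite flat $S$-group scheme $\shG_{\alpha,\beta}^{\shL}$ of degree $p$ whose restriction to $U$ is $G_U$, which completes essential surjectivity; together with full faithfulness, the functor is an equivalence.

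I expect the main obstacle to be the extension of the Tate--Oort datum across the codimension-$\geq 2$ locus $S\smallsetminus U$, and in particular pinning down exactly which hypothesis each piece consumes: local factoriality is precisely what permits the extension of the invertible sheaf, while normality, or more precisely $(S_2)$, is what guarantees the extension of global sections and the injectivity underlying the compatibility relation. Once these extension statements are in place, the remaining verifications are formal, and no direct analysis of the group-scheme structure beyond the Tate--Oort formulas is needed.
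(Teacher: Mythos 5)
Your proposal is correct and follows essentially the same route as the paper: full faithfulness comes from Proposition \ref{full and faithful} (with the $(S_1)$ and $(S_2)$ hypotheses vacuous by normality), and essential surjectivity is reduced via Proposition \ref{tate--oort} to extending the Tate--Oort triple $(\shL_U,\alpha_U,\beta_U)$ across the codimension $\geq 2$ complement, with the sections extended by the depth $\geq 2$ local-cohomology argument. The only cosmetic difference is in extending the invertible sheaf itself: you pass through the reflexive hull of the pushforward, while the paper writes $\shL_U=\O_U(D_U)$, splits $D_U$ into effective Weil divisors, and takes their closures in $S$, invoking local factoriality to get invertible sheaves back -- both arguments use the same hypothesis in the same way.
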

  
\proof
The restriction functor is fully faithful by Proposition \ref{full and faithful},
and we have to check that it is essentially surjective.
In light of Proposition \ref{tate--oort}, it suffices to extend an invertible sheaf $\shL_U$
and the sections $\alpha_U,\beta_U\in \Gamma(U,\shL_U)$ corresponding to a triple in $(\Trp/U)$ to a triple in $(\Trp/S)$.
Write $\shL_U=\O_U(D_U)$ for some Cartier divisor $D_U$. Since $S$ is locally factorial, we can view $D_U$ as a Weil divisor and
write it as a difference
$D_U=A_U-B_U$ of effective Weil divisors. Denote by $A$ and $B$ the closures in $S$ of $A_U$ and $B_U$.
These Weil divisors over $S$ correspond to invertible sheaves, giving the desired extension
of the invertible sheaf $\O_U(D_U)$. The sections are extended as in the proof of Proposition \ref{full and faithful}.
\qed

%\medskip

\noindent
\begin{emp} \label{proof.effective model}
{\it Proof of Theorem \ref{effective model}.}
Since  $S$ is a disjoint union of integral schemes, it suffices to prove the theorem when $S$ is irreducible. 
Let $f: G_S \to {\rm Aut}_{Y/S}$ be the morphism introduced in \eqref{action}. Let $f(G_S) \subseteq {\rm Aut}_{Y/S}$ denote the schematic image, 
which is finite   over $S$.
According to 
%\cite{EGA IVc}, 
\cite{EGA}, Chapter IV, Theorem 11.1.1, the set $W\subset f(G_S)$ of all points 
where the  morphism $f(G_S) \to S$ is flat  is open.
Let $V'$ be the complement in $S$ of the image under $f$ of the closed set $f(G_S)\smallsetminus W$. Since $f$ is finite, $V'$ is open.
Then the induced map $f(G_S)_{V'}\ra V'$ is flat.
 Consider now
$$
V:=\left\{s\in S\mid 
\text{$f(G_S)\otimes_{\O_S}\O_{S,s} \subset\Aut_{Y/S}\otimes_{\O_S}\O_{S,s}$ is a  subgroup scheme}\right\}.
$$
We claim that $V$ is open in $S$, and that $f(G_S)_V $ is a subgroup scheme of $(\Aut_{Y/S})_V$.
This is a local question, and  we may assume that $S=\Spec(R)$ is affine,
with $f(G_S)=\Spec(A)$ and $\Aut_{Y/S}=\Spec(B)$ (we use here the fact noted in \ref{lemma.affine} that $\Aut_{Y/S} \to S$ is affine). 
%The latter is indeed affine, by the closed embedding $\Aut_{Y/S}\subset\underline{\GL}(\O_Y)$.
Write $A=B/I$, and let $\primid\subset R$ be the prime ideal corresponding to a point $s\in V$.
The  comultiplication map $B\ra B\otimes B\ra A\otimes A$ factorizes over $A=B/I$ when
localized at $\primid$. Since $I$ is finitely generated, there is an element $a\in R\smallsetminus \primid$
so that the map factorizes when inverting $a$.
Then $\Spec(R_a)\subset S$ defines an open neighborhood over which
the group law for $\Aut_{Y/S}$ defines a composition $f(G_S)\times f(G_S)\ra f(G_S)$.
In a similar way one construct an open neighborhood over which the inversion
map for $\Aut_{Y/S}$ maps $f(G_S)$ to itself.
Summing up, $V\subset S$ is open.

Now consider the open subset $U:=V\cap V'$ of $S$.
Define $\shG_U:=f(G_S)_U$. Then the structure morphism $\shG_U\ra U$ is finite and flat,
and the inclusion $\shG_U\subset(\Aut_{Y/S})_U$ is a   subgroup scheme, necessarily closed.
We claim that $\shG_U \to U$ has degree $p$.
Since $U$ is integral, it suffices to check this over the generic point $\eta\in U$.
By construction of the effective model,  $\shG_\eta=G$ is the constant group of order $p$.

According to Romagny's result (\cite{Romagny 2012}, Theorem A), $U$ contains each point $s\in S$
of codimension one.
Using Proposition \ref{extension group schemes}, we   extend
$\shG_U$ to a finite flat group scheme $\shG$ over $S$.  
The canonical inclusion $\shG_U\subset\Aut{Y/S}_U$ gives
an action on $Y_U$. By Proposition \ref{extension torsors}, this action extends to
an action of $\shG$ on $Y$.
This shows the existence of $\shG$ and $h:\shG\ra\Aut_{Y/S}$.

The uniqueness also follows from Proposition \ref{extension group schemes} and
\ref{extension torsors}, because for any other $\shG'$ and $h':\shG\ra\Aut_{Y/S}$ as in 
the assertion, there is an open subset $U_0\subset U$ containing all points of codimension one
with $\shG'|U_0=f(G_S)|U_0$.
If the  fibers $Y_s$ are $\shG_s$-torsors for each point $s\in S$ of codimension one, then
$Y$ is a $\shG$-torsor, by Proposition \ref{extension torsors}.
\qed
\end{emp}

\medskip
Now let $R$ be a noetherian domain of characteristic $p>0$ and write $S:=\Spec(R)$.
Fix some elements $a,b\in R$ with $a\neq 0$, and consider the $R$-algebra 
$$
B:=R[u]/(u^p-a^{p-1}u-b),
$$
endowed with the automorphism $u\mapsto u+a$ of order $p$. This automorphism 
induces an action of $G=\ZZ/p\ZZ$
on the $S$-scheme $Y:=\Spec(B)$. 
Let $\shG/S$ denote the group scheme $\shG_{a,0}=\Spec R[z]/(z^p-a^{p-1}z)$ in the Tate--Oort classification.

\begin{lemma} \label{naturalaction} There is a natural $S$-action $\shG \times_S Y \to Y$ of $\shG/S$ on  $Y/S$ such that $Y/S$ is a torsor under  $\shG/S$. Moreover, any torsor $Z/S$ under $\shG/S$ is isomorphic to a torsor of the form $Y/S$.
\end{lemma}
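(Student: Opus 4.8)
The plan is to treat the two assertions separately: first write down the coaction explicitly and verify the torsor property by exhibiting an inverse morphism, then classify all torsors by a cohomological argument. The starting observation is a description of $\shG$. Since $R$ has characteristic $p$ we have $w_p=0$ in $R$, so the Tate--Oort parameter $\beta$ vanishes, the group law on $\shG=\shG_{a,0}=\Spec R[z]/(z^p-a^{p-1}z)$ is the additive one $z\mapsto z\otimes 1+1\otimes z$, and $\shG$ is the kernel of the $\FF_p$-linear isogeny
$$
\psi\colon \GG_a\lra\GG_a,\qquad z\longmapsto z^p-a^{p-1}z,
$$
which is finite and faithfully flat of degree $p$ over $R$. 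Note that $z=a$ defines an $S$-point of $\shG$, since $a^p-a^{p-1}a=0$.

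For the first assertion I would define the coaction $m^*\colon B\to A\otimes_R B$ (with $A=R[z]/(z^p-a^{p-1}z)$) by $u\mapsto z\otimes 1+1\otimes u$. This is well defined because Frobenius is additive in characteristic $p$:
$$
(z\otimes 1+1\otimes u)^p=z^p\otimes 1+1\otimes u^p=a^{p-1}(z\otimes 1+1\otimes u)+1\otimes b,
$$
so the relation $u^p-a^{p-1}u-b=0$ is preserved. Coassociativity and counitality are immediate since $m^*$ is the additive coaction, and specialising $z$ to the point $a\in\shG(S)$ recovers the given automorphism $u\mapsto u+a$. To see that $Y/S$ is a torsor I would check that $\Phi=(m,\pr_2)\colon\shG\times_S Y\to Y\times_S Y$ is an isomorphism: writing $u_1,u_2$ for the two copies of $u$ in $B\otimes_R B$, the comorphism of $\Phi$ sends $u_1\mapsto z\otimes 1+1\otimes u$ and $u_2\mapsto 1\otimes u$, while the assignment $z\mapsto u_1\otimes 1-1\otimes u_2$, $u\mapsto 1\otimes u$ is a well-defined $R$-algebra map (again by the Frobenius computation, with $b\otimes 1=1\otimes b$ cancelling as $b\in R$), and a direct check shows the two are mutually inverse. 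Since $B$ is free of rank $p$ over $R$, the map $Y\to S$ is faithfully flat, and $Y/S$ is a $\shG$-torsor.

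For the second assertion I would exploit the short exact sequence of fppf sheaves on $S$
$$
0\lra\shG\lra\GG_a\xrightarrow{\ \psi\ }\GG_a\lra 0,
$$
exact on the right because $\psi$ is an fppf cover and with kernel $\shG$ by the identification above. Its long exact cohomology sequence contains
$$
\Gamma(S,\GG_a)\xrightarrow{\ \psi\ }\Gamma(S,\GG_a)\xrightarrow{\ \delta\ }H^1_{\fppf}(S,\shG)\lra H^1_{\fppf}(S,\GG_a).
$$
As $S=\Spec R$ is affine and $\GG_a$ is quasi-coherent, $H^1_{\fppf}(S,\GG_a)=H^1_{\Zar}(S,\O_S)=0$, so $\delta$ is surjective. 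For $b\in\Gamma(S,\GG_a)=R$ the class $\delta(b)$ is represented by the pullback of the $\shG$-torsor $\psi\colon\GG_a\to\GG_a$ along the section $b\colon S\to\GG_a$, and this pullback is exactly $\Spec R[u]/(u^p-a^{p-1}u-b)=Y$. Surjectivity of $\delta$ then gives that every $\shG$-torsor over $S$ is isomorphic to one of the form $Y/S$.

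The step I expect to demand the most care is the second part: correctly computing the coboundary $\delta(b)$ as the pullback torsor and matching it with the explicit equation, and justifying the vanishing $H^1_{\fppf}(S,\GG_a)=0$, which rests on flat descent identifying fppf and Zariski cohomology of the quasi-coherent sheaf $\GG_a$ over the affine base. The first part is purely computational, its only subtlety being the repeated use of additivity of Frobenius in characteristic $p$.
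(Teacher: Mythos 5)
Your proposal is correct and follows essentially the same route as the paper: both rest on the fppf short exact sequence $0\to\shG\to\GG_a\xrightarrow{P}\GG_a\to 0$ for $P(z)=z^p-a^{p-1}z$, the identification of $Y$ with the coboundary class $\delta(b)$ (equivalently the fiber of $P$ over the section $b$), and the vanishing of $H^1(S,\GG_a)=H^1(S,\O_S)$ over the affine base to conclude that every torsor arises this way. The only difference is that you verify the torsor property of $Y$ directly by exhibiting the inverse of $(m,\pr_2)$ via $z\mapsto u\otimes 1-1\otimes u$, whereas the paper obtains it from Giraud's description of the coboundary torsor; your check is a harmless, self-contained substitute.
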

%, thus has an action of the group $G=\ZZ/p\ZZ$. Furthermore, we may regard $Y=\Spec(B)$ as a torsor for the group scheme

\proof Indeed, the $p$-polynomial $P(z)=z^p-a^{p-1}z$ defines an isogeny of the additive group scheme,
and yields a short exact sequence
$$
0\lra\shG\lra \GG_{a,S}\stackrel{P}{\lra}\GG_{a,S}\lra 0
$$
in the fppf-topology. In turn, we get a long exact sequence
\begin{equation}
\label{torsor sequence}
H^0(S,\O_S)\stackrel{P}{\lra} H^0(S,\O_S)\stackrel{\delta}\lra H^1(S,\shG)\lra H^1(S,\GG_{a,S}) \lra H^1(S,\GG_{a,S}).
\end{equation}
The element $b\in R=H^0(S,\O_S)$ yields, via the coboundary map $\delta$, a $\shG$-torsor.
According to \cite{Giraud 1971}, Chapter III, Definition 3.1.3, the torsor is defined as the fiber for the $\shG$-torsor $P:\GG_a\ra\GG_a$
over the section $b:S\ra\GG_a$. We thus see that $Y$ is the $\shG$-torsor coming from $b\in R$
via the coboundary map.
The group scheme action $\shG\times_S Y\ra Y$ is just induced by the addition in $\GG_a$, hence
given by the homomorphism $u\mapsto z+u$.

The etale cohomology group $H^1(S,\GG_{a,S})$
is isomorphic to the coherent sheaf cohomology group $H^1(S,\O_S)$, and since $S$ is affine, this latter group is trivial. 
Hence, since torsors under  $\shG$ are in bijection with the elements of $H^1(S,\shG)$, any torsor $Z/S$ under $\shG/S$ is isomorphic to a torsor of the form $Y/S$.
\qed 

\begin{proposition}
\label{effective model torsor}
Suppose that $R$ is a locally factorial noetherian domain of characteristic $p>0$.
Then $\shG/S$ and
%the $S$-group scheme 
the $S$-action $\shG \times_S Y \to Y$ in {\rm \ref{naturalaction}}  is the effective model for the action of $G=\ZZ/p\ZZ$
on $Y/S$. 
%the $S$-scheme $Y=\Spec(B)$.
\end{proposition}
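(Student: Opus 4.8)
The plan is to deduce the statement from the uniqueness in Theorem \ref{effective model}. That theorem characterises the effective model as the unique finite flat $S$-group scheme of degree $p$, equipped with an $S$-homomorphism to $\Aut_{Y/S}$, whose base change over $\O_{S,s}$ induces an isomorphism onto $f(G_S)\otimes\O_{S,s}$ at every point $s\in S$ of codimension at most $1$. Writing $h\colon\shG\to\Aut_{Y/S}$ for the homomorphism that classifies the action of Lemma \ref{naturalaction}, it therefore suffices to check that the pair $(\shG,h)$ enjoys this defining property; the conclusion then follows from uniqueness.

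First I would show that $h$ is a closed immersion. Since $\shG\to S$ is finite and $\Aut_{Y/S}\to S$ is separated (even affine, by Lemma \ref{lemma.affine}), the morphism $h$ is finite, hence proper. It is also a monomorphism: given an $S$-scheme $T$ and sections $g_1,g_2\in\shG(T)$ with $h(g_1)=h(g_2)$ in $\Aut(Y_T/T)$, the torsor isomorphism $\mu\times\pr_2\colon\shG\times_S Y\to Y\times_S Y$ of Lemma \ref{naturalaction} forces $g_1$ and $g_2$ to agree after composition with the structure morphism $Y_T\to T$; as $Y\to S$ is finite flat of positive degree, $Y_T\to T$ is faithfully flat and hence an epimorphism, so $g_1=g_2$. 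A proper monomorphism is a closed immersion, so $\shG$ is realised, through $h$, as a finite flat closed $S$-subgroup scheme of $\Aut_{Y/S}$.

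It remains to compare $\shG$ and $f(G_S)$ at the points $s\in S$ of codimension at most $1$. As $R$ is locally factorial, such a local ring $\O_{S,s}$ is either the fraction field $K=\Frac(R)$ or a discrete valuation ring. Over $K$ the element $a$ is invertible, and the substitution $z=aw$ identifies $\shG\otimes K=\Spec K[z]/(z^p-a^{p-1}z)$ with $\Spec K[w]/(w^p-w)=G_K$; under this identification the action $u\mapsto z+u$ becomes $u\mapsto u+aj$ for $j\in\FF_p$, which is precisely the original $G$-action. Thus $h\otimes K$ coincides with $f\otimes K$, and realises $\shG\otimes K$ as the reduced \'etale schematic image $f(G_S)\otimes K$. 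For a codimension one point, set $\mathcal{O}=\O_{S,s}$. Both $\shG\otimes\mathcal{O}$ and the effective model $f(G_S)\otimes\mathcal{O}$---which is finite flat over $\mathcal{O}$ by Romagny's theorem, as recalled before Theorem \ref{effective model}---are closed subschemes of $\Aut_{Y/S}\otimes\mathcal{O}$ that are flat over $\mathcal{O}$ and share the common generic fibre $\shG\otimes K=f(G_S)\otimes K$. Since a closed subscheme of an $\mathcal{O}$-scheme that is flat, equivalently torsion free, over the discrete valuation ring $\mathcal{O}$ equals the schematic closure of its generic fibre, both coincide with that schematic closure, whence $\shG\otimes\mathcal{O}=f(G_S)\otimes\mathcal{O}$ inside $\Aut_{Y/S}\otimes\mathcal{O}$. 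Consequently $h\otimes\O_{S,s}$ induces the required isomorphism $\shG\otimes\O_{S,s}\to f(G_S)\otimes\O_{S,s}$ at each point $s$ of codimension at most $1$, and $(\shG,h)$ is the effective model.

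The main obstacle will be the closed-immersion step, that is, extracting the faithfulness of $h$ from the torsor property; once $h$ is known to be a closed immersion, the identification with $f(G_S)$ at codimension $\leq 1$ points rests only on the explicit generic-fibre computation and on the clean principle that a flat closed subscheme over a discrete valuation ring is the schematic closure of its generic fibre, after which Theorem \ref{effective model} finishes the argument.
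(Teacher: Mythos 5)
Your proposal is correct and follows essentially the same route as the paper: both establish that $h\colon\shG\to\Aut_{Y/S}$ is a proper monomorphism, hence a closed immersion, and then identify $\shG$ with the schematic image $f(G_S)$ over points of codimension at most one by observing that both are flat (equivalently, torsion-free) closed subschemes agreeing on the generic fibre, so that Theorem \ref{effective model} applies. The only cosmetic differences are that you spell out the monomorphism property via the torsor isomorphism and fppf descent along $Y\to S$, where the paper simply cites faithfulness of the action on a torsor, and you make the generic-fibre identification of $\shG\otimes K$ with $G_K$ explicit.
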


\proof
Since the $\shG$-action on the torsor $Y$ is faithful, the canonical map $\shG\ra\Aut_{Y/S}$ is 
a monomorphism. The structure morphism $\shG\ra S$ is finite, and $\Aut_{Y/S}\ra S$ is affine (\ref{lemma.affine}).
It follows that $\shG\ra\Aut_{Y/S}$ is proper, hence a closed embedding by \cite{EGA}, Chapter IV, Proposition 18.12.6.
Being flat, the morphism $\shG\ra S$ has the going-down property, hence the generic fiber $\shG_\eta$ is dense in $\shG$.
By assumption, the scheme $S=\Spec(R)$ is normal, whence contains no embedded component, and the same
holds for $\shG$, because it is the spectrum of  $R[z]/(z^p-a^{p-1}z)$, whose  underlying $R$-module is free.
It follows that $G=\shG_\eta$ is schematically dense in $\shG$.

Our assumption that $S$ is locally
factorial and Theorem \ref{effective model} show that the effective model for the $G$-action exists.
Theorem \ref{effective model} also shows that the closed subscheme $\shG$ of $\Aut_{Y/S}$ is the effective model 
if we show that when $S$ has dimension $1$, then $\shG$ is equal to the schematic image $\shG':=f(G_S)$ of the canonical homomorphism $f:G_S\ra\Aut_{Y/S}$. 
Since $G_S$ is reduced and $G_\eta\subset G_S$ is dense, the same properties hold for $\shG'$. 
Both $\shG$ and $\shG'$ are the closures of their generic fibers inside $\Aut_{Y/S}$.
%Both $\shG$ and $\shG'$ are closed subschemes of $\Aut_{Y/S}$ and contain as generic fiber the discrete group $G=\ZZ/p\ZZ$.
%They have the same underlying topological space. Moreover, they are both reduced. 
Hence, $\shG=\shG'$.
\qed

% ===========================================================================================
\section{Wild actions in dimension one}
\label{wild actions in dimension one}

We further discuss in this section
the notion of effective model for a group action
in the case where the 
base scheme $S$ has dimension $1$.
The main result of this section is Theorem \ref{characterization moderately ramified}.

Let $A:=k[[u]]$ be a formal power series ring over a field $k$ of characteristic $p>0$.
Let $\sigma:A\ra A$ be a $k$-linear automorphism of order $p$, generating a cyclic group $G\subset\Aut_k(A)$
of order $p$. 
Let $x:=N_{A/A^G}(u)=\prod_{\sigma\in G}\sigma(u)$ be the   norm element of the uniformizer $u\in A$.
It is obviously invariant under the action of $G$, so that $k[[x]]\subset A^G$. Samuel's Theorem \cite{Samuel 1966} 
ensures that this canonical inclusion $k[[x]]\subset A^G$ is   an equality.

Since the field $k$ contains only $\zeta=1$ as $p$-th root of unity, 
we find that $\sigma(u)=u+\text{\rm (higher order terms)}$.
It is standard to define the \emph{ramification break} of the action as the largest   integer $m>0$ such that
the induced action of $\sigma$ on   $A/\maxid_A^{m+1}$ is trivial. 
In particular, $\sigma(u)=u+u^{m+1}({\rm unit})$, and $m>0$ 
completely determines the higher ramification groups $G=G_1= G_2 =\dots=G_m\supsetneqq G_{m+1}= (0)$.

Let now $Y:=\Spec A$ and $S:=\Spec A^G$. 
Using the $G$-action on $A$, we associate an action of the constant group scheme $G_S$ on $Y$,
given by the morphism $G_S \times_S Y \to Y$ corresponding to the ring homomorphism
$A \ra A \otimes_{A^G} A^G[s]/(s^p-s)$ defined by
\begin{equation*} 
%\label{eq.action}
a\longmapsto \sum_{i=0}^{p-1} \left( \sigma^i(a)\otimes \prod_{j \in {\mathbb F}_p, \;  j\neq \overline{i}}\frac{(s-j)}{(\overline{i}-j)}\right).
\end{equation*}
Consider now the effective model $\shG/S$ for the action of $G_S$ on $Y$, as recalled in Section \ref{extensions torsors}.
Since $\dim(S)=1$, the existence of the effective model is proved already in \cite{Romagny 2012}, Theorem 4.3.4,
where one finds that $\shG/S$ is a finite flat group scheme of degree $p$ with an $S$-action $\shG \times_S Y \to Y$. It is natural to ask
when $Y/S$ is a torsor for this action. 
The answer to this question is given in our next theorem, which is the main result of this section.

\begin{theorem}
\label{characterization moderately ramified}
Keep the above notation. Let $\shG/S$ be the effective model for the action of $G_S$ on $Y$. The following are equivalent:
\begin{enumerate}[\rm (i)]
\item
The scheme  $Y/S$ is a torsor for the action of $\shG/S$.
\item
%The ramification break $m$ can be written as $m=pr-1$ for some integer $r>0$. %is congruent to $-1$ modulo $p$.
The integer $m+1$ is divisible by $p$.
\item
There exists a uniformizer $u\in A$ and  an invariant element $a\in A^G$ such that $\sigma(u)=u+a$.
\end{enumerate}
Let $\rho:=(m+1)(p-1)/p$. When these equivalent conditions are satisfied, 
the group scheme $\shG $ is isomorphic to the group scheme $\shG_{x^{\rho},0} $ in the Tate--Oort
classification \cite{Tate; Oort 1970}.
\end{theorem}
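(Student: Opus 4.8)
The plan is to use condition (iii) to put the extension $A^G\subset A$ into the explicit torsor form studied in Section~\ref{extensions torsors}, and then to read off $\shG$ from Proposition~\ref{effective model torsor} together with one valuation computation. First I would fix a uniformizer $u$ and an invariant element $a\in A^G$ with $\sigma(u)=u+a$, as furnished by (iii); note $a\neq 0$ since $\sigma\neq\id$. Because $a$ is invariant, one gets $\sigma^i(u)=u+ia$ for all $i$, so the norm element is
$$
x=N_{A/A^G}(u)=\prod_{i\in\FF_p}(u+ia)=u^p-a^{p-1}u ,
$$
using the characteristic-$p$ identity $\prod_{i\in\FF_p}(T+ia)=T^p-a^{p-1}T$. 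By Samuel's theorem \cite{Samuel 1966} we have $A^G=k[[x]]=:R$, and comparing the free $R$-bases $1,u,\dots,u^{p-1}$ on both sides shows that the surjection $R[u]/(u^p-a^{p-1}u-x)\to A$ is an isomorphism. Thus $A$ is exactly the algebra $B$ of Lemma~\ref{naturalaction} with $b=x$, and the $G$-action is the translation $u\mapsto u+a$ considered there.

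Next I would observe that $R=k[[x]]$ is a discrete valuation ring, hence a locally factorial noetherian domain of characteristic $p$, so Proposition~\ref{effective model torsor} applies and identifies the effective model $\shG$ of the $G$-action on $Y=\Spec A$ with the group scheme $\shG_{a,0}=\Spec R[z]/(z^p-a^{p-1}z)$, the scheme $Y/S$ being a torsor under it by Lemma~\ref{naturalaction}. In Tate--Oort terms (\ref{FFG}) this means $\shG$ has parameter $\alpha=a^{p-1}$, and it remains only to express $\alpha$ in terms of $x$.

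For this I would compute the valuation of $a$. The defining property of the ramification break gives $a=\sigma(u)-u\in\maxid_A^{m+1}\smallsetminus\maxid_A^{m+2}$, so $v_A(a)=m+1$. Since $A^G\subset A$ is totally ramified of degree $p$ with trivial residue extension, the valuations satisfy $v_A|_{A^G}=p\,v_{A^G}$, whence $v_{A^G}(a)=(m+1)/p$, an integer precisely because of (ii). Writing $a=x^{(m+1)/p}w$ with $w\in R^\times$ then yields $a^{p-1}=x^{\rho}w^{p-1}$ with $\rho=(m+1)(p-1)/p$. Finally, the scalar automorphism $z\mapsto w^{-1}z$ of $\GG_{a,S}$ carries $\Ker(z\mapsto z^p-a^{p-1}z)$ isomorphically onto $\Ker(z\mapsto z^p-x^{\rho}z)$, because multiplication by $t$ sends the kernel of $z^p-\alpha z$ to the kernel of $z^p-t^{p-1}\alpha z$, and here $t^{p-1}\alpha=w^{-(p-1)}x^{\rho}w^{p-1}=x^{\rho}$. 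Being induced by an automorphism of $\GG_a$, this map respects the group laws, giving the desired isomorphism $\shG\cong\shG_{x^{\rho},0}$ in the Tate--Oort classification.

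The structural input is supplied entirely by Proposition~\ref{effective model torsor}, and the only genuine computation is the valuation of $a$. The point requiring care is the last step: $a^{p-1}$ and $x^{\rho}$ differ by the unit $(p-1)$-th power $w^{p-1}$, and the hard part is to absorb this discrepancy without any hypothesis on $k$ (such as the existence of $(p-1)$-th roots). The observation that resolves it is that exactly such a $(p-1)$-th power is what a scalar change of coordinate on $\GG_a$ rescales away, so the identification holds over an arbitrary field of representatives $k$.
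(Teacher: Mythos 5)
Your argument for the implications out of (iii) is correct and closely parallels the paper's: from $\sigma(u)=u+a$ you get $A\cong R[u]/(u^p-a^{p-1}u-x)$ with $R=k[[x]]$, Proposition~\ref{effective model torsor} identifies the effective model as $\shG_{a^{p-1},0}$ with $Y$ a torsor under it, the valuation computation $v_A(a)=m+1=p\,v_{A^G}(a)$ gives (ii), and the Tate--Oort rescaling $\alpha\mapsto\epsilon^{p-1}\alpha$ absorbs the unit $w^{p-1}$ to yield $\shG\cong\shG_{x^{\rho},0}$. All of that is sound, including the point about not needing $(p-1)$-th roots in $k$.

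The gap is that this establishes only (iii)$\Rightarrow$(i), (iii)$\Rightarrow$(ii), and the final structure statement conditional on (iii); the claimed \emph{equivalence} of (i), (ii), (iii) is not proved. Two substantive implications are missing and neither is a formality. For (ii)$\Rightarrow$(i) (or (ii)$\Rightarrow$(iii)) you must start from the bare hypothesis $p\mid m+1$ and \emph{produce} a uniformizer translated by an invariant element; the paper does this by writing $\Frac(A)=\Frac(A^G)(z)$ with $z^p-z=x^{-m}f(x)$, $m=pr-1$, and checking via the computation of Lemma~\ref{RamificationBreak} that $s=zx^{r}$ is a uniformizer with $\sigma(s)=s+x^{r}$. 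For (i)$\Rightarrow$(iii) you must go from the abstract torsor condition back to such a uniformizer: the paper first pins down $\shG\cong\shG_{x^{r(p-1)},0}$ with $r\geq 1$ via Corollary~\ref{effective model generically constant}, then uses Lemma~\ref{naturalaction} to write $A=k[[x]][s]/(s^p-x^{r(p-1)}s-g(x))$ with action $s\mapsto s+x^{r}$, and finally invokes formal smoothness of $A$ and the Implicit Function Theorem to conclude that $s$ is a uniformizer. Without these two directions your proof shows that (iii) is sufficient for (i) and (ii), but not that it is necessary, so the theorem as stated is not established.
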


Note that in characteristic $p=2$, the equivalent conditions in Theorem \ref{characterization moderately ramified} automatically hold, because the ramification
break $m$ is known to be always odd in this case (the proof of this fact is recalled 
%STEFAN
in \ref{RamificationBreak}).
Theorem \ref{characterization moderately ramified} illustrates the fact that the geometric condition {\it $Y/S$ is a torsor for the action of $\shG/S$}
implies an interesting condition on the equation defining the automorphism $\sigma$. 
When the three equivalent conditions in Theorem \ref{characterization moderately ramified} are satisfied, we call the action of $G$ on $A$ {\it moderately ramified}.  
We consider the case where  $\dim(A)>1$ in \ref{definition moderately ramified} 
and impose  an analogous geometric condition (MR5) in our general definition of moderately ramified action. 
The proof of Theorem \ref{characterization moderately ramified} is postponed to 
\ref{Proof characterization moderately ramified}. We start by reviewing the relevant parts of the Tate--Oort
classification \cite{Tate; Oort 1970}.

\begin{emp} \label{TateOortReview}
Let $R$ be any ring of characteristic $p>0$,  with $\Pic(R)=0$. Set $S:=\Spec R$.
The Tate--Oort classification \cite{Tate; Oort 1970},
which we already
discussed in \ref{FFG}, now takes the following simpler form:
a group  scheme $\shG/S$ whose structure sheaf is locally free of rank $p$  is isomorphic to
$\shG_{\alpha,\beta}:=\Spec(R[T]/(T^p-\alpha T))$
for some elements $\alpha,\beta\in R$ with $\alpha \beta=0$.
As abelian sheaves,  
$$
\shG_{\alpha,\beta}(T):=\left\{s\in\Gamma(T,\O_T)\mid s^p=\alpha s\right\},
$$
with group law given by the formula
$s\star s' = s+s'+\beta\sum_{i=1}^{p-1} \frac{1}{i! (p-i)!} s^is'^{p-i}$.
Moreover, two group schemes  $\shG_{\alpha,\beta}/S$ and $ \shG_{\alpha',\beta'}/S$ 
are isomorphic if and only if there exists $\epsilon\in R^\times$
such that $\alpha'=\epsilon^{p-1}\alpha$ and $\beta'=\epsilon^{1-p}\beta$.
\end{emp}

\begin{proposition}
\label{generically split group scheme}
Suppose that $R$ is a domain. Then the generic fiber of $\shG_{\alpha,\beta}$ is 
\'etale if and only if $\alpha\neq 0$ and   $\beta=0$.
Assume in addition that $R$ is normal. 
Then  the generic fiber of $\shG_{\alpha,\beta}$ is constant if and only if  $\beta=0$ and $\alpha=a^{p-1}$
for some non-zero $a\in R$.  
\end{proposition}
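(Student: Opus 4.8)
The plan is to reduce both statements to the separability and splitting behaviour of the polynomial $T^p-\alpha T$ over the fraction field $K:=\Frac(R)$, using the explicit description of $\shG_{\alpha,\beta}$ recalled in \ref{TateOortReview}. The generic fiber is $\shG_{\alpha,\beta}\otimes K=\Spec(K[T]/(T^p-\alpha T))$, a finite group scheme of degree $p$ over the field $K$, and all the relevant structure is encoded in this one polynomial.

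For (i), I would use that a finite group scheme over a field is \'etale if and only if its coordinate algebra is reduced, equivalently the defining polynomial is separable. Here the formal derivative of $T^p-\alpha T$ is $pT^{p-1}-\alpha=-\alpha$, because $p=0$ in $R$. Thus the polynomial is separable precisely when $-\alpha$ is a unit in $K$, i.e.\ when $\alpha\neq 0$; and when $\alpha=0$ the algebra $K[T]/(T^p)$ is non-reduced, so the generic fiber fails to be \'etale. This shows the generic fiber is \'etale if and only if $\alpha\neq 0$. Finally, since $R$ is a domain and $\alpha\beta=0$, the condition $\alpha\neq 0$ already forces $\beta=0$, so it is equivalent to the stated condition ``$\alpha\neq 0$ and $\beta=0$''.

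For (ii), I would first note that constant implies \'etale, so part (i) gives $\beta=0$ and $\alpha\neq 0$; in particular the group law of $\shG_{\alpha,0}$ is the additive one, so for any field extension the points are exactly the roots of $T^p-\alpha T=T(T^{p-1}-\alpha)$, which are closed under addition by the Frobenius and under $\FF_p$-scaling. The generic fiber is constant precisely when all $p$ (distinct) roots lie in $K$, i.e.\ when this polynomial splits completely over $K$. If $\alpha=a^{p-1}$ for some nonzero $a\in R$, then $a$ is a nonzero root and the full set of roots is $\{ca:c\in\FF_p\}$ --- one checks $(ca)^p=c^pa^p=c\,\alpha a=\alpha(ca)$ using $c^p=c$ in $\FF_p$ --- so the polynomial splits and the generic fiber is constant. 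Conversely, if it is constant there exists a nonzero root $a\in K$ with $a^{p-1}=\alpha$.

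The one place where the hypotheses do genuine work --- and the step I would flag as the crux --- is upgrading such a root from $K$ to $R$ in the converse direction of (ii). The element $a\in K$ satisfies the monic equation $X^{p-1}-\alpha=0$ with coefficients in $R$, hence is integral over $R$; since $R$ is normal, i.e.\ integrally closed in $K$, it follows that $a\in R$, and $a\neq 0$ since $\alpha\neq 0$. This is exactly why normality is imposed in (ii) but not in (i), and it is the only ingredient beyond the elementary polynomial analysis.
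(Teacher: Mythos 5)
Your proof is correct and follows essentially the same route as the paper's: derivative computation plus $\alpha\beta=0$ for the \'etale criterion, the explicit factorization $T^p-\alpha T=\prod_{i\in\FF_p}(T-ia)$ for one direction of the constancy statement, and integral closedness to descend the root $a$ from $\Frac(R)$ to $R$ in the other. Your explicit remark that $a$ satisfies the monic equation $X^{p-1}-\alpha=0$ merely spells out the step the paper compresses into ``Since $R$ is normal, we already have $a\in R$.''
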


\proof
Taking the derivative of $T^p-\alpha T$, we see that $\alpha$ defines the locus
where the structure morphism $\shG_{\alpha,\beta}\ra\Spec(R)$ is not smooth.
Suppose that  the generic fiber of $\shG_{\alpha,\beta}$ is \'etale. Then $\alpha\neq 0$.
Since $R$ is a domain, the condition $\alpha\beta=0$ ensures $\beta=0$.
The converse is   also clear.

Now assume that $R$ is normal.
Suppose that $\alpha=a^{p-1}$ for some non-zero $a\in R$.
Then we have a factorization $T^p-\alpha T=\prod_{i=0}^{p-1}(T-ia)$. The roots $ia$ are pairwise different.
Thus the generic fiber of $\shG_{\alpha,\beta}$ must be constant.
Conversely, suppose that $\shG_{\alpha,\beta}$ is generically constant.
Then the polynomial $T^p-\alpha T$ is separable, and its roots are contained
in the  field of fractions
$F=\Frac(R)$.
In particular, there is a non-zero element $a\in F$ with $a^{p-1}=\alpha$. Again it follows that
$\beta=0$. Since $R$ is normal, we already have $a\in R$.  
\qed

\begin{corollary} \label{cor.structure}  \label{effective model generically constant}
Let $\shG/S$ be  a finite flat group scheme of degree $p$ that  is
generically constant. Then  $\shG/S$ is isomorphic
to $\shG_{x^{r(p-1)},0}/S$ for some unique integer $r\geq 0$.
\end{corollary}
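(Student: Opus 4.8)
The plan is to combine the Tate--Oort normal form recalled in \ref{TateOortReview} with the criterion of Proposition \ref{generically split group scheme}, and then to exploit that the base ring $R=k[[x]]$ is a discrete valuation ring in order to extract the exponent $r$ and to verify its uniqueness. Throughout, $S=\Spec(R)$ with $R=k[[x]]$, which is a complete normal local domain of characteristic $p$; in particular $\Pic(R)=0$, so the simplified classification of \ref{TateOortReview} is available.

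First I would apply that classification to write $\shG\simeq\shG_{\alpha,\beta}$ for some $\alpha,\beta\in R$ with $\alpha\beta=0$. Since $\shG$ is generically constant by hypothesis and $R$ is a normal domain, Proposition \ref{generically split group scheme} forces $\beta=0$ and $\alpha=a^{p-1}$ for some nonzero $a\in R$. This reduces the whole statement to understanding the group schemes $\shG_{\alpha,0}$ with $\alpha$ a $(p-1)$-st power, up to the isomorphism relation $\alpha'=\varepsilon^{p-1}\alpha$ (with $\varepsilon\in R^\times$) recorded at the end of \ref{TateOortReview}.

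Next I would use that $R$ is a discrete valuation ring with uniformizer $x$ to factor $a=x^r\varepsilon_0$, where $r:=\val(a)\geq 0$ and $\varepsilon_0\in R^\times$. Then $\alpha=a^{p-1}=\varepsilon_0^{p-1}x^{r(p-1)}$, and setting $\varepsilon:=\varepsilon_0^{-1}\in R^\times$ gives $x^{r(p-1)}=\varepsilon^{p-1}\alpha$. The isomorphism criterion of \ref{TateOortReview}, applied with $\beta=\beta'=0$, then yields $\shG\simeq\shG_{\alpha,0}\simeq\shG_{x^{r(p-1)},0}$, which proves existence.

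For uniqueness, the clean observation is that $\val(\alpha)$ is an isomorphism invariant of $\shG_{\alpha,0}$: any isomorphism multiplies $\alpha$ by $\varepsilon^{p-1}$ with $\varepsilon$ a unit, and $\val(\varepsilon)=0$. Concretely, if $\shG_{x^{r(p-1)},0}\simeq\shG_{x^{r'(p-1)},0}$ then $x^{r'(p-1)}=\varepsilon^{p-1}x^{r(p-1)}$ for some $\varepsilon\in R^\times$, and taking valuations gives $r'(p-1)=r(p-1)$, hence $r'=r$. The proof is short, and there is no serious obstacle: the only genuine input beyond the cited structure results is the factorization of $a$ via the valuation on the discrete valuation ring $R$, which simultaneously produces the unit realizing the isomorphism and, through the valuation, pins down $r=\val(\alpha)/(p-1)$ uniquely.
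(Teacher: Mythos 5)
Your proof is correct, and the existence half follows the paper's argument almost verbatim: invoke Proposition \ref{generically split group scheme} to get $\shG\simeq\shG_{a^{p-1},0}$, then absorb the unit part of $a$ using the Tate--Oort isomorphism relation $\alpha'=\epsilon^{p-1}\alpha$. (Your variant of factoring $a=x^r\varepsilon_0$ directly is marginally cleaner than the paper's, which writes $\alpha=\epsilon x^e$ and then argues that $e=(p-1)r$ and that $\epsilon$ must be a $(p-1)$-th power.) Where you genuinely diverge is in the uniqueness of $r$. You read it off from the ``only if'' direction of the Tate--Oort classification: an isomorphism $\shG_{x^{r(p-1)},0}\simeq\shG_{x^{r'(p-1)},0}$ forces $x^{r'(p-1)}=\epsilon^{p-1}x^{r(p-1)}$ with $\epsilon\in R^\times$, and taking valuations gives $r=r'$. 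This is perfectly valid given the classification statement recalled in \ref{TateOortReview}. The paper instead identifies $r$ intrinsically as the length of the schematic intersection of the zero-section $(s=0)$ with any non-zero section $(s=ix^r)$ inside $\shG$, which avoids appealing to the ``only if'' half of the classification and gives a geometric meaning to $r$ that is reused implicitly elsewhere (e.g.\ in the proof of Proposition \ref{torsor1}). Your route is shorter; the paper's buys an invariant description of $r$ that does not depend on having normalized $\alpha$ first.
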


\proof
According to  Proposition \ref{generically split group scheme}, the group scheme $\shG$ is isomorphic to $\shG_{\alpha,0}$ for some 
power series of the form $\alpha=a^{p-1}$ with $a\in R$.
Write $\alpha=\epsilon\cdot x^e$ for some unit $\epsilon\in R$ and some exponent $e\geq 0$.
Taking valuations of both sides, we see that $e=(p-1)r$ for some $r$, so $x^e$, and whence the unit $\epsilon$, are $(p-1)$-th powers.
Since $\alpha$ is unique up to invertible $(p-1)$-th powers, we may assume $\alpha=x^{r(p-1)}$,
so that
$$
\shG=\shG_{x^{r(p-1)},0}= \Spec R[s]/(s^p-x^{r(p-1)} s).
$$
The integer $r\geq 0$ is unique, because it is the length of the
intersection, inside the scheme $\shG$, of the zero-section (defined by $s=0$)
and  any non-zero section (defined by $s=ix^r$ for some $i\in\FF_p^\times$).
\qed

\medskip
We now return to the initial set-up of this section where $ A=k[[u]]$ and $R=A^G=k[[x]]$.
%\medskip
Recall that since $\Frac(A)/\Frac(A^G)$ is a Galois extension of degree $p$, we can find $z \in \Frac(A)$, a power series 
$f(x)=\sum_{i=0}^\infty \lambda_i x^i \in k[[x]]^\times$, 
and an integer $\mu $, such that $\Frac(A)=\Frac(A^G)(z)$ and $z$ satisfies the Artin--Schreier
equation $z^p-z=x^{-\mu} f(x)$.
Since the extension $A/A^G$ is totally ramified, 
we find that $\mu>0$. It is well-known that there exists such $z$ with $\mu$ coprime to $p$.
Indeed, write $\mu=pr-i$ for some uniquely defined non-negative integers $r$ and $0\leq i<p$.
The ring $A$ contains the element $s:=zx^{r}$, which satisfies the equation 
$s^p -x^{r(p-1)}s = x^i f(x)$. 
%Let us show now that we can assume that $\mu$ is not a multiple of $p$. 
%Indeed, i
If $i=0$, then the residue field of $A$ contains the class of $s$, 
and since the residue field extension is trivial, we find 
that $\lambda_0$ is a $p$-th power modulo $(x)$, say $\lambda_0=\ell^p$ for some $\ell \in k$. 
It follows that we can consider $z':=z+ \ell/x^r$, 
which satisfies $z'^p-z'= x^{-\mu} f(x) - \lambda_0/x^\mu + \ell/x^r$. 
After possibly finitely many similar steps, we obtain a generator $z$
for $\Frac(A)$ whose associated integer $\mu>0$ is coprime to $p$.
The following lemma is well-known, and the notation introduced in its proof
will be used in \ref{Proof characterization moderately ramified}.

\begin{lemma}
\label{RamificationBreak} 
Write as above $\Frac(A)=\Frac(A^G)(z)$ such that $z$ satisfies the Artin--Schreier
equation $z^p-z=x^{-\mu} f(x)$  with $\mu>0$ coprime to $p$. Then $\mu=m$.
\end{lemma}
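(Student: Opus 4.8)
The plan is to compute $v(\sigma(z)-z)$ in two independent ways, where $v$ denotes the normalized valuation on $\Frac(A)=k((u))$ with $v(u)=1$, and then to compare the answers. First I would record the basic valuations. Since $x=\prod_{\sigma\in G}\sigma(u)$ is a product of $p$ uniformizers, $v(x)=p$, so the right-hand side of the Artin--Schreier equation satisfies $v(x^{-\mu}f(x))=-\mu p<0$. As $\mu>0$ forces $v(z)<0$, the dominant term of $z^p-z$ is $z^p$, so $p\,v(z)=-\mu p$, hence $v(z)=-\mu$. I then fix a unit $g\in k[[u]]^\times$ with $z=u^{-\mu}g(u)$.

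For the first computation I would exploit that $x^{-\mu}f(x)\in\Frac(A^G)$ is $G$-invariant, so that $\sigma(z)^p-\sigma(z)=z^p-z$. In characteristic $p$ this rearranges to $(\sigma(z)-z)^p=\sigma(z)-z$, whence $\sigma(z)-z\in\FF_p$. This element is nonzero: were it zero, $z$ would be $G$-invariant and $\Frac(A)=\Frac(A^G)(z)$ would collapse to $\Frac(A^G)$, contradicting $[\Frac(A):\Frac(A^G)]=p$. Thus $\sigma(z)-z\in\FF_p^\times$ and $v(\sigma(z)-z)=0$.

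For the second computation I would expand $\sigma(z)$ directly. Writing $\delta:=\sigma(u)-u$, the definition of the ramification break gives $v(\delta)=m+1$. Since $\sigma$ is a continuous $k$-algebra automorphism, $\sigma(z)=\sigma(u)^{-\mu}g(\sigma(u))=(u+\delta)^{-\mu}g(u+\delta)$. Splitting
$$
\sigma(z)-z=\bigl[(u+\delta)^{-\mu}-u^{-\mu}\bigr]g(u+\delta)+u^{-\mu}\bigl[g(u+\delta)-g(u)\bigr],
$$
the second summand has valuation at least $-\mu+v(\delta)=m-\mu+1$, while in the first summand the binomial expansion $(u+\delta)^{-\mu}=u^{-\mu}(1+\delta/u)^{-\mu}=u^{-\mu}\bigl(1-\mu\,\delta/u+\cdots\bigr)$ isolates the term $-\mu\,u^{-\mu-1}g(u+\delta)\,\delta$, of valuation $-\mu-1+v(\delta)=m-\mu$ and with nonzero leading coefficient, every remaining contribution having strictly larger valuation. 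Therefore $v(\sigma(z)-z)=m-\mu$.

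Comparing the two results forces $m-\mu=0$, that is $\mu=m$. I expect the only genuinely delicate point to be the nonvanishing of the leading coefficient in the second computation: the factor $g(u+\delta)$ is a unit, so everything hinges on $\mu\not\equiv 0\pmod p$, which is exactly the hypothesis that $\mu$ is coprime to $p$. Without it the leading term could drop out and the value $m-\mu$ would fail to be detected. The remaining bookkeeping---checking that $g(u+\delta)-g(u)$ and the higher binomial terms all have valuation strictly exceeding $m-\mu$---is routine given $v(\delta)=m+1$ and $m\geq 1$, since each such term carries at least one extra factor of valuation $\geq m$.
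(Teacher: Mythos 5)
Your argument is correct, and it reaches the conclusion by a genuinely different route from the paper's. The paper substitutes $s=zx^r$ (writing $\mu=pr-i$ with $0<i<p$), manufactures an explicit uniformizer $w=s^c/x^d$ from the relation $ci-dp=1$, and computes the valuation of $\sigma(w)-w$ directly from the binomial expansion of $(s+x^r)^c$: the unique term of minimal valuation (at $n=c-1$) has valuation $1+\mu$, whence $m+1=1+\mu$. You instead evaluate $v(\sigma(z)-z)$ for the Artin--Schreier generator itself in two ways: the cocycle identity $(\sigma(z)-z)^p=\sigma(z)-z$ places it in $\FF_p^\times$, giving valuation $0$, while the expansion of $(u+\delta)^{-\mu}g(u+\delta)$ isolates a leading term of valuation $m-\mu$ whose coefficient $-\mu$ survives precisely because $p\nmid\mu$. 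The coprimality hypothesis does the same essential work in both proofs (in the paper it makes $c$ exist and forces the minimum to be attained at a single index; in yours it keeps the linear binomial term alive), and both ultimately rest on $v(\delta)=m+1$. Your version is shorter and avoids the auxiliary integers $c$ and $d$; the paper's has the side benefit of producing the explicit uniformizer $s^c/x^d$ and the integers $c,d$, whose notation is reused in the proof of Theorem \ref{characterization moderately ramified} (where $i=c=1$ and $d=0$). The one step you should spell out slightly more is the expansion $(1+\delta/u)^{-\mu}=\sum_{j\ge 0}\binom{-\mu}{j}(\delta/u)^j$: since $v(\delta/u)=m\ge 1$ the series converges $\maxid_A$-adically, its integer coefficients make sense in $k$, and the terms with $j\ge 2$ have valuation at least $2m>m$ --- which is exactly the bookkeeping you defer, and it does go through.
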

\proof
Write $\mu=pr-i$ with $0<i<p$.
Let $0<c<p$ be the unique integer such that $ci$ is congruent to $1$ modulo $p$,
and let $d:=(ci-1)/p$, so that $ci-dp=1$.
The element $s=zx^r\in  \Frac(A)$ satisfies the integral equation
$$
s^p - x^{r(p-1)}s = x^if(x).
$$
Recall that $A=k[[u]]$, so that 
${\rm ord}_u(s)={\rm ord}_u(z)+pr=-\mu+pr =i$. Consequently, the element
$s^c/x^d$ has valuation
${\rm ord}_u(s^c/x^d)= ci-pd=1$. 
Thus the element $s^c/x^d$ is a uniformizer in $A$, and  hence  $k[[s^c/x^d]]=A$.
The group action $z\mapsto z+1$ sends $ s$ to $s+x^r$.
%, and thus induces an action on $s^c/x^d$ to $((z+1)x^r)^c/x^d= $ as claimed.
To determine the ramification break $m$, it remains to compute the valuation of
$$
\frac{(s+x^{r})^c}{x^d}-\frac{s^c}{x^d}=\sum_{n=0}^{c-1} \binom{c}{n} s^n x^{r(c-n)-d},
$$
and we leave this computation to the reader.
\if false
Since $c<p$, the $n$-th summand has valuation $ni+ (r(c-n)-d)p = -dp+ n(i-rp) +rpc$.
This expression takes its minimal value exactly when $n=c-1$, because  $0<i<p\leq rp$.
In turn, this unique minimal value is the  valuation of the sum
%\cite{AC 5-6}, 
(\cite{Bourbaki AC 1-7}, Chapter VI, \S 3, No.\ 1, Proposition 1):
$$
-dp+ (c-1)(i-rp)+rcp = -dp+ci   - i +rp   = 1+\mu.
$$
It follows that $\mu=m$ is indeed the ramification break.
\fi
\qed

\begin{emp} 
\label {Proof characterization moderately ramified}
\noindent
{\it Proof of Theorem {\rm \ref{characterization moderately ramified}}.}
The implication (iii)$\Rightarrow$(ii) is   immediate: Suppose that $\sigma(u)=u+a$ for some invariant $a$.
Then $m+1:=\val_u(\sigma(u)-u)= \val_u(a)$, and the latter is a multiple of $p$ since $A^G\subset A$ has degree $p$.

(ii)$\Rightarrow$(i): Suppose that $m=pr-1$. As above, we can find $z \in \Frac(A)$ and  a power series 
$f(x)=\sum_{i=0}^\infty \lambda_i x^i \in k[[x]]^\times$  such that $\Frac(A)=\Frac(A^G)(z)$ and $z$ satisfies the Artin--Schreier
equation $z^p-z=x^{-m} f(x)$.
With the notation from the proof of Lemma \ref{RamificationBreak},
we have $i=c=1$ and $d=0$, and we see that $s:=zx^r\in A$ is a uniformizer, satisfying
the equation
$$
s^p-x^{r(p-1)}s - xf(x) =0.
$$
Moreover, the $G$-action is given by $s\mapsto s+x^r$.
In light of the exact sequence \eqref{torsor sequence}, the scheme $Y=\Spec(A)$ is a torsor
for the group scheme $\shG_{x^{r(p-1)},0}$. By Proposition \ref{effective model torsor},
the group scheme $\shG_{x^{r(p-1)},0}$ and its associated action on $Y$ is the effective model for the action of $G$ on $Y$.

(i)$\Rightarrow$(iii): Suppose that $Y$ is a $\shG$-torsor. 
Corollary \ref{cor.structure}
shows that since $\shG/S$ is  a finite flat group scheme of degree $p$ that  is
generically constant, then it is isomorphic
to $\shG_{x^{r(p-1)},0}/S$ for some unique integer $r\geq 0$.
The case $r=0$ is impossible, for then the closed fiber of $\shG$ is \'etale, whereas the closed fiber of $Y \to S$ is non-reduced. 
%Proposition \ref{torsor1} shows that $\shG=\shG_{x^{r(p-1)},0}$,  where $m+1=pr+i$. 
Since $Y$ is a $\shG$-torsor, Lemma \ref{naturalaction} allows us to write that 
$$
A=k[[x]][s]/(s^p-x^{r(p-1)}s - g(x))
$$
for some power series $g(x)\in k[[x]]$. 
The induced $G$-action is given by $s\mapsto s+x^r$.
Using the hypothesis that $A$ is formally smooth over $k$,
we infer that the  partial derivative of $s^p-x^{r(p-1)}s - g(x)$ with respect to $x$ is a unit,
and we deduce from the Implicit Function Theorem
that one may express $x$ as a formal power series in $s$.
In turn, the canonical map $k[[s]]\ra A$ is bijective.
Setting $u=s$ and $a=x^r$, we see that (iii) holds.
\qed
\end{emp}

\if false
\begin{remark} Let $S=\Spec k[[x]]$.
%In the case $r=0$, t
The group scheme $\shG_{1,0}/S$ is nothing but the constant group scheme $G_S/S$.
When $r'\leq r$, we have a canonical  $S$-group scheme homomorphism 
$ \shG_{x^{r'(p-1)},0} \to \shG_{x^{r(p-1)},0}$, 
which on the level of $k[[x]]$-algebras
is given by 
$$
k[[x]][s]/(s^p-x^{r(p-1)} s) \longrightarrow k[[x]][t]/(t^p-x^{r'(p-1)} t),\quad s \longmapsto x^{r-r'}t.
$$
We will view below the $S$-morphism $ \shG_{1,0} \to \shG_{x^{r(p-1)},0}$ as given by the inclusion of the $k[[x]]$-subalgebra
$k[[x]][x^rs] $ into $k[[x]][s]/(s^p-s)$. 
Let now $Y=\Spec A$, and consider the action $G_S \times_S Y \to Y $ given explicitly in \eqref{eq.action}. 
We have seen in Theorem \ref{characterization moderately ramified}
that when $m+1$ is divisible by $p$, then this action naturally factors as
$$G_S \times_S Y \lra \shG_{x^{r(p-1)},0} \times_S Y \lra Y,
$$
with $rp=m+1$. We show below that such factorization can be seen using the explicit homomorphism in \eqref{eq.action}, and that in general, 
a similar factorization occurs when $rp(p-1)\leq m+1$. 
%(Stefan, I am not too sure how canonical things are here.)
Indeed, let us note first the following two identities involving  expressions
appearing in \eqref{eq.action}:
$$
\sum_{i=0}^{p-1} \left( \prod_{j \in {\mathbb F}_p, \;  j\neq \overline{i}}\frac{(s-j)}{(\overline{i}-j)}\right) = 1,
  \text{ \rm and }
\sum_{i=0}^{p-1} \left( \overline{i} \prod_{j \in {\mathbb F}_p, \;  j\neq \overline{i}}\frac{(s-j)}{(\overline{i}-j)}\right) = s.
$$
The left hand sides of these equalities are polynomials in $s$ of degree at most $p-1$, and thus they are completely determined by their values 
at the $p$ elements $j \in {\mathbb F}_p$. Assume now that $rp(p-1)\leq m+1$. We can then write
$$ \sum_{i=0}^{p-1} \left( \sigma^i(a)\otimes \prod_{j \in {\mathbb F}_p,\; j\neq \overline{i}}\frac{(s-j)}{(\overline{i}-j)}\right)
 = a \otimes 1+
 \sum_{i=0}^{p-1} \left( (\sigma^i(a)-a)x^{-r(p-1)}\otimes \prod_{j \in {\mathbb F}_p, \; j\neq \overline{i}}\frac{(sx^r-jx^r)}{(\overline{i}-j)}\right),
 $$
 which justifies the existence of a factorization $G_S \times_S Y \lra \shG_{x^{r(p-1)},0} \times_S Y$.
 When $rp=m+1$, there exists a uniformizer $u\in A$ such that $\sigma(u)=u+\epsilon x^r$ for some $\epsilon \in (A^G)^\times$. It follows in this case
 that 
 $$ \sum_{i=0}^{p-1} \left( \sigma^i(u)\otimes \prod_{j \in {\mathbb F}_p,\; j\neq \overline{i}}\frac{(s-j)}{(\overline{i}-j)}\right)
 =
 u \otimes 1 +1\otimes \epsilon x^r s.
 $$
 \end{remark}
\fi
%Stefan I am removing the proposition for now, as I do not know how to prove it, and the remark above seem to indicate that 
% the situation might be more complicated 
\if false
\begin{proposition} \label{torsor1}
Keep the above notation. In particular,  $Y=\Spec A$, $S =\Spec k[[x]]$, and $\shG/S$ is the effective model for the action of $G$ on $Y$.
Let $m>0$ be the ramification break, and write $m+1=pr+i$ with some unique integers $r\geq 0$ and  $0\leq i<p$.
Then 
%the effective model 
$\shG/S$ is isomorphic to $ \shG_{x^{r(p-1)},0}/S$.
\end{proposition}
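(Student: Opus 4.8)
The plan is to reduce the proposition to a single numerical statement — the determination of the Tate--Oort exponent of $\shG$ — and then to pin down that exponent by an $x$-adic valuation count. First I would invoke Corollary \ref{cor.structure}: the effective model $\shG/S$ is a finite flat group scheme of degree $p$ whose generic fibre is the constant group $G$, hence is generically constant, and the corollary provides a \emph{unique} integer $r'\ge 0$ with $\shG\simeq\shG_{x^{r'(p-1)},0}$. Thus the whole content of the proposition is the identity $r'=r$, where $m+1=pr+i$ with $0\le i<p$. Recall from the proof of that corollary that $r'$ has an intrinsic meaning: it is the length of the schematic intersection, inside $\shG$, of the zero-section $\{T=0\}$ with a non-zero section $\{T=jx^{r'}\}$, $j\in\FF_p^\times$; equivalently $r'(p-1)=\val_x(\alpha)$ for the Tate--Oort parameter $\alpha$.

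Next I would make $\shG$ explicit. Since $\dim S=1$, the effective model is the flat part of the schematic image of the canonical homomorphism $f\colon G_S\to\Aut_{Y/S}$. Using the reduction preceding Lemma \ref{RamificationBreak} I would first normalise so that $\Frac(A)=\Frac(A^G)(z)$ with $z^p-z=x^{-m}f(x)$ and $m$ coprime to $p$; in particular $i\neq 1$. I would then fix a uniformizer $\pi$ of $A$ and use the basis $1,\pi,\dots,\pi^{p-1}$ of $A$ over $R=k[[x]]$ to embed $\Aut_{Y/S}\subset\underline{\GL}_{p,R}$ as in Lemma \ref{lemma.affine}. The key local computation is: for each $l\neq 0$ one has $\val_u(\sigma^l(\pi)-\pi)=m+1=pr+i$, and since $\pi^p=\varepsilon x$ for a unit $\varepsilon\in A^\times$ one may rewrite $\sigma^l(\pi)-\pi=w\varepsilon^r\,x^r\pi^i$ with $w\in A^\times$. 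Reducing $w\varepsilon^r$ in the basis shows that every entry of the matrix $\sigma^l-\mathrm{id}$ lies in the ideal $(x^r)\subset R$, and that the entry recording the $\pi^i$-coefficient of $\sigma^l(\pi)$ — an off-diagonal slot, because $i\neq 1$ — equals a unit times $x^r$ exactly. Dividing this distinguished coordinate by $x^r$ yields a coordinate $T$ on the flatification with $T(\sigma^l)\in R^\times\cdot j_l$, whose $p$ values are the roots of $T^p-x^{r(p-1)}T$; this exhibits $\shG\simeq\shG_{x^{r(p-1)},0}$, so $r'=r$. As a consistency check, when $i=0$ this recovers the torsor case of Proposition \ref{effective model torsor}, with exponent $r(p-1)=(m+1)(p-1)/p$.

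The main obstacle is the passage from the schematic image to its flatification. The valuation count above shows cleanly that $x^r$ divides all entries of $\sigma^l-\mathrm{id}$ and divides one of them exactly, which fixes the valuation of the relevant coordinate; but converting this into the \emph{exact} isomorphism $\shG\simeq\shG_{x^{r(p-1)},0}$ requires verifying both that the rescaled coordinate $T$ generates a free $R$-algebra of rank $p$ and that the induced relation is precisely $T^p=x^{r(p-1)}T$, not a proper divisor. I expect the cleanest way to close this gap is to produce $T$ directly, check $T^p-x^{r(p-1)}T=0$ together with freeness, and then appeal to the uniqueness clause of Theorem \ref{effective model}; alternatively one can read the exponent off Romagny's explicit description of the effective model over the discrete valuation ring $R$.
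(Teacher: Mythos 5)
Your proposal is correct in substance and follows essentially the same route as the paper. Both proofs reduce, via Corollary \ref{cor.structure}, to identifying the unique Tate--Oort exponent of $\shG$, and both identify it by measuring the congruence level between a non-trivial section of $\shG$ and the identity section: the paper does this without matrices, by noting that the section $P$ of $\shG_{x^{\ell(p-1)},0}$ defined by $(s-x^{\ell})$ meets the zero section to order exactly $\ell$, lifts to a generator $\sigma^l$ of $G_S(S)$, and that $\sigma^l$ acts trivially on $A/\maxid_R^jA=A/\maxid_A^{jp}$ precisely when $jp\le m+1$, i.e.\ when $j\le r$, forcing $\ell=r$. Your computation of the matrix of $\sigma^l-\mathrm{id}$ in the basis $1,\pi,\dots,\pi^{p-1}$ is the coordinate incarnation of this same congruence count, and your valuation bookkeeping is right (all entries lie in $(x^r)$, and the $\pi^i$-slot of the column of $\pi$ is exactly a unit times $x^r$; note that the columns of $\pi^j$ for $j\ge 2$ need the same, even easier, estimate, which you should record). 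One intermediate assertion is false as stated: the values of your rescaled coordinate $T$ on the sections $\sigma^l$ are units of the form $l\cdot(\text{unit})+O(x)$, which are not the roots of $T^p-x^{r(p-1)}T$ (those roots are $jx^{r}$ for $j\in\FF_p$, in the unrescaled normal-form coordinate), so $T$ does not directly exhibit the Tate--Oort normal form. But that step is dispensable: as you observe at the outset, the exponent is already characterized as the length of the schematic intersection of the zero section with any non-zero section, and since $\shG\hookrightarrow\underline{\GL}_S(\O_Y)$ is a closed immersion this length is $\min$ of the $x$-valuations of the entries of $\sigma^l-\mathrm{id}$, which your computation shows equals $r$. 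Invoking that characterization closes the argument exactly as the paper does.
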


\proof
By definition of the ramification break $m > 0$, the $m$-th infinitesimal neighborhood
$Y_m:=\Spec(A/\maxid^{m+1})$ is the largest subscheme of $Y$ on which $G$ acts trivially.
If follows that $Y_{pr}:=\Spec(A/\maxid^{pr})$ is the largest infinitesimal neighborhood
on which $G$ acts trivially that is the preimage of some subscheme  of $S$ under the morphism $Y\ra S$.

In particular, the  closure
$\overline{\left\{\sigma\right\}}\subset\shG\subset\underline{\GL}_S(\O_Y)$
of the generator $\sigma\in G$ has the property that 
its schematic intersection with the zero-section  $S\ra\underline{\GL}_S(\O_Y)$
is an Artin scheme of length precisely $r$.
Among all finite flat group scheme of order  $p$ over $S=\Spec(k[[x]])$,
only $\shG_{x^{r(p-1)},0}= \Spec k[[x]][s]/(s^p-x^{r(p-1)} s)$ has this property.
If follows that the effective model $\shG$ is isomorphic to $\shG_{x^{r(p-1)},0}$.
\qed
\fi

We can generalize the last statement of Theorem {\rm \ref{characterization moderately ramified}} as follows.
\begin{proposition} \label{torsor1}
Let $G$ and $A$ be as at the beginning of this section. In particular, $A^G=R=k[[x]]$, $Y=\Spec A$, and $S
=\Spec k[[x]]$.
Let $m>0$ be the ramification break, and write $m+1=pr+i$ with some
unique integers $r\geq 0$ and  $0\leq i<p$.
Then the effective model $\shG/S$ of the action of the constant group scheme $G_S$ on $Y$ is isomorphic to $ \shG_{x^{r(p-1)},0}/S$.
\end{proposition}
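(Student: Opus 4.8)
The plan is to identify $\shG$ through the Tate--Oort classification and then pin down its single numerical invariant by a length computation transported to $Y$. First I would record the structural facts: the effective model $\shG/S$ is a finite flat $S$-group scheme of degree $p$, whose existence over the one-dimensional base $S=\Spec k[[x]]$ is guaranteed by \cite{Romagny 2012}; it sits inside $\Aut_{Y/S}$ as a closed subgroup scheme, so that $\shG\hookrightarrow\Aut_{Y/S}$ is a monomorphism; and its generic fibre is the constant group $G=\ZZ/p\ZZ$, since $Y_\eta\to\eta$ is a $G$-torsor. In particular $\shG$ is generically constant, so Corollary \ref{cor.structure} applies and produces a unique integer $s\geq 0$ together with an isomorphism $\shG\simeq\shG_{x^{s(p-1)},0}$. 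The whole task then reduces to proving $s=r$.

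To compute $s$ I would use the characterization of this invariant furnished by the proof of Corollary \ref{cor.structure}: the integer $s$ is the length of the scheme-theoretic intersection, inside $\shG$, of the unit section with any non-unit section. The generator $\sigma$ gives such a section $\sigma\colon S\to\shG$, as its generic fibre is distinct from the unit and hence corresponds to one of the sections $T=jx^{s}$ with $j\in\FF_p^\times$ in the model. Thus $s$ equals the length of the equalizer $E$ of the unit section $e$ and $\sigma$. The key step is to transport $E$ to $Y$: since $\shG\hookrightarrow\Aut_{Y/S}$ is a monomorphism, and this embedding carries $e$ to $\id_Y$ and $\sigma$ to the automorphism $\sigma$ of $Y$, the equalizer $E$ coincides with the equalizer of $\id_Y$ and $\sigma$ viewed as sections $S\to\Aut_{Y/S}$. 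Concretely, a test morphism $\Spec R/(x^\ell)\to S$ factors through $E$ if and only if $\sigma$ restricts to the identity on $Y\times_S\Spec R/(x^\ell)=\Spec A/(x^\ell A)$.

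Finally I would carry out the numerical bookkeeping. Since $x=\prod_{\tau\in G}\tau(u)$ and each $\tau(u)$ has $u$-order $1$, we have $\val_u(x)=p$, whence $x^\ell A=\maxid_A^{p\ell}$. By the definition of the ramification break, $\sigma$ acts trivially on $A/\maxid_A^{p\ell}$ precisely when $p\ell\leq m+1$, that is, when $\ell\leq r$, using $m+1=pr+i$ with $0\leq i<p$. Therefore $\Spec R/(x^\ell)$ factors through $E$ exactly for $\ell\leq r$, and the only closed subscheme of $S=\Spec k[[x]]$ with this property is $\Spec R/(x^r)$, of length $r$; hence $s=r$ and $\shG\simeq\shG_{x^{r(p-1)},0}$, as claimed. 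I expect the main obstacle to be the middle step, namely justifying rigorously that the scheme-theoretic equalizer formed inside the abstract group scheme $\shG$ may be recomputed as the locus on the base over which $\sigma$ degenerates to the identity on $Y$; this relies on the faithfulness of the action (the monomorphism $\shG\hookrightarrow\Aut_{Y/S}$) together with careful tracking of the factor $p=\val_u(x)$ that relates infinitesimal neighbourhoods of the closed point in $Y$ to those in $S$.
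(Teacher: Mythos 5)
Your proposal is correct and follows essentially the same route as the paper: both identify $\shG$ as $\shG_{x^{s(p-1)},0}$ via the classification of generically constant group schemes of degree $p$, and both pin down $s$ by measuring, on the base $S$, the order of contact between the unit section and the section coming from a generator of $G$, using the embedding $\shG\subset\Aut_{Y/S}$ and the relation $\maxid_R A=\maxid_A^p$ to translate this into the condition $p\ell\leq m+1$, i.e.\ $\ell\leq r$. The only cosmetic difference is the direction of travel (the paper starts from the section $(s-x^\ell)$ of $\shG$ and lifts it to a generator of $G_S(S)$, whereas you push the generator $\sigma$ forward into $\shG(S)$), which changes nothing of substance.
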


\proof
According to Corollary \ref{effective model generically constant}, the
effective model of the $G$-action is of the form
$$
\shG=\shG_{x^{\ell(p-1)},0}=\Spec R[s]/(s^p-x^{\ell(p-1)} s)
$$
for some integer $\ell \geq 1$. We have to prove that $\ell=r$.
In the group scheme $\shG_{x^{\ell(p-1)},0}/S$, the zero-section is given by the ideal $(s)$. 
%$O\subset\shG$ is given by $s=0$, 
Let $P$ denote the section defined by the ideal $(s-x^\ell)$.
%the equation $s=x^\ell$ defines another section $P\subset\shG$, and the intersection $O\cap P$ is an Artin scheme of length $l$.
By definition of the effective model when ${\rm dim}(S)=1$ (see proof of \ref{effective model} in \ref{proof.effective model}), we have a closed immersion $\shG=\shG_{x^{\ell(p-1)},0} \subset \underline{\GL}_S(\O_Y)$.
Thus the section $P$ gives an $S$-automorphism $P_0:Y\ra Y$,
%By definition, the effective model is a closed subgroup scheme of $\underline{\GL}_S(\O_Y)$,
%and it follows that the $S$-automorphism $P:Y\ra Y$ 
and this automorphism becomes the identity
after tensoring
with $R/\maxid_R^\ell$, but is not the identity when tensoring with
$R/\maxid_R^{\ell+1}$.

Consider now the morphism $G_S \to \shG$. Since this morphism is an isomorphism 
after tensoring with $\Frac(R)$, we find that $P \in \shG(S)$ lifts to a generator of $G_S(S)$.
By definition of the ramification break $m\geq 0$, 
any generator of $G_S(S)$ 
corresponds to an automorphism $Y\to Y$ such that
this automorphism becomes the identity
after tensoring
with $A/\maxid_A^{m+1}$ over $A$, but is not the identity when tensoring with
$A/\maxid_A^{m+2}$. It follows that this automorphism 
becomes the identity
after tensoring
with $R/\maxid_R^{r}$ over $R$, but is not the identity when tensoring with
$R/\maxid_R^{r+1}$. Applying this to the automorphism $P_0$ shows that $\ell=r$.
\qed
\if false

On the other hand, $P\subset\shG$ yields a generator $\sigma\in G$
after tensoring with $\Frac(R)$.
One may also view $P$   as the image of the constant section $\sigma_S$
under the  homomorphism
$G_S\ra\shG$. It follows that the $S$-automorphism $P:Y\ra Y$ coincides
with the automorphism $\sigma:Y\ra Y$.

the infinitesimal
neighborhood
$Y_m=\Spec(A/\maxid_A^{m+1})$ is the largest closed subscheme on which
$\sigma$ and hence $P$ act trivially.
It follows that $Y_{pr}=\Spec(A/\maxid_R^{r}A)$ is the largest such
infinitesimal neighborhood
that is also the preimage of some closed subscheme in $S=\Spec(R)$ under
the structure morphism $Y\ra S$.
We conclude $l=r$.
\fi

% ===========================================================================================
\section{Moderately ramified actions}
\label{moderately ramified}

Let $A$ be a complete   local noetherian ring that is regular,  of dimension $n\geq 2$ and 
characteristic $p>0$, with maximal ideal $\maxid_A$ and field of representatives $k$.
Let $G$ be a finite cyclic group of order $p$, and assume that $A$ is endowed with a 
faithful    action of $G$ ramified precisely at the origin (\ref{ramifiedprecisely}), such that  $k\subset A^G$. 
Choose an admissible regular system of parameters $u_1,\ldots,u_n\in A$ with respect to the $G$-action   (\ref{exist admissible}).
Write as before $x_i:=N_{A/A^G}(u_i)$, $i=1,\dots, n$, for the norm elements, 
and consider the norm subring $R:=k[[x_1,\ldots,x_n]]$ of $A=k[[u_1,\ldots,u_n]]$.
The ring extension  $R\subset A$ is flat and finite of degree $p^n$ (\ref{norm elements}). 
%$$ R=k[[x_1,\ldots,x_n]]\subset k[[u_1,\ldots,u_n]]=A, $$

In this section, we introduce  \emph{five cumulative assumptions} on 
the $G$-action and on the choice   of parameters,
which ensure that the ramification in $\Spec(A)\ra\Spec(R)$ is ``as small  and simple as possible''.
This will lead to the notion of \emph{moderately ramified} $G$-action in \ref{definition moderately ramified}, for which we shall
obtain  structure results in \ref{structure moderately ramified} and \ref{structure moderately ramified2}. We show in \ref{artin case} that 
when $n=2$ and $p=2$, and $k$ has no separable quadratic extensions, then every $G$-action that is ramified precisely at the origin is moderately ramified. 
The first condition on the $G$-action that we want to consider is:

\medskip
\begin{list}{-}{\leftmargin2em}
\item[\textbf{(\MRone)}] \emph{The field extension $\Frac(A)/\Frac(R)$ is Galois.}
\end{list}

\medskip \noindent
As we note in Theorem \ref{galois extension}, this condition often reduces to checking that the smaller extension $\Frac(A^G)/\Frac(R)$  is Galois.
%According to Theorem \ref{galois extension}, this holds provided that the smaller field extension $\Frac(R)\subset \Frac(A^G)$ of degree $p^{n-1}$ is Galois.
Assuming from now on that (\MRone) holds, we write  $H$ for the Galois group of $\Frac(A)/\Frac(R)$, 
which  has order $p^n$ and contains $G$. Given $\primid \in \Spec A$, the notation $I_\primid$ always refers in this section 
to the inertia group  inside $H$.

\begin{proposition}
\label{generated by inertia}
If \rm{(\MRone)} holds, then the Galois group $H$ is generated by the inertia subgroups
$I_\primid\subset H$, where $\primid$ runs through the height one prime ideals in $A$.
\end{proposition}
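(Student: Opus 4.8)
The plan is to identify $A$ with the integral closure of $R$ in the Galois extension $L:=\Frac(A)$ of $K:=\Frac(R)$, so that $H=\Gal(L/K)$ acts on $A$ with $A^H=A\cap K=R$ (the last equality because $R$ is normal). Let $H'\subseteq H$ be the subgroup generated by all inertia groups $I_\primid$ with $\primid\subset A$ of height one, and set $B:=A^{H'}=A\cap L^{H'}$. Then $B$ is a normal domain, finite and local over $R$, and its residue field is squeezed between those of $R$ and of $A$, both equal to $k$, hence equals $k$. By the Galois correspondence, proving the proposition is equivalent to showing $B=R$.

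First I would show that the finite morphism $\Spec B\to\Spec R$ is \emph{unramified in codimension one}. Fix a height-one prime $\primid$ of $A$, and write $\idealq:=\primid\cap B$ and $\mathfrak{r}:=\primid\cap R$; since $R\subseteq B\subseteq A$ are finite extensions of normal domains, both $\idealq$ and $\mathfrak{r}$ have height one. By construction $I_\primid\subseteq H'$, hence $L^{H'}\subseteq L^{I_\primid}$ and therefore $B\subseteq C:=A^{I_\primid}$, with $\idealq=\mathfrak{P}\cap B$ for $\mathfrak{P}:=\primid\cap C$. Localizing at $\mathfrak{r}$ puts us in the classical setting of a Galois extension of Dedekind domains, as $R_\mathfrak{r}$ is a discrete valuation ring and $A_\mathfrak{r}$ is the semilocal integral closure of $R_\mathfrak{r}$ in $L$. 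There the inertia field $L^{I_\primid}$ is the maximal subextension of $L/K$ that is unramified at $\mathfrak{P}$, so $C_{\mathfrak{P}}/R_\mathfrak{r}$ has ramification index one and separable residue extension. Unramifiedness passes to the intermediate extension, so $B_{\idealq}/R_\mathfrak{r}$ is unramified as well. As every height-one prime of $R$ arises as such an $\mathfrak{r}$, no codimension-one point of $\Spec R$ lies in the branch locus.

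Next I would invoke the Zariski--Nagata Purity of the Branch Locus: since $R$ is regular and $B$ is normal, the branch locus of $\Spec B\to\Spec R$ is either empty or pure of codimension one. Having ruled out codimension one, the branch locus is empty and $\Spec B\to\Spec R$ is finite \'etale. Finally, $R$ is complete local, hence henselian, and $B$ is a connected (local, integral) finite \'etale $R$-algebra whose residue field $k$ coincides with that of $R$. The special fibre $B\otimes_R\kappa(\maxid_R)=B/\maxid_R B$ is then a finite \'etale $k$-algebra that is also local Artinian with residue field $k$, hence reduced and therefore equal to $k$; so the degree of $B/R$ is $\dim_k(B/\maxid_R B)=1$, giving $B=R$. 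Equivalently $L^{H'}=K$, so $H'=H$.

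The main obstacle I anticipate is the codimension-one step, precisely the justification that the inertia field $L^{I_\primid}$ is unramified in the presence of wild ramification and possibly imperfect residue fields $\kappa(\mathfrak{r})$. This is handled cleanly by localizing at the height-one prime to reduce to the decomposition/inertia theory over a discrete valuation ring, where $I_\primid$ is by definition the kernel of the action on the residue field and the inertia field is the maximal unramified subextension; everything else (purity and the henselian endgame) is then formal.
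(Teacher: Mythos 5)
Your proof is correct and follows essentially the same route as the paper: form the subgroup $H'$ generated by the height-one inertia groups, observe that $A^{H'}$ is unramified over $R$ in codimension one because each $I_\primid$ lies in $H'$, and apply Zariski--Nagata purity over the regular ring $R$ together with the triviality of the residue field extension. The only (cosmetic) difference is in the endgame: the paper derives a contradiction from the morphism being ramified at the closed point while purity forces a codimension-one branch locus, whereas you conclude that the morphism is finite \'etale and then use that $R$ is henselian with residue field $k$ to force the degree to be one.
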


\proof
Consider the subgroup $H'\subset H$ generated by the   inertia subgroups $I_\primid$ for all height one prime ideals $\primid $ of $ A$.
Set $R':=A^{H'}$ and consider the morphism $\Spec R' \to \Spec R$.
Seeking a contradiction, we assume $H'\neq H$, so that $R'\neq R$.
Let $\mathfrak q\subset R$ be a prime ideal of height one.
Then there exists a prime ideal $\mathfrak p\subset A$ of height one such that $\mathfrak q = R \cap \mathfrak p$. 
It follows that the morphism $\Spec R' \to \Spec R$
is unramified over $\mathfrak q$.  On the other hand, the morphism is ramified at the maximal ideal of $R'$, because the 
morphism is finite and the residue field extension at the maximal ideal is trivial. Since $R'$ is normal and $R$ is regular  and the morphism is finite, we can apply the 
Zariski--Nagata Purity Theorem to $\Spec R' \to \Spec R$ and find that the non-empty branch locus is  pure of codimension one in $\Spec R$. 
This is a contradiction.
\qed

\medskip
A similar argument appears in \cite{Serre 1967}, Th\'eor\`eme 2', and we discuss this result further in \ref{pseudoreflection}.
The next condition ensures that the inertia groups for the quotient map 
$\Spec(A)\ra\Spec(R)$ are as  small and uniform as possible:

\medskip
\begin{list}{-}{\leftmargin2em}
\item[\textbf{(\MRtwo)}] \emph{
The inertia group $I_\primid$ of every ramified prime ideal $\primid \subset A$  of height one is cyclic of order $p$, and normal in $H$.}
\end{list}

\noindent
Our next lemma shows that this condition is automatic in dimension two.

\begin{lemma} \label{n=2}
If {\rm {(\MRone)}} holds and $n=2$, then  Condition {\rm {(\MRtwo)}}  also holds, and $H$ is elementary abelian of order $p^2$.
\end{lemma}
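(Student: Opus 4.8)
The plan is to treat the two assertions separately and to exploit the intermediate ring $A^G$ in the tower $R\subset A^G\subset A$: by \textbf{(\MRone)} the outer extension $\Frac(A)/\Frac(R)$ is Galois of degree $p^2$, while by Proposition \ref{properties invariant ring} the top step $A^G\subset A$ is \'etale away from the origin. First I would dispose of the purely group-theoretic input. The degree of $R\subset A$ is $p^n=p^2$, so $|H|=p^2$; since every group of order $p^2$ is abelian, $H$ is abelian. In particular every subgroup of $H$ is normal, so the normality clause in \textbf{(\MRtwo)} is automatic, and it remains to determine the orders of the inertia groups and to rule out $H\cong\ZZ/p^2\ZZ$.

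The heart of the argument is a local inertia computation at a height-one prime $\mathfrak{p}\subset A$. Because $A^G\subset A$ is finite of degree $p$ by Proposition \ref{properties invariant ring}(ii) and $\Frac(A)^G=\Frac(A^G)$ (both rings are normal, finite over $R$, with the same fraction field), the subextension $\Frac(A)/\Frac(A^G)$ is Galois with group $G$. The standard compatibility of inertia groups in a tower then gives $I_{\mathfrak{p}}(A/A^G)=I_{\mathfrak{p}}\cap G$, where $I_{\mathfrak{p}}=I_{\mathfrak{p}}(A/R)\subseteq H$. Now Proposition \ref{properties invariant ring}(iii) asserts that the punctured map $\Spec(A)\setminus\{\maxid_A\}\to\Spec(A^G)\setminus\{\maxid_{A^G}\}$ is a $G$-torsor, hence finite \'etale; since every height-one prime lies in the punctured spectrum, the extension $A^G\subset A$ is unramified at $\mathfrak{p}$, i.e.\ $I_{\mathfrak{p}}\cap G=\{1\}$.

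It follows that $I_{\mathfrak{p}}$ injects into $H/G$, a group of order $p$, so $|I_{\mathfrak{p}}|\le p$. For a ramified height-one prime this forces $I_{\mathfrak{p}}$ to be cyclic of order exactly $p$, which together with the normality already observed yields \textbf{(\MRtwo)}. Finally, to see that $H$ is elementary abelian rather than cyclic of order $p^2$, I would invoke Proposition \ref{generated by inertia}: $H$ is generated by the inertia subgroups $I_{\mathfrak{p}}$ as $\mathfrak{p}$ ranges over the height-one primes. By the previous step each such subgroup has exponent dividing $p$; but in $\ZZ/p^2\ZZ$ all elements of order dividing $p$ lie in the unique subgroup of index $p$, so subgroups of order at most $p$ cannot generate the whole group. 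Hence $H\not\cong\ZZ/p^2\ZZ$, and therefore $H\cong(\ZZ/p\ZZ)^2$.

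The only point requiring care — the main, if modest, obstacle — is the faithful translation between the geometric torsor statement of Proposition \ref{properties invariant ring}(iii) and the vanishing of inertia, namely that finite \'etale at $\mathfrak{p}$ means $I_{\mathfrak{p}}(A/A^G)=\{1\}$, together with the tower identity $I_{\mathfrak{p}}\cap G=I_{\mathfrak{p}}(A/A^G)$. Once these two facts are in place the remainder is formal, and notably the proof never needs to analyze residue-field extensions directly.
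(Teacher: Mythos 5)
Your proof is correct and follows essentially the same route as the paper's: in both arguments the key point is that a height-one inertia group $I_{\mathfrak{p}}\subseteq H$ must meet $G$ trivially because the $G$-action is ramified precisely at the origin, which forces $|I_{\mathfrak{p}}|\le p$ and then rules out $H$ cyclic. The only cosmetic difference is at the end, where the paper observes that a cyclic $H$ would make its unique order-$p$ subgroup equal to both $G$ and some nontrivial $I_{\mathfrak{p}}$, while you invoke Proposition \ref{generated by inertia} to note that subgroups of order at most $p$ cannot generate $\ZZ/p^2\ZZ$; both steps are immediate.
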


\proof When $n=2$, we find that $|H|=p^2$ and so $H$ is abelian. 
By the Zariski--Nagata Purity Theorem applied to the ramified morphism $\Spec A \to \Spec R$, 
there is at least one prime ideal $\primid$ of height one in $A$ whose inertia group $I_\primid\subset H$ is non-trivial.
%%STEFAN:
The group $I_\primid$ cannot be the whole group $H$, otherwise the morphism $\Spec A \to \Spec A^G$ is ramified at $\primid$, a contradiction.
%If $I_\primid$ is not cyclic of order $p$
Hence, all non-trivial inertia subgroups $I_\primid$ are cyclic of order $p$.
Note that the group $H$ cannot be cyclic, since otherwise, it contains only one subgroup of order $p$,
which must coincide with $G$. This is not possible, since then $G = I_\primid$, contradicting the assumption that
$G$ acts freely outside the closed point. Hence, $H$ is elementary abelian of order $p^2$. \qed

%\medskip With Proposition \ref{generated by inertia}, we immediately obtain:

\begin{proposition}
\label{elementary abelian} 
If {\rm {(\MRone)}} and  {\rm {(\MRtwo)}} hold, then the Galois group $H$ is elementary abelian of order $p^n$. 
%Moreover, there are at least $n$ distinct cyclic subgroups $G_1,\ldots,G_n$ of order $p$ in $ H$
%that are equal to the inertia subgroup of some ramified prime ideal of height one in $A$.
\end{proposition}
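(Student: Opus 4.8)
The plan is to combine Proposition~\ref{generated by inertia} with an elementary fact about $p$-groups. First I would invoke Proposition~\ref{generated by inertia}: since (\MRone) holds, $H$ is generated by the inertia subgroups $I_\primid$ as $\primid$ ranges over the height one primes of $A$. Recalling that $H$ has order $p^n$ and is therefore a $p$-group, I would then use (\MRtwo) to note that each nontrivial $I_\primid$ is cyclic of order $p$ and normal in $H$, the remaining ones being trivial. This reduces the proposition to showing that every normal subgroup of order $p$ in $H$ is central.

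The crucial step is the standard observation that a normal subgroup $N\subset H$ of order $p$ in a $p$-group is automatically central: conjugation gives a homomorphism $H\ra\Aut(N)$, and since $\Aut(N)\cong(\ZZ/p\ZZ)^\times$ has order $p-1$ prime to $p$ while $H$ is a $p$-group, this homomorphism is trivial, so $N$ lies in the center of $H$. Applying this to each nontrivial $I_\primid$ shows that all of the generators of $H$ provided by Proposition~\ref{generated by inertia} are central, whence $H$ is abelian.

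Finally, to upgrade \emph{abelian} to \emph{elementary abelian}, I would argue that in the abelian group $H$ the elements of order dividing $p$ form a subgroup; since this subgroup contains every generator $I_\primid$ (each of exponent dividing $p$), it must be all of $H$. Thus $H$ has exponent $p$, and being abelian of order $p^n$ it is isomorphic to $(\ZZ/p\ZZ)^n$, as desired.

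I do not anticipate a genuine obstacle: once Proposition~\ref{generated by inertia} is available, the argument is purely group-theoretic, the only nontrivial ingredient being the centrality of normal order-$p$ subgroups in a $p$-group.
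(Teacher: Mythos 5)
Your proof is correct, and it follows the same overall skeleton as the paper's: invoke Proposition~\ref{generated by inertia} to see that $H$ is generated by the inertia subgroups $I_\primid$ of height one primes, use (\MRtwo) to know each nontrivial one is cyclic of order $p$ and normal in $H$, and then finish with elementary group theory. The only difference lies in the group-theoretic kernel. You use that $H$ has order $p^n$, so that the conjugation map $H\ra\Aut(I_\primid)\cong(\ZZ/p\ZZ)^\times$ must be trivial, making every generator central; the paper instead observes that for two normal subgroups $H_1,H_2$ with $H_1\cap H_2=\{e\}$ a commutator $h_1h_2h_1^{-1}h_2^{-1}$ lies in both and hence is trivial, and builds up the generated subgroup inductively. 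Your centrality argument is arguably cleaner and avoids the (implicit) induction, at the mild cost of using that $H$ is a $p$-group --- a fact that is indeed available here, since $|H|=[\Frac(A):\Frac(R)]=p^n$. The passage from abelian to elementary abelian is handled identically in both arguments: the generators all have order dividing $p$, so the exponent is $p$.
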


\proof We use here Proposition \ref{generated by inertia} and the fact that if $H_1$ and $H_2$ are two abelian subgroups of $H$ such that $H_1 \cap H_2 =(0)$ and such that both $H_1$ and $H_2$ are normal in $H$,
then the subgroup of $H$ generated by $H_1$ and $H_2$ is also abelian and normal in $H$. To see this, simply note that a commutator $h_1 h_2 h_1^{-1}h_2^{-1}$
with $h_i \in H_i$ belongs to both $H_1$ and $H_2$.
\qed

\medskip 
Assume that (\MRone) and (\MRtwo) hold.
Since $H$ is elementary abelian of order $p^n$, it
contains exactly $(p^n-1)/(p-1)$ distinct subgroups of order $p$. Proposition \ref{generated by inertia} let us choose
 $n$ distinct such cyclic subgroups $G_1,\ldots,G_n$
that are equal to the inertia subgroup of some ramified prime ideal of height one in $A$ and such that the natural map
$$
G_1\times\ldots \times G_n\longrightarrow H
$$
is an isomorphism.
%Then this homomorphism is bijective, and 
Each subgroup $G_i\subset H$ has
a canonical complement, namely the subgroup $G_i^\perp$ of $H$ generated by the subgroups  $ G_j$, $j \neq i$.

Consider the corresponding ring of invariants $A^{G_i^\perp}\subset A$.
Then $\Frac(A^{G_i^\perp})/\Frac(R)$ is a cyclic extension of degree $p$,
with Galois group $H/G_i^\perp$, which we identify with the group $G_i$. By hypothesis, $G_i$ is the inertia group of a prime ${\mathfrak p}_i$ of $A$.
It follows that the morphism $\Spec(A^{G_i^\perp}) \to \Spec(R) $ is ramified at ${\mathfrak p}_i \cap A^{G_i^\perp}$.
By the Zariski--Nagata Purity Theorem, 
the branch locus of $\Spec A^{G_i^\perp} \to \Spec R $ is of the form $V(a_i)$ for some non-invertible element $a_i\in R=k[[x_1,\ldots,x_n]]$.

Consider the canonical homomorphism of $R$-algebras:
\begin{equation} \label{map}
f:A^{G_1^\perp}\otimes_R\cdots\otimes_R A^{G_n^\perp}\lra  A.
\end{equation}

\begin{lemma} \label{flat, system parameters} 
Keep the above notation, and assume that {\rm (\MRone)} and {\rm (\MRtwo)} hold.
If the above map $f$ is an isomorphism, then the extension $R\subset A^{G_i^\perp} $ is flat and the ring $A^{G_i^\perp}$ is regular for $i=1,\dots, n$. 
Moreover, the elements $a_1,\ldots,a_n$ form a system of parameters of $R$.
\end{lemma}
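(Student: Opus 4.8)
The plan is to establish flatness and regularity first, where the hypothesis that $f$ is an isomorphism does the essential work, and then to settle the system-of-parameters claim by a ramification analysis. Throughout write $B_i:=A^{G_i^\perp}$, a normal domain finite over $R$ with $[\Frac(B_i):\Frac(R)]=p$, and set $C_i:=\bigotimes_{j\neq i}B_j$, so that $f$ being an isomorphism gives $A\simeq B_i\otimes_R C_i$ for each $i$. To prove flatness I would base-change this isomorphism along $R\to k=R/\maxid_R$, obtaining $A\otimes_R k\simeq (B_i\otimes_R k)\otimes_k(C_i\otimes_R k)$, whence
\[
\dim_k(B_i\otimes_R k)\cdot\dim_k(C_i\otimes_R k)=\dim_k(A\otimes_R k)=\rank_R(A)=p^n .
\]
For any finite $R$-module $M$ one has $\dim_k(M\otimes_R k)=\mu_R(M)\ge \rank_R(M)$, and here $\rank_R(B_i)=p$ while $\rank_R(C_i)=p^{n-1}$; since the product is exactly $p^n$, both inequalities are equalities, so $\mu_R(B_i)=\rank_R(B_i)=p$. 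As $B_i$ is a domain containing $R$, it is $R$-torsion free, so a minimal surjection $R^{\oplus p}\to B_i$ is generically bijective with torsion-free kernel, hence is an isomorphism: $B_i$ is free of rank $p$, and $R\subset B_i$ is flat.

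For regularity I would note that, each $B_j$ now being free of rank $p$, the module $C_i=\bigotimes_{j\neq i}B_j$ is free over $R$, so $A\simeq B_i\otimes_R C_i$ is free, in particular flat, as a $B_i$-module. Thus $B_i\to A$ is a faithfully flat local homomorphism with $A$ regular, and regularity descends: for the residue field $k$ of $B_i$ one has $\operatorname{pd}_{B_i}(k)=\operatorname{pd}_A(A/\maxid_{B_i}A)\le\operatorname{gl.dim}(A)<\infty$, so $B_i$ is regular by Serre's criterion (equivalently, by \cite{Matsumura 1989}, Theorem~23.7). As a by-product that prepares the last claim, the fibre $F_i:=B_i\otimes_R k$ is local Artinian of length $p$; since $A\otimes_R k\simeq F_1\otimes_k\cdots\otimes_k F_n$ and $\maxid_A/\maxid_A^2$ has dimension $n$ and surjects onto $\bigoplus_i\maxid_{F_i}/\maxid_{F_i}^2$, each $F_i$ must have embedding dimension $1$. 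Hence $F_i\simeq k[t_i]/(t_i^p)$, the ring $B_i=R[t_i]$ is monogenic with minimal polynomial $h_i\equiv t^p\bmod\maxid_R$, and regularity of $B_i$ forces $h_i(0)\in\maxid_R\setminus\maxid_R^2$.

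It remains to show that $a_1,\dots,a_n$ is a system of parameters, i.e.\ that $\dim R/(a_1,\dots,a_n)=0$, equivalently that the branch divisors $V(a_i)$ meet only at the closed point. The engine is a codimension-one statement: no height-one prime $\mathfrak q\subset R$ lies on two of the $V(a_i)$. Indeed, if $a_i,a_j\in\mathfrak q$ with $i\neq j$, then over the completed discrete valuation ring $\widehat{R_{\mathfrak q}}$ the rings $B_i\otimes\widehat{R_{\mathfrak q}}$ and $B_j\otimes\widehat{R_{\mathfrak q}}$ are two distinct totally ramified separable $\ZZ/p\ZZ$-extensions (totally ramified because $\Frac(B_i)/\Frac(R)$ is Galois of degree $p$ and ramified over $\mathfrak q$; separable as a subextension of the Galois extension $\Frac(A)/\Frac(R)$). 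A conductor--discriminant computation then shows that their tensor product over $\widehat{R_{\mathfrak q}}$ is a \emph{non-maximal} order in the degree-$p^2$ compositum, hence not normal; since $A\simeq\bigotimes_R B_l$ realizes this tensor product as a factor of $A\otimes_R\widehat{R_{\mathfrak q}}$, this contradicts the normality of $A$. For $n=2$ this already finishes the proof, because two divisors with no common component in a two-dimensional regular local ring meet only at the closed point.

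The main obstacle is upgrading this pairwise transversality to a genuine system of parameters when $n\ge 3$, since pairwise-distinct divisors through the origin (think of $x,y,x+y$) can still meet in positive dimension. Here I expect to use the full strength of the hypothesis that $A$ is \emph{regular}, not merely normal. Were some prime $\mathfrak q$ of height $\ge 2$ to lie on all $n$ branch divisors, then all $n$ wildly ramified $\ZZ/p\ZZ$-covers $B_i\otimes_R R_{\mathfrak q}$ would ramify through the closed point of the regular local ring $R_{\mathfrak q}$ of dimension $<n$; forcing $n$ essentially independent wild ramifications into too few dimensions ought to make the fibre product $A\otimes_R R_{\mathfrak q}=\bigotimes_{R_{\mathfrak q}}(B_i\otimes_R R_{\mathfrak q})$ singular — the higher-dimensional analogue of the non-normality above, already visible in the tame cone model $y_n^{\,p}=\sum_{i<n} y_i^{\,p}$ — contradicting the regularity of $A$. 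Making this last step precise, namely controlling the singularity of a fibre product of several wild covers over a lower-dimensional regular base, is the crux of the argument.
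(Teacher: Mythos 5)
Your proofs of flatness and of regularity are correct and follow essentially the paper's own route: Nakayama plus the count $\prod_i d_i=p^n$ forces each $A^{G_i^\perp}$ to be $R$-free of rank $p$, and regularity of $A^{G_i^\perp}$ then descends from $A$ along the faithfully flat local extension $A^{G_i^\perp}\subset A$ (the paper quotes \cite{EGA}, IV.6.5.1, where you quote Serre's criterion; same argument).

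The system-of-parameters claim is where the proposal has genuine gaps, and they cannot be repaired along the lines you sketch, because your argument never invokes the standing hypothesis that the $G$-action is \emph{ramified precisely at the origin} (\ref{ramifiedprecisely}) --- and that hypothesis is indispensable. First, your codimension-one step already fails: for a height-one prime $\idealq\subset R$ the residue field $\Frac(R/\idealq)$ is imperfect, so ``ramified'' does not imply ``totally ramified''; the ramification can be fierce ($e=1$, purely inseparable residue extension of degree $p$), and then two distinct ramified $\ZZ/p\ZZ$-covers of $\widehat{R_\idealq}$ can have a normal, even regular, tensor product. The paper's own construction in \ref{normalformrecalled} (see also the remark following \ref{Serre}) is a counterexample to your non-normality claim: for $A=R[u,v]/(u^p-(\m a)^{p-1}u-x,\,v^p-(\m b)^{p-1}v-y)$ with $\m$ a non-unit coprime to $a$ and $b$, both degree-$p$ covers are ramified along $V(\m)$, yet their tensor product $A$ is regular. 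Second, the step you yourself flag as the crux ($n\geq 3$) cannot be extracted from the regularity of $A$ alone: take $R=k[[x_1,x_2,x_3]]$, $a_1=x_1$, $a_2=x_2$, $a_3=x_1+x_2$, and $A=R[u_1,u_2,u_3]/(u_i^p-a_i^{p-1}u_i-x_i)$. By \ref{structure moderately ramified2} (a) this $A$ is regular, $\Frac(A)/\Frac(R)$ is Galois with group $(\ZZ/p\ZZ)^3$, the map $f$ is an isomorphism by construction, and {\rm (\MRone)}, {\rm (\MRtwo)} hold (every ramified height-one prime has inertia group one of the $G_i$); the branch divisors even pairwise share no component, which is exactly the situation your codimension-one step would leave you in. Nevertheless $x_1,x_2,x_1+x_2$ is not a system of parameters. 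The only hypothesis that fails here is that the diagonal $G$-action is not ramified precisely at the origin: its fixed ideal is $(x_1,x_2)A$, of dimension one.

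The paper's proof of this last claim is short and uses exactly that hypothesis, with no discriminant computation. If $(a_1,\dots,a_n)\subseteq\idealq$ for some non-maximal prime $\idealq\subset R$, then $\idealq$ lies in the branch locus of every $\Spec(A^{G_i^\perp})\to\Spec(R)$, so each $\Spec(A^{G_i^\perp})$ has a single point over $\idealq$, with purely inseparable (possibly trivial) residue field extension --- a statement that covers the fierce and the totally ramified cases alike. Since $f$ is an isomorphism, $\Spec(A)$ is the fiber product of the $\Spec(A^{G_i^\perp})$, hence has a single point $t$ over $\idealq$, again with purely inseparable residue extension. Then $G$ fixes $t$ and acts trivially on $A/t$ (a purely inseparable extension of $\Frac(R/\idealq)$ has no nontrivial automorphisms), so $I_t\neq\{\id\}$ at a non-maximal prime, contradicting \ref{ramifiedprecisely}. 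This is the missing idea in your proposal: the contradiction must be sought against the ramification hypothesis on the $G$-action, not against the regularity or normality of $A$.
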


\proof 
Let $d_i$ denote the dimension of $A^{G_i^\perp} \otimes R/\maxid_R$ as an $R/\maxid_R$-vector space. 
By Nakayama's Lemma, there exists a surjective homomorphism  $\varphi_i: R^{\oplus d_i} \to A^{G_i^\perp}$ of $R$-modules.
Tensoring with $\Frac(A^{G_i^\perp})$, which has degree $p$ over $\Frac(R)$, we find that  $d_i\geq p$.
 Since $A$ is free of rank $p^n$ and $f$ is an isomorphism, we have $\prod_{i=1}^n d_i = p^n$, which implies that
 $d_i=p$ for all $1\leq i\leq n$.
Since the $R$-module $R^{\oplus p}$ is torsion-free, we conclude that the morphism $\varphi_i: R^{\oplus p} \to A^{G_i^\perp}$ is an isomorphism for each $i=1,\dots, n$.
Hence, the extension $R\subset A^{G_i^\perp}$ is flat for all $i=1,\dots, n$.
It follows that the finite ring extensions $A^{G_i^\perp} \subset \bigotimes_{j=1}^n A^{G_j^\perp}$
are flat. Since $A$ is regular, %\cite{EGA IVb}, 
\cite{EGA}, IV.6.5.1  ensures  that the rings $A^{G_i^\perp}$
are regular. 

If $a_1,\dots, a_n$ does not form a system of parameters in $R$, 
then the ideal $(a_1,\ldots,a_n)$ of $R$ is contained in some non-maximal prime ideal
$\idealq$ of $R$.
Let $s\in \Spec(R)$ be the corresponding non-closed point.
Since $s$ belongs to the branch locus of each morphism $\Spec(A^{G_i^\perp}) \to \Spec R$ by hypothesis, the preimage of $s$ in each $\Spec(A^{G_i^\perp})$ is a singleton,
with purely inseparable (possibly trivial) residue field extension.
Since by hypothesis, $\Spec(A)$ is the fiber product of the $\Spec(A^{G_i^\perp})$, we find that the preimage $t$ of $s$  in $\Spec(A)$ is also a singleton with purely inseparable residue field extension. Hence, the inertia group at $t$ for the action of $G$ is not trivial, contradicting the hypotheses that the action of $G$ on $A$ is ramified precisely at the origin. \qed

\begin{remark} \label{IntegralClosureAbelianExt} Let $ Y_i:=\Spec(A^{G_i^\perp}) $ and  $S:=\Spec(R)$.
%The structure  morphism $Y_i\ra S$ becomes a $G_i$-torsor over the complement of its branch locus. %, the latter being defined by $a_i\in R$.
When $n=2$, the morphism $Y_i\to S$ is flat since $Y_i$ is normal and, hence, Cohen--Macaulay. When $n>2$, Condition (\MRfour) discussed below 
will imply that  $Y_i\to S$ is flat. 
Note that the flatness would be automatic if the order of the group were prime to the residue characteristic.
Indeed, it is proved in \cite{Rob} that if $R$ is a regular local ring and $L/{\rm Frac}(R)$ is a Galois extension with abelian Galois group $H$
of order coprime to the residue characteristic $p$ of $R$, then the integral closure $B$ of $R$ in $L$ is Cohen--Macaulay. It follows then that $B/R$ is flat.
An example is given in \cite{Rob} that shows that the hypothesis $\gcd(|H|,p)=1$ is needed in the proof of the statement. We note here that this hypothesis
is also needed for the statement to hold in the equicharacteristic case. For this, let $G={\mathbb Z}/p{\mathbb Z}$, and consider a moderately ramified $G$-action on $A=k[[u_1,\dots, u_n]]$, with norm ring $R$, as in \ref{structure moderately ramified2}.
Then the extension ${\rm Frac}(A^G)/{\rm Frac}(R)$ is elementary abelian with Galois group $H$ of order $p^{n-1}$, and  $A^G$ is the integral closure of $R$ in ${\rm Frac}(A^G)$. 
But $A^G $ is not Cohen--Macaulay when $\dim(A)>2$, 
in view of the facts reviewed in \ref{properties invariant ring}. 

We do not know a counter-example to the Cohen--Macaulayness of the integral closure $B$
when $H$ is cyclic of order $p>3$. When $p=3$, such a counter-example is presented in \cite{Koh}, 2.4, in the mixed characteristic case. In \ref{generalizedreflectionCM}, we exhibit such a counter-example when $R$ is only assumed to be Cohen--Macaulay 
and $H$ is generated by a generalized reflection (definition recalled in \ref{pseudoreflection}). Under such weaker assumption on $R$, the case where $|H|$ is coprime to the residue characteristic of $R$ is treated in \cite{H-E}, Propositions 13 and 15.

\end{remark}

Our third condition below requires that the set of   inertia subgroups for the morphism $\Spec(A) \to \Spec(R)$ be as small as possible. 
This   condition is motivated by the statement of Proposition \ref{tensor product}.
 
\medskip
\begin{list}{-}{\leftmargin2em}
\item[\textbf{(\MRthree)}] 
\emph{There are only $n$   subgroups
$G_1,\ldots,G_n\subset H$
which are equal to the inertia subgroup of a  ramified prime ideal of height one in   $A$.}
\end{list}
When {(\MRthree)} holds, up to the enumeration of the subgroups, the  morphism \eqref{map} does not anymore depend on the choice of subgroups.
\begin{proposition}
\label{tensor product} Keep the above notation, and assume that {\rm (\MRone)} -- {\rm (\MRthree)} hold.
If the rings $A^{G_i^\perp} $ are Cohen--Macaulay for $1\leq i\leq n$, then the morphism $f$ is an isomorphism.
\end{proposition}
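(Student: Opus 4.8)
The plan is to show that $f$ is an isomorphism by verifying that both source and target are free $R$-modules of the same rank, that $f$ is a generic isomorphism, and that it is an isomorphism in codimension one, and then to globalize by a determinant argument.

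First I would establish that each factor $A^{G_i^\perp}$ is free of rank $p$ over $R$. Since $A^{G_i^\perp}$ is assumed Cohen--Macaulay, finite over the regular local ring $R$, and of dimension $n=\dim R$, it is a maximal Cohen--Macaulay $R$-module; the Auslander--Buchsbaum formula (exactly as in the proof of Lemma~\ref{parameter system}) forces $\operatorname{pd}_R A^{G_i^\perp}=0$, so $A^{G_i^\perp}$ is $R$-free, of rank $[\,\Frac(A^{G_i^\perp}):\Frac(R)\,]=p$. Consequently the source $B:=A^{G_1^\perp}\otimes_R\cdots\otimes_R A^{G_n^\perp}$ is free of rank $p^n$ over $R$, matching $A$, which is free of rank $p^n$ by \ref{norm elements}. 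Tensoring $f$ with $\Frac(R)$ yields the multiplication map $\bigotimes_{\Frac(R)}\Frac(A^{G_i^\perp})\to\Frac(A)$; because $H\cong G_1\times\cdots\times G_n$ is elementary abelian with $\bigcap_i G_i^\perp=\{e\}$, the $\Frac(A^{G_i^\perp})$ are the degree-$p$ subextensions whose compositum is $\Frac(A)$, and since their degrees multiply to $[\Frac(A):\Frac(R)]=p^n$, this map is an isomorphism. Thus $f$ is a generic isomorphism.

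Next I would check that $f$ becomes an isomorphism after localizing at each height-one prime $\mathfrak q\subset R$, and this is where \MRtwo{} and \MRthree{} enter decisively. By these two conditions the inertia group in $H$ of any ramified height-one prime of $A$ over $\mathfrak q$ is either trivial or one of $G_1,\dots,G_n$. Since $G_j\subseteq G_i^\perp$ precisely when $i\neq j$, it follows that \emph{at most one} factor $A^{G_i^\perp}$ is ramified over $\mathfrak q$. If none is, each $(A^{G_i^\perp})_{\mathfrak q}$ is finite, flat and unramified, hence étale over $R_{\mathfrak q}$, and so is their tensor product $B_{\mathfrak q}$. If exactly the factor $j$ ramifies, then $B_{\mathfrak q}=(A^{G_j^\perp})_{\mathfrak q}\otimes_{R_{\mathfrak q}}\bigl(\bigotimes_{i\neq j}(A^{G_i^\perp})_{\mathfrak q}\bigr)$ is étale over the normal ring $(A^{G_j^\perp})_{\mathfrak q}$, because the second tensor factor is étale over $R_{\mathfrak q}$. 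In either case $B_{\mathfrak q}$ is étale over a normal base, hence normal; being $R_{\mathfrak q}$-free it injects into its generic fibre $\Frac(A)$, so it is a normal domain, finite over $R_{\mathfrak q}$, with fraction field $\Frac(A)$, and therefore equals the integral closure of $R_{\mathfrak q}$ in $\Frac(A)$, namely $A_{\mathfrak q}$. Identifying $B_{\mathfrak q}$ and $A_{\mathfrak q}$ as lattices in $\Frac(A)$ via the generic isomorphism, $f_{\mathfrak q}$ is the identity, hence an isomorphism.

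Finally I would globalize. Choosing $R$-bases, $\det(f)\in R$ is nonzero by the generic isomorphism and lies in no height-one prime by the codimension-one step, since $\Supp(\Coker f)=V(\det f)$ for a map of free modules of equal rank. By Krull's principal ideal theorem a nonzero non-unit would cut out a locus of pure codimension one, so $\det(f)$ must be a unit and $f$ an isomorphism. The main obstacle is the codimension-one step, and specifically the verification that at most one factor ramifies over each $\mathfrak q$: this is exactly what \MRtwo{} and \MRthree{} guarantee, and it is what makes $B_{\mathfrak q}$ étale over a normal ring rather than a tensor product of several wildly ramified extensions, which would in general fail to be normal.
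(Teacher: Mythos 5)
Your proof is correct, and its heart is the same as the paper's: both arguments run on (a) miracle flatness to make each $A^{G_i^\perp}$ free of rank $p$ over $R$, (b) the Galois computation at the generic point, and (c) the observation that over any height-one prime $\idealq\subset R$ at most one tensor factor ramifies --- this is exactly where (\MRtwo) and (\MRthree) are used --- so that the tensor product is \'etale over a normal ring in codimension one. Where you diverge is the endgame. The paper packages (a) and (c) into Serre's criterion ($S_2$ from $R$-freeness, $R_1$ from the \'etale-over-normal argument) to conclude that $A':=A^{G_1^\perp}\otimes_R\cdots\otimes_R A^{G_n^\perp}$ is normal, and then finishes with ``a finite extension of a normal domain inducing an equality of fraction fields is an equality.'' You instead upgrade the codimension-one analysis to an actual identification $B_\idealq=A_\idealq$ inside $\Frac(A)$ (via the integral closure of $R_\idealq$) and globalize with the determinant/Krull argument, which is precisely the mechanism of the paper's own Proposition \ref{torsors}. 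Both routes are of comparable length; yours buys a slightly stronger intermediate statement in codimension one, while the paper's records the normality of $A'$ explicitly.

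One step you should spell out: to pass from ``every nontrivial inertia group $I_\primid$ over $\idealq$ equals some $G_\ell$'' to ``at most one factor $A^{G_i^\perp}$ ramifies over $\idealq$,'' you need all primes of $A$ above $\idealq$ to have the \emph{same} inertia group; otherwise one prime could have inertia $G_1$ and another $G_2$, ramifying two distinct factors. This is automatic here because $H$ is abelian (inertia groups over a fixed $\idealq$ are conjugate, hence equal), or one can argue as the paper does by fixing a single prime $\primid\subset A$ dominating the unique ramified primes of both factors and deriving a contradiction from $I_\primid\subset G_i^\perp$ for one of the two indices.
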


\proof 
First, we verify that the ring $A':=A^{G_1^\perp}\otimes_R\cdots\otimes_R A^{G_n^\perp}$
is normal:  Since $R$ is regular and $A^{G_i^\perp}$ is Cohen--Macaulay, 
the extension $R\subset A^{G_i^\perp}$ is flat for all $i=1,\ldots,n$ (\cite{Matsu}, (21.D) Theorem 51).
In turn, $A'$ is free as an $R$-module, hence Cohen--Macaulay. It remains to check that $A'$ is regular in codimension one.
Let $\idealq\subset R$ be a prime of height one.
We claim that {\it all  but possibly one of the extensions $R\subset A^{G_i^\perp}$ are unramified over $\idealq$}.
If this holds, we may assume without restriction  that $R_{\idealq}\subset (A^{G_i^\perp})_{\idealq}$ is unramified for $i=2,\dots, n$. 
Then $A'_{\idealq}$ is \'etale over $(A^{G_1^\perp})_{\idealq}$ and the latter ring is normal, 
which ensures that $A'_{\idealq}$ is normal too (\cite{Matsu}, (21.E) (iii)).
Summing up, $A'$ is normal.

To verify the claim,  let us assume {\it ab absurdo} that there exist two indices $i \neq j$ such that $R\subset A^{G_i^\perp}$ and $R\subset A^{G_j^\perp}$ 
are ramified over $\idealq$.
Upon renumbering if necessary, we may assume that $i=1$ and $j=2$. 
%It follows that the ideal $\idealq$ contains $a_1$ and $a_2$.
Since $\Frac(A^{G_i^\perp})/\Frac(R)$ is cyclic of degree $p$,
there is only one prime ideal $\primid_1\subset A^{G_1^\perp}$ and $\primid_2\subset A^{G_2^\perp}$ 
lying over $\idealq$. Choose some prime ideal $\primid\subset A$ lying over $\idealq$.
Clearly,   $\primid$ lies over both  $\primid_1$ and $\primid_2$. The extension $R\subset A$ is ramified at $\primid$ since $R\subset A^{G_1^\perp}$ 
is ramified at $\primid_1$.
In particular, the inertia group $I_\primid$ is non-trivial.
By Condition (\MRthree), we have $I_\primid=G_j$ for some index $j \in [1,n]$. Thus we can always assume that either $I_\primid \neq G_1$ or $I_\primid \neq G_2$.
Let us consider the case where $I_\primid \neq G_1$, the other one being similar. Since $I_\primid=G_j$ for some index $j \in [2,n]$, we find that by definition
$I_\primid\subset G_1^\perp$. Hence, $A^{G_1^\perp}\subset A^{I_\primid}$. Since $R\subset A^{I_\primid}$ is unramified at $\primid \cap A^{I_\primid}$,
we find that  $R\subset A^{G_1^\perp}$ is unramified at $\primid_1$, which is a contradiction.

We proceed by  proving that the localized morphism $f_\idealq$ is bijective for the minimal prime $\idealq=0$ inside $R$. We need to show that the map
%$${\rm Frac} A^{G_1^\perp} \otimes_{{\rm Frac} R} \ldots\otimes_{{\rm Frac} R} {\rm Frac} A^{G_n^\perp} \lra {\rm Frac} A$$
$$
{\rm Frac}(A)^{G_1^\perp} \otimes_{{\rm Frac}(R)} \ldots\otimes_{{\rm Frac}(R)} {\rm Frac}(A)^{G_n^\perp} = A'_\idealq \lra A_\idealq={\rm Frac}(A)$$
is an isomorphism. 
It follows from the Galois correspondence that the image of this map is the subfield of ${\rm Frac}(A)$
fixed by the subgroup $G_1^{\perp}\cap\ldots\cap G_n^{\perp} $. By construction, this latter subgroup is trivial
so the map is surjective. Since both source and targets are vector spaces over ${\rm Frac}(R)$ of the same dimension, the map is an isomorphism, as desired.

Since $A'$ is a free $R$-module, the canonical map $A'\ra A'_\idealq $ is injective, and we conclude
that $f:A'\ra A$ is injective. In particular, $A'$ is integral,
and we saw in the preceding paragraph that $\Frac(A')\ra\Frac(A)$ is bijective. 
Since $A'$ is normal, the finite extension $A'\subset A$ must be an  equality.
\qed

\medskip
\noindent
Proposition \ref{tensor product} motivates  our next condition.

\medskip
\begin{list}{-}{\leftmargin2em}
\item[\textbf{(\MRfour)}] 
\emph{The rings $A^{G_i^\perp}$ are Cohen--Macaulay, for all $1\leq i\leq n$.}   
\end{list}

\smallskip \noindent
When (\MRfour) holds, we can use $f$ to identify  $A$ with $A^{G_1^\perp}\otimes_R\cdots\otimes_R A^{G_n^\perp}$.

\begin{emp} \label{MR4}
Let $ Y_i:=\Spec(A^{G_i^\perp}) $ and  $S:=\Spec(R)$.
Recall that we identify $H/G_i^{\perp}$ with $G_i$ through the canonical bijection   $G_i\ra H/G_i^{\perp}$, 
so that we get a   $G_i$-action on $A^{G_i^\perp}$  and $Y_i/S$. 
%Let $ Y_i:=\Spec(A^{G_i^\perp}) $ and  $S:=\Spec(R)$, and consider the 
%Recall that we identify $H/G_i^{\perp}$ with $G_i$ through the canonical bijection   $G_i\ra H/G_i^{\perp}$, so that we get a   
%$G_i$-action on $A^{G_i^\perp}$ and $Y_i/S$.  
Let $\shG_i/S$ be the effective model of the $G_i$-action on $Y_i$, as introduced in Theorem \ref{effective model}.
This effective model is a finite flat group scheme over $S$, 
%which is constant outside the branch locus of $Y_i \to S$, MORE REFERENCE % defined by $a_i\in R$, 
and comes with an  $S$-action
$\shG_i\times_S Y_i\ra Y_i$. 
We  can now formulate the last of our cumulative conditions.

\medskip
\begin{list}{-}{\leftmargin2em}
\item[\textbf{(\MRfive)}] 
The    scheme $Y_i/S$ is a torsor for $\shG_i/S$, for all $1\leq i\leq n$.
\end{list}
\end{emp}

\noindent When $\dim(S)=1$, a similar condition is considered in Theorem \ref{characterization moderately ramified}.
Our next lemma shows that Condition (\MRfive) is automatic when $p=2$.

\begin{lemma} \label{MR1-2impliesMR5}
If {\rm {(\MRone)}} -- {\rm {(\MRfour)}} hold  and $p=2$, then  Condition {\rm {(\MRfive)}} also holds.
\end{lemma}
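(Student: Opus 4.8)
The plan is to reduce Condition (\MRfive) to the one-dimensional theory of Section \ref{wild actions in dimension one} by working fibre by fibre over the base $S=\Spec(R)$. Under (\MRone)--(\MRfour) the morphism $Y_i\to S$ is finite and flat of degree $p$, the ring $A^{G_i^\perp}$ is regular, and the elements $a_i$ cut out the branch loci (Lemma \ref{flat, system parameters}); moreover the effective model $\shG_i/S$ together with its action $\shG_i\times_S Y_i\to Y_i$ is available from Theorem \ref{effective model}. The crucial reformulation is the last assertion of Theorem \ref{effective model}: it suffices to prove that for every point $s\in S$ of codimension at most $1$ the fibre $Y_i\otimes\kappa(s)$ is a torsor under $\shG_i\otimes\kappa(s)$.

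First I would dispose of the easy points. At the generic point $\eta$ the fibre $Y_i\otimes\Frac(R)=\Spec(\Frac(A^{G_i^\perp}))$ is a $G_i$-Galois field extension, hence a torsor under the constant group $\shG_{i,\eta}=G_i$. At a codimension-one point $s$ lying outside the branch locus $V(a_i)$, the morphism $Y_i\otimes\O_{S,s}\to\Spec(\O_{S,s})$ is finite étale of degree $p$ and the $G_i$-action is free, so $Y_i\otimes\O_{S,s}$ is an étale $G_i$-torsor; its effective model is the constant group and the closed fibre is again a torsor.

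The substantive case is a codimension-one point $s$ in the branch locus $V(a_i)$. Here $\O_{S,s}$ is a discrete valuation ring, and since $\Frac(A^{G_i^\perp})/\Frac(R)$ is $\ZZ/p\ZZ$-Galois there is a single prime of $A^{G_i^\perp}$ over $s$; because $A^{G_i^\perp}$ is regular, the localisation $B_s:=A^{G_i^\perp}\otimes_R\O_{S,s}$ is a discrete valuation ring and $\O_{S,s}\subset B_s$ is a ramified $\ZZ/p\ZZ$-Galois extension. I would next record that a $\ZZ/p\ZZ$-Galois extension of a discrete valuation ring with a single prime above is, being Artin--Schreier, either totally ramified with trivial residue field extension or unramified with separable residue field extension; the latter is excluded at a branch point, so the extension is totally ramified with trivial residue field extension. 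Passing to completions then gives $\widehat{\O}_{S,s}\cong\kappa(s)[[t]]$ and $\widehat{B}_s\cong\kappa(s)[[u]]$ with $\widehat{B}_s^{\,G_i}=\widehat{\O}_{S,s}$, which is exactly the situation of Section \ref{wild actions in dimension one} over the ground field $\kappa(s)$, and the base change of $\shG_i$ to $\widehat{\O}_{S,s}$ is the effective model of this one-dimensional action.

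Finally I would invoke Theorem \ref{characterization moderately ramified}: for $p=2$ its condition (ii) holds automatically, because the ramification break $m_s$ of a $\ZZ/2\ZZ$-action is always odd, so that $m_s+1$ is divisible by $2$ (as recalled in \ref{RamificationBreak}). Hence $Y_i\otimes\widehat{\O}_{S,s}$ is a torsor for its effective model, and by faithfully flat descent so is $Y_i\otimes\O_{S,s}$; taking the closed fibre shows $Y_i\otimes\kappa(s)$ is a torsor under $\shG_i\otimes\kappa(s)$. Combining the three cases, the fibrewise criterion of Theorem \ref{effective model} yields that $Y_i/S$ is a torsor under $\shG_i/S$ for each $i$, which is Condition (\MRfive). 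I expect the main obstacle to be the branch-point analysis of the third paragraph---identifying the local extension as totally ramified with trivial residue extension and matching its effective model with the global $\shG_i$ under completion---after which the parity of the ramification break for $p=2$ closes the argument.
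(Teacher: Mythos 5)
Your reduction to the fibrewise torsor criterion of Theorem \ref{effective model} is a legitimate strategy, and your treatment of the generic point and of the unramified codimension-one points is fine, but the third paragraph --- which carries all the content --- rests on a false dichotomy. It is not true that a $\ZZ/p\ZZ$-Galois extension of a discrete valuation ring with a single prime above is either totally ramified with trivial residue field extension or unramified: when the residue field is imperfect (and the residue fields of height-one primes of $R=k[[x_1,\ldots,x_n]]$ with $n\geq 2$ are imperfect), \emph{ferocious} ramification can occur, namely $e=1$ with purely inseparable residue field extension of degree $p$. This is in fact the typical behaviour here. Concretely, for $p=2$ take $R=k[[x,y]]$ and $B=R[u]/(u^2-yu-x)$, which is exactly the shape of $A^{G_i^\perp}=R[u_i]/(u_i^2-a_iu_i-x_i)$ produced by a moderately ramified action (e.g.\ $a_1=y$, $a_2=x$): at the branch prime $\mathfrak q=(y)$ the localization $B_{\mathfrak q}$ is a discrete valuation ring with uniformizer $y$ and residue field $k((x))[u]/(u^2-x)$, an inseparable quadratic extension of $\kappa(\mathfrak q)=k((x))$. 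In that case $\widehat B_s$ is not of the form $\kappa(s)[[u]]$ with $\widehat B_s^{G_i}=\kappa(s)[[t]]$, so the setting of Section \ref{wild actions in dimension one} does not apply and Theorem \ref{characterization moderately ramified} cannot be invoked. The fibre over such a point is still a torsor --- it is the nontrivial $\shG_{0,0}$-torsor $\Spec\kappa(s)[u]/(u^2-c)$ with $c$ a non-square --- but your argument does not establish this, and that is precisely the case that matters.

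The paper sidesteps the fibrewise analysis with a short global computation: by (\MRfour) the ring $B=A^{G_i^\perp}$ is free of rank $2$ over the local ring $R$, so it has a basis $1,u$; then $u^2=au+\xi$ for some $a,\xi\in R$, so $B=R[u]/(u^2-au-\xi)$ with the Galois action $u\mapsto u+a$ (characteristic $2$), and Lemma \ref{naturalaction} together with Proposition \ref{effective model torsor} identifies the effective model with $\shG_{a,0}$ and exhibits $Y_i$ as a torsor under it. If you wish to keep your local approach, you must handle the ferociously ramified codimension-one points by a separate computation, which in practice amounts to carrying out the paper's rank-two presentation locally at each such point.
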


\proof
Set $B=A^{G_i^\perp}$ for convenience. Condition {\rm {(\MRfour)}} ensures that the extension $R\subset B$ is flat.
Thus $B$ is a free $R$-module of rank $2$, and since $R$ is local, we can find a basis for $B$ as an $R$-module
of the form $1,u$, where $1$ is the unit element in $R$ and $u \in B$.  It follows that there exist $a, \xi \in R$ such that 
$u^2=au+\xi$.
 Then $B=R[u]/(u^2-au-\xi)$, and the action of $G/G_i^\perp$ is given by $u\mapsto u+a$.
Proposition \ref{effective model torsor} describes the effective model $\shG_i$ of the action of ${\mathbb Z}/2{\mathbb Z}$ on  $Y_i=\Spec(B)$, 
and shows that the latter scheme is a torsor under $\shG_i$. 
\qed

\begin{definition}
\label{definition moderately ramified}
We say that a $G$-action on $A$ is \emph{moderately ramified}
if it is ramified precisely at the origin, and 
there exists an admissible regular system of parameters $u_1,\ldots,u_n\in A$ with associated norm subring $R$
such that the five conditions (\MRone) -- (\MRfive)  hold.
\end{definition}

Theorem \ref{structure moderately ramified}  below is our main structure result for moderately ramified actions, and shows that such actions are easily described by explicit equations. Its converse in \ref{structure moderately ramified2} shows in addition that moderately ramified actions are abundant.
\begin{theorem}
\label{structure moderately ramified} 
Let $A$ be a complete   local ring that is regular of dimension $n \geq 2$, with field of representatives $k$ and endowed with 
a moderately ramified action of the cyclic group $G$ of order $p$. Then $A$, as a $k$-algebra with $G$-action, is isomorphic to 
$$
k[[x_1,\ldots,x_n]][u_1,\ldots,u_n]/(u_1^p-a_1^{p-1}u_1-x_1,\ldots,u_n^p-a_n^{p-1}u_n-x_n),
$$
where $x_1,\ldots,x_n$ are indeterminates, the elements  $a_1,\ldots,a_n\in k[[x_1,\ldots,x_n]]$
form a  system of parameters of $k[[x_1,\ldots,x_n]]$, and  the natural automorphism $\sigma$ of order $p$, which sends $u_i$ to $u_i+a_i$ for $i=1,\dots, n$
and fixes $k[[x_1,\ldots,x_n]]$, induces under this isomorphism a generator of $G$.

The elements $u_1,\dots, u_n$ form an admissible regular system of parameters for the action of $\left< \sigma\right>$, and the subring $R:=k[[x_1,\ldots,x_n]]$ is identified with the norm subring for the action of $G$. The Galois group $H$ of the extension $\Frac(A)/\Frac(R)$ 
is generated by the automorphisms $\sigma_i$, $i=1,\dots, n$, where $\sigma_i(u_j)=u_j$ if $i\neq j$, and $\sigma_i(u_i)=u_i+a_i$. 
Let ${G_i^\perp} $ be the subgroup generated by the elements $\sigma_j$ with $j \neq i$. Then
the invariant subring
$A^{G_i^\perp}$ introduced in {\rm \ref{flat, system parameters}} is isomorphic to $R[u_i]/(u_i^p-a_i^{p-1}u_i-x_i)$.

\end{theorem}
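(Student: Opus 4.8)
The plan is to assemble the normal form from the five cumulative conditions, treating each invariant subring $A^{G_i^\perp}$ separately and then recombining them by tensoring over $R$. First I would record the structural consequences of (\MRone)--(\MRfour): by Proposition \ref{elementary abelian} the Galois group $H$ is elementary abelian of order $p^n$, the chosen inertia subgroups yield an isomorphism $G_1\times\cdots\times G_n\xrightarrow{\sim}H$, and (\MRfour) together with Proposition \ref{tensor product} makes the canonical map $f$ of \eqref{map} an isomorphism, so that $A=A^{G_1^\perp}\otimes_R\cdots\otimes_R A^{G_n^\perp}$. Lemma \ref{flat, system parameters} then tells me that each $R\subset A^{G_i^\perp}$ is flat, hence free of rank $p$ since $R$ is local, that each $A^{G_i^\perp}$ is regular, and that the branch-locus elements $a_1,\ldots,a_n\in R$ form a system of parameters of $R$.

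The heart of the argument is to make each factor explicit using (\MRfive). By that condition $Y_i=\Spec(A^{G_i^\perp})$ is a torsor under its effective model $\shG_i/S$, which is generically the constant group scheme $G_i$; so Proposition \ref{generically split group scheme}, applied to the normal domain $R$, identifies it in Tate--Oort form as $\shG_i\simeq\shG_{a_i^{p-1},0}$ for a non-zero $a_i\in R$, which I may take to be the branch-locus equation. Lemma \ref{naturalaction} then expresses the torsor as $A^{G_i^\perp}=R[w_i]/(w_i^p-a_i^{p-1}w_i-b_i)$ for some $b_i\in R$, with the generator of $G_i=H/G_i^\perp$ acting by $w_i\mapsto w_i+a_i$ (after rescaling $a_i$ by a scalar in $\FF_p^\times$ so that the chosen generator corresponds to the section $z=a_i$ among the roots $z=la_i$ of $z^p-a_i^{p-1}z$). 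Since $\sigma_i\in G_i\subset G_j^\perp$ for $j\neq i$, each $\sigma_i$ fixes $w_j$, so the tensor decomposition gives $A=R[w_1,\ldots,w_n]/(w_1^p-a_1^{p-1}w_1-b_1,\ldots,w_n^p-a_n^{p-1}w_n-b_n)$ with $H=G_1\times\cdots\times G_n$ acting through $\sigma_i\colon w_i\mapsto w_i+a_i$ and $w_j\mapsto w_j$ for $j\neq i$.

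Next I would locate the original group $G$ inside $H$ and normalize the coordinates. I first check that $w_1,\ldots,w_n$ is a regular system of parameters of $A$: the norm elements generating $\maxid_R$ lie in $\maxid_A^p\subset\maxid_A^2$, so $\maxid_R\subset\maxid_A^2$, and reducing modulo $\maxid_R A$ gives $A/\maxid_R A\simeq k[w_1,\ldots,w_n]/(w_1^p,\ldots,w_n^p)$, in which $\overline{w}_1,\ldots,\overline{w}_n$ are a basis of the cotangent space. Writing a generator $\sigma$ of $G$ as $\sigma=\sigma_1^{c_1}\cdots\sigma_n^{c_n}$, I claim every $c_i\neq 0$: if $c_i=0$ then $\sigma(w_i)=w_i$, so $w_i\in\maxid_A\setminus\maxid_A^2$ would be a $G$-invariant element of a cotangent basis, contradicting Lemma \ref{no invariant parameter}. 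Replacing the generator $\sigma_i$ of $G_i$ by $\sigma_i^{c_i}$ and $a_i$ by $c_ia_i$ leaves each relation unchanged, since $(c_ia_i)^{p-1}=a_i^{p-1}$, and arranges that $\sigma=\sigma_1\cdots\sigma_n$ acts diagonally by $w_i\mapsto w_i+a_i$.

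Finally I would identify the $b_i$ with the norm elements and conclude. A direct computation gives $N_{A/A^G}(w_i)=\prod_{l\in\FF_p}(w_i+la_i)=w_i^p-a_i^{p-1}w_i=b_i$, and the same reduction as above shows $w_1,\ldots,w_n$ is admissible, so by Proposition \ref{norm elements} the ring $A$ is free of rank $p^n$ over the norm subring $k[[b_1,\ldots,b_n]]$; since $b_i\in R$ and $A$ is also free of rank $p^n$ over $R$, multiplicativity of ranks forces $R=k[[b_1,\ldots,b_n]]$. Renaming $u_i:=w_i$ and $x_i:=b_i$ then yields the displayed normal form, from which the statements about the admissible regular system $u_1,\ldots,u_n$, the norm subring $R$, the generators $\sigma_i$ of $H$, the complements $G_i^\perp$, and the factors $A^{G_i^\perp}=R[u_i]/(u_i^p-a_i^{p-1}u_i-x_i)$ can all be read off. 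I expect the main obstacle to be the second step: correctly identifying the effective model in Tate--Oort form and translating the geometric torsor condition (\MRfive) into the explicit Artin--Schreier-type equation, while matching the chosen generator of each $G_i$ with the right section of $\shG_i$. The verification that the $c_i$ are non-zero is the other delicate point, since it relies on first establishing that the $w_i$ form a regular system of parameters.
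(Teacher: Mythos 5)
Your proposal follows the same skeleton as the paper's own proof in \ref{proofstructure}: Conditions (\MRone)--(\MRfour) together with Proposition \ref{tensor product} and Lemma \ref{flat, system parameters} give the tensor decomposition $A= A^{G_1^\perp}\otimes_R\cdots\otimes_R A^{G_n^\perp}$ with each factor regular and the $a_i$ a system of parameters; (\MRfive) plus the Tate--Oort classification puts each factor in the form $R[w_i]/(w_i^p-a_i^{p-1}w_i-b_i)$ (this is Lemma \ref{normalformMR4}); and Lemma \ref{no invariant parameter} forces all exponents $c_i$ to be non-zero, so that after rescaling $a_i\mapsto c_ia_i$ (harmless since $c_i^{p-1}=1$) the group $G$ becomes the diagonal subgroup and $N_{A/A^G}(w_i)=b_i$. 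Where you genuinely diverge is in legitimizing the new coordinates. The paper's Lemma \ref{new regular system} computes the Jacobian of the relations and invokes the Implicit Function Theorem to show directly that $\xi_1,\ldots,\xi_n$ is a regular system of parameters of $R$, hence that the $w_i$ form one for $A$. You instead get the regular system of parameters from $\maxid_RA\subset\maxid_A^p\subset\maxid_A^2$ and the reduction $A/\maxid_RA$, and then identify $R=k[[b_1,\ldots,b_n]]$ by comparing the rank $p^n$ of $A$ over the two norm subrings (via a second appeal to Proposition \ref{norm elements}) and using that $k[[b_1,\ldots,b_n]]$ is normal with the same fraction field as $R$. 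Both routes are valid; yours is somewhat more elementary, avoiding the Jacobian/Implicit Function Theorem computation, while the paper's yields the explicit intermediate fact that the $x_j$ are power series in the $\xi_i$. One detail you elide: Lemma \ref{naturalaction} only produces $b_i\in R$, whereas your isomorphism $A/\maxid_RA\simeq k[w_1,\ldots,w_n]/(w_1^p,\ldots,w_n^p)$ requires $b_i\in\maxid_R$; as in Lemma \ref{normalformMR4}, one must first translate $w_i$ by the element of $k$ whose $p$-th power is $b_i$ modulo $\maxid_R$ (possible because the residue field extension is trivial). With that normalization supplied, your argument is complete.
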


Before we give the proof of Theorem  \ref{structure moderately ramified} in \ref{proofstructure},
let us review the notion of {\it fixed scheme of the action} and mention two corollaries.
Let a finite group $G$ act on a ring $A$. The {\it fixed point scheme} of the $G$-action is the closed subscheme $\Spec A/I $ of $ \Spec A$, 
where $I$ is the ideal of $A$ generated by the elements of the form $\sigma(a)-a$, for all $a \in A$ and all $\sigma \in G$. 
Assume now that  $A=k[[u_1,\dots,u_n]]$  and $G$ acts by $k$-automorphisms. Then 
$I$ is the ideal of $A$ generated by the elements
$\sigma(u_i)-u_i$, for $i =1,\dots, n$ and $\sigma \in G$.
When the action of $G=\left< \sigma \right>$ on $k[[u_1,\dots, u_n]]$ is linear, with $\sigma(u_i):=\zeta_i u_i$ for 
$i =1,\dots, n$ and  $\zeta_i$ is a non-trivial $m$-th root of unity for some $m$ coprime to the characteristic of $k$, we find that the fixed point scheme is smooth, with ideal $I=(u_1,\dots, u_n)$.
\if false 
Suppose that $G$ is cyclic of order $p$  and ${\rm char}(k)=p$. When the ideal $I$ is principal, then the ring $A^G$ is regular (\cite{K-L}, Theorem 2), and it is conjectured in \cite{K-L}, Conjecture 9,
that if $A^G$ is regular, then $I$ is principal.
\fi
Let us record now the following immediate consequences of Theorem \ref{structure moderately ramified}.
 
\begin{corollary}
Let $A$ of dimension $n$ be endowed with a moderately ramified $G$-action, as in {\rm \ref{structure moderately ramified}}. 
Let  $I$ be the ideal of $A$ defining the fixed point scheme of the action.  Then  
$I \subseteq \maxid_A^p$ and the length of $A/I$ is a multiple of $p^n$.
\end{corollary}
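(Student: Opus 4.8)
The plan is to read the ideal $I$ off the explicit presentation of Theorem \ref{structure moderately ramified} and then reduce both claims to elementary facts about the norm subring $R=k[[x_1,\ldots,x_n]]$. First I would invoke \ref{structure moderately ramified} to assume $A=k[[u_1,\ldots,u_n]]$ with $\sigma(u_i)=u_i+a_i$, where $\sigma$ generates $G$ and fixes $R$, the elements $a_1,\ldots,a_n\in R$ form a system of parameters of $R$, and $R\subset A$ is free of rank $p^n$. Since $A=k[[u_1,\ldots,u_n]]$ and $G=\langle\sigma\rangle$ acts by $k$-automorphisms, the fixed–scheme ideal $I$ is generated by the elements $\sigma^j(u_i)-u_i$ with $1\le i\le n$ and $1\le j\le p-1$. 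As $\sigma^j(u_i)=u_i+ja_i$, these equal $ja_i$, and each $j\in\{1,\ldots,p-1\}$ is a unit in $k$ because $\operatorname{char}(k)=p$; hence $I=(a_1,\ldots,a_n)A$.

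To prove $I\subseteq\maxid_A^p$ I would argue through $\maxid_A$-adic orders. Each $a_i$ lies in $\maxid_R\subseteq\maxid_A$, so $\ord_A(a_i)\ge 1$. The defining relation $x_i=u_i^p-a_i^{p-1}u_i$ then gives $\ord_A(u_i^p)=p$ and $\ord_A(a_i^{p-1}u_i)\ge(p-1)+1=p$, whence $\ord_A(x_i)\ge p$, i.e. $x_i\in\maxid_A^p$ for every $i$. Since $\maxid_R=(x_1,\ldots,x_n)R$ and $\maxid_A^p$ is an ideal of $A$, this forces $\maxid_R\subseteq\maxid_A^p$; in particular every $a_i$ lies in $\maxid_A^p$, so $I=(a_1,\ldots,a_n)A\subseteq\maxid_A^p$.

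For the divisibility of the length I would use freeness. Set $\mathfrak{c}:=(a_1,\ldots,a_n)R$, so that $I=\mathfrak{c}A$. Because $R\subset A$ is free of rank $p^n$, tensoring $R/\mathfrak{c}$ up to $A$ yields an isomorphism of $R$-modules $A/I=A\otimes_R(R/\mathfrak{c})\simeq(R/\mathfrak{c})^{\oplus p^n}$. As $a_1,\ldots,a_n$ is a system of parameters of $R$, the ring $R/\mathfrak{c}$ has finite length, hence so does $A/I$. Since $A$ and $R$ share the residue field $k$, the length of any finite-length module coincides with its $k$-vector space dimension, so $\operatorname{length}(A/I)=\dim_k(A/I)=p^n\cdot\dim_k(R/\mathfrak{c})=p^n\cdot\operatorname{length}(R/\mathfrak{c})$, a multiple of $p^n$.

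The computations are routine once the presentation from \ref{structure moderately ramified} is in hand, and I expect no deep obstacle. The one point deserving a little care is the order estimate $\ord_A(x_i)\ge p$: it is exactly this that yields $\maxid_R\subseteq\maxid_A^p$, and hence simultaneously the inclusion $I\subseteq\maxid_A^p$ and the clean identification of $A/I$ as a free $R/\mathfrak{c}$-module that delivers the factor $p^n$ in the length.
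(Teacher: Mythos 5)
Your proof is correct and follows essentially the same route as the paper: identify $I=(a_1,\ldots,a_n)A$ from the normal form, deduce $x_i\in\maxid_A^p$ from the relation $u_i^p-a_i^{p-1}u_i-x_i=0$ and hence $a_i\in\maxid_R A\subseteq\maxid_A^p$, and use flatness of $A$ over $R$ of rank $p^n$ to get the length divisibility. The only cosmetic difference is that you make the last step explicit via the isomorphism $A/I\simeq (R/(a_1,\ldots,a_n))^{\oplus p^n}$, where the paper instead cites the length $p^n$ of $A/(x_1,\ldots,x_n)A$ from Proposition \ref{norm elements} together with a multiplicativity result of Nagata.
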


\proof 
Consider the normal form for the ring $A$ obtained in Theorem \ref{structure moderately ramified}.
It is clear from the explicit form of the action of $G$ that $I=(a_1,\dots,a_n)A$.  Since  $a_i\in\maxid_R$ and  $u_i^p-a_i^{p-1}u_i - x_i =0$, we see
that $x_i\in\maxid_A^p$, so that $I \subseteq \maxid_A^p$.
We computed in \ref{norm elements} that the length of $A/(x_1,\dots, x_n)A$ is equal to $p^n$. 
It follows from the facts that $a_1,\dots,a_n \in R $ belong to $\maxid_R=(x_1,\dots, x_n)R$ and that $A/R$ is flat,
that the length of $A/(a_1, \dots, a_n)A$ as an $A$-module is a multiple of the length of $A/(x_1,\dots, x_n)A$ (\cite{Nag}, 19.1). 
\qed

\medskip
The proof for the following consequence is left to the reader:

\begin{corollary} 
Let $A_1$ and $A_2$ be two regular complete local rings 
of positive dimension $n_1$ and $n_2$, respectively, of characteristic $p>0$, and field of representatives $k$. 
Consider two moderately ramified actions of $G:=\ZZ/p\ZZ$, on $A_1$ and $A_2$. 
Then the natural diagonal action of $G$ on the completed tensor product $A_1 \hat{\otimes}_k A_2$ is also moderately ramified.
\end{corollary}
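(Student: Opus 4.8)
The plan is to reduce the statement to the structure theorem \ref{structure moderately ramified} together with its converse \ref{structure moderately ramified2}, by exhibiting the diagonal action on the completed tensor product directly in normal form. First I would apply Theorem \ref{structure moderately ramified} to each factor and write
$$
A_1=k[[x_1,\ldots,x_{n_1}]][u_1,\ldots,u_{n_1}]/(u_1^p-a_1^{p-1}u_1-x_1,\ldots,u_{n_1}^p-a_{n_1}^{p-1}u_{n_1}-x_{n_1}),
$$
with $\sigma(u_i)=u_i+a_i$ and the $a_i\in R_1:=k[[x_1,\ldots,x_{n_1}]]$ forming a system of parameters, and likewise
$$
A_2=k[[y_1,\ldots,y_{n_2}]][v_1,\ldots,v_{n_2}]/(v_1^p-b_1^{p-1}v_1-y_1,\ldots,v_{n_2}^p-b_{n_2}^{p-1}v_{n_2}-y_{n_2}),
$$
with $\sigma(v_j)=v_j+b_j$ and the $b_j\in R_2:=k[[y_1,\ldots,y_{n_2}]]$ forming a system of parameters. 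Since $A_1=k[[u_1,\ldots,u_{n_1}]]$ and $A_2=k[[v_1,\ldots,v_{n_2}]]$ as $k$-algebras, the completed tensor product is the regular complete local ring $A:=A_1\hat{\otimes}_k A_2=k[[u_1,\ldots,u_{n_1},v_1,\ldots,v_{n_2}]]$ of dimension $n:=n_1+n_2$, and the diagonal action is $\sigma(u_i)=u_i+a_i$, $\sigma(v_j)=v_j+b_j$.

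Next I would identify the norm subring of $A$. Because each $a_i$ lies in $R_1\subset A_1^G$ and each $b_j$ in $R_2\subset A_2^G$, these elements are fixed by $\sigma$, so $\sigma^k(u_i)=u_i+ka_i$ and
$$
N_{A/A^G}(u_i)=\prod_{k=0}^{p-1}(u_i+ka_i)=u_i^p-a_i^{p-1}u_i=x_i,
$$
which coincides with the norm already computed inside $A_1$; symmetrically $N_{A/A^G}(v_j)=y_j$. Hence the norm subring of $A$ is $R:=k[[x_1,\ldots,x_{n_1},y_1,\ldots,y_{n_2}]]=R_1\hat{\otimes}_k R_2$, and the presentation displayed above is exactly the normal form of \ref{structure moderately ramified} for $A$ relative to $R$, with combined parameter tuple $(a_1,\ldots,a_{n_1},b_1,\ldots,b_{n_2})$.

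The one genuine verification is that this combined tuple is a system of parameters of $R$, and this is where I would focus. Since each $a_i$ involves only the variables $x_\ell$ and each $b_j$ only the variables $y_m$, one has a $k$-algebra identification
$$
R/(a_1,\ldots,a_{n_1},b_1,\ldots,b_{n_2})\;\cong\;\bigl(R_1/(a_1,\ldots,a_{n_1})\bigr)\hat{\otimes}_k\bigl(R_2/(b_1,\ldots,b_{n_2})\bigr).
$$
Both factors on the right are Artinian local $k$-algebras of finite length, because the $a_i$ (resp.\ the $b_j$) form a system of parameters of $R_1$ (resp.\ $R_2$); their completed tensor product over $k$ is therefore again of finite length, so the ideal $(a_1,\ldots,a_{n_1},b_1,\ldots,b_{n_2})$ is $\maxid_R$-primary. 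Having thus presented $A$ with its diagonal $G$-action in the normal form of \ref{structure moderately ramified} with $a_1,\ldots,a_{n_1},b_1,\ldots,b_{n_2}$ a system of parameters of $R$, the converse \ref{structure moderately ramified2} immediately yields that the diagonal action is moderately ramified. The main obstacle is purely bookkeeping: ensuring that the norm elements of the tensor product agree with those of the two factors, so that $R=R_1\hat{\otimes}_k R_2$ and the presentation is genuinely in normal form, after which the Artinian tensor-product argument finishes the parameter check and the converse theorem does the rest.
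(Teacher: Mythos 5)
Your proposal is correct and follows exactly the route the paper intends (the paper leaves this proof to the reader, but places the corollary immediately after Theorem \ref{structure moderately ramified} and its converse \ref{structure moderately ramified2} precisely so that one writes both actions in normal form, checks that the concatenated tuple $a_1,\ldots,a_{n_1},b_1,\ldots,b_{n_2}$ is a system of parameters of $R_1\hat{\otimes}_k R_2$ via the Artinian tensor-product argument, and invokes the converse). The only point to add is that the corollary allows $n_i=1$, where Theorem \ref{structure moderately ramified} does not apply as stated; in that case one substitutes the identical normal form supplied by condition (iii) of Theorem \ref{characterization moderately ramified}, and the rest of your argument goes through unchanged.
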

 
Next we explain how the torsor condition on $Y_i \to S$ in (\MRfive) leads to a simple equation for $Y_i$.

\begin{lemma} \label{normalformMR4} 
Consider a moderately ramified $G$-action on $A$ and keep the notation introduced in this section. 
Suppose that $Y_i \to S$ is a $\shG_i/S$-torsor.
Then $A^{G_i^\perp}$ is isomorphic to $R[r_i]/(f_i)$ for some polynomial $f_i(r_i)=r_i^p-a_i^{p-1}r_i - \xi_i$ with coefficients
 $a_i, \xi_i \in \maxid_{R}$.
\end{lemma}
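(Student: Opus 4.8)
The plan is to reduce the statement to the explicit one-dimensional torsor description already provided by Lemma \ref{naturalaction}, after first pinning down the effective model $\shG_i$ through the Tate--Oort classification. Throughout I would write $B:=A^{G_i^\perp}$ and $S=\Spec(R)$. Two structural facts set the stage. First, by (\MRfour) the ring $B$ is Cohen--Macaulay, so Proposition \ref{tensor product} shows that the map $f$ of \eqref{map} is an isomorphism, and then Lemma \ref{flat, system parameters} gives that $R\subset B$ is flat; since its generic rank is $[\Frac(B):\Frac(R)]=p$, the $R$-module $B$ is free of rank $p$. Second, $B$ is the invariant ring of the subgroup $G_i^\perp$ of $H$ acting on the local ring $A$, so by the same argument used for $A^G$ in Proposition \ref{properties invariant ring} the ring $B$ is local with residue field $k$. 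This locality is the one fact I will really need at the end. Finally, $\shG_i/S$ is the effective model of the $G_i$-action on $Y_i$, and by its construction (Theorem \ref{effective model}) its generic fibre is the constant group scheme $G_i$.

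Next I would identify $\shG_i$ precisely. Since $R$ is a regular local ring, $\Pic(R)=0$, so the Tate--Oort description of \ref{TateOortReview} applies and $\shG_i\simeq\shG_{\alpha,\beta}$ for some $\alpha,\beta\in R$ with $\alpha\beta=0$. Because the generic fibre of $\shG_i$ is constant, Proposition \ref{generically split group scheme} forces $\beta=0$ and $\alpha=a_i^{p-1}$ for some nonzero $a_i\in R$, so that $\shG_i\simeq\Spec R[z]/(z^p-a_i^{p-1}z)$; this is exactly the group scheme attached to the parameter $a_i$ in Lemma \ref{naturalaction}. The element $a_i$ cannot be a unit: otherwise $\shG_i\to S$ would be étale everywhere, and since $Y_i/S$ is a $\shG_i$-torsor the morphism $\Spec(B)\to S$ would be étale, contradicting that it is ramified (as recorded just before Lemma \ref{flat, system parameters}). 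Hence $a_i\in\maxid_R$. Now (\MRfive) provides that $Y_i/S$ is a torsor under $\shG_i$, so the final assertion of Lemma \ref{naturalaction} yields a power series $\xi_i\in R$ together with an isomorphism $B\simeq R[r_i]/(r_i^p-a_i^{p-1}r_i-\xi_i)$ under which the $G_i$-action is $r_i\mapsto r_i+a_i$.

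It remains to arrange $\xi_i\in\maxid_R$, which is the single point needing an argument rather than a citation. Reducing the displayed equation modulo $\maxid_R$ and using $a_i\in\maxid_R$ gives $B/\maxid_R B\simeq k[r_i]/(r_i^p-\bar\xi_i)$. Since $B$ is local with residue field $k$, this Artinian quotient is local with residue field $k$, which forces $r_i^p-\bar\xi_i$ to be a $p$-th power of a linear polynomial; in characteristic $p$ this means $\bar\xi_i=c^p$ for some $c\in k$. Lifting $c$ to $\tilde c\in R$ through the field of representatives and substituting $r_i\mapsto r_i-\tilde c$ replaces $\xi_i$ by $\xi_i+a_i^{p-1}\tilde c-\tilde c^{\,p}$, whose residue modulo $\maxid_R$ is $c^p-c^p=0$, while preserving both the shape of the equation and the action $r_i\mapsto r_i+a_i$. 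After this substitution both $a_i$ and $\xi_i$ lie in $\maxid_R$, which is the desired conclusion. The only genuine obstacle is this last bookkeeping: one must verify that the translation keeps the equation in the required Artin--Schreier form and the action unchanged while pushing the constant term into $\maxid_R$; the earlier steps are direct applications of the Tate--Oort classification and of Lemma \ref{naturalaction}.
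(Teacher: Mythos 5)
Your proposal is correct and follows essentially the same route as the paper: Tate--Oort classification plus generic constancy to pin down $\shG_i\simeq\shG_{a_i^{p-1},0}$, Lemma \ref{naturalaction} to write the torsor as $R[r_i]/(r_i^p-a_i^{p-1}r_i-\xi_i)$, ramification to force $a_i\in\maxid_R$, and the trivial residue field extension to make $\bar\xi_i$ a $p$-th power that can be absorbed by translating $r_i$. The paper states the last two points more tersely, but your explicit verification of the substitution $r_i\mapsto r_i-\tilde c$ and of the locality of $A^{G_i^\perp}$ is exactly the content it leaves implicit.
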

\proof 
It follows
from the Tate--Oort classification recalled in \ref{tate--oort} and \ref{TateOortReview} that
$\shG_i/S$ is isomorphic to the group scheme $\shG_{\alpha_i,\beta_i}/S$, with $\shG_{\alpha_i,\beta_i}= R[T]/(T^p-\alpha_iT)$
for some elements $\alpha_i,\beta_i\in R$ with $\alpha_i\beta_i=0$ (since  $w_p=0$ when $R$ has characteristic $p$).  
By construction, the generic fiber of $\shG_i \to S$ is a constant group scheme.  Since $R$ is a normal domain, we conclude from 
Proposition \ref{generically split group scheme}   that 
$\beta_i=0$, $\alpha_i\neq 0$, and $\alpha_i $ is a $(p-1)$-th power in  $ R $, say $\alpha_i=a_i^{p-1}$ with $a_i \in R$.

%According to \ref{}, the group scheme $\shG_i/S$ is the spectrum of $R[r_i]/(r_i^p-a_i^{p-1}r_i)$
It follows from \ref{naturalaction} and \ref{effective model torsor}
%for some $a_i\in R$, and 
that the torsor $Y\ra S$ is isomorphic to the spectrum of $R[r_i]/(f_i)$ for some polynomial $f_i(r_i)=r_i^p-a_i^{p-1}r_i - \xi_i$.
The coefficient $a_i$ is not a unit, because $R\subset A^{G_i^\perp}$ is ramified at the origin.
The element $\xi_i$ becomes a $p$-th power in the residue field $k=R/\maxid_R$, because the
extension $R\subset A^{G_i^\perp}$ has trivial residue field extension.
Changing $\xi_i$ by such $p$-th power from the field of representatives $k\subset R$, we obtain $\xi_i\in \maxid_R$.
\qed

\begin{emp} \label{proofstructure} {\it Proof of Theorem \ref{structure moderately ramified}.}
Suppose that the $G$-action on $A=k[[u_1,\ldots,u_n]]$ is moderately ramified,
that is, Conditions   (\MRone) -- (\MRfive)  hold. We find  
from Proposition \ref{tensor product} that the map 
$f:A^{G_1^\perp}\otimes_R\cdots\otimes_R A^{G_n^\perp}\ra  A$ introduced in \eqref{map} is an isomorphism, and that each 
$A^{G_i^\perp}$ is in fact regular (\ref{flat, system parameters}). We may then apply  \ref{normalformMR4}  
to write  $A^{G_i^\perp}=R[r_i]/(f_i)$ for some  polynomial $f_i(r_i)=r_i^p-a_i^{p-1}r_i - \xi_i$ 
with coefficients $a_i, \xi_i \in \maxid_{R}$. 
\end{emp}

\begin{lemma}
\label{new regular system}
Keep all the preceding assumptions. Then the following hold:
\begin{enumerate}[\rm (a)]
\item The elements  $a_1,\ldots,a_n $ form a system of parameters of $R$.
\item The elements  $\xi_1,\ldots,\xi_n $ form a regular system of parameters of $R$.
\item The elements  $r_1,\ldots,r_n$ form  a regular system of parameters of $A$.
\end{enumerate}
\end{lemma}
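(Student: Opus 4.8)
The plan is to dispatch (a) by citation and then reduce (b) and (c) to a single cotangent-space computation. Part (a) is precisely the last assertion of Lemma \ref{flat, system parameters}: in the proof of the theorem (\ref{proofstructure}) the map $f$ has already been shown to be an isomorphism, so that lemma applies. The only thing to check is that the $a_i$ occurring in $f_i=r_i^p-a_i^{p-1}r_i-\xi_i$ is the $a_i$ of \ref{flat, system parameters}, namely the one cutting out the branch locus of $\Spec A^{G_i^\perp}\to\Spec R$; this holds because the different of $R[r_i]/(f_i)$ over $R$ is generated by $-a_i^{p-1}$, so the branch locus is $V(a_i)$. As the system-of-parameters property depends only on the common zero locus $V(a_1,\ldots,a_n)=\{\maxid_R\}$, it transfers, and $a_1,\ldots,a_n$ form a system of parameters of $R$.

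For (b) and (c) I would first record the elementary identity obtained by setting all $r_i$ equal to $0$ in $A=R[r_1,\ldots,r_n]/(f_1,\ldots,f_n)$: since $f_i\equiv-\xi_i$ modulo $(r_1,\ldots,r_n)$, one gets
\[
A/(r_1,\ldots,r_n)A \;\simeq\; R/(\xi_1,\ldots,\xi_n).
\]
This links the two parts, so it suffices to prove (c) and read off (b). To prove (c), I would compute the cotangent space $\maxid_A/\maxid_A^2$, which is $n$-dimensional over $k$ because $A$ is regular of dimension $n$. Reducing the presentation modulo $\maxid_R$ and using $a_i,\xi_i\in\maxid_R$ gives $A\otimes_R k\simeq k[r_1,\ldots,r_n]/(r_1^p,\ldots,r_n^p)$, a local $k$-algebra whose maximal ideal is generated by the classes of the $r_i$; pulling this back along $A\to A\otimes_R k$ shows $\maxid_A=(x_1,\ldots,x_n,r_1,\ldots,r_n)A$. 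Now each norm element $x_i=\prod_{\sigma\in G}\sigma(u_i)$ is a product of $p$ elements of $\maxid_A$, hence $x_i\in\maxid_A^p\subseteq\maxid_A^2$, so its class vanishes in $\maxid_A/\maxid_A^2$. Therefore the $n$ classes $\overline{r}_1,\ldots,\overline{r}_n$ generate the $n$-dimensional space $\maxid_A/\maxid_A^2$, hence form a basis, and $r_1,\ldots,r_n$ is a regular system of parameters, giving (c).

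Finally (b) drops out: by (c) we have $\maxid_A=(r_1,\ldots,r_n)A$, so the displayed identity yields $R/(\xi_1,\ldots,\xi_n)\simeq A/\maxid_A=k$, that is $(\xi_1,\ldots,\xi_n)=\maxid_R$; these $n$ generators of the maximal ideal of the $n$-dimensional regular ring $R$ then form a regular system of parameters.

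The argument is elementary throughout, so there is no serious obstacle; the points that require care are the reduction $A\otimes_R k\simeq k[r_1,\ldots,r_n]/(r_1^p,\ldots,r_n^p)$ and the observation $x_i\in\maxid_A^2$, which together are exactly what force the $r_i$, and not the norm elements $x_i$, to be the genuine parameters of $A$. The one bookkeeping subtlety is the identification, for part (a), of the branch-locus $a_i$ of \ref{flat, system parameters} with the $a_i$ coming from the Tate--Oort normal form of \ref{normalformMR4}.
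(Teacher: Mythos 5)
Your proposal is correct, but it reaches (b) and (c) by a genuinely different route than the paper. For (a) you and the paper both invoke Lemma \ref{flat, system parameters}; your explicit check that the Tate--Oort coefficient $a_i$ of \ref{normalformMR4} cuts out the branch locus (via the different $f_i'(r_i)=-a_i^{p-1}$), and hence defines the same vanishing locus as the element named $a_i$ in \ref{flat, system parameters}, is a point the paper leaves implicit, and it is worth making. For the rest, the paper proves (b) \emph{first}: it writes $A=k[[x_1,\ldots,x_n,r_1,\ldots,r_n]]/(f_1,\ldots,f_n)$, notes that regularity of $A$ forces the Jacobian matrix of the $f_i$ to have maximal rank modulo the maximal ideal, observes that all $r$-derivatives $a_i^{p-1}$ and all terms involving $a_i$ vanish there, concludes that $(\partial\xi_i/\partial x_j)$ is invertible modulo $\maxid$, and then applies the formal Implicit Function Theorem to express the $x_j$ as power series in the $\xi_i$; part (c) then follows by substituting the relations $\xi_i=r_i^p-a_i^{p-1}r_i$. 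You instead prove (c) first by a cotangent-space argument: $\maxid_A=(x_1,\ldots,x_n,r_1,\ldots,r_n)A$ from the fibre computation $A\otimes_Rk\simeq k[r_1,\ldots,r_n]/(r_1^p,\ldots,r_n^p)$, the classes of the norm elements die in $\maxid_A/\maxid_A^2$ because $x_i=\prod_{\sigma\in G}\sigma(u_i)\in\maxid_A^p$, and regularity of $A$ (dimension count) makes the $\overline{r}_i$ a basis; then (b) drops out of the identity $A/(r_1,\ldots,r_n)A\simeq R/(\xi_1,\ldots,\xi_n)$, which forces $(\xi_1,\ldots,\xi_n)=\maxid_R$. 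Both arguments use the regularity of $A$ as the essential input, but they use it differently: the paper through the Jacobian Criterion and Bourbaki's Implicit Function Theorem, you through Nakayama and the dimension of the cotangent space. Your version is more elementary and self-contained (no external citations beyond the lemmas already in play), and it exploits the multiplicative structure of the norm elements; the paper's version yields slightly more explicit output, namely the invertibility of the matrix $(\partial\xi_i/\partial x_j)$ and an explicit power-series inversion of $x$ in terms of $\xi$, though nothing later in the paper requires that extra precision.
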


\proof 
Part (a) is immediate from \ref{flat, system parameters}. 
To proceed, note that the canonical map
$$
R[r_1,\ldots,r_n]/(f_1,\ldots,f_n)\lra R[[r_1,\ldots,r_n]]/(f_1,\ldots,f_n)
$$
from polynomial rings to power series rings is bijective. This is because the polynomials $f_i(r_i)$
are monic, so  both sides are free $R$-modules of the same rank, with corresponding
bases of monomials.
Write  $\ideala\subset R[[r_1,\ldots,r_n]]$ for the ideal  generated by the polynomials $f_1,\ldots,f_r$. 
Using Proposition \ref{tensor product} we get an identification $A=R[[r_1,\ldots,r_n]]/\ideala$.
The  partial derivatives are
\begin{equation}
\label{partial derivatives}
\begin{split}
\frac{\partial}{\partial x_j}(r_i^p-a_i^{p-1}r_i-\xi_i)	&= 
  \frac{\partial a_i}{\partial x_j}a_i^{p-2}r_i - \frac{\partial\xi_i}{\partial x_j}, \quad {\rm and }\\
\frac{\partial}{\partial r_j}(r_i^p-a_i^{p-1}r_i-\xi_i)	&= 
  a_i^{p-1}.
\end{split}
\end{equation}
Consider the Jacobian matrix $J\in\Mat_{n\times 2n}(k[[x_1,\ldots,x_n,r_1,\ldots,r_n]])$  of partial derivatives.
Since $A$ is regular, this matrix
has maximal rank when its entries are reduced modulo the maximal ideal (use the Jacobian Criterion as in \cite{NagJac}, Theorem on page 429, and Remark 1 on page 431). 
So the same holds for the $n\times n$-matrix $(\partial \xi_i/\partial x_j)_{1 \leq i,j \leq n} \in\Mat_{n\times n}(R)$, because $a_i\in\maxid_R$.
By the Implicit Function Theorem (\cite{A 4-7}, Chapter IV, \S 4, No.\ 7, Corollary to Proposition 10),  
 the elements $x_1,\ldots,x_n$ can be expressed as formal power series in the $\xi_1\ldots,\xi_n$, which proves (b).
Using the relation $r_i^p-a_i^{p-1}r_i-\xi_i=0$, the elements $x_1,\ldots,x_n$
can be expressed as formal power series in the $r_1,\ldots,r_n$.
%Again by  the Implicit Function Theorem, the elements $x_1,\ldots,x_n$ can be expressed as formal power series in the
%$r_1,\ldots,r_n$. 
Thus $r_1,\ldots,r_n$ for a regular system of parameters of $A$, and (c) is proved.
%It follows that the ring $A$ is regular.
\qed

\begin{emp} We can now conclude the proof of Theorem \ref{structure moderately ramified}.
We  use the   identification 
%\begin{equation} \label{identification A}
$$ k[[x_1,\ldots,x_n]][r_1,\ldots,r_n]/(f_1,\ldots,f_n) \lra A$$
%\end{equation}\eqref{identification A} 
to obtain an explicit isomorphism $(\ZZ/p\ZZ)^n \to H$: Define an injective group  homomorphism
\begin{gather*}
(\ZZ/p\ZZ)^n  \lra   \Aut_R(A) \quad \subseteq \Gal(\Frac(A)/\Frac(R))=H, \\
(h_1,\dots, h_n)   \longmapsto   \left( r_i\mapsto r_i+h_ia_i \right)_{1\leq i\leq n}. 
\end{gather*}
In the definition above, we identify $\ZZ/p\ZZ$ with the prime subfield of $k$.
% This homomorphism is injective, because the set $\{ r_1^{\nu_1}\ldots r_n^{\nu_n}, 0\leq \nu_i\leq p-1 \}$ form an $R$-basis of $A$. 
Since $|H|=p^n$, we find then that this homomorphism is an isomorphism.

The subgroup $G$ of $H$  corresponds under this isomorphism to a subgroup 
of order $p$ generated by some $(h_1,\dots, h_n) \in  (\ZZ/p\ZZ)^n$.  Lemma \ref{no invariant parameter}
shows that every coefficient $h_i$ is non-zero. We thus have $h_i^{p-1}=1$ for $i=1,\dots, n$.
After replacing our choice of $a_i\in R$ by $h_ia_i$, we can assume that the subgroup $G$ of $H$ corresponds under the above bijection
with the diagonal subgroup,
generated by $(1,\ldots,1)$. With such choice, the generator of $G$ corresponding to $(1,\ldots,1)$ acts via
$$
r_i\longmapsto r_i+ a_i.
$$
It is easy to compute the norms of $r_i\in A$:
$$
N_{A/A^G}(r_i)=\prod_{\ell=0}^{p-1}(r_i+\ell a_i) = r_i^p-a_i^{p-1}r_i = \xi_i.
$$
\if false
Since  $a_i\in\maxid_R$ and  $r_i^p-a_i^{p-1}r_i - \xi_i =0$, we see
that $\xi_i\in\maxid_A^p$. Since $a_i$ can be expressed in terms of $\xi_1,\ldots,\xi_n$, 
it follows that $a_i \in\maxid_A^p$. Therefore, the action induced by $H$ on the space $\maxid_A/\maxid_A^p$ is trivial.
It follows in particular that the action induced by $G$ on $\maxid_A/\maxid_A^2$ is trivial, showing that every regular system of parameters
in $A$ is admissible (see \ref{lem.Borel}). 
\fi 
We replace our original regular system of parameters $u_1,\ldots,u_n$ by the regular system of parameters $r_1,\ldots,r_n$.
The norm elements $x_1,\ldots,x_n\in R$ for the initial regular system of parameters are then replaced  by $\xi_1,\ldots,\xi_n$.
Both regular systems of parameters produce the same norm subring  $R\subset A$. 
After this change of parameters, we  can write that $A=k[[u_1,\ldots,u_n]] $ and $R=k[[x_1,\ldots,x_n]]$, with 
$$
A=R[u_1,\ldots,u_n]/(u_1^p-a_1^{p-1}u_1-x_1,\ldots,u_n^p-a_n^{p-1}u_n-x_n).
$$
Moreover, the elements $(h_1,\dots,h_n)\in(\ZZ/p\ZZ)^n=H$ act on $A$ via
$u_i\mapsto u_i + h_ia_i$ and $G$ is generated by the element corresponding to $(1,\dots,1)$. 
\qed
\end{emp}

%\medskip
We now state the expected converse to Theorem \ref{structure moderately ramified}.
\begin{theorem}
\label{structure moderately ramified2}
Let $n\geq 2$. Let $k$ be a field of characteristic $p>0$. Let $R:=k[[x_1,\ldots,x_n]]$ and let $a_1,\dots, a_n \in \maxid_R \setminus \{0\}$.
%, $a_i \neq 0$ for $i=1,\dots, n$.
 Consider the ring 
$$
A:=R[u_1,\ldots,u_n]/(u_1^p-a_1^{p-1}u_1-x_1,\ldots,u_n^p-a_n^{p-1}u_n-x_n).
$$
\begin{enumerate}[\rm (a)]
\item
Then $A$ is a regular complete local ring with maximal ideal $(u_1,\dots, u_n)$. The extension $\Frac(A)/\Frac(R)$ is Galois with Galois group $H$ isomorphic to $({\mathbb Z}/p{\mathbb Z})^n$, 
generated by the automorphisms $\sigma_i$, $i=1,\dots, n$, with $\sigma_i(u_j):=u_j$ if $j \neq i$, and $\sigma_i(u_i):=u_i +a_i$. We have $R=A^H$.
\item
Assume now that the elements $a_1,\ldots,a_n$
form a system of parameters in $R$. Let $\sigma$ be the automorphism of order $p$ of $A$ which sends $u_i$ to $ u_i+a_i$, for $i=1,\dots, n$
and fixes $R$ (in other words, $\sigma=\sigma_1 \cdots \sigma_n$). Then the action of $\langle \sigma \rangle$ on $A$ is moderately ramified.
\end{enumerate}
\end{theorem}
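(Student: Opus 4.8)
I would prove the two parts separately. For part~(a), note first that $A$ is finite and free over $R$ of rank $p^n$, with the monomials $u_1^{e_1}\cdots u_n^{e_n}$ ($0\le e_i<p$) as basis, since each $u_i$ satisfies the monic relation $F_i:=u_i^p-a_i^{p-1}u_i-x_i$ of degree $p$. Because $a_i\in\maxid_R$, the special fiber $A\otimes_R k=k[u_1,\dots,u_n]/(u_1^p,\dots,u_n^p)$ is local Artinian, and as $R$ is henselian this forces $A$ to be local, with maximal ideal $\maxid_A=(u_1,\dots,u_n)A$ (here $x_i=u_i^p-a_i^{p-1}u_i\in(u_1,\dots,u_n)A$). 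Reducing $F_i$ modulo $\maxid_A^2$ gives $x_i\equiv0$, so $\bar u_1,\dots,\bar u_n$ span $\maxid_A/\maxid_A^2$; since $\dim A=\dim R=n$, the ring $A$ is regular with regular system of parameters $u_1,\dots,u_n$, and thus $A\simeq k[[u_1,\dots,u_n]]$. A direct check using $(u_i+a_i)^p=u_i^p+a_i^p$ shows that each $\sigma_i$ preserves the relations $F_j$ and has order $p$, that the $\sigma_i$ commute, and that $(\ZZ/p\ZZ)^n\to\Aut_R(A)$ is injective, since $\sigma_1^{e_1}\cdots\sigma_n^{e_n}(u_i)=u_i+e_ia_i$. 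As $a_i\neq0$, the derivative $-a_i^{p-1}$ of $F_i$ is a unit in $\Frac(R)$, so $A\otimes_R\Frac(R)$ is \'etale, hence reduced; being a localization of the domain $A$ it is a field, whence $[\Frac(A):\Frac(R)]=p^n=|H|$. Artin's lemma then gives that $\Frac(A)/\Frac(R)$ is Galois with group $H$ and $\Frac(A)^H=\Frac(R)$, and $A^H=A\cap\Frac(R)=R$ by normality of $R$.

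For part~(b), set $G=\langle\sigma\rangle$ with $\sigma=\sigma_1\cdots\sigma_n$, so $\sigma(u_i)=u_i+a_i$ and $G\simeq\ZZ/p\ZZ$. The ideal of the fixed scheme is $I=(a_1,\dots,a_n)A$; since $a_1,\dots,a_n$ is a system of parameters, $(a_1,\dots,a_n)R$ is $\maxid_R$-primary and its extension to $A$ is $\maxid_A$-primary, so $V(I)=\{\maxid_A\}$. As $G$ has prime order, it follows that $I_{\maxid_A}=G$ while $I_\primid=\{1\}$ for every non-maximal $\primid$; that is, the action is ramified precisely at the origin in the sense of~\ref{ramifiedprecisely}. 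Moreover $x_j\in\maxid_A^2$ (both $u_j^p$ and $a_j^{p-1}u_j$ lie there), so $a_i\in\maxid_R\subseteq\maxid_A^2$ and $\sigma$ acts trivially on $\maxid_A/\maxid_A^2$; by the remark following Lemma~\ref{lem.Borel}, $u_1,\dots,u_n$ is admissible. Finally $N_{A/A^G}(u_i)=\prod_{\ell=0}^{p-1}(u_i+\ell a_i)=u_i^p-a_i^{p-1}u_i=x_i$, so the norm subring attached to $u_1,\dots,u_n$ is exactly $R$. Condition~(\MRone) is the content of part~(a).

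To handle the remaining conditions I would use the decomposition $A=B_1\otimes_R\cdots\otimes_R B_n$ with $B_i:=R[u_i]/(F_i)$, which is an isomorphism of free $R$-modules of rank $p^n$. Since $\sigma_j$ with $j\neq i$ acts only on the $j$-th factor and $B_j^{\langle\sigma_j\rangle}=R$ (seen by comparing leading coefficients in $u_j$), one identifies $A^{G_i^\perp}=B_i$. Each $B_i$ is $R$-free of rank $p$, hence Cohen--Macaulay, which is~(\MRfour). For~(\MRfive), observe that $B_i=R[u_i]/(u_i^p-a_i^{p-1}u_i-x_i)$ is precisely the algebra of Lemma~\ref{naturalaction} with $a=a_i\neq0$ and $b=x_i$; as $R$ is regular and therefore locally factorial, Proposition~\ref{effective model torsor} identifies $\shG_{a_i,0}$ with its natural action as the effective model $\shG_i$ of the $G_i$-action on $Y_i=\Spec B_i$, and shows $Y_i$ is a torsor under it.

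The main obstacle is to determine the inertia subgroups of the ramified height-one primes, which is where the system-of-parameters hypothesis is truly used. The crucial observation is that, $R$ being Cohen--Macaulay, a system of parameters is a regular sequence; hence for $i\neq j$ the pair $a_i,a_j$ is a regular sequence, so $V(a_i)\cap V(a_j)$ has codimension $2$ and no height-one prime $\idealq\subset R$ contains two of the $a_i$. Because the discriminant of $B_i/R$ is a unit multiple of a power of $a_i$, the cover $\Spec B_i\to\Spec R$ ramifies over $\idealq$ exactly when $a_i\in\idealq$. Thus, for a height-one prime $\primid\subset A$ over $\idealq$: if no $a_\ell\in\idealq$, then $A\otimes_R R_\idealq$ is \'etale over $R_\idealq$ and $I_\primid=\{1\}$; if $a_i\in\idealq$ for the unique such index $i$, then $A^{G_i}=\bigotimes_{\ell\neq i}B_\ell$ is \'etale over $R_\idealq$, so $I_\primid$ maps trivially to $H/G_i$ and onto $H/G_i^\perp=G_i$, forcing $I_\primid=G_i$, cyclic of order $p$ and normal because $H$ is abelian. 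This gives~(\MRtwo). Since each $G_i$ does arise (take a minimal prime of $(a_i)$, which contains no other $a_\ell$) and no further subgroup can occur, there are exactly the $n$ inertia subgroups $G_1,\dots,G_n$, which is~(\MRthree). With all five conditions verified and an admissible regular system of parameters with norm subring $R$ in hand, the $G$-action is moderately ramified in the sense of Definition~\ref{definition moderately ramified}.
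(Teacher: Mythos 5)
Your proof is correct in substance and follows the same overall outline as the paper's (establish regularity and the Galois property, then verify (\MRone)--(\MRfive) one by one), but the key verifications are carried out by a genuinely different mechanism. Where the paper identifies $A^{G_i^\perp}$ with $R[u_i]/(u_i^p-a_i^{p-1}u_i-x_i)$ by a normality/fraction-field argument, you obtain it from the tensor decomposition $A=B_1\otimes_R\cdots\otimes_R B_n$ together with the facts that taking invariants commutes with flat base change and that $B_j^{\langle\sigma_j\rangle}=R$; and where the paper pins down inertia groups by writing out the fixed ideal $I(\tau)$ of every nontrivial $\tau\in H$ and using that a height-one prime contains at most one $a_i$, you argue via discriminants: $B_\ell/R$ is \'etale away from $V(a_\ell)$, so over a height-one prime $\idealq$ containing only $a_i$ the intermediate ring $A^{G_i}=\bigotimes_{\ell\neq i}B_\ell$ is \'etale over $R_\idealq$, forcing $I_\primid\subseteq G_i$ for any $\primid$ above $\idealq$. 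Your route makes the ``only one factor ramifies at a time'' geometry transparent and localizes cleanly; the paper's fixed-ideal computation is more elementary and, importantly, delivers both inclusions $I_\primid\subseteq G_i$ and $G_i\subseteq I_\primid$ at once.

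That said, one step is asserted rather than proved: the claim that $I_\primid$ maps \emph{onto} $H/G_i^\perp\cong G_i$. The \'etale-ness of $A^{G_i}$ over $R_\idealq$ gives only the first half of your sentence (triviality of the image in $H/G_i$); given $I_\primid\subseteq G_i$, surjectivity onto $H/G_i^\perp$ is \emph{equivalent} to $I_\primid=G_i$, so as written the assertion is circular. It can be justified (surjectivity of inertia groups in towers does hold for normal domains, via the theorem that the decomposition group surjects onto the group of residue automorphisms, applied to $\primid\cap B_i$, whose inertia in $G_i$ is full), but the simplest repair is exactly the paper's observation: the fixed ideal of $\sigma_i$ acting on $A$ is $(a_i)A$, so whenever $a_i\in\primid$ one has $\sigma_i(\primid)=\primid$ and $\sigma_i$ acts trivially on $A/\primid$, i.e.\ $G_i\subseteq I_\primid$. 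This single remark completes your (\MRtwo)--(\MRthree) argument for ramified primes and simultaneously shows that each $G_i$ actually occurs (take $\primid$ a minimal prime of $(a_i)A$). With that fix, the rest of your verifications --- ramification precisely at the origin because $(a_1,\dots,a_n)A$ is $\maxid_A$-primary, admissibility from the trivial action on $\maxid_A/\maxid_A^2$ via Lemma \ref{lem.Borel}, (\MRfour) from freeness, and (\MRfive) via Lemma \ref{naturalaction} and Proposition \ref{effective model torsor} --- coincide with the paper's and are sound.
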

\proof
(a) Clearly, $R\subset A$ is a finite flat extension of degree $p^n$, so $\dim(A)=\dim(R)=n$ by Going-Up and Going-Down.
The fiber ring for this   extension is  $A/\maxid_RA=k[u_1,\ldots,u_n]/(u_1^p,\ldots,u_n^p)$, which is local, so $A$ is local by Hensel's Lemma.
Furthermore, the residue classes of the $x_i$ vanish in the cotangent space    $\maxid_A/\maxid_A^2$, so the
latter is generated by the residue classes of the $u_i$.
Hence $\edim(A)\leq\dim(A)$, so $A$ is regular.

It is clear that we can exhibit $p^n$ automorphisms of $A$ leaving $R$ fixed: for each $(c_1,\dots, c_n) \in ({\mathbb Z}/p{\mathbb Z})^n$, 
define $\sigma(c_1,\dots, c_n)$ to be the $R$-automorphism of order $p$ of $A$ which sends $u_i$ to $ u_i+c_ia_i$, for $i=1,\dots, n$.
Hence, $\Frac(A)/\Frac(R)$ is Galois, and we obtained an explicit group homomorphism
$H:=(\ZZ/p\ZZ)^n \to {\rm Gal}(\Frac(A)/\Frac(R))$.

Let us denote by $G_i$ the subgroup generated by $\sigma_i$, with $\sigma_i(u_i):=u_i+a_i$ and $\sigma_i(u_j):=u_j$ if $j \neq i$.
Let $G_i^{\perp}$ denote the subgroup of $H$ generated by the groups $G_j$, $j \neq i$.
Clearly, the ring of invariants $A^{G_i^\perp}$ contains $u_i$, and we obtain a natural ring homomorphism
$R[u_i]/(f_i) \to A^{G_i^\perp}$, which sends the class of $u_i$ to the class of $u_i$. 
As in the preceding paragraph, one sees that the source ring is a regular complete local ring
with field of fractions of degree $p$ over $\Frac(R)$. 
It follows that this natural homomorphism is an isomorphism. 
 
(b) Let $G:=\langle \sigma \rangle$.
The fixed scheme of the $G$-action on $X=\Spec(A)$ is defined by the ideal
$(a_1,\ldots,a_n)\subset A$. Since we assume in (b) that $a_1,\ldots,a_n\in R=k[[x_1,\ldots,x_n]]$ is
a system of parameters, it follows that the action of $G$ is ramified precisely at the origin.
The regular system of parameters $u_1,\ldots,u_n\in A$ is   admissible %(\ref{admissible})
since the group $G$  induces the trivial representation on the cotangent space $\maxid_A/\maxid_A^2$ (\ref{lem.Borel}).
Clearly, $x_i=N_{A/A^G}(u_i)$, so that $R$ is indeed the norm subring of $A$ associated with the admissible system of parameters $u_1,\dots, u_n$.
It follows from (a) that $\Frac(A)/\Frac(R)$ is Galois, showing that (\MRone) holds.

Let us show now that each $G_i$ is the inertia group of a height one prime ideal of $A$. 
Indeed, let $\primid $ be minimal prime ideal in $A$ which contains $a_i$. 
This ideal $\primid$ has height one, and it is clear that $G_i \subseteq I_\primid$.
Since $a_1,\dots, a_n,$ is a system of parameters of $A$, we find that $\primid$ cannot contain 
$a_j$, for any $j \neq i$. It follows that no automorphism in $H$ but those in $G_i$ can induce the identity 
on $A/\primid$. Hence, $G_i = I_\primid$, as desired.

Consider now a prime ideal $\primid$ of height one in $A$ with $I_\primid\neq \{\rm id\}$.
We are going to show that $I_\primid=G_j$ for some $j$, which in particular implies that Conditions (\MRtwo) and (\MRthree) hold. 
Let $\tau \in I_\primid$ be a non-trivial element. Then the ideal $I(\tau)$ of the fixed scheme is contained in $\primid$. 
Since $\primid$ has height one, it follows that $I(\tau)$ can contain at most one of $a_1,\dots, a_n$. We find using our explicit description in (a) 
of the automorphisms of $\Frac(A)$ over $\Frac(R)$ that 
the ideal $I(\tau)$ is not trivial and contains at least one of of $a_1,\dots, a_n$ since $\tau$ is not trivial. It follows that we can assume  that $a_i \in I(\tau)$.
Considering again the descriptions of the automorphisms, we see that this implies that $\tau \in \left< \sigma_i\right>=G_i$. It follows that $I_\primid=G_i$.

Consider now the isomorphism $R[u_i]/(f_i) \to A^{G_i^\perp}$ found in (a). 
It is clear that $R\subset A^{G_i^\perp}$ is finite and flat, so (\MRfour) holds. 
The scheme  $\Spec A^{G_i^\perp} \to \Spec R$ 
is a torsor under the group scheme ${\mathcal G}_i:=\Spec R[z]/(z^p-a_i^{p-1}z)$ 
and this group scheme is the effective model of the action on $\Spec A^{G_i^\perp}$,
according to Proposition \ref{effective model torsor}. Thus (\MRfive) holds.
\qed

\medskip
There is one situation in which every action ramified precisely at the origin is moderately ramified, namely when $n=2$ and $p=2$, and $k$ does not admit any separable quadratic extension.
That $\mathbb Z/2\mathbb Z$-actions on $k[[x,y]]$ are very structured  was already recognized by Artin in \cite{Artin 1975},
leading to a description of the ring of invariants $A^G$ that we recall in \ref{2-dimensional invariant ring}.

\begin{proposition}
\label{artin case} Let $k$ be a field of characteristic $p=2$ which does not admit any separable quadratic extension.
Then, in dimension $n=2$, any $G$-action on  $A$
that is ramified precisely at the origin is moderately ramified.
\end{proposition}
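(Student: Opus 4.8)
The plan is to verify the five cumulative conditions (\MRone)--(\MRfive) for a suitably chosen system of parameters, feeding in the hypothesis on $k$ and the special features of the case $n=p=2$ at the appropriate places. First I would invoke Proposition \ref{exist admissible} to fix an admissible regular system of parameters $u_1,u_2\in A$, with associated norm subring $R=k[[x_1,x_2]]$. By Proposition \ref{norm elements} the extension $R\subset A$ is finite flat of degree $p^n=4$, and since $A^G\subset A$ has degree $p=2$ by Proposition \ref{properties invariant ring}(ii), the intermediate extension $\Frac(A^G)/\Frac(R)$ has degree $2$.

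The crux is (\MRone), and this is the single place where the hypothesis on $k$ enters. By Theorem \ref{galois extension}(i) the degree-$2$ extension $\Frac(A^G)/\Frac(R)$ is not purely inseparable; since every degree-$2$ extension is normal, in characteristic $2$ it is therefore separable, hence Galois. Because $k$ admits no separable quadratic extension --- equivalently, no Galois extension of degree $p=2$ --- Theorem \ref{galois extension}(ii), applied to the finite extension $R\subset A^G$ with $\Frac(A^G)/\Frac(R)$ Galois, upgrades this to the statement that $\Frac(A)/\Frac(R)$ is itself Galois. This is (\MRone); I would then write $H$ for its Galois group, of order $4$.

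With (\MRone) in hand the remaining conditions are essentially bookkeeping inside $H$. Lemma \ref{n=2} supplies (\MRtwo) and identifies $H$ with $(\ZZ/p\ZZ)^2$, which has exactly three subgroups of order $p$. For (\MRthree) the key observation is that $G$ itself can never be the inertia group of a height-one prime: if $G\subseteq I_\primid$ for some height-one $\primid$, then every element of $G$ would act trivially on $A/\primid$, placing $\primid$ in the ramification locus of the $G$-action and contradicting ramification precisely at the origin (which forces this locus to be the closed point alone). The same reasoning rules out $I_\primid=H$, so every nontrivial height-one inertia subgroup has order $p$ and differs from $G$. On the other hand, Proposition \ref{generated by inertia} shows the height-one inertia subgroups generate $H$, and a single subgroup of order $p$ cannot generate a group of order $p^2$; since $G$ is excluded and only two other order-$p$ subgroups remain, both of them must occur. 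Thus exactly $n=2$ subgroups arise as inertia and (\MRthree) holds. Condition (\MRfour) is automatic in dimension two: each $A^{G_i^\perp}$ is the integral closure of $R$ in a subfield of $\Frac(A)$, hence a two-dimensional normal local domain, which satisfies Serre's condition $(S_2)$ and is therefore Cohen--Macaulay, as already noted in Remark \ref{IntegralClosureAbelianExt}. Finally, with (\MRone)--(\MRfour) in place and $p=2$, Lemma \ref{MR1-2impliesMR5} gives (\MRfive).

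The only genuinely delicate step is (\MRone): it is where the absence of separable quadratic extensions of $k$ is indispensable, and it relies on the two-stage mechanism of Theorem \ref{galois extension} --- first descending to the sub-extension $\Frac(A^G)/\Frac(R)$ to detect separability, then re-ascending to $\Frac(A)/\Frac(R)$. I expect this to be the main (indeed the only substantive) obstacle; everything after (\MRone) follows formally from the structure of the order-four group $H$ together with the lemmas tailored to $n=2$ and $p=2$, so I anticipate no further difficulties there.
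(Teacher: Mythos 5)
Your proof is correct and follows essentially the same route as the paper's: fix an admissible system of parameters via Proposition \ref{exist admissible}, establish (\MRone) through both parts of Theorem \ref{galois extension}, get (\MRtwo) from Lemma \ref{n=2}, deduce (\MRthree) by excluding $G$ as a height-one inertia group and invoking Proposition \ref{generated by inertia}, note (\MRfour) is automatic in dimension two, and conclude (\MRfive) from Lemma \ref{MR1-2impliesMR5}. Your extra details (the separable-implies-Galois dichotomy in degree $2$, and the normality argument for (\MRfour)) simply make explicit what the paper leaves implicit.
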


\proof Proposition \ref{exist admissible} shows that 
 $A$ contains an admissible regular system of parameters $u_1,u_2$, with norm ring $k[[x_1,x_2]]$.
According to Theorem \ref{galois extension} (i), the field extension
$\Frac(A^G)/k((x_1,x_2)) $, which has  degree $p^{n-1}=2$,  is not purely inseparable. Hence  it must be Galois. We use our hypothesis on $k$ with \ref{galois extension} (ii) to find that  the extension $\Frac(A)/k((x_1,x_2))$ must be Galois as well.
So Condition (\MRone) holds. Then \ref{n=2} shows that (\MRtwo) holds.
Hence, the Galois group $H$ of order $p^n=4$ is $\ZZ/2\ZZ \times \ZZ/2\ZZ$, and 
this group has precisely three nontrivial cyclic subgroups, one of which must be $G$.
Since the action of $G$ is ramified precisely at the origin, we find that $G$ cannot 
be the inertia group in $H$ of a prime of height $1$. 
Proposition \ref{generated by inertia} 
%and \ref{n=2} 
shows that $H$ is generated by cyclic groups which are inertia groups of primes of height $1$. 
Thus $H$ contains exactly two subgroups which are inertia groups of primes of height $1$, and Condition (\MRthree) holds.  
%By \ref{elementary abelian}, the two other subgroups are inertia groups for some primes,
%and Condition (\MRthree) holds. 
In dimension $2$, Condition (\MRfour) is always satisfied.
Lemma \ref{MR1-2impliesMR5} ensures that %(\MRfour) and 
(\MRfive) holds.
\qed

\if false
\begin{lemma} \label{lem.p=2modram}
\label{case p=2} Let $p=2$ and $n=2$. 
Let $A$ be endowed with a $G$-action 
that is ramified precisely at the origin and such that 
{\rm (\MRone)},  {\rm (\MRtwo)},  and {\rm (\MRthree)}, hold. Then  {\rm (\MRfour)} also holds.
\end{lemma}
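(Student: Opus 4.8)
The plan is to verify Condition (\MRfour) directly, that is, to show that each of the two rings $A^{G_i^\perp}$ is Cohen--Macaulay, by reducing the question to the classical fact that a normal Noetherian local domain of dimension two automatically satisfies Serre's condition $(S_2)$ and is therefore Cohen--Macaulay.

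First I would record the group-theoretic picture in dimension $n=2$. Since (\MRone) holds, Lemma \ref{n=2} shows that $H$ is elementary abelian of order $p^2$, so the natural map $G_1\times G_2\to H$ is an isomorphism; consequently the complementary subgroups are simply $G_1^\perp=G_2$ and $G_2^\perp=G_1$. The two rings appearing in (\MRfour) are therefore $A^{G_1^\perp}=A^{G_2}$ and $A^{G_2^\perp}=A^{G_1}$, each being the ring of invariants of a cyclic group of order $p$ acting on $A$.

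Next I would assemble the hypotheses of Serre's criterion for $A^{G_i^\perp}$. Being the ring of invariants of a finite group acting on the complete local Noetherian ring $A$, each $A^{G_i^\perp}$ is again a complete local Noetherian ring, by the same argument (via \cite{Mol}) that applies to $A^G$ in Section \ref{norm subrings}. Since $A$ is regular, hence integrally closed in $\Frac(A)$, the identity $A^{G_i^\perp}=A\cap\Frac(A)^{G_i^\perp}$ exhibits $A^{G_i^\perp}$ as integrally closed in $\Frac(A)^{G_i^\perp}$, so it is a normal domain; and because the finite extension $A^{G_i^\perp}\subset A$ preserves dimension, $\dim A^{G_i^\perp}=\dim A=2$. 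A normal local domain of dimension two satisfies $(R_1)$ and $(S_2)$, and in dimension two the condition $(S_2)$ is exactly Cohen--Macaulayness; hence each $A^{G_i^\perp}$ is Cohen--Macaulay, which is precisely Condition (\MRfour). I would then record the structural payoff: with $R$ regular and $A^{G_i^\perp}$ now Cohen--Macaulay, the finite extension $R\subset A^{G_i^\perp}$ is flat, as observed for $n=2$ in Remark \ref{IntegralClosureAbelianExt}; it is this flatness that lets (\MRfour) feed into Proposition \ref{tensor product} and force the map $f$ of \eqref{map} to be an isomorphism.

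I do not expect a genuine obstacle here, since the entire content is Serre's normality criterion specialized to dimension two. The one point I would flag explicitly is that this verification uses neither (\MRthree) nor the hypothesis $p=2$: Cohen--Macaulayness of $A^{G_i^\perp}$ is forced purely by the normality of the invariant ring together with $\dim A=2$. This is exactly why the proof of Proposition \ref{artin case} disposes of (\MRfour) in a single phrase, asserting that in dimension two this condition is always satisfied, and it explains why the hypotheses (\MRthree) and $p=2$ in the present lemma, although harmless, are not actually invoked in deriving its conclusion.
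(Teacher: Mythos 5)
Your Cohen--Macaulay argument is correct as it stands: in dimension two each $A^{G_i^\perp}$ is the integral closure of $R$ in $\Frac(A)^{G_i^\perp}$, hence a complete local noetherian normal domain of dimension two, and Serre's criterion gives $(S_2)$, which for a two-dimensional local ring is exactly Cohen--Macaulayness. Read against the numbering of the axioms in the final text, where (\MRfour) is the Cohen--Macaulay condition, this settles the statement, and it is precisely how the paper itself disposes of (\MRfour) in dimension two: a single phrase in the proof of Proposition \ref{artin case}, with the underlying normality argument recorded in Remark \ref{IntegralClosureAbelianExt}. Your preliminary identifications ($G_1^\perp=G_2$ and $G_2^\perp=G_1$, via Lemma \ref{n=2}) are also fine.

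You should be aware, however, that this lemma survives from a draft in which the axioms were numbered differently: the ``(\MRfour)'' in its conclusion is the torsor condition, i.e.\ what the final text calls (\MRfive), and the paper's own proof of the lemma proves exactly that. That proof opens by noting flatness of $Y_i\to S$ in dimension two --- which is the entirety of your argument --- and then does the real work: by Proposition \ref{torsors} it suffices that the fiber of $Y_i$ over each point $s\in S$ of codimension at most one be a torsor under the corresponding fiber of the effective model $\shG_i$; over codimension-one points $\O_{S,s}$ is a discrete valuation ring, so Theorem \ref{characterization moderately ramified} applies, and the torsor condition there is equivalent to $p$ dividing $m+1$ for the ramification break $m$ --- automatic when $p=2$ because the break is always odd in characteristic $2$. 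So your closing observation that $p=2$ is never invoked, while accurate for the Cohen--Macaulay reading, is false for the content the lemma was written to carry: $p=2$ is the crux of the torsor statement, and your proof captures none of it. (Your parallel remark about (\MRthree) is less consequential, since even for the torsor reading (\MRthree) mainly normalizes the choice of the subgroups $G_i$, and (\MRtwo) is itself automatic when $n=2$ by Lemma \ref{n=2}.) The surviving counterpart in the final text is Lemma \ref{MR1-2impliesMR5}, which derives (\MRfive) from (\MRone)--(\MRfour) and $p=2$ by a different, purely algebraic route: the rank-two free presentation $B=R[u]/(u^2-au-\xi)$ combined with Proposition \ref{effective model torsor}, again with $p=2$ essential.
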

\proof 
Keep the notation introduced in \ref{MR4}. Recall that $G_i$ denotes the inertia subgroup of a prime ideal ${\mathfrak p}_i \in \Spec A$, 
and we denote by $s_i$ the image of this prime under the natural map $\Spec A \to S$. By construction,  the branch locus of $Y_i \to S$
is the closure of the point $s_i$, of codimension $1$ in $S$. Since $n=2$, we find that each morphism $Y_i \to S$ is flat.
Let $\shG_i/S$ be the effective model of the $G_i$-action on $Y_i$.
This effective model is a finite flat group scheme over $S$, which is constant outside
the branch locus of $Y_i \to S$, and comes with an $S$-group scheme action
$\shG_i\times_S Y_i\ra Y_i$. 
For each $i=1,\dots, n$, we need to show that the scheme $Y_i/S$ is a torsor under $\shG_i/S$. 
Proposition \ref{torsors} shows that for $Y_i/S$ to be a torsor under $\shG_i/S$, it suffices 
that for each point $s\in S$ of codimension at most $ 1$, the fiber $Y_{i,s}$ is a $\shG_{i,s}$-torsor.
When $p=2$ and ${\mathcal O}_{S,s}$ is a complete discrete valuation ring, Theorem \ref{characterization moderately ramified}
shows that the latter condition is always satisfied.
\qed
\fi

\begin{remark} \label{pseudoreflection}
Let $B$ be any   noetherian local ring.
A ring automorphism $\sigma: B \to B$ of finite order is called a {\it generalized reflection}
if $(\sigma-{\rm id})(B) \subseteq (xB)$ for some regular non-unit $x\in B$.
In other words, the scheme of fixed points of $\sigma$ in $\Spec(B)$ contains an effective Cartier divisor.
The automorphism  $\sigma$ is called a {\it pseudo-reflection} if $(\sigma-{\rm id})(B)= (xB)$ for some regular non-unit $x\in B$.
In this case, the scheme of fixed points of $\sigma$ in $\Spec(B)$ is an effective Cartier divisor.
When the order  of $\sigma$ is invertible in $B$, this latter definition is equivalent 
to the more classical definition where one requires that $\sigma$ acts on 
the cotangent space $\maxid_B/\maxid_B^2$ through pseudo-reflections in the sense of linear algebra (see \cite{Avr}, (12) on page 168).

Note that if   $\sigma : B \to B$ is a generalized reflection (resp., a pseudo-reflection), 
then the conjugates  $g\sigma g^{-1}$ and   powers $\sigma^m\neq {\rm id}$,
are  also  generalized reflections (resp., pseudo-reflections), where $g:B \to B$ is any automorphism.
Indeed, $I(g\sigma g^{-1})=g(I(\sigma))$, and when $n\geq 2$ is the order of $\sigma$,
we have  $I(\sigma^{n-1}) \subseteq  \dots \subseteq I(\sigma^2) \subseteq I(\sigma)$.
Considering the same chain of inclusions with $\sigma $ replaced by $\sigma^{-1}$, we find that all these inclusions are in fact equalities. 

Let now $A$ be a complete regular local ring of dimension $n \geq 2$ endowed with a moderately ramified action of ${\mathbb Z}/p{\mathbb Z}$, as in Theorem \ref{structure moderately ramified}. Let $u_1, \dots, u_n \in A$ be as in the statement of \ref{structure moderately ramified}, and let $R:=k[[x_1,\dots, x_n]]$.
As in \ref{structure moderately ramified}, $\Frac(A)/\Frac(R)$ is Galois of order $p^n$, with Galois group $H$ generated by elements $\sigma_i$, $i=1,\dots, n$,
such that $\sigma_i(u_i)=u_i+a_i$, and $\sigma_i(u_j)=u_j$ if $i \neq j$. It is clear from this expression for $\sigma_i$ that the ideal $I(\sigma_i)$ is equal to $(a_iA)$, and is thus principal. Since we assume that $(a_1,\dots, a_n)$ is a system of parameters in $R$, each of these ideals is proper. It follows that  the group $H$ is generated by pseudo-reflections. 
The next proposition shows that when only Condition (\MRone) holds (i.e., when $\Frac(A)/\Frac(R)$ is Galois), then it is still possible to show that
$H$ is generated by generalized reflections.  This proposition
%, as noted in \cite{A-B}, 
is nothing but a reformulation of a result of Serre (\cite{Serre 1967}, Th\'eor\`eme 2'). 
%Since this result implies part of Conjecture 9  in \cite{K-L}, 
%We include a proof  for the convenience of the reader.
\end{remark}
\begin{proposition} \label{Serre} %(Serre, \cite{Serre 1967}).
Let $B$ be a regular 
noetherian local ring with maximal ideal $\maxid_B$. Let $H$ be a finite 
group of ring automorphisms of $B$, and let $R:=B^H$. 
Assume that %$R$ is noetherian,  and that $B$ is an algebra of finite type over $R$.
$B$ is a finitely generated $R$-module. 
The ring $R$ is then a noetherian local ring with maximal ideal $\maxid_R$, and we assume also that the natural map $R/\maxid_R \to B/\maxid_B$ is an isomorphism. Then $H$ is generated by 
generalized reflections 
%pseudo-reflections 
if $R$ is regular.
\end{proposition}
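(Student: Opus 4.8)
The plan is to reduce the statement to the purity argument already used in the proof of Proposition~\ref{generated by inertia}, after translating the notion of generalized reflection into the language of inertia groups. First I would observe that, as $H$ is by hypothesis a group of automorphisms of the domain $B$, it acts faithfully, and the induced action on $\Frac(B)$ has fixed field $\Frac(R)$, so that $\Frac(B)/\Frac(R)$ is Galois with group $H$. For a prime $\mathfrak P\subset B$, I write $I_{\mathfrak P}\subseteq H$ for the inertia group consisting of the $\sigma$ that fix $\mathfrak P$ and induce the identity on $B/\mathfrak P$.

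The central claim I would establish is that an element $\sigma\in H$ is a generalized reflection precisely when it lies in $I_{\mathfrak P}$ for some height one prime $\mathfrak P\subset B$. For the forward implication, given $(\sigma-\mathrm{id})(B)\subseteq xB$ with $x$ a regular non-unit, I would take $\mathfrak P$ minimal over $(x)$, which has height one by Krull's principal ideal theorem, and note that $\sigma(b)-b\in xB\subseteq\mathfrak P$ for all $b$ forces $\sigma(\mathfrak P)=\mathfrak P$ and $\sigma\in I_{\mathfrak P}$. For the converse I would use that the regular local ring $B$ is factorial, so a height one prime has the form $\mathfrak P=xB$ with $x$ a regular non-unit; then $\sigma\in I_{\mathfrak P}$ gives $(\sigma-\mathrm{id})(B)\subseteq\mathfrak P=xB$. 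Consequently the subgroup $H_1\subseteq H$ generated by the generalized reflections equals the subgroup generated by the inertia groups of all height one primes of $B$, and since conjugation sends $I_{\mathfrak P}$ to $I_{\sigma\mathfrak P}$, this $H_1$ is normal in $H$.

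It then remains to show $H_1=H$ when $R$ is regular, and here I would repeat the argument of Proposition~\ref{generated by inertia}. Setting $R':=B^{H_1}$, the ring $R'$ is a finite normal $R$-algebra with $R\subseteq R'\subseteq B$ (finite because it is an $R$-submodule of the finite module $B$ over the noetherian $R$, and normal as the invariant ring of the normal domain $B$), and $\Frac(R')/\Frac(R)$ is Galois with group $H/H_1$. Assuming $R'\neq R$ for contradiction: over any height one prime $\mathfrak q\subset R$ there is, by going-down for the integral extension of normal domains $R\subseteq B$, a height one prime $\mathfrak P\subset B$, whose inertia lies in $H_1$ by construction, so $R'/R$ is unramified in codimension one; whereas $R\subsetneq R'$ finite with trivial residue field extension is ramified at the closed point of $\Spec R'$ by Nakayama's lemma. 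Since $R$ is regular and $R'$ is normal, the Zariski--Nagata Purity Theorem forces the non-empty branch locus of $\Spec R'\to\Spec R$ to be pure of codimension one, a contradiction. Hence $R'=R$, that is $H=H_1$ is generated by generalized reflections.

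The step I expect to demand the most care is the codimension one analysis in the last paragraph: one must check that triviality of the image of $I_{\mathfrak P}$ in $H/H_1$ yields genuine unramifiedness of $R'/R$ at $\mathfrak q$ --- including separability of the residue extension, which is where positive characteristic could interfere --- and that the Purity Theorem applies with $R'$ merely normal. Both points are handled exactly as in Proposition~\ref{generated by inertia}, the inertia group in the sense used here having order equal to the ramification index times the inseparable residue degree, so that its triviality is equivalent to unramifiedness.
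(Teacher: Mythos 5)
Your proof is correct and takes essentially the same route as the paper's: translate the notion of generalized reflection into inertia groups of height-one primes of $B$ (via factoriality of the regular local ring $B$), note the subgroup $H_1$ they generate is normal, and then run the purity argument of Proposition \ref{generated by inertia} — unramifiedness of $B^{H_1}/R$ in codimension one against ramification at the closed point forced by the trivial residue extension and Nakayama — to conclude $B^{H_1}=R$ and hence $H_1=H$. This is precisely the argument behind the result of Serre (Th\'eor\`eme 2') that the paper invokes, including your closing observation that triviality of the inertia group (of order $e\cdot f_{\mathrm{insep}}$) is what yields genuine unramifiedness at height-one primes.
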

\if false
\proof Let $H' $ denote the subgroup of $H$ generated by the pseudo-reflections in $H$. It is easy to check that 
in fact $H'$ is a normal subgroup of $H$. We have $R:=B^H \subseteq R':=B^{H'}$. To show that $H'=H$, it suffices
to show that $R=R'$. If $R \neq R'$, then the hypothesis that the natural map $R/\maxid_R \to B/\maxid_B$ is an isomorphism
implies that $\Spec R' \to \Spec R$ is ramified at $\maxid_{R'}$. By construction, $R'$ is integrally closed, and by hypothesis, $R$ is regular.
It follows from the Zariski--Nagata Purity Theorem that the ramification locus of  $\Spec R' \to \Spec R$ has codimension $1$. Thus, we will obtain a contradiction 
by showing the following claim: {\it $\Spec R' \to \Spec R$ does not ramify in codimension $1$}. Let ${\mathfrak q}'$ be an ideal of height $1$ in $R'$.
Let $\mathfrak p$ be a prime ideal of $B$ above $\mathfrak q$. We can consider the inertia group $I_{\frak p}$ in $H$, and the inertia group $I'_{\frak p}$ in $H'$.
Clearly, $I_{\frak p} \supseteq I'_{\frak p}$, and the image of  $I_{\frak p}/ I'_{\frak p}$ in $H/H'$ is the inertia group of $\frak q$. 
Thus our claim follows if we show that $I_{\frak p} \subseteq I'_{\frak p}$. Since $B$ is regular, every prime ideal of height $1$ is principal. 
Thus there exists $c \in B$ such that $\frak p=(cB)$. It follows that for each $\sigma \in I_{\frak p}$, we have $I(\sigma) \subseteq (cB)$, and so 
$\sigma \in I_{\frak p} \subseteq   I_{\frak p} \cap H' = I'_{\frak p}$, as desired. \qed
\fi
\begin{remark} 
%In the above situation, since $B$ is noetherian and is a finitely generated $B^H$-module,  
%the Eagon-Nagata Theorem implies that $B^H$ is noetherian.
%We note here that 
Conjecture 9 in \cite{K-L} strengthens Proposition \ref{Serre} in certain cases. Indeed, 
let $B$ be a regular local ring with ${\rm char}(B/\maxid_B)=p>0$, and let $G=\left< \sigma\right>$ be a cyclic group of order $p$ acting
on $B$ by local automorphisms. 
%% How do we get that B is finitely generated over B^G?
%% THis follows when B^G is assumed to be noetherian
%% See Bourbaki, Algebre Commutative, V, Corollary 1. page 318
%Assume that $B$ is a finitely generated $B^G$-module.
With the definition of pseudo-reflection given in this article, \cite{K-L}, Conjecture 9,
states that: {\it If $B^G$ is regular, then $\sigma$ is a pseudo-reflection}. The authors of \cite{K-L} prove this conjecture when $p=2$ and $p=3$.

\if false
We  note here that \cite{K-L} assumes that $B^G$ is noetherian (this hypothesis is made in the abstract of the paper \cite{K-L})
but does not assume that $B$ is a finitely generated $B^G$-module. The authors claim that when $B^G$ is regular, then $B$ is a finitely generated $B^G$-module
of rank $|G|$ on page 69 (Theorem 2, proof of (d) implies (c)) and on page 70 (proof of Conjecture 9 in the cases $p=2$ and $p=3$). This statement seems incorrect since even in dimension $1$, it is possible for a Dedekind domain $A$ to have an integral closure $B$ in a finite extension which is not a finitely generated $A$-module. The arguments of the authors are correct once $B$ is assumed to be a finitely generated $B^G$-module, since then $B^G$ regular implies that $B$ is free over $B^G$ of rank $|G|$.
\fi

%The definition of pseudo-reflection in this article 
%corresponds to the definition in \cite{K-L} 
%only when the natural map $B^G/\maxid_{B^G} \to B/\maxid_B$ is an isomorphism (\cite{K-L}, Theorem 2, and Example 7). 
Theorem 2, (a) implies (d), in   \cite{K-L} 
proves the converse of the statement of Conjecture 9 above: {\it If $\sigma$ is a pseudo-reflection, then $B^G$ is regular}. We note below an example where 
$\sigma$ is a generalized reflection, but $B^G$ is not regular (see also \ref{generalizedreflectionCM}). We do not have an example of a generalized reflection acting 
on a regular ring $B$ such that $B^G$ is not Cohen--Macaulay, and it would be interesting to determine whether such example exists.
In the case where the order of $\sigma$ is coprime to the residue characteristic of $B$, the invariant ring $B^G$ is always Cohen--Macaulay (\cite{H-E}, Proposition 13). 

Theorem \ref{complete intersection subring} lets us consider an action of $G$ on $A:=k[[u,v]]$ which is not ramified precisely at the origin, 
but such that the ring $A^G$ can still be completely described. Indeed, 
let $R:=k[[x,y]]$. Let $a,b  \in {\maxid}_R$ with $(a,b)$ a ${\maxid}_R$-primary ideal. Let $\mu \in \maxid_R$ such that  
$(a,\mu)$  and $(b,\mu)$ are ${\maxid}_R$-primary  ideals.
Consider the ring $$A:=R[u,v]/(u^p-(\mu a)^{p-1}u-x, v^p-(\mu b)^{p-1}v-y).$$
Then $A$ is a regular complete local ring with maximal ideal $(u,v)$ (\ref{structure moderately ramified2} (a)). The extension $\Frac(A)/\Frac(R)$ is Galois with Galois group $H$ isomorphic to $({\mathbb Z}/p{\mathbb Z})^2$, 
generated by the automorphisms $\sigma_1$ and $\sigma_2$,  with 
%$\sigma_i(u_j):=u_j$ if $j \neq i$, and 
$\sigma_1(u):=u+\m a$ and $\sigma_1(v)=v$, and $\sigma_2(v):=v+\m b$ and $\sigma_2(u)=u$. We have $R=A^H$.
The morphisms $\sigma_1$ and $\sigma_2$ are both pseudo-reflections, but the composite $\sigma:=\sigma_1 \sigma_2$   is only a generalized reflection: $I(\sigma)=(\m  a,\m  b)$. Let $G:=\left<\sigma\right>$.
It follows that  every prime ideal $\mathfrak p$ of $A$ containing $\m$
is ramified over ${\mathfrak p} \cap A^G$. 
 We claim that the ring $A^G$ is not regular. 
Indeed,
consider $z:=bu-av$, with the relation $f(z):= z^p - (\m ab)^{p-1}z -a^py+b^px=0$. Then
$A^G= k[[x,y]][z]/(f)$, and  this latter ring is singular at $(x,y,z)$ (\ref{complete intersection subring}).
Preliminary computations seem to indicate that the graph of the minimal desingularization $X \to \Spec A^G$ does not depend on $\mu$.
\end{remark}

\if false
  Let $k$ be a field of characteristic $p>2$. Let $R:=k[[x,y]]$ and let $\mm \in \maxid_R \setminus \{0\}$, coprime to both $x$ and $y$.
 Consider the ring 
$$
A:=R[u,v]/(u^p-(\mm x)^{p-1}u-x, v^p-(\mm y)^{p-1}v-y).
$$
Then $A$ is a regular complete local ring with maximal ideal $(u,v)$. The extension $\Frac(A)/\Frac(R)$ is Galois with Galois group $H$ isomorphic to $({\mathbb Z}/p{\mathbb Z})^2$, 
generated by the automorphisms $\sigma_i$, $i=1,2$, with $\sigma_i(u_j):=u_j$ if $j \neq i$, and $\sigma_1(u):=u+\mm x$, and $\sigma_2(v):=v+\mm y$. We have $R=A^H$.
The morphisms $\sigma_1$ and $\sigma_2$ are both pseudo-reflections, but the composite $\sigma:=\sigma_1 \sigma_2$   is only a generalized reflection: $I(\sigma)=(\mm x,\mm y)$.
Let $G:=\left<\sigma\right>$. We claim that the ring $A^G$ is not regular. 
Indeed, we find that the element $z:=yu-xv$ is $G$-invariant, and satisfies the equation $f(z):=z^p-(\mm xy)^{p-1}z-(y^px+x^py)=0$. 
The ring $A^G$ is isomorphic to the ring $k[[x,y]][z]/(f(z))$, and  this latter ring is singular at $(x,y,z)$ (\ref{complete intersection subring2}).
In this example, the action of $G$ is not ramified precisely at the origin. In fact, for any prime ideal $\frak p$ containing $\mm$,  
$I_{\frak p}=\left< \sigma_1, \sigma_2\right>$ is not cyclic.

\end{remark}
\fi
\if false

\begin{remark} 
Let $\sigma$ be a $k$-automorphism of finite order of the ring $A=k[[u_1,\dots, u_n]]$. Then $\sigma$ is called a {\it pseudo-reflection} in \cite{K-L}, Definition 1,
if there exists a regular system of parameters $v_1,\dots, v_n$ in $A$ such that $\sigma$ fixes $v_2,\dots,v_n$. We find that when $A$ is endowed with a moderately ramified action of ${\mathbb Z}/p{\mathbb Z}$, the Galois group of the associated extension $\Frac(A)/\Frac(R)$ is generated by pseudo-reflections.
\end{remark}
\fi

% ===========================================================================================
\section{Complete intersection subrings}
\label{complete intersection subrings}

Let as usual $A$ be a complete regular local noetherian ring of dimension $n\geq 2$,  
characteristic $p>0$, with field of representatives $k$, and 
endowed with the action of a cyclic group $G$ of order $p$.
\if false
When the action is ramified precisely at the origin (\ref{ramifiedprecisely}), Artin in \cite{Artin 1975} showed that when $p=2$ and $n=2$, the ring of invariants $A^G$ can be completely described in terms of generators and relations.
Peskin in \cite {Peskin 1983}  
considered certain special classes of actions when $p=3$ and $n=2$, and for these classes, was also successful in  describing $A^G$
 in terms of generators and relations. 
 \fi
 The main result in this section is Theorem \ref{2-dimensional invariant ring}, which shows that for the class of moderately ramified actions
 introduced in \ref{definition moderately ramified}, the ring $A^G$ can 
 %also 
 be described  when $n=2$ in terms of generators and relations. %, for all primes $p$.

\begin{emp} \label{normalformrecalled}
Let $R:=k[[x_1,\ldots,x_n]]$. Let $a_1,\ldots,a_n \in \maxid_R$
%%STEFAN: I added the condition that a_i is in the maximal ideal
be such that $a_i$ and $a_j$ are coprime if $i \neq j$.
%be a  system of parameters of $R$. 
Let $\m \in R \setminus \{0\}$ be such that $\m$ and $a_i$ are coprime, for $i=1,\dots, n$. Consider the ring
$$
A:=R[u_1,\ldots,u_n]/(u_1^p- (\m a_1)^{p-1}u_1-x_1,\ldots,u_n^p-(\m a_n)^{p-1}u_n-x_n).
$$
Theorem \ref{structure moderately ramified2} (a) shows 
$A$ is a regular complete local ring with maximal ideal $(u_1,\dots, u_n)$. The extension $\Frac(A)/\Frac(R)$ is Galois with Galois group $H$ isomorphic to $({\mathbb Z}/p{\mathbb Z})^n$, 
generated by the automorphisms $\sigma_i$, $i=1,\dots, n$, with $\sigma_i(u_j):=u_j$ if $j \neq i$, and $\sigma_i(u_i):=u_i +\m a_i$. We have $R=A^H$.

Let $\sigma$ be the automorphism of order $p$ of $A$ which sends $u_i$ to $ u_i+\m a_i$, for $i=1,\dots, n$
and fixes $R$ (in other words, $\sigma=\sigma_1 \cdots \sigma_n$). 
When 
$(\m a_1, \dots, \m  a_n)$ is a system of parameters of $R$, 
%$\m \in R^*$, 
then
 the action of $G:=\langle \sigma \rangle$ on $A$ is moderately ramified (\ref{structure moderately ramified2} (b)). Theorem \ref{structure moderately ramified} shows that any moderately ramified action of 
 $G$ corresponds to a ring $A$ as above with $\m=1$ and  $(\m a_1, \dots, \m  a_n)$  a system of parameters of $R$. When $\m \notin R^*$, the action is not ramified precisely at the origin. Our goal is to describe 
 the ring of invariants $A^G$ explicitly in all cases when $n=2$.
 
 Identify 
the Galois group $H$ of  $\Frac(A)/\Frac(R)$ with the elementary abelian $p$-group
$({\mathbb Z}/p{\mathbb Z})^n$,  where an element $(\nu_1,\nu_2,\ldots,\nu_n)$ corresponds to the $R$-algebra automorphism given by $u_{j}\longmapsto u_{j} + \nu_j \m a_j$.
Under our identification, $(1,\dots, 1)$ corresponds to the generator $\sigma$ of $G$.
% corresponds to the automorphism $\sigma $ which sends $u_i$ to $ u_i+ \m a_i$, for $i=1,\dots, n$ and fixes $R$.
 
Let $s=(s_1,\dots, s_n) \in ({\mathbb Z}/p{\mathbb Z})^n$  
%%STEFAN, added 
be a non-zero element, and consider the subgroup $H_s$ of $H$ of all elements of $H$ perpendicular to $s$. 
In other words, 
$$H_s := \{ (\nu_1,\nu_2,\ldots,\nu_n) \mid \sum_{i=1}^n \nu_i s_i =0\}.$$
Let $$z_s:=(\prod_{\{ i \mid  s_i \neq 0\} } a_i) (s_1\frac{u_1}{a_1}+\ldots+s_n\frac{u_n}{a_n} ) \in A.$$
It is easy to verify that $z_s$ satisfies the following relation:
$$ z_s^p - (\m \prod_{\{ i \mid  s_i \neq 0\} } a_i)^{p-1} z_s - (\prod_{\{ i \mid  s_i \neq 0\} } a_i)^p  (s_1\frac{x_1}{a_1^p}+\ldots+s_n\frac{x_n}{a_n^p} )=0.
$$
The reader will check that $h \in H$ is such that $h(z_s)=z_s$ if and only if $h \in H_s$. 
In particular, $\Frac(A^{H_s})=\Frac(R)(z_s)$ has degree $p$ over $\Frac(R)$. 

For convenience, let 
$\alpha:=\prod_{\{ i \mid  s_i \neq 0\} } a_i$, and when $s_i \neq 0$, let $\alpha_i:=\alpha/a_i$. 
By definition, $\alpha \in R$, and $\alpha_i \in R$ when $s_i \neq 0$.
 With this notation, define $f(z) \in R[z]$ by
$$f(z):=  z^p - (\m \alpha)^{p-1} z - \sum_{\{ i \mid  s_i \neq 0\} } s_i\alpha_i^p x_i.$$
Since $f(z_s)=0$ and the degree of $z_s$ over $R$ is $p$, we find that $f(z)$ is irreducible in $\Frac(R)[z]$, and also in $R[z]$ by Gauss' Lemma. Since $R[z]$ is factorial because $R$ is, we find that $R[z]/(f(z))$ is a domain. 
We have a natural ring homomorphism $j:R[z]/(f(z)) \to A^{H_s}$, which sends the class of $z$ to $z_s$. 
After tensoring with $\Frac(R)$, the resulting homomorphism is surjective.
It is thus also bijective, since the source and target have the same dimension over $\Frac(R)$. 
Since both rings $R[z]/(f(z))$ and $A^{H_s}$ are integral domains,
the map $j$
%$R[z]/(f(z)) \to A^{H_s}$ 
is injective.
 \end{emp}
 
 \begin{theorem}\label{maximal subgroups}
Let $A$ be as in {\rm \ref{normalformrecalled}}.
 Let $H$ denote the Galois group of the associated 
extension $\Frac(A)/\Frac(R)$. 
Let $H_s$ denote a subgroup of order $p^{n-1}$ in $H$. 
Then the extension $A^{H_s}$ 
 is isomorphic to the ring $R[z]/(f(z))$, where $f(z)
% = z^p - \alpha^{p-1} z - \beta $, with $\alpha, \beta 
 \in R[z]$ is given above. In particular, it is flat
over $R$. 
The group $H/H_s$ acts on the ring $A^{H_s}$ through  pseudo-reflections.
% $z \mapsto z+(\m \prod_{\{ i \mid  s_i \neq 0\} } a_i)$.
\end{theorem}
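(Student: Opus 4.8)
The plan is to upgrade the injection $j\colon C\to A^{H_s}$ constructed in \ref{normalformrecalled}, where $C:=R[z]/(f(z))$, to an isomorphism. Since $A$ is regular, hence normal, and $H_s$ is finite, the invariant ring $A^{H_s}$ is normal; being integral over $R$ with $\Frac(A^{H_s})=\Frac(R)(z_s)$, it is the integral closure of $R$ in $\Frac(R)(z_s)$. As $f(z)$ is monic, $C$ is integral over $R$ with the same fraction field, so $j$ is an isomorphism as soon as $C$ is itself normal; the flatness of $A^{H_s}=C$ over $R$ is then automatic, because $C$ is $R$-free of rank $p$ on the basis $1,z_s,\dots,z_s^{p-1}$. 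Thus everything reduces to proving that $C$ is normal.

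First I would record that $C$, being cut out by a single equation inside the regular ring $R[z]$ (equivalently, being finite and free over the regular ring $R$), is Cohen--Macaulay and hence satisfies Serre's condition $(S_2)$. By Serre's criterion it then suffices to verify $(R_1)$, that is, that $C_\primid$ is regular for every height-one prime $\primid\subset C$. Each such $\primid$ lies over a height-one prime $\idealq\subset R$, so the problem localizes to showing that $C\otimes_R R_\idealq$ is regular over the discrete valuation ring $R_\idealq$, for every height-one prime $\idealq$ of $R$.

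Here I would split into two cases according to whether $\m\alpha\in\idealq$. If $\m\alpha\notin\idealq$, the group scheme $\shG_{\m\alpha,0}\otimes R_\idealq$ is étale, since the derivative of $w^p-(\m\alpha)^{p-1}w$ is the unit $-(\m\alpha)^{p-1}$; as $C\otimes R_\idealq$ is a torsor under it by Lemma \ref{naturalaction}, it is finite étale over $R_\idealq$, hence regular. If $\m\alpha\in\idealq$, we are in the one-dimensional situation of Section \ref{wild actions in dimension one}: here $C\otimes R_\idealq=R_\idealq[z]/(z^p-(\m\alpha)^{p-1}z-\xi)$ is a torsor under $\shG_{\m\alpha,0}\otimes R_\idealq$, and a generator $\tau$ of $H/H_s$, represented by $\nu\in H$ with $s\cdot\nu\in\FF_p^\times$, acts by the invariant translation $z_s\mapsto z_s+(s\cdot\nu)\,\m\alpha$. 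I would use this translation, via Theorem \ref{characterization moderately ramified}, to force the ramification break $m$ at $\idealq$ to satisfy $p\mid m+1$; combined with the identification of $\shG_{\m\alpha,0}\otimes R_\idealq$ as the effective model (Propositions \ref{effective model torsor} and \ref{torsor1}, so that $m+1=p\,v_\idealq(\m\alpha)+i$ with $0\le i<p$) this gives $m+1=p\,v_\idealq(\m\alpha)$. Comparing the discriminant of $C\otimes R_\idealq$ over $R_\idealq$, of valuation $p(p-1)v_\idealq(\m\alpha)$, with that of the maximal order, of valuation $(m+1)(p-1)$, then yields equality, so $C\otimes R_\idealq$ is the maximal order and is regular. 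This is the step I expect to be the main obstacle: one must exploit the special shape $\xi=\sum_{s_i\neq 0}s_i\alpha_i^p x_i$, in which the coordinates $x_i$ occur linearly with $p$-th power coefficients, to produce a uniformizer of the form $z_s+r$ with $r\in R_\idealq$ whose $\tau$-translate is still the invariant element $(s\cdot\nu)\m\alpha$ (this is the completing-the-$p$-th-power phenomenon), thereby landing in case (iii) of \ref{characterization moderately ramified} at every ramified height-one prime.

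Finally I would treat the action of $H/H_s$ through pseudo-reflections. Once $A^{H_s}=C=R[z_s]$, the generator $\tau$ above acts by $z_s\mapsto z_s+(s\cdot\nu)\,\m\alpha$ and fixes $R$. Since $A^{H_s}$ is generated over $R$ by $z_s$, the image $(\tau-\id)(A^{H_s})$ is contained in $(\m\alpha)A^{H_s}$ and contains $(\tau-\id)(z_s)=(s\cdot\nu)\,\m\alpha$; hence $I(\tau)=(\m\alpha)A^{H_s}$ is principal, generated by the regular non-unit $\m\alpha$. Therefore $\tau$, and by the remarks in \ref{pseudoreflection} each of its nontrivial powers, is a pseudo-reflection, so that $H/H_s$ acts on $A^{H_s}$ through pseudo-reflections.
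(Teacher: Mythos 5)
Your skeleton coincides with the paper's: both proofs reduce everything to showing that $C:=R[z]/(f(z))$ is normal ($C$ is finite free over $R$, hence Cohen--Macaulay and $(S_2)$, so only regularity in codimension one is at issue), and then conclude that the injection $j\colon C\to A^{H_s}$ of \ref{normalformrecalled} is an isomorphism because it is a map of normal domains inducing an equality of fraction fields; your pseudo-reflection argument via $(\tau-\id)(z_s)=(s\cdot\nu)\,\m\alpha$ is also the paper's. Your treatment of the height-one primes $\idealq$ with $\m\alpha\notin\idealq$ is correct and clean: there $C\otimes R_\idealq$ is a torsor under the finite \'etale group scheme $\shG_{\m\alpha,0}\otimes R_\idealq$ by Lemma \ref{naturalaction}, hence \'etale over $R_\idealq$, hence regular.

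The gap is the case $\m\alpha\in\idealq$, which you defer as ``the main obstacle'': this is not a loose end but the entire content of the theorem, and the tools you propose for it cannot close it. First, the plan is circular: Proposition \ref{effective model torsor} identifies $\shG_{\m\alpha,0}$ as the effective model of the action on $\Spec C$ itself, whereas Proposition \ref{torsor1} computes the effective model of the action on the \emph{normalization}; equating the two, so as to get $m+1=p\,v_\idealq(\m\alpha)+i$ and then run the discriminant count, presupposes that $C$ \emph{is} the normalization at $\idealq$ --- exactly what is to be proved. Second, the inputs you allow yourself (torsor structure, Theorem \ref{characterization moderately ramified}, discriminants) are genuinely insufficient, as the following example shows. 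Take $p=2$, $R=k[[x,y]]$, and $C_0:=R[z]/(z^2-x^2z-x^3-x^4)$, a $\shG_{x^2,0}$-torsor with $\xi=x^3+x^4\in\maxid_R$. It is a domain (its Artin--Schreier class $1/x+1$ has a simple pole at $(x)$, which no element of $\wp(\Frac R)$ has), the extension is totally ramified at $(x)$ with break $m=1$, so $p\mid m+1$ holds and the normalization is even a torsor under its own effective model $\shG_{x,0}$; nevertheless $C_0$ is singular along the height-one prime $(x,z)$, since in characteristic $2$ all partials of $z^2-x^2z-x^3-x^4$ lie in $(x,z)$ and the equation lies in $(x,z)^2$. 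So torsor plus ``$p\mid m+1$'' plus discriminant bookkeeping cannot yield normality: one must prove the sharper equality $m+1=p\,v_\idealq(\m\alpha)$ at every such prime, and also exclude split or inert behavior there, and both facts hold only because of the precise shape $\xi=\sum_{s_i\neq 0}s_i\alpha_i^px_i$ together with the pairwise coprimality of $\m,a_1,\ldots,a_n$ (for instance, with $a_i=x_i$ and $\m=x_1+x_2$, ramification at $(\m)$ comes from the cancellation $x_1/(\m x_1)^2+x_2/(\m x_2)^2=1/(\m x_1x_2)$). This computation, which you explicitly do not carry out, is what the paper does directly: its Jacobian-criterion case analysis (short for $p\geq 3$, delicate for $p=2$ via \eqref{eq.1} and \eqref{eq.2}) shows that any non-regular prime of $C$ has height at least two.
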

\proof
%We have a natural ring homomorphism $R[z]/(f(z)) \to A^{H_s}$, which sends the class of $z$ to $z_s$. Since the associated homomorphism of fields of fractions %is surjective, we find that $R[z]/(f(z)) \to A^{H_s}$ is injective. Our goal is to show that this homomorphism is in fact an isomorphism. 
Since $R[z]/(f(z))$ is finitely generated and free over $R$, it is Cohen--Macaulay and, hence, satisfies the property $S_2$. 
We show below that $R[z]/(f(z))$ is regular in codimension $1$. Then from Serre's criterion, $R[z]/(f(z))$ is normal, and injects into $A^{H_s}$, which is also normal, with same field of fractions. It follows that $R[z]/(f(z)) \to A^{H_s}$ is an isomorphism.

Since $R$ is complete, we can identify $R[z]/(f)$ with $k[[x_1,\dots,x_n,z]]/(f)$. Consider the partial derivatives of $f$:
%\begin{align*}
$$
f_z = -(\m \alpha)^{p-1}\quadand  %\\
f_{x_i} = (\m \alpha)^{p-2}z(\m \alpha)_{x_i} - s_i\alpha_i^p,\quad i=1,\dots,n.
%, \text{ and } \\ f_{x_j} &= (\m ab)^{p-2}z(\m ab)_{x_j} - a^p.
$$
%\end{align*}
%where we write $f_z=\partial f/\partial z$ etc.\ for   partial derivatives. Seeking a contradiction,

Suppose that $\primid$ is a prime ideal in $R[z]$ containing $(f)$.
%, and denote by  $\primid '$ the prime ideal $\primid/(f)$ in $R[z]/(f)$.
%$k[[x_1,\dots,x_n,z]]/(f)$   
Assume that the localization of $R[z]/(f)$ at the prime ideal $\primid/(f)$ is not regular.
%$(k[[x_1,\dots,x_n,z]]/(f))_\primid$ 
%$(R[z]/(f))_{\primid '}$ is not regular.
Then, according to the Jacobian Criterion, 
all the derivatives of $f$ belong to $\primid$. In particular,
$f_z \in \primid$, and so $\m \alpha\in\primid$.
We want to show that the prime $\primid/(f)$ has height at least $2$ in $R[z]/(f)$. 
%Since $s \neq (0,\dots,0)$, we may assume that there exists an index $i$ such that $s_i \neq 0$.
%When there exists exactly one index $i$ such that $s_i \neq 0$, we have $f(z)=z^p-(\m a_i)^{p-1}z -s_ix_i$. 
%We find then that $A^{H_s}$ is regular. Let us assume now that there exist at least two indices
%$i, j$ with $s_i \neq 0$ and $s_j \neq 0$.

Assume first that $p\geq 3$, and that $s_i \neq 0$. Then the condition $f_{x_i} \in \primid$ immediately implies  that  $\alpha_i \in\primid$.
In particular, $\alpha_i$ is not a unit.
Since $\alpha=a_i\alpha_i= \prod_{\{ j \mid  s_j \neq 0 \} } a_j$, and so there exists $\ell \neq i$
such that $s_\ell \neq 0$ and $a_\ell \in \primid$. But then   from $s_\ell \neq 0$ and
$f_{x_\ell} \in \primid$, it follows that $\alpha_\ell \in \primid$. Hence, among the factors of $\alpha_\ell$, we can find 
$a_m \in \primid$ for some $m \neq \ell$. By hypothesis, the ideal $(a_\ell, a_m)$ has height at least 2 since 
$a_\ell$ and $a_m$ are coprime
%$a_1,\dots, a_n$ form a system of parameters for $R$, 
and, thus, $\height(\primid)\geq 2$.
% if $\primid$ is not regular, as desired.

 Assume now that $p=2$.  
In view of the fact that $\m \alpha\in\primid$, we will consider two cases: when $\alpha \in \primid$, and when $\m \in \primid$. 
{\it Let us start with the case where $\alpha \in \primid$.}
If two or more of the $a_j$'s that divide $\alpha$ belong to $\primid$, then as before $\height(\primid)\geq 2$ and we are done.
Thus we are reduced to consider only the case where $a_i\in\primid$ for some $i$ with $s_i \neq 0$ and $a_j \notin \primid$ for all $j \neq i$ such that $s_j \neq 0$. We show now that this case cannot happen. Recalling that $\alpha= \alpha_i a_i$, we find using the product rule that
$f_{x_i}=  z(\m \alpha)_{x_i} - s_i\alpha_i^2= z(\m \alpha_i)_{x_i}a_i +z(\m \alpha_i)(a_i)_{x_i} - s_i\alpha_i^2 $.
From $f_{x_i}, a_i \in \primid$ and $\alpha_i \notin \primid$, we obtain that   
\begin{equation} \label{eq.1}
z \m (a_i)_{x_i}+s_i \alpha_i \in\primid.
\end{equation} 
We conclude in particular from this last expression that $z \notin \primid$. 

Since $a_i$ divides $\alpha_j$ when $j \neq i$ and $s_j \neq 0$, 
we find that $s_j \alpha_j^2x_j \in \primid$.
%From $f_{x_j}\in \primid$ and $\alpha_j \in \primid$ when $j \neq i$ and $s_j \neq 0$, we find that   $z (\m \alpha)_{x_j} \in\primid$. 
By hypothesis, % $f$ belongs to $\primid$, and so
%we also have 
$f(z):=  z^2 - (\m \alpha) z - \sum_{\{ j \mid  s_j \neq 0\} } s_j\alpha_j^2 x_j$ belongs to $\primid$, 
%. It follows that $z^2 - (\m \alpha) z - \sum_{\{ j \mid  s_j \neq 0\} } s_j\alpha_j^2 x_j \in \primid$, 
and so 
\begin{equation}
\label{eq.2}
z^2 +s_i\alpha_i^2x_i \in \primid.
\end{equation} 
We are now ready to conclude as follows. First, \eqref{eq.1} shows that $(z \m (a_i)_{x_i}+s_i \alpha_i)^2 \in\primid$. %Since $s_i \neq 0$, 
Using that $s_i=1=s_i^2$ and \eqref{eq.2}, we obtain  that
$z^2(1+ (\m (a_i)_{x_i})^2 x_i) \in \primid$. 
This is a contradiction, since $(1+ (\m (a_i)_{x_i})^2 x_i)$ is a unit, and we noted above that $z \notin \primid$.

{\it Consider now the case where $\m \in \primid$.} If $a_i$ belongs to $\primid$ for some $i$, then $\height(\primid)\geq 2$ by our hypothesis 
that $\m$ is coprime to $a_i$. 
So let us assume that $ a_i \notin \primid$ for all $i$ with $s_i \neq 0$. 
From $f_{x_i}\in \primid$ and $\alpha_i \notin \primid$, we conclude that $z \m_{x_i} a_i + s_i\alpha_i \in \primid$ whenever $s_i \neq 0$. 
In particular, $z \notin \primid$.
From the relation $f=0$
%Using the minimal polynomial of $z$, 
we find that $z^2 + \sum_{\{ i \mid  s_i \neq 0\} } s_i\alpha_i^2x_i \in \primid$. It follows that 
$z^2(1+ \sum_{\{ i \mid  s_i \neq 0\} } (\m_{x_i} a_i)^2 x_i) \in \primid$. 
This is a contradiction as before, since $(1+ \sum_{\{ i \mid  s_i \neq 0\} } (\m_{x_i} a_i)^2 x_i)$ is a unit, and $z \notin \primid$.
This concludes the proof that $R[z]/(f(z)) \to A^{H_s}$ 
 is an isomorphism.

% Let $s =(s_1,\dots, s_n) \in ({\mathbb Z}/p{\mathbb Z})^n$ with associated subgroup $H_s$ of $H$.
Pick any standard basis vector $e_i$ of $({\mathbb Z}/p{\mathbb Z})^n$ which does not belong to 
$H_s$ (i.e., such that $s_i \neq 0$). Then the group $H/H_s$ is cyclic of order $p$, generated by the image $\overline{e}_i$ of $e_i$, and acts on $A^{H_s}$. It is easy to check that $
\overline{e}_i(z_s) - z_s = s_i \m \prod_{s_j \neq 0} a_j$, so that the ideal of the fixed scheme of $\overline{e}_i$ is $I(\overline{e}_i)=\m (\prod_{s_j \neq 0} a_j)A^{H_s}$, showing that $\overline{e}_i$ is a pseudo-reflection as defined in \ref{pseudoreflection}.
\qed
 
\medskip
In the definition of a moderately ramified action on a  complete local regular noetherian ring $A$ of dimension $n \geq 2$, Condition (\MRfive) imposes 
some structure requirement on  $n$ subrings of $A^G$, denoted $A^{G_i^\perp}$ in \ref{MR4}, of rank $p$ over $R$. Our next corollary implies that all subrings
of the form $A^{H_s}$ of rank $p$ over $R$ satisfy the same structure requirement.

\begin{corollary} \label{maximal subgroups 2}
Let $A$ be a  complete local regular noetherian ring of dimension $n \geq 2$ and  characteristic $p>0$, with field of representatives $k$.
Assume that  $A$ is
endowed with a moderately ramified  action of a cyclic group $G$ of order $p$. Let $H$ denote the Galois group of the associated 
extension $\Frac(A)/\Frac(R)$. 
Let $H_s$ denote a subgroup of order $p^{n-1}$ in $H$. Then the extension $A^{H_s}$ 
 is isomorphic to a ring of the form $R[z]/(f(z))$, where $f(z) = z^p - \alpha^{p-1} z - \beta $, with $\alpha, \beta \in R$. In particular, it is flat
over $R$. The group $H/H_s$ acts on the ring $A^{H_s}$ through   pseudo-reflections.
\end{corollary}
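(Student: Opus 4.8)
The plan is to reduce to the normal form of Theorem \ref{structure moderately ramified} and then adapt the normality computation from the proof of Theorem \ref{maximal subgroups}. Since the action is moderately ramified, conditions (\MRone) and (\MRtwo) hold, so $H$ is elementary abelian of order $p^n$ (\ref{elementary abelian}), and Theorem \ref{structure moderately ramified} lets us assume
$$ A = R[u_1,\ldots,u_n]/(u_1^p-a_1^{p-1}u_1-x_1,\ldots,u_n^p-a_n^{p-1}u_n-x_n), $$
with $R=k[[x_1,\ldots,x_n]]$, the elements $a_1,\ldots,a_n\in\maxid_R$ a system of parameters, and $\sigma_i(u_j)=u_j+\delta_{ij}a_i$. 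A subgroup of order $p^{n-1}$ is then the kernel $H_s$ of a nonzero linear form $s=(s_1,\ldots,s_n)$ on $H$. This is the setting of \ref{normalformrecalled} with $\mu=1$, but without the pairwise coprimality of the $a_i$.

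Next I would transplant the explicit generator from \ref{normalformrecalled}: put $\alpha:=\prod_{s_i\neq 0}a_i$, $\alpha_i:=\alpha/a_i\in R$, and $z_s:=\sum_{s_i\neq 0}s_i\alpha_i u_i\in A$. The same formal computation as in \ref{normalformrecalled}, which uses no coprimality, shows that $z_s$ is $H_s$-invariant but not $H$-invariant, that it satisfies $f(z_s)=0$ for the monic polynomial $f(z)=z^p-\alpha^{p-1}z-\beta$ with $\beta:=\sum_{s_i\neq 0}s_i\alpha_i^p x_i\in R$, and that $f$ is irreducible, so that $z\mapsto z_s$ defines an injection $R[z]/(f)\hookrightarrow A^{H_s}$ of domains inducing an isomorphism on fraction fields, which have degree $p$ over $\Frac(R)$. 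Because $A^{H_s}$ is the ring of invariants of $H_s$ on the normal domain $A$, it is normal and finite over $R$, hence is the integral closure of $R[z]/(f)$ in the common fraction field. Therefore it suffices to prove that $R[z]/(f)$ is itself normal, for then $R[z]/(f)=A^{H_s}$, and flatness over $R$ is immediate since $R[z]/(f)$ is $R$-free of rank $p$.

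The main obstacle is the normality of $R[z]/(f)$, which I would establish through Serre's criterion. Being a hypersurface in the regular ring $R[z]$, it is a complete intersection, hence Cohen--Macaulay, so $S_2$ holds; the content is the condition $R_1$. Here I would rerun the Jacobian analysis from the proof of Theorem \ref{maximal subgroups}: one has $f_z=-\alpha^{p-1}$ and, for $s_j\neq 0$, $f_{x_j}=\alpha^{p-2}\alpha_{x_j}z-s_j\alpha_j^p$, using that the $x_j$-derivative of the $p$-th power $\alpha_i^p$ vanishes in characteristic $p$. A singular prime $\primid\supseteq(f)$ must contain $\alpha$ and all the $f_{x_j}$, and the same case analysis as in that proof, direct for $p\geq 3$ and for $p=2$ via the unit $1+(a_{i_0})_{x_{i_0}}^2x_{i_0}$ argument, forces at least two of the elements $a_i$ with $s_i\neq 0$ to lie in $\primid$. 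This is the single step where I depart from Theorem \ref{maximal subgroups}: rather than invoking pairwise coprimality, I would use that any two members of the system of parameters $a_1,\ldots,a_n$ are part of a regular sequence in the Cohen--Macaulay ring $R$ and hence generate an ideal of height at least two. Consequently $\primid$ has height at least two in $R[z]/(f)$, giving $R_1$ and thus normality.

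Finally, for the pseudo-reflection assertion I would choose a basis vector $e_i$ with $s_i\neq 0$; its image generates the cyclic quotient $H/H_s$, and since $e_i(u_i)=u_i+a_i$ one computes $\overline{e}_i(z_s)-z_s=s_i\alpha$. As $A^{H_s}=R[z_s]$ and $\overline{e}_i$ fixes $R$, the ideal $I(\overline{e}_i)$ of its fixed scheme equals $(\alpha)A^{H_s}=\bigl(\prod_{s_j\neq 0}a_j\bigr)A^{H_s}$, which is principal and generated by a regular non-unit; hence $\overline{e}_i$ is a pseudo-reflection in the sense of \ref{pseudoreflection}.
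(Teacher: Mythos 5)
Your proof is correct, and at its core it follows the same route as the paper: reduce to the normal form of Theorem \ref{structure moderately ramified}, then run the machinery of \ref{normalformrecalled} and Theorem \ref{maximal subgroups}. The difference is that the paper's proof consists of exactly this reduction followed by a \emph{direct citation} of Theorem \ref{maximal subgroups}, whereas you re-derive that theorem because you believe its hypotheses may fail here: you write that the moderately ramified case is the setting of \ref{normalformrecalled} with $\mu=1$ ``but without the pairwise coprimality of the $a_i$.'' That worry is unfounded: pairwise coprimality is automatic for a system of parameters in the $n$-dimensional ring $R$. Indeed, if $a_i$ and $a_j$ (with $i\neq j$) had a common non-unit factor $c$, then $(a_1,\ldots,a_n)\subseteq (c)+(a_k : k\neq i,j)$ would be contained in an ideal generated by $n-1$ elements, hence of height at most $n-1$ by Krull's height theorem, contradicting that $(a_1,\ldots,a_n)$ is $\maxid_R$-primary; so the hypotheses of \ref{normalformrecalled} hold verbatim with $\mu=1$, and the paper's two-line proof is already complete. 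Your workaround --- replacing ``coprime elements generate an ideal of height $\geq 2$'' by ``any two members of a system of parameters form a regular sequence in the Cohen--Macaulay ring $R$, hence generate an ideal of height $\geq 2$'' --- is valid, and the rest of your transplanted argument (the generator $z_s$ and its equation, irreducibility of $f$, the reduction of the identity $A^{H_s}=R[z]/(f)$ to normality of $R[z]/(f)$ via Serre's criterion, the Jacobian case analysis for $p\geq 3$ and $p=2$, and the computation $\overline{e}_i(z_s)-z_s=s_i\alpha$ giving the principal fixed-scheme ideal $(\alpha)$ for the pseudo-reflection claim) is accurate. What your route buys is a self-contained proof whose height argument does not pass through coprimality at all; what it costs is re-proving a theorem whose hypotheses you in fact already satisfy.
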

\proof 
Theorem \ref{structure moderately ramified}  lets us identify $A$  with a ring of the form 
$$
R[u_1,\ldots,u_n]/(u_1^p- a_1^{p-1}u_1-x_1,\ldots,u_n^p-a_n^{p-1}u_n-x_n),
$$
where  $R:=k[[x_1,\ldots,x_n]]$ and $a_1,\ldots,a_n\in R$
is a  system of parameters of $R$. Theorem \ref{maximal subgroups} can then be applied.  
\qed

\medskip Keep the notation introduced in \ref{normalformrecalled}.
Recall that $ x_i:=N_{A/A^G}(u_i)=u_i^p-(\m a_i)^{p-1}u_i$ is a \emph{norm element}.
In light of the $G$-action $u_i\mapsto u_i+\m a_i$, the  elements
$$
z_{ij}:=a_iu_j - a_ju_i, \quad i \neq j
$$
are also clearly $G$-invariant. We call these elements   \emph{minor elements}. Denoting by $e_1,\dots, e_n$ the standard vectors of $({\mathbb Z}/p{\mathbb Z})^n$
and setting $s=e_j-e_i$, we find that $z_{ij}=z_s$ (notation as in \ref{normalformrecalled}). %the proof of \ref{maximal subgroups}).
The minor elements satisfy the obvious relations:
\begin{equation}
\label{minor relations}
\begin{array}{rl}
z_{ij}^p -(\m^{p-1} a_i^{p-1}a_j^{p-1}z_{ij} + a_i^px_j - a_j^px_i)=0, & \quad 1\leq i<j\leq n, {\rm \ and } \\
a_iz_{jk} - a_jz_{ik} + a_kz_{ij}=0, &\quad 1\leq i<j<k\leq n.
\end{array}
\end{equation}

When $n=2$, there is only one interesting element $z_{ij}$, namely $z_{12}$, and Theorem \ref{maximal subgroups} shows that it generates $A^G$. 
Our next theorem shows that $A^G$ in this case can be explicitly described.
\begin{theorem}
\label{2-dimensional invariant ring}
Let $A$ be a  complete local regular noetherian ring of dimension two and characteristic $p>0$, with field of representatives $k$.
Assume that  $A$ is
endowed with a moderately ramified  action of a cyclic group $G$ of order $p$. 
%Fix a regular system of parameters
%$u_1,  u_2$ for $A$ to define $A_{ci}$ as in {\rm \ref{Aci}}. Then $A_\ci=A^G$. In particular, 
Then there exists a system of parameters $a,b$ in $k[[x,y]]$ such that $A^G$ is isomorphic to the domain
$$
k[[x,y,z]]/(z^p-a^{p-1}b^{p-1}z -a^py+b^px).
$$
%after setting $z=z_{12}$ and $x=x_1$, $y=x_2$ and $a=a_1$, $b=a_2$.
Conversely, for any system of parameters $a,b\in k[[x,y]]$, the above
ring is the ring of invariants of a moderately ramified
$G$-action on some complete regular local noetherian ring $A$ of dimension two.
\end{theorem}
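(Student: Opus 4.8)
The plan is to deduce both directions from the general structure theorems already in hand, specialized to $n=2$. For the forward direction I would first invoke Theorem~\ref{structure moderately ramified} to put $A$ with its $G$-action into the normal form
$$
A = R[u_1,u_2]/(u_1^p - a_1^{p-1}u_1 - x_1,\; u_2^p - a_2^{p-1}u_2 - x_2),
$$
where $R = k[[x_1,x_2]]$, the pair $a_1,a_2$ is a system of parameters of $R$, and $\sigma$ acts by $u_i \mapsto u_i + a_i$. This is exactly the situation of \ref{normalformrecalled} with $\m = 1$; here I would note that $a_1,a_2$ are automatically coprime, since $R$ is a two-dimensional regular local ring, hence a UFD, and a common prime factor of $a_1,a_2$ would force $V(a_1,a_2)$ to have dimension one, contradicting that $(a_1,a_2)$ is $\maxid_R$-primary. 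In particular the coprimality hypotheses of \ref{normalformrecalled} are satisfied.

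Next I would identify $A^G$. Since $G = \langle\sigma\rangle$ corresponds under the isomorphism $H \cong (\ZZ/p\ZZ)^2$ to the diagonal subgroup generated by $(1,1)$, and since $n=2$ forces every subgroup of order $p = p^{n-1}$ to be one of the ``hyperplanes'' $H_s$, I would take $s = (1,-1)$, for which $H_s = G$ and the associated minor element is $z_s = z_{12} = a_1 u_2 - a_2 u_1$. Theorem~\ref{maximal subgroups} (or directly the computation in \ref{normalformrecalled}) then gives an isomorphism $A^G = A^{H_s} \cong R[z]/(f(z))$, and the first relation in \eqref{minor relations}, taken with $\m=1$, identifies $f(z)$ with $z^p - a_1^{p-1}a_2^{p-1}z - a_1^p x_2 + a_2^p x_1$. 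Renaming $a := a_1$, $b := a_2$, $x := x_1$, $y := x_2$, this is precisely the asserted equation $z^p - a^{p-1}b^{p-1}z - a^p y + b^p x$; that the resulting ring is a domain is already recorded in \ref{normalformrecalled}, and in any case follows from $A^G$ being a subring of the domain $A$.

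For the converse, given an arbitrary system of parameters $a,b \in k[[x,y]]$, I would set $R = k[[x,y]]$ and form
$$
A = R[u_1,u_2]/(u_1^p - a^{p-1}u_1 - x,\; u_2^p - b^{p-1}u_2 - y),
$$
equipped with $\sigma(u_1) = u_1 + a$, $\sigma(u_2) = u_2 + b$. Theorem~\ref{structure moderately ramified2} shows that $A$ is a regular complete local ring of dimension two and that the action of $G = \langle\sigma\rangle$ is moderately ramified, whereupon the forward direction yields $A^G \cong k[[x,y,z]]/(z^p - a^{p-1}b^{p-1}z - a^p y + b^p x)$, as required. I do not expect a genuine obstacle here: the whole statement is a specialization of the higher-dimensional machinery to $n=2$, and the only points demanding care are the bookkeeping of signs when matching \eqref{minor relations} to the displayed equation and the verification that $G$ coincides with one of the subgroups $H_s$ to which Theorem~\ref{maximal subgroups} applies.
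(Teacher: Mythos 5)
Your proposal is correct and follows essentially the same route as the paper: the forward direction is exactly the content of Corollary \ref{maximal subgroups 2} (normal form via Theorem \ref{structure moderately ramified}, then Theorem \ref{maximal subgroups} applied to the order-$p^{n-1}$ subgroup $H_s$ equal to $G$), and the converse uses the same explicit ring $k[[x,y]][u,v]/(u^p-a^{p-1}u-x,\,v^p-b^{p-1}v-y)$ together with Theorem \ref{structure moderately ramified2}. The only slip is a harmless sign: with the paper's conventions $z_{12}=a_1u_2-a_2u_1$ equals $z_s$ for $s=e_2-e_1=(-1,1)$ rather than $s=(1,-1)$ (which gives $-z_{12}$), but $H_s$, the minor relation, and the resulting presentation of $A^G$ are unaffected.
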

\proof The first part of the statement follows immediately from \ref{maximal subgroups 2}.
For the converse, we use the ring $A:=
k[[x,y]][u,v]/(u^p- a^{p-1}u-x,v^p-b^{p-1}v-y)
$ and apply \ref{structure moderately ramified2} to find that $A$ is regular.
\qed
 
\begin{remark}
When $p=2$ and $n=2$, we noted in \ref{artin case} that every action ramified precisely at the origin is in fact already moderately ramified
when $k$ has no separable quadratic extensions.
Putting this result together with Theorem \ref{2-dimensional invariant ring}, we recover Artin's description in \cite{Artin 1975} of the ring $A^G$ when $n=2$, $p=2$, and
$k$ has no separable quadratic extensions. Note that this hypothesis on $k$ does not appear in Artin's description in \cite{Artin 1975}, and we have not been able to provide a proof of this description without such hypothesis on $k$. 
\end{remark}

\begin{remark} Let $A$ be a  complete local regular noetherian ring of dimension two and characteristic $p>0$, with field of representatives $k$.
Assume that  $A$ is
endowed with a moderately ramified  action of a cyclic group $G$ of order $p$, and consider the ring $A^G$ as described in Theorem  \ref{2-dimensional invariant ring}.
Let $I$ denote the ideal of $A^G$ generated by $x$ and $y$. Then $I \neq IA \cap A^G$. Indeed, the element $z \in A^G$ does not belong to $I$
since $A^G/I$ is isomorphic to $k[z]/(z^p)$. On the other hand, since $(a,b) \subseteq I$ and $z=av-bu$, we find that $z \in IA \cap A^G$. 
This example generalizes Example 2 in \cite{Gil}. Note that it follows that $A^G$ is not a direct summand of the $A^G$-module $A$ (see \cite{Hoc}, Proposition 1, or \cite{H-E}, Proposition 10).
\end{remark}

\begin{remark} Axioms (\MRone)  and (\MRtwo) in the definition of moderately ramified action 
specify the existence of a regular system of parameters $u_1,\dots,u_n$ in the complete regular local ring $A$ such that the associated norm subring
$R:=k[[x_1,\dots,x_n]]$ is such that $\Frac(A)/\Frac(R)$ is a Galois extension of degree $p^n$ with elementary abelian Galois group (see \ref{elementary abelian}). In particular, 
any moderately ramified action $\sigma:A \to A$ comes equipped with a subgroup of ${\rm Aut}_k(A)$ isomorphic to $({\mathbb Z}/p{\mathbb Z})^n$. We note below that when $p=2=n$, much more is true. 
 
Assume that $n=p=2$ and that $k$ is algebraically closed. Let $\sigma:A \to A$ be a moderately ramified action. Consider a regular system of parameters $u,v$ such that $A=k[[u,v]]$, $R:=k[[x,y]]$ is the norm subring,
and  $a,b \in k[[x,y]]$ are such that $\sigma(u)=u+a$ and $\sigma(v)=v+b$. Clearly, there are then two non-trivial involutions in ${\rm Aut}_k(A)$
which commute with $\sigma$, namely the involution which fixes $v$ and sends $u$ to $u+a$, and the involution which fixes $u$ and sends $v$ to $v+b$.
We show below that in fact, {\it the centralizer of $\left< \sigma \right>$ in the group 
${\rm Aut}_k(A)$ contains infinitely many non-trivial involutions}.  

For each $c \in k^*$, consider the regular system of parameters $u+cv$, $v$, with norms
$X$ and $Y=y$ with respect to $\sigma$, where
$$X:=(u+cv)(\sigma(u)+c\sigma(v))= x + c^2y+ c(u\sigma(v) + v \sigma(u)).$$ 
For the regular system of parameters $u+cv$, $v$, the norm subring is $k[[X,Y]]$.
Since $u,v$ is an admissible regular system of parameters (\ref{admissible}) and $a,b \in (u,v)^2$, one checks that
$u+cv$, $v$, is also an admissible regular system of parameters, so the homomorphism $k[[X,Y]] \to A$ has degree $4$. 
Theorem \ref{galois extension} (i) shows that the field extension
$\Frac(A^G)/k((X,Y)) $, which has  degree $p^{n-1}=2$,  is not purely inseparable. Hence  it must be Galois. Since $k$ is algebraically closed,
\ref{galois extension} (ii) implies that  the extension $\Frac(A)/k((X,Y))$ must be Galois as well.
 We find that $k[[x,y]]=k[[X,Y]]$ if and only if $(u\sigma(v) + v \sigma(u))\in k[[x,y]]$. In our case, we can compute explicitly that $u\sigma(v) + v \sigma(u)= av+bu$, which is nothing but the minor element $z=z_{12}$  generating $A^G$ over $k[[x,y]]$.  Hence, the extension $\Frac(A)/\Frac(R)$ and $\Frac(A)/ k((X,Y))$ are distinct, and thus correspond to two distinct elementary abelian subgroups $H_0$ and $H_c$  in ${\rm Aut}_k(A)$ of order $4$, intersecting in $\left< \sigma \right>$. It is not hard to check that   the groups $H_c$, $c \in k$, are pairwise distinct.
%% u+dv=u+cv+(d-c)v$, so repeat the above with u+cv and v.
\end{remark}

\if false
\begin{remark} A moderately ramified action of a cyclic group $G$ of order $p$ on $A$ of dimension $n$ comes along with 
$(p-1)^{n-1}-1$ other actions of $G$ which are also moderately ramified. When $n=2$, choose a system of parameters $u,v$
so that the action of $G$ on $A=k[[u,v]]$ is in normal form as in \ref{normalformrecalled}. Then the $p-1$ actions are labeled each by an element 
$c \in {\mathbb F}_p^*$, 
with automorphism of order $p$ given by $u \mapsto u+a$ and $v \mapsto v+cb$. 
Associated to this automorphism 
is the ring of invariants $$
B_c:=k[[x,y,z]]/(z^p-a^{p-1}b^{p-1}z -a^py+cb^px).
$$
When $p>2$, it is natural to wonder whether these rings are isomorphic as $k$-algebras.

Let $d \in {\mathbb F}_p^*$. The ring $B_{cd^2}$ is easily shown to be isomorphic to the ring $B_{c}$ over the separable closure $k_s$ of $k$
in two cases. In both cases, we will use  a change of variables $\tau$ given by $x \mapsto \delta x$, $y \mapsto \mu y$, and $z \mapsto z$,
with $\delta, \mu \in k_s$. 
Assume that $(a,b)=(x,y)$. Let $\delta \in k_s$
satisfy the equation $\delta^{p-1}=d$ and set $\mu:=1/(d\delta)$. It is easy to see that with these choices for $\delta$ and $\mu$, 
we have $cd^2 \mu^p \delta =c$, $\delta^p \mu=1$, and $\delta^{p-1}\mu^{p-1}=1$, and so $B_{cd^2}$ is isomorphic as a $k(\delta)$-algebra to $B_c$.
Assume now that $(a,b)=(y,x)$. Let $\delta :=1/d$ and set $\mu:=1$. Then $B_{cd^2}$ is isomorphic as a $k$-algebra to $B_c$.

\if false
Both cases are similar, and so we present below only the case where $(a,b)=(x,y)$.
If the polynomial $ z^p-a^{p-1}b^{p-1}z -a^py+cb^px$ is sent to the polynomial $z^p-a^{p-1}b^{p-1}z -a^py+b^px$ under the map $\tau$, 
then $c \mu^p \delta =1$, $\delta^p \mu=1$, and $\delta^{p-1}\mu^{p-1}=1$. In particular, $(\delta\mu)^{p-1}=1$ implies that $\delta\mu \in k^*$.
Combining $c \mu^p \delta =1$ and $\delta^p \mu=1$, we find that $c (\delta \mu)^{p+1}  =1$, and so $c$ is a square in $k^*$, say $c=d^2$ for some $d \in k^*$. 
From $c (\delta \mu)^{p+1}  =1$ and $\delta^{p-1}\mu^{p-1}=1$, we find that $c(\delta \mu)^2=1$. 
\fi

\end{remark}
\fi
\begin{emp} \label{Aci}
Let $A_\ci$ denote the   $R$-subalgebra of $A^G$   generated
by the $n-1$ minor elements $z_{12},\ldots,z_{1n}$. We show in \ref{complete intersection subring} that
this subring is a complete intersection which gives us a useful approximation of $A^G$.
Call the $R$-subalgebra generated by
all minor elements $z_{ij}$, $1\leq i<j\leq n$, the  \emph{minor subring} $A_\mnr$ of $ A^G$. This subring captures the regular locus
of $\Spec(A^G)$ (\ref{minor subring}).

For the definition of the ring homomorphism below, regard $z_{ij}$ as an indeterminate, 
and set $r_{ij}:= z_{ij}^p-(\m a_i a_j)^{p-1}z_{ij}  -a_i^px_j+a_j^px_i$, viewed as an element of the polynomial ring $R[z_{ij}]$.
Set $B:=R[z_{12},\ldots,z_{1n}]/(r_{12},\ldots,r_{1n})$. 
Then we have a natural homomorphism of $R$-algebras:
\begin{equation}
\label{subring ci}
B=R[z_{12},\ldots,z_{1n}]/(r_{12},\ldots,r_{1n})\lra A,
\end{equation}
whose image is $A_\ci$. We show in Theorem \ref{complete intersection subring} that the homomorphism $B \to A_\ci$ is an isomorphism.
\end{emp}
\if false
We will need the following facts.
\begin{lemma} \label{integra}
The ring $R[z_{ij}]/(r_{ij})$ is a domain. % and the natural map $R[z_{ij}]/(r_{ij}) \to A$ is injective. 
\end{lemma}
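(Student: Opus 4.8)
The plan is to reduce the lemma to the irreducibility of the single polynomial
$r_{ij}(z) = z^p - (\mu a_i a_j)^{p-1} z - a_i^p x_j + a_j^p x_i \in R[z]$. Indeed, the norm subring $R = k[[x_1,\ldots,x_n]]$ is a regular local ring, hence factorial, so $R[z_{ij}]$ is a unique factorization domain; once $r_{ij}$ is shown to be irreducible, the ideal $(r_{ij})$ is prime and the quotient $R[z_{ij}]/(r_{ij})$ is a domain. So the whole task is to prove that $r_{ij}$ is irreducible.

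To establish irreducibility I would realize $r_{ij}$ as the minimal polynomial over $\Frac(R)$ of the genuine element $z_{ij} = a_i u_j - a_j u_i$ of the domain $A$. First, $r_{ij}$ is monic of degree $p$ and annihilates $z_{ij}$; this is precisely the first relation in \eqref{minor relations}. Next I would compute the stabilizer of $z_{ij}$ in the Galois group $H = (\ZZ/p\ZZ)^n$ of $\Frac(A)/\Frac(R)$: an element $(\nu_1,\ldots,\nu_n)$ acts by $u_\ell \mapsto u_\ell + \nu_\ell \mu a_\ell$, and therefore sends $z_{ij}$ to $z_{ij} + \mu a_i a_j(\nu_j - \nu_i)$. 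Since $A$ is a domain and $\mu, a_i, a_j$ are all nonzero (the $a_\ell$ are pairwise coprime nonunits in $\maxid_R$ and $\mu \in R \setminus \{0\}$), this fixes $z_{ij}$ exactly when $\nu_i = \nu_j$, that is, exactly on the index-$p$ subgroup $H_s$ associated with $s = e_j - e_i$. By the Galois correspondence $\Frac(R)(z_{ij}) = \Frac(A)^{H_s}$, so $[\Frac(R)(z_{ij}) : \Frac(R)] = |H/H_s| = p$, and the minimal polynomial of $z_{ij}$ over $\Frac(R)$ has degree $p$. Being monic of degree $p$ and vanishing at $z_{ij}$, the polynomial $r_{ij}$ must coincide with this minimal polynomial and is thus irreducible in $\Frac(R)[z]$.

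Finally I would invoke Gauss's Lemma to descend: $r_{ij}$ is monic, hence primitive, and $R$ is a UFD, so any nontrivial factorization in $R[z]$ would produce one in $\Frac(R)[z]$, which is impossible. Hence $r_{ij}$ is irreducible in $R[z_{ij}]$ and $R[z_{ij}]/(r_{ij})$ is a domain.

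I do not expect any serious obstacle here, since every ingredient is already available: this lemma is simply the special case $s = e_j - e_i$ of the degree-and-irreducibility computation carried out in \ref{normalformrecalled}, under which $z_s$ specializes to $z_{ij}$ and $f(z)$ to $r_{ij}$. In fact one could bypass the argument entirely and cite Theorem \ref{maximal subgroups}, which identifies $R[z_{ij}]/(r_{ij})$ with the invariant subring $A^{H_s}$, a subring of the domain $A$ and hence itself a domain. The only point that genuinely requires care is the nonvanishing $\mu a_i a_j \neq 0$ underlying the stabilizer computation, and this is guaranteed by the coprimality hypotheses imposed in \ref{normalformrecalled}.
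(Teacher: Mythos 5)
Your proof is correct and takes essentially the same route as the paper: the paper likewise reduces the lemma to irreducibility of $r_{ij}$ in the UFD $R[z_{ij}]$ and deduces that irreducibility from the fact that the element $z_{ij}\in A$ generates a degree-$p$ extension of $\Frac(R)$, which is exactly the stabilizer computation for $H_s$ with $s=e_j-e_i$ carried out in \ref{normalformrecalled}. The only cosmetic difference is that the paper phrases the final step as a contradiction (a reducible factorization would force all prime factors to be linear, making the image of $R[z_{ij}]\to A$ equal to $R$), whereas you identify $r_{ij}$ directly with the minimal polynomial of $z_{ij}$ and conclude by Gauss's Lemma.
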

\proof The kernel of the natural map $R[z_{ij}] \to A$, with $z_{ij} \mapsto z_{ij}$ is a prime ideal which contains $r_{ij}$. 
Since $R$ is a UFD, so is $R[z_{ij}]$. Thus, to show that $(r_{ij})$ is a prime ideal, it suffices to show that the polynomial $r_{ij}$
is irreducible in $R[z_{ij}]$. If it is not irreducible, then any prime factor has degree $1$ and, thus, the image of $R[z_{ij}] \to A$ would be the subring $R$ in $A$. This is a contradiction, since $\Frac(R[z_{ij}])$ has degree $p$ over $\Frac(R)$. \qed
 
\begin{lemma} \label{codimension1}
Let $a,b$ be part of a system of parameters of $R$. Let $\m \in R \setminus \{0\}$, coprime to both $a$ and $b$. 
Let $f:=z^p-(\m a b)^{p-1}z-a^px_j+b^px_1$, with $j \neq 1$.
Then the ring $R[z]/(f)$ is regular in codimension one.
\end{lemma}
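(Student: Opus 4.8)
The plan is to verify the condition $R_1$ directly. Since $R[z]/(f)$ is free of rank $p$ over $R$, it is Cohen--Macaulay, hence $S_2$; what needs proof is regularity at every prime of height at most one. I would work inside the regular ring $R[z]=k[[x_1,\dots,x_n]][z]$ and invoke the Jacobian criterion exactly as in \ref{new regular system} and \ref{maximal subgroups}: the non-regular locus of the hypersurface $\Spec R[z]/(f)$ is cut out by $f$ together with all its partial derivatives. Thus it suffices to show that every prime $\primid\subset R[z]$ containing $f$ and the whole Jacobian ideal contains two coprime elements of $R$. For then $\mathfrak q:=\primid\cap R$ has height at least two (two coprime elements of the factorial ring $R$ lie in no height-one prime), and since $f$ is monic in $z$ we have $f\notin\mathfrak q R[z]$, so $\primid\supsetneq\mathfrak q R[z]$ and hence $\height(\primid)\ge 3$ in $R[z]$, giving $\height(\primid/(f))\ge 2$ in $R[z]/(f)$.

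First I would record the partial derivatives. In characteristic $p$ one computes $\partial f/\partial z=-(\m ab)^{p-1}$ and $\partial f/\partial x_i=(\m ab)^{p-2}\,\partial_{x_i}(\m ab)\,z-a^p\delta_{ij}+b^p\delta_{i1}$, using that $a^p,b^p$ are $p$-th powers so their $x_i$-derivatives vanish. Hence $\partial f/\partial z\in\primid$ forces $\m ab\in\primid$, so at least one of $\m,a,b$ lies in $\primid$. Recall that $a,b$ are coprime (being part of a system of parameters in $R$) and that $\m$ is coprime to each of $a,b$; so any two of $\m,a,b$ form a coprime pair.

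When $p\ge 3$ the argument is immediate: since $\m ab\in\primid$ we get $(\m ab)^{p-2}\in\primid$, so in both $\partial f/\partial x_j$ and $\partial f/\partial x_1$ the $z$-terms lie in $\primid$, leaving $a^p\in\primid$ and $b^p\in\primid$; thus $a,b\in\primid$, the coprime pair we want.

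The case $p=2$ is the \emph{main obstacle}, since then $(\m ab)^{p-2}=1$ and the $z$-terms no longer obviously lie in $\primid$; here I would mimic the case distinction in the proof of \ref{maximal subgroups}. Using $f\in\primid$ together with $\m ab z\in\primid$ gives the key congruence $z^2\equiv a^2x_j+b^2x_1\pmod\primid$. If exactly one of $a,b$ lies in $\primid$, say $a\in\primid$ and $b\notin\primid$, then $\partial f/\partial x_1$ reduces to $\m a_{x_1}bz+b^2\in\primid$; squaring and substituting $z^2\equiv b^2x_1$ yields $b^4(1+\m^2a_{x_1}^2x_1)\in\primid$, and since $1+\m^2a_{x_1}^2x_1$ is a unit while $b\notin\primid$, this is a contradiction (the subcase $b\in\primid,\,a\notin\primid$ is symmetric, using $\partial f/\partial x_j$). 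If instead $a,b\notin\primid$, then necessarily $\m\in\primid$, and $\partial f/\partial x_j,\partial f/\partial x_1$ reduce to $a\equiv\m_{x_j}bz$ and $b\equiv\m_{x_1}az\pmod\primid$, which force $z\notin\primid$; feeding $a^2\equiv\m_{x_j}^2b^2z^2$ and $b^2\equiv\m_{x_1}^2a^2z^2$ into the congruence for $z^2$ gives $z^2(1+\m_{x_j}^2b^2x_j+\m_{x_1}^2a^2x_1)\in\primid$ with the parenthesized factor a unit, again a contradiction. Hence for $p=2$ the only surviving possibility is $a,b\in\primid$, the desired coprime pair, which completes the verification of $R_1$.
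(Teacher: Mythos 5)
Your proof is correct and takes essentially the same route as the paper's own argument: both invoke the Jacobian criterion, use $f_z\in\primid$ to force $\m ab\in\primid$, dispose of $p\geq 3$ immediately via the factor $(\m ab)^{p-2}\in\primid$, and handle $p=2$ by the identical squaring-and-unit trick applied to the congruences coming from $f$ and its $x_1$- and $x_j$-derivatives, concluding in every case that a non-regular prime must contain a coprime pair and hence have height at least two. The only cosmetic differences are that you work in $R[z]$ and track heights by passing through $\primid\cap R$, while the paper identifies $R[z]/(f)$ with $k[[x_1,\dots,x_n,z]]/(f)$ and reads off the height bound directly.
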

\begin{proof} 
Without loss of generality, we can assume that we are in the situation of \ref{integra}, and so $R[z]/(f)$ is a domain of rank $p$ over $R$. 
Since $R$ is complete, we can identify $R[z]/(f)$ with $k[[x_1,\dots,x_n,z]]/(f)$. Relabel $x:=x_1$, % and $y:=x_2$, 
and consider the following  partial derivatives of $f$:
\begin{align*}
f_z &= -(\m a b)^{p-1}, \text{ and }   \\
f_x &= (\m ab)^{p-2}z(\m ab)_x + b^p, \text{ and } \\ f_{x_j} &= (\m ab)^{p-2}z(\m ab)_{x_j} - a^p.
\end{align*}
%where we write $f_z=\partial f/\partial z$ etc.\ for   partial derivatives. Seeking a contradiction,
Suppose that $\primid$ is a prime ideal of $k[[x_1,\dots,x_n,z]]/(f)$ such that the localization $(k[[x_1,\dots,x_n,z]]/(f))_\primid$ is not regular.
%suppose there is a height one prime $\primid$ so that the corresponding localization of $R[z_{1j}]/(r_{1j})$ is singular.
Then, according to the Jacobian Criterion, we have
$f_z \in \primid$, and in particular $\m ab\in\primid$. 
When $p\geq 3$, the condition $f_x,  f_{x_j} \in \primid$ immediately implies  that $a, b\in\primid$, and thus $\height(\primid)\geq 2$ if $\primid$ is not regular, as desired.

Assume now that $p=2$.  Consider first the case where $a\in\primid$. 
If we also have $b\in\primid$, then as before $\height(\primid)\geq 2$. % because of our hypothesis on $a$ and $b$.
From $f_x\in \primid$, we find that either $b\in\primid$ or $z(\m a)_x+b \in\primid$. 
Thus, if $b\in\primid$, the proof is complete. Assume then that $b\notin\primid$. It follows that $z(\m a)_x+b \in\primid$, and so $z\notin\primid$.
Since $f+\m a b z=z^2+b^2x \in \primid$, we find that $z^2(1+(\m a)_x^2x) \in\primid$. This is not possible since $1+(\m a)_x^2x$ is a unit in $R$.

The second case where $b\in\primid$ is similar. Consider then the last case where $\m \in \primid$. If either $a$ or $b$ belongs to $\primid$,
then $\height(\primid)\geq 2$ by our hypothesis in $\m$. So let us assume that $a$ and $b$ are not in $\primid$. It follows that 
$z\m_x a   + b$ and $z\m_{x_j} b   + a$ both belong to $\primid$. In particular, $z \notin \primid$. Since $z^2-a^2x_j+b^2x \in \primid$, we conclude that 
$z^2(\m_x^2 a^2x+\m_{x_j}^2 b^2y+1) \in \primid$. As before, this is a contradiction since $(\m_x^2 a^2x+\m_{x_j}^2 b^2y+1)$ is a unit in  $R$.
\end{proof}
\fi

\begin{lemma} \label{AciNormal}
 The ring $B$ is a complete intersection, free of rank $p^{n-1}$ over $R$. The ring $B[1/(\m a_1)]$ is normal.
\end{lemma}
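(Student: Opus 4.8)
The plan is to prove the two assertions separately, the first being formal and the second resting on a codimension-one analysis.

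For the complete intersection statement, I would first observe that each relation $r_{1j}=z_{1j}^p-(\m a_1a_j)^{p-1}z_{1j}-a_1^px_j+a_j^px_1$ is monic of degree $p$ in the single variable $z_{1j}$, with coefficients in $R$ and involving none of the other variables. Hence $B$ decomposes as a tensor product $B=C_2\otimes_R\cdots\otimes_R C_n$, where $C_j:=R[z_{1j}]/(r_{1j})$ is free of rank $p$ over $R$; this at once shows that $B$ is free over $R$ of rank $p^{n-1}$, and since $R\subset B$ is finite, that $\dim(B)=\dim(R)=n$. Regarding $B$ as the quotient of the regular ring $P:=R[z_{12},\ldots,z_{1n}]$ of dimension $2n-1$ by the $n-1$ elements $r_{12},\ldots,r_{1n}$, the ideal they generate has height $(2n-1)-n=n-1$, equal to the number of generators. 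As $P$ is Cohen--Macaulay, these elements form a regular sequence, so $B$ is a complete intersection.

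To prove that $B[1/(\m a_1)]$ is normal I would apply Serre's criterion $(R_1)+(S_2)$. Being a complete intersection, $B$ is Cohen--Macaulay, hence so is the localization $B[1/(\m a_1)]$; thus $(S_2)$ is automatic and only $(R_1)$ must be checked. The generic fibre $B\otimes_R\Frac(R)$ is \'etale over $\Frac(R)$, because each factor $\Frac(R)[z_{1j}]/(r_{1j})$ is separable (its derivative $-(\m a_1a_j)^{p-1}$ is nonzero); in particular $B$ is reduced, which disposes of the height-zero primes. Now let $\primid\subset B[1/(\m a_1)]$ be a prime of height one. Because $R\to B$ is finite and flat, $\primid$ contracts to a height-one prime $\idealq$ of $R':=R[1/(\m a_1)]$, and $R'_\idealq$ is a discrete valuation ring; it therefore suffices to show that $B\otimes_R R'_\idealq$ is regular. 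The decisive point is the pairwise coprimality of $a_2,\ldots,a_n$: at most one of them can lie in $\idealq$, since two would force $\height(\idealq)\geq 2$. For every index $\ell$ with $a_\ell\notin\idealq$ the derivative $-(\m a_1a_\ell)^{p-1}$ is a unit in $R'_\idealq$, so $C_\ell\otimes_R R'_\idealq$ is \'etale over $R'_\idealq$. If no $a_j$ lies in $\idealq$, then $B\otimes_R R'_\idealq$ is a tensor product of \'etale $R'_\idealq$-algebras, hence \'etale over the regular ring $R'_\idealq$ and therefore regular. If a single $a_{j_0}$ lies in $\idealq$, I would write $B\otimes_R R'_\idealq=(C_{j_0}\otimes_R R'_\idealq)\otimes_{R'_\idealq}E$ with $E$ \'etale over $R'_\idealq$; by base change $B\otimes_R R'_\idealq$ is then \'etale over $C_{j_0}\otimes_R R'_\idealq$, so it is enough that this last ring be regular. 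But $C_{j_0}=R[z_{1j_0}]/(r_{1j_0})$ is exactly a ring of the form $R[z]/(f)$ treated in Theorem \ref{maximal subgroups} (take $s=e_{j_0}-e_1$), shown there to be normal and hence regular in codimension one; as $C_{j_0}\otimes_R R'_\idealq$ is its localization at the primes lying over the height-one prime $\idealq$, it is regular. In every case $B[1/(\m a_1)]_\primid$ is regular, so $(R_1)$ holds and $B[1/(\m a_1)]$ is normal.

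I expect the main obstacle to be the ramified factor $C_{j_0}$: away from the vanishing loci of the $a_j$ the whole extension is \'etale and harmless, but at a height-one prime containing some $a_{j_0}$ one needs the single-variable ring $R[z]/(r_{1j_0})$ to be regular in codimension one, whose verification (the $p=2$ versus $p\geq 3$ Jacobian analysis) is the genuinely computational step. I would discharge it by invoking the regularity already established in Theorem \ref{maximal subgroups}, thereby confining all the real work to the reduction to one variable via the tensor decomposition, the \'etale base change, and the coprimality bookkeeping.
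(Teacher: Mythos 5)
Your proof is correct. On the normality assertion it is in substance the paper's argument: both use Serre's criterion with $(S_2)$ coming from the complete-intersection property, both use the pairwise coprimality of the $a_j$ to ensure that a height-one prime avoiding $\m a_1$ meets at most one locus $V(a_j)$, both observe that the remaining tensor factors are \'etale there, and both import the decisive computational fact --- regularity in codimension one of each one-variable ring $R[z_{1j}]/(r_{1j})$ --- from Theorem \ref{maximal subgroups}. The only difference is bookkeeping: you contract to a height-one prime of $R[1/(\m a_1)]$ and base-change to the resulting discrete valuation ring, while the paper works directly at a height-one prime $\mathfrak{q}\subset B$ and notes that $\Spec B\to\Spec B_j$ is \'etale at $\mathfrak{q}$, where $B_j\cong R[z_{1j}]/(r_{1j})$ is the distinguished subring; these are the same argument. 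Where you genuinely diverge is the complete-intersection claim: the paper deduces it from the tensor decomposition by citing Majadas (\cite{Maj}, Theorem 2), whereas you argue directly that the ideal $I=(r_{12},\ldots,r_{1n})$ in the regular ring $P=R[z_{12},\ldots,z_{1n}]$ has height $n-1$, equal to its number of generators, so that the $r_{1j}$ form a regular sequence. Your route is more elementary and self-contained; to make the height count airtight, note that $B$ is local (its fiber $B/\maxid_R B\cong k[z_{12},\ldots,z_{1n}]/(z_{12}^p,\ldots,z_{1n}^p)$ is Artin local and $B$ is finite over the complete local ring $R$), so every minimal prime of $I$ lies in the single maximal ideal $\mathfrak{m}=(\maxid_R,z_{12},\ldots,z_{1n})$ of $P$ containing $I$, and the identity $\height(\mathfrak{p})=\dim P_{\mathfrak{m}}-\dim(P_{\mathfrak{m}}/\mathfrak{p})$, valid in the Cohen--Macaulay local ring $P_{\mathfrak{m}}$, yields $\height(I)=(2n-1)-n=n-1$. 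A small point in your favor: you treat the height-zero primes explicitly (reducedness of $B$ via separability of the generic fibre), which the paper leaves implicit.
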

\begin{proof}
Consider the natural isomorphism
$$
\bigotimes_{j=2}^n R[z_{1j}]/(r_{1j}) \longrightarrow B:= R[z_{12},\ldots,z_{1n}]/(r_{12},\ldots,r_{1n})
$$
%Let us call this ring $B$.
Since the polynomial rings $R[z_{1j}]$ are domains and each $r_{1j}$ is non-zero,
each tensor factor on the left and, hence, also $B$,  is a complete intersection (\cite{Maj}, Theorem 2).
Each tensor factor is free of rank $p$ over $R$, so $B$ is free of rank $p^{n-1}$ over $R$.

Consider now the fibers of the map $\Spec(R[z_{1j}]/(r_{1j}))\ra\Spec(R)$.
The Jacobian Criterion tells us that the fibers over points outside of the closed subset $V(\m a_1a_j)$ are etale.
%STEFAN: I do not see that we need this; It is easy to check that the fibers over points in $V(\m a_1a_j)$ are all singletons. 
Theorem \ref{maximal subgroups}
%Lemma \ref{codimension1} 
shows that $\Spec(R[z_{1j}]/(r_{1j}))$ is regular in codimension $1$.
Let $\mathfrak q$ be a prime ideal of height $1$ in $B$ that does not contain $\m a_1$. Then $\mathfrak q$ can contain at most one $a_j$ with $j \neq 1$.
When $\mathfrak q$ does not contain any $a_j$, $j \neq 1$, the map $\Spec B \to \Spec R$ is etale at $\mathfrak q$, and thus $B_{\mathfrak q}$ is regular.
When $\mathfrak q$ contains $a_j$ for some $j \neq 1$, let $B_j$ denote the natural subring of $B$ isomorphic to $R[z_{1j}]/(r_{1j})$.
The map $\Spec B \to \Spec B_j$ is etale at $\mathfrak q$, and thus $B_{\mathfrak q}$ is regular since $B_j$ is regular in codimension $1$.
We conclude that $B[1/(\m a_1)]$ is regular in codimension $1$. 
Since $B$ is a complete intersection, it is Cohen--Macaulay and, hence, satisfies Condition $S_2$. It follows that  $B[1/(\m a_1)]$ is normal.
\end{proof}

\if false
The group $(\ZZ/p\ZZ)^{n-1}$ acts on $B$ as follows: the element $(\nu_2,\ldots,\nu_n) \in (\ZZ/p\ZZ)^{n-1}$ corresponds to the $R$-algebra automorphism $B \to B$ 
given by $$
z_{1j}\longmapsto z_{1j} + \nu_ja_1a_j.
$$
Let $r:=\prod_{i=1}^n a_i$.
Since $\Spec(R[z_{1j}]/(r_{1j}))\ra\Spec(R)$ is a torsor under ${\mathbb Z}/p{\mathbb Z}$ outside of the closed subset $V(a_1a_j)$, we find that
 $\Spec(B[1/r])$ is a torsor over $\Spec(R[1/r])$ under the action of
the constant group scheme $(\ZZ/p\ZZ)^{n-1}$.

Recall that we also have an action of $(\ZZ/p\ZZ)^{n}$ on $A$, where an element $(\nu_1,\nu_2,\ldots,\nu_n)$ corresponds to the $R$-algebra automorphism 
given by $
u_{j}\longmapsto u_{j} + \nu_j a_j$. The ring $A$ is thus also equipped with an action of $(\ZZ/p\ZZ)^{n-1}$, by viewing $(\ZZ/p\ZZ)^{n-1}$ as the subgroup of   $(\ZZ/p\ZZ)^{n}$ consisting of the elements of the form 
$(0,\nu_2,\ldots,\nu_n)$, and endowing $A$ with the induced action. It is easy to check that the morphism $B \to A$ is equivariant with respect
to these actions.
\fi
\begin{theorem}
\label{complete intersection subring}
The   homomorphism  $B \to A_\ci$ induced by {\rm \eqref{subring ci}} is an isomorphism. 
The ring $A_\ci$ is a complete intersection domain,
with $A_\ci[1/(\m a_1)]=A^G[1/(\m a_1)]$. When $n=2$, $A_\ci = A^G$.
When $n\geq 3$, the ring $A_\ci$ is not regular in codimension one, and $A_\ci \neq A^G$.
\end{theorem}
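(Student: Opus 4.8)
The plan is to first identify $A_\ci$ with the explicit complete intersection $B$ of \ref{Aci}, and then read off all remaining assertions from this identification together with the minor relations \eqref{minor relations}. To get the isomorphism $B\to A_\ci$ from \eqref{subring ci}, note that the subfield of $\Frac(A)$ generated over $\Frac(R)$ by the images of $z_{12},\dots,z_{1n}$ is the fixed field of the diagonal group $G$: an element $(\nu_1,\dots,\nu_n)\in H$ fixes every $z_{1j}=a_1u_j-a_ju_1$ exactly when $\nu_1=\dots=\nu_n$. Hence $\Frac(A_\ci)=\Frac(A^G)$ has degree $p^{n-1}$ over $\Frac(R)$. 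By Lemma \ref{AciNormal}, $B$ is free of rank $p^{n-1}$ over $R$, so tensoring the surjection $B\to A_\ci$ with $\Frac(R)$ gives a surjection of $\Frac(R)$-vector spaces of equal dimension $p^{n-1}$, hence a bijection. Since $B$ is $R$-free, its submodule $\ker(B\to A_\ci)$ is torsion-free and vanishes generically, so it is zero. Thus $B\simeq A_\ci$, and Lemma \ref{AciNormal} shows $A_\ci$ is a complete intersection domain.

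Next I would settle the localization statement and the case $n=2$. Because $A^G$ is a finite $R$-module it is integral over $R$, hence over $A_\ci$; localizing, $A^G[1/(\m a_1)]$ is integral over $A_\ci[1/(\m a_1)]$, and the two rings share the fraction field $\Frac(A^G)$. By Lemma \ref{AciNormal} together with $B\simeq A_\ci$, the ring $A_\ci[1/(\m a_1)]$ is normal, so integrality forces $A^G[1/(\m a_1)]\subseteq A_\ci[1/(\m a_1)]$; the reverse inclusion is clear, giving equality. For $n=2$ there is a single generator $z_{12}$, and $G$ is exactly the subgroup $H_s$ with $s=(-1,1)$ (one checks $z_s=z_{12}$ and $f=r_{12}$), so Theorem \ref{maximal subgroups} identifies $A^G=A^{H_s}$ with $R[z_{12}]/(r_{12})=A_\ci$.

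Finally, for $n\ge 3$ I would exhibit an element of $\Frac(A_\ci)$ that is integral over $A_\ci$ but not in it. Take the minor element $z_{23}$, which lies in $A^G$ and satisfies a monic equation over $R$ by \eqref{minor relations}, hence is integral over $A_\ci$; the relation $a_1z_{23}=a_2z_{13}-a_3z_{12}$ shows $z_{23}\in\Frac(A_\ci)$. Using the monomial $R$-basis $\{\prod_{j}z_{1j}^{e_j}:0\le e_j\le p-1\}$ of $B\simeq A_\ci$, and that $A$ is a domain with $a_1\neq 0$, membership $z_{23}\in A_\ci$ is equivalent to $a_2z_{13}-a_3z_{12}\in a_1A_\ci$, i.e.\ to $a_2,a_3\in a_1R$. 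But $a_1$ and $a_2$ are coprime and $a_1$ is a non-unit, so $a_2\notin a_1R$, a contradiction. Thus $A_\ci$ is not integrally closed. Being a complete intersection, $A_\ci$ satisfies $S_2$, so Serre's criterion shows it fails $R_1$, i.e.\ it is not regular in codimension one. As $A^G$ is normal (the invariant subring of the normal domain $A$) whereas $A_\ci$ is not, we conclude $A_\ci\neq A^G$.

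The main obstacle is this last step, namely certifying that the candidate $z_{23}$ genuinely escapes $A_\ci$. This is where the explicit free $R$-basis from Lemma \ref{AciNormal} and the pairwise coprimality of the $a_i$ are indispensable, and it is the only place the hypothesis $n\ge 3$ enters, since it is precisely what guarantees the existence of a minor $z_{jk}$ with $1\notin\{j,k\}$ whose expression involves a coefficient $a_2$ not divisible by $a_1$.
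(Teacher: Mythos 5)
Your proof is correct, and its first three assertions (the isomorphism $B\simeq A_\ci$, the equality after inverting $\m a_1$, and the case $n=2$ via Theorem \ref{maximal subgroups} with $s=(-1,1)$) follow essentially the paper's own route: the same Galois-theoretic dimension count $[\Frac(R)(z_{12},\dots,z_{1n}):\Frac(R)]=p^{n-1}$ coming from the stabilizer computation in $H$, freeness of $B$ over $R$ from Lemma \ref{AciNormal}, and normality of $B[1/(\m a_1)]$ combined with integrality of $A^G$ over $A_\ci$. The genuine divergence is in the case $n\geq 3$. The paper argues indirectly: by Proposition \ref{properties invariant ring} the ring $A^G$ is not Cohen--Macaulay, while $B$ is, being a complete intersection; hence if $B$ were regular in codimension one it would be normal (Serre's criterion with $S_2$), and the finite inclusion $B\subset A^G$ with equal fraction fields would be an equality --- a contradiction. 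You instead produce an explicit witness to non-normality: $z_{23}$ lies in $\Frac(A_\ci)$ by the syzygy $a_1z_{23}=a_2z_{13}-a_3z_{12}$, is integral over $A_\ci$ by \eqref{minor relations}, yet is not in $A_\ci$, since reading $a_2z_{13}-a_3z_{12}\in a_1A_\ci$ in the monomial $R$-basis of $B$ would force $a_2,a_3\in a_1R$, contradicting coprimality of the $a_i$; Serre's criterion then yields failure of $R_1$, and normality of $A^G$ yields $A_\ci\neq A^G$. Your route buys two things. It is constructive, pinpointing exactly where integral closure fails. More importantly, it operates under precisely the standing hypotheses of \ref{normalformrecalled} (pairwise coprime non-unit $a_i$, with $\m$ coprime to them), whereas Proposition \ref{properties invariant ring} assumes the action is ramified precisely at the origin --- an assumption that fails in Section \ref{complete intersection subrings} whenever $\m$ is not a unit, as the paper itself notes in \ref{normalformrecalled} --- so your argument covers the general setting of the theorem more directly than the paper's citation does. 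What the paper's approach buys is brevity and a conceptual explanation of the failure (the obstruction is $\depth(A^G)=2<n$), at the cost of leaning on that extra ramification hypothesis.
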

\proof
Let us show first that the morphism $B\ra A$ 
is injective. For this, let $B_{ij}:= R[z_{ij}]/(r_{ij}) $. 
We will show that both natural maps
$$
B_{12} \otimes_R \dots \otimes_R  B_{1n} \longrightarrow \Frac(B_{12}) \otimes_{\Frac(R)} \dots \otimes_{\Frac(R)}  \Frac(B_{1n})
$$
and
$$\Frac(B_{12}) \otimes_{\Frac(R)} \dots \otimes_{\Frac(R)}  \Frac(B_{1n}) \lra \Frac(A)$$
are injective.
The first map is injective because each $B_{1j}$ is free (and thus flat) over $R$. 
To show that the second map is injective, we consider its source and target as finitely generated vector spaces over $\Frac(R)$. 
We find that the dimension of the source is $p^{n-1}$, and that the image is the smallest subfield of $\Frac(A)$ generated by $\Frac(R)$ and $z_{12}, \dots, z_{1n}$. It is easy to check that this subfield has dimension $p^{n-1}$, since $H_{12} \cap \dots \cap H_{1n}=\left< \sigma \right>$.
\if false
Let us show first that the morphism $B\ra A$ 
is injective. Since $R$ is a domain and $B$ is a free $R$-module, multiplication  in $B$ by $r \in R \setminus \{0\}$ is injective.
Thus $B \to A$ is injective if $B[1/r] \to A[1/r]$ is injective. We apply this remark to the case where $r:=\m \prod_{i=1}^n a_i$, 
and prove now that $B[1/r] \to A[1/r]$ is injective.
%Thus, it suffices to check injectivity after inverting $a=a_1\ldots a_n$.

 *******
 
Recall from XX? that we have a tensor decomposition 
$$
A =    \bigotimes_{j=1}^n A^{G_j^\perp} = \bigotimes_{j=1}^n R[u_j]/(u_j^p-(\m a_i)^{p-1}u_j - x_j);
$$
our tensor factor $R[z_{1j}]/(r_{1j})$ maps into the subring $A^{G_1^\perp}\otimes A^{G_j^\perp}$.
As explained above, the map $B\ra A$ is equivariant.
Using that $B$ becomes a torsor after inverting $r:=a_1\ldots a_n$,
we infer that $B[1/r]\ra A[1/r]$ is injective, so that $B \to A$ is injective. 

*******

%Since $A$ is a domain, we find that $B[1/r]$ is a domain. As above, since $R$ is a domain and  $B$ is free as an $R$-module, we conclude
%that $B$ is a domain.
\fi

Since $A$ is a domain and $B \to A$ is injective, we find that $B$ is a domain. 
%Since  $B$ is free of rank $p^{n-1}$ over $R$ (\ref{AciNormal}), we find that $[\Frac(B):\Frac(R)]= [\Frac(A^G):\Frac(R)]$. 
%\subset A^G$ have degree $p^{n-1}$ as $R$-modules, 
%It follows that the 
The injection $B\subset A^G$ induces a bijection on field of fractions.
As $A^G$ and $B[1/\m a_1]$ are both normal (\ref{AciNormal}), we find that the inclusion $B[1/\m a_1] \to A^G[1/\m a_1]$ is an isomorphism.
%$\Spec(A^G)\ra \Spec(B)$ must become an isomorphism after inverting $a_1$.

%When $n=2$, we find that $A_{ci}$ is isomorphic to a ring as in \ref{codimension1}, and so is regular in codimension $1$. Since it is also Cohen--Macaulay %because it is free over $R$, it is $S_2$ and, thus, normal. Hence, $A_{ci}=A^G$ in this case.
The case $n=2$ is treated in Theorem \ref{maximal subgroups}.
To prove the last statement in \ref{complete intersection subring}, suppose that $n\geq 3$. Then Proposition \ref{properties invariant ring}
shows that $A^G$ is not Cohen--Macaulay. Being a complete intersection, the ring $B$ is Cohen--Macaulay.
If $B$ were regular in codimension one, then it would be normal since it is $S_2$,  and the inclusion $B\subset A^G$
would be an equality, a contradiction.
\qed

\begin{corollary}
\label{minor subring}
The morphism of schemes $\Spec(A^G)\ra \Spec(A_\mnr)$ is an isomorphism outside the closed points.
\end{corollary}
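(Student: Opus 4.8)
The plan is to derive this directly from Theorem \ref{complete intersection subring}, by a squeezing argument applied to each index together with the symmetry of the whole set-up. We work in the moderately ramified situation, so that $\m$ is a unit and $a_1,\dots,a_n$ form a system of parameters of $R$. Recall first the inclusions $A_\ci\subseteq A_\mnr\subseteq A^G$ of module-finite $R$-algebras, each of which is a domain as a subring of $A$. Since $R$ is complete local and $A_\mnr$ is a finite $R$-algebra that is a domain, $A_\mnr$ is itself a complete local domain; thus both $\Spec(A^G)$ and $\Spec(A_\mnr)$ have a unique closed point, and the finite morphism $f\colon\Spec(A^G)\to\Spec(A_\mnr)$ sends closed point to closed point, with the fibre over the closed point of $A_\mnr$ equal to the closed point of $A^G$.

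The first step is to prove that $A_\mnr[1/(\m a_i)]=A^G[1/(\m a_i)]$ for every $i=1,\dots,n$. For each $i$, let $A_\ci^{(i)}$ be the $R$-subalgebra of $A^G$ generated by the minor elements $z_{ij}$ with $j\neq i$; this is contained in $A_\mnr$. The construction of $A$, of the $G$-action, of $A^G$ and of the minor elements is symmetric under simultaneously permuting the indices of $u_i,x_i,a_i$, and the hypotheses of \ref{normalformrecalled} are symmetric in the indices, so the statement of Theorem \ref{complete intersection subring} applied after relabelling gives $A_\ci^{(i)}[1/(\m a_i)]=A^G[1/(\m a_i)]$. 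Localizing the chain $A_\ci^{(i)}\subseteq A_\mnr\subseteq A^G$ at $\m a_i$ and squeezing between the two equal outer terms then yields $A_\mnr[1/(\m a_i)]=A^G[1/(\m a_i)]$.

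It follows that $f$ restricts to an isomorphism over the open subscheme $\bigcup_{i=1}^n D(\m a_i)=\Spec(A_\mnr)\setminus V(\m a_1,\dots,\m a_n)$. The final step is to identify this complement with the closed point. Since $\m$ is a unit we have $V(\m a_1,\dots,\m a_n)=V(a_1,\dots,a_n)$, and since $a_1,\dots,a_n$ is a system of parameters the ideal $(a_1,\dots,a_n)R$ is $\maxid_R$-primary; as $A_\mnr$ is a finite $R$-algebra that is local, $(a_1,\dots,a_n)A_\mnr$ is $\maxid_{A_\mnr}$-primary, so $V(a_1,\dots,a_n)$ is exactly the closed point of $\Spec(A_\mnr)$. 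Hence $f$ is an isomorphism over $\Spec(A_\mnr)\setminus\{\maxid_{A_\mnr}\}$, and combined with the first paragraph this says precisely that $\Spec(A^G)\to\Spec(A_\mnr)$ is an isomorphism outside the closed points.

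The step I expect to require the most care is the identification of $\bigcup_i D(\m a_i)$ with the punctured spectrum in the last paragraph: this is exactly where the hypothesis that $a_1,\dots,a_n$ is a system of parameters (equivalently, that the action is ramified precisely at the origin, so $\m$ is a unit) is indispensable. Without it the locus $V(\m a_1,\dots,\m a_n)$ could be positive-dimensional---for instance it contains the divisor $V(\m)$ when $\m$ is a non-unit---and $f$ would then fail to be an isomorphism off the closed point.
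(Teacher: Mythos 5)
Your proof is correct and takes essentially the same route as the paper: both arguments squeeze $A_\mnr$ between the relabelled complete intersection subrings (Theorem \ref{complete intersection subring} applied after permuting indices) and $A^G$ after inverting $\m a_i$, and then use that $a_1,\dots,a_n$ is a system of parameters to see that the open sets $D(\m a_i)$ cover the punctured spectrum. The only differences are presentational: the paper records the squeeze as ``$A_\mnr$ becomes normal after inverting $a_i$'' and concludes from normality of $(A_\mnr)_\primid$ at non-maximal primes, whereas you use the equalities $A_\mnr[1/(\m a_i)]=A^G[1/(\m a_i)]$ directly, and you make explicit the restriction to $\m$ a unit (the moderately ramified case) that the paper's proof leaves implicit.
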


\proof
By definition, we have $A_\ci\subset A_\mnr\subset A$. Using Theorem \ref{complete intersection subring},
we infer that $A_\mnr$ becomes normal after inverting $a_1$.
But it also contains the other complete intersection subrings,
defined with $z_{i1},\ldots,\widehat{z}_{ii},\ldots,z_{in}$ for  
$1\leq i\leq n$. Whence $A_\mnr$ becomes normal after inverting any of the $a_i$.
Since $a_1,\ldots,a_n\in R$ is a system of parameters, we conclude that the localizations
$(A_\mnr)_\primid$ are normal for any non-maximal prime ideal $\primid$.
It follows that $\Spec(A^G)\ra \Spec(A_\mnr)$ is an isomorphism outside the closed points.
\qed

% ===========================================================================================
\section{The invariant ring in higher dimension}
\label{linear situation}

Let $k$ be a ground field of characteristic $p>0$, and consider the polynomial ring in $2n$ variables
$$
B:=k[u_1,\ldots,u_n,a_1,\ldots,a_n],
$$
endowed with the action of the cyclic group $G$ of order $p$ given by identifying a generator of $G$ 
with the $k$-linear automorphism $\sigma$ of order $p$ defined by
$$
u_i\longmapsto u_i + a_i \quadand a_i\longmapsto a_i, \quad1\leq i\leq n.
$$
The subring $B^G$ of %so-called \emph{vector invariants} in 
$ B$ is an
object extensively studied in modular representation theory. In this section, we review some known results on the structure of $B^G$ in \ref{generators vector invariants}, and  use
them to obtain information on the invariant subring $A^G$ for certain moderately ramified group actions on complete local rings $A$ obtained as quotients of the completion $\widehat{B}$ of $B$ at $(u_1, \dots, u_n, a_1,\dots, a_n)$.
We also provide in 
\ref{generalizedreflectionCM} an example where the integral closure of a 
Cohen--Macaulay local ring $R$ in a Galois extension $L/\Frac(R) $ generated by a generalized reflection of prime order $p$
is not  Cohen--Macaulay (see \ref{IntegralClosureAbelianExt} for a related example). 

Generators for  the invariant ring $B^G$ have been determined.
As in the previous section, the norm elements and  the minor elements 
\begin{gather*}
x_i:=N_{B/B^G}(u_i)=u_i^p-a_i^{p-1}u_i, \quad 1 \leq i \leq n,\\
z_{ij}:=a_iu_j-a_ju_i, \quad 1 \leq i<j\leq n
\end{gather*}
are clearly $G$-invariant. Additional natural $G$-invariant elements are  the traces
$$
t_\epsilon: =\Trace_{B/B^G}(u_1^{\epsilon_1}\cdot \ldots \cdot u_n^{\epsilon_n}) =
\sum_{\nu=0}^{p-1} (\prod_{i=1}^n(u_i + \nu a_i)^{\epsilon_i}),
$$
where $\epsilon=(\epsilon_1,\ldots,\epsilon_n) \in \NN^n$. 
For the purpose of generating $B^G$, we recall below that it suffices to consider only the $n$-tuples
 $(\epsilon_1,\ldots,\epsilon_n)$
subject to the conditions
$$
0\leq \epsilon_i\leq p-1 \quadand \sum_{i=1}^n\epsilon_i > 2p-2.
$$
Let us call such a tuple \emph{relevant}.
Note that there are no relevant tuples  when $n=2$. 
When $n=3$ and $p=2$, there is only one relevant tuple, namely $\epsilon=(1,1,1)$.
The element $t_\epsilon\in B^G$ attached to a relevant tuple $\epsilon$
is called \emph{trace element}. 

Write $\widehat{B}$ for the formal completion of the polynomial ring $B$ with respect to the maximal 
ideal $\maxid=(u_1,\dots, u_n,a_1,\dots, a_n)$. Then  
$\widehat{B}^G$ coincides with the formal completion of $B^G$ with respect
to $\maxid\cap B^G$.
Recall that the \emph{embedding dimension} of a local noetherian ring $C$ is the
vector space dimension of the \emph{cotangent space} $\maxid_C/\maxid_C^2$ over
the residue field $\kappa=C/\maxid_C$.

\begin{proposition}
\label{generators vector invariants}
The ring $B^G$ is generated as a $k$-algebra by the indeterminates $a_i$, 
the norm elements $x_i$, the minor elements $z_{ij}$, and the  
trace elements $t_\epsilon$. These elements yield a basis for
the cotangent space $\maxid_{\widehat{B}^G}/\maxid^2_{\widehat{B}^G}$, which has dimension
\begin{equation}
\label{formula embedding dimension}
\edim(\widehat{B}^G) = 2n + \binom{n}{2} + p^n - \binom{2p+n-2}{n} + n\binom{p+n-2}{n}.
\end{equation}
Furthermore, the ring $\widehat{B}^G$ has depth $n+2$.
\end{proposition}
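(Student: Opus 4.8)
The plan is to work throughout with the graded ring $B^{G}$ itself, since every listed invariant is homogeneous and $\widehat{B}^{G}$ is its completion at the irrelevant maximal ideal; in particular $\maxid_{\widehat{B}^{G}}/\maxid_{\widehat{B}^{G}}^{2}$ agrees degree by degree with $\maxid_{B^{G}}/\maxid_{B^{G}}^{2}$, and $\depth$ is unchanged by this completion. That the $a_{i}$, the norms $x_{i}$, the minors $z_{ij}$ and the relevant traces $t_{\epsilon}$ generate $B^{G}$ as a $k$-algebra is exactly the solution of Richman's conjecture for the vector invariants of $n$ copies of the two-dimensional indecomposable $\ZZ/p\ZZ$-module $V_{2}$ (Campbell--Hughes), and the same circle of ideas identifies this set as a \emph{minimal} homogeneous generating set. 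Granting generation, these elements already span $\maxid_{B^{G}}/\maxid_{B^{G}}^{2}$, so the second assertion reduces to linear independence there: no listed generator may lie in $\maxid_{B^{G}}^{2}$, and generators of equal degree must stay independent modulo $\maxid_{B^{G}}^{2}$.

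I would check independence degree by degree, using the auxiliary grading of $B=k[u_{i},a_{i}]$ by $u$-degree (weight $1$ on each $u_{i}$, weight $0$ on each $a_{i}$) and comparing leading components. In degree $1$ the invariants are exactly the forms $\sum_{i}c_{i}a_{i}$, giving the $n$ generators $a_{i}$. In degree $2$ one has $(\maxid_{B^{G}}^{2})_{2}=\Sym^{2}\langle a_{1},\dots,a_{n}\rangle$, while each $z_{ij}=a_{i}u_{j}-a_{j}u_{i}$ has $u$-degree $1$, so the $\binom{n}{2}$ minors are independent modulo squares. In degree $p$, generation forces every decomposable to be a monomial in the $a_{i}$ and $z_{ij}$ (the only generators of degree $<p$), hence of $u$-degree at most $\lfloor p/2\rfloor<p$, whereas the leading term of $x_{i}$ is $u_{i}^{p}$; thus the $x_{i}$ are independent modulo squares as well. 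These account for the summand $2n+\binom{n}{2}$. The delicate point, which I expect to be the main obstacle, is the indecomposability of the trace elements $t_{\epsilon}$ for relevant $\epsilon$: the naive $u$-degree leading term does not separate $t_{\epsilon}$ from decomposables such as $z_{12}\cdots z_{1p}$ multiplied by norms, so one must run a finer leading-monomial (SAGBI-type) analysis of the transfer $\Trace_{B/B^{G}}(u^{\epsilon})$, exploiting that an invariant reduced modulo $(a_{1},\dots,a_{n})$ lands in $k[u_{1}^{p},\dots,u_{n}^{p}]$. This is precisely the technical content of the cited minimal-generating-set theorem, which I would invoke rather than reprove.

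For the embedding dimension it then remains to count the relevant tuples $N=\#\{\epsilon\in\{0,\dots,p-1\}^{n}:\sum_{i}\epsilon_{i}>2p-2\}$. I would pass to the complement by inclusion--exclusion: the number of $\epsilon\in\ZZ_{\ge 0}^{n}$ with $\sum_{i}\epsilon_{i}\le 2p-2$ is $\binom{2p+n-2}{n}$, and exactly $n\binom{p+n-2}{n}$ of these violate some bound $\epsilon_{i}\le p-1$, two simultaneous violations being impossible since they would force $\sum_{i}\epsilon_{i}\ge 2p$. Hence $N=p^{n}-\binom{2p+n-2}{n}+n\binom{p+n-2}{n}$, and adding the $n$ forms $a_{i}$, the $n$ norms $x_{i}$ and the $\binom{n}{2}$ minors yields exactly \eqref{formula embedding dimension}.

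Finally, the depth is read off from the Ellingsrud--Skjelbred computation of the depth of $\ZZ/p\ZZ$-invariants. Here $B=\Sym(V^{\ast})$ for $V$ the $2n$-dimensional representation $n\,V_{2}$, whose fixed space has dimension $\dim V^{G}=n$; their formula gives $\depth(B^{G})=\min(\dim V,\dim V^{G}+2)=\min(2n,n+2)$, which equals $n+2$ for $n\ge 2$ (this is consistent with the value $2$ recovered in \ref{properties invariant ring}, where $\dim V^{G}=0$). Since depth is preserved under completion at the irrelevant ideal, $\depth(\widehat{B}^{G})=n+2$, as claimed.
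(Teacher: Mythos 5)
Your proposal is correct and follows essentially the same route as the paper: generation via Campbell--Hughes' solution of Richman's conjecture, minimality via the Shank--Wehlau minimal-generating-set theorem (which you too ultimately invoke for the indecomposability of the traces), the identical inclusion--exclusion count of relevant tuples, and the Ellingsrud--Skjelbred theorem for the depth. The one detail the paper handles that you omit is that Ellingsrud--Skjelbred assume $k$ algebraically closed, so besides invariance of depth under completion one also needs its invariance under ground field extension.
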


\proof
First note that the grading on the polynomial ring $B$ induces a grading
on the ring of invariants $B^G\subset B$. According to \cite{Kemper 2002},
Proposition 2.1, together with Lemma 2.2, these gradings ensure that the minimal number
of generators for the $k$-algebra $B^G$ coincides with $\edim(\widehat{B}^G)$.

Richman   conjectured that the indeterminates together with the norms, minors and traces  
generate the ring of invariants  
(\cite{Richman 1990}, page 32). This was established in full generality by Campbell and Hughes \cite{C-H}.
Later, Shank and Wehlau showed that the elements form a minimal set of generators,
if one discards the non-relevant traces (\cite{Shank; Wehlau 2002}, Corollary 4.4).
In turn, these yield a basis for the cotangent space $\maxid_{\widehat{B}^G}/\maxid^2_{\widehat{B}^G}$.

The indeterminates $a_i$, together with the norm elements $x_i$ and minor elements $z_{ij}$ contribute
$2n+\binom{n}{2}$ members of the basis. It remains to count the number
of monomials $u_1^{\epsilon_1}\ldots u^{\epsilon_n}_n$ corresponding to relevant tuples $\epsilon\in\NN^n$.
There  are $p^n$ monomials that have degree $\leq p-1$ in each variable, and 
there are 
$\sum_{d=0}^{2p-2}\binom{d+n-1}{n-1}=\binom{2p+n-2}{n}$
monomials with total degree $\leq 2p-2$. Among the latter one sees that there are
$\sum_{d=0}^{p-2}\binom{d+n-1}{n-1}=\binom{p+n-2}{n}$
excess monomials that have degree $\geq p$ in a fixed variable $u_i$. 
Our  formula for $\edim(\widehat{B}^G)$ follows.

The statement about the depth of the local ring of $B^G$ at the origin
is proved  by Ellingsrud and Skjelbred (\cite{Ellingsrud; Skjelbred 1980}, Theorem 3.1),
under the assumption that the ground field $k$ is algebraically closed.
In the notation of their result, we interpret $F_{-1}$ to be the empty set, and   note that $F_0$ is the closed set
$V(a_1,\dots, a_n)$ in $\Spec B$, which as dimension $n$.
Since depths are invariant under ground field extensions and formal completions, the formula holds in general. 
\qed

\medskip
Recall that $x_i$ denote the norm from $B$ to $B^G$ of the element $u_i$.
We now write  $\widehat{B}=k[[u_1,\dots, u_n,a_1,\dots, a_n]]$  and   identify the norm of $u_i$ from $\widehat{B}$ to $\widehat{B}^G$ with $x_i$.
Thus,  $\widehat{B} $ contains  $R:=k[[x_1,\dots, x_n]]$ as subring.
Choose  $\alpha_1,\dots, \alpha_n \in \maxid_R$, and consider the elements 
$$
b_i:=a_i -\alpha_i \in \widehat{B}.
$$ 
Since $(u_1,\dots,u_n,a_1,\dots,a_n)=  (u_1,\dots,u_n,b_1,\dots,b_n)$ we conclude that
the $b_1,\dots,b_n$ are part of a regular system of parameters of $\widehat{B} $
(use \cite{Matsumura 1989}, 17.4). 
Let $\mathfrak b$ denote the ideal of $\widehat{B}$ generated by $b_1,\dots, b_n$. It follows from the equality of ideals just mentioned that 
the ring $A:=\widehat{B}/\mathfrak b$ is regular of dimension $n$, with maximal ideal generated by the classes of $u_1,\dots, u_n$. Clearly, $b_i \in \widehat{B}^G$.
The ring $A=\widehat{B}/\mathfrak b$ has thus an induced action of $G$.

\begin{lemma} \label{lem. modram}
The ring $A$ is isomorphic to the ring $k[[x_1,\dots,x_n]][u_1,\dots,u_n]/I$, where the ideal $I$ is generated
by $u_i^p-\alpha_i^{p-1} u_i -x_i$ for $1\leq i\leq n$.
When $\alpha_1,\dots,\alpha_n$ is a system of parameters in $R$, 
then the $G$-action on $A$ is moderately ramified.
\end{lemma}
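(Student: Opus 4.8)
The plan is to recognize $A=\widehat B/\mathfrak b$ as an instance of the ring studied in Theorem~\ref{structure moderately ramified2} by degenerating the universal norm relations, and then to import the conclusions of that theorem. Concretely, I would set $C:=k[[x_1,\dots,x_n]][u_1,\dots,u_n]/I$ and construct a comparison map $\bar\psi\colon C\to A$. The key observation is the reduction modulo $\mathfrak b$ of the defining relation of $\widehat B$: in $\widehat B$ the norm element is \emph{by definition} $x_i=u_i^p-a_i^{p-1}u_i$, while in $A$ one has $\bar a_i=\bar\alpha_i$ since $b_i=a_i-\alpha_i\in\mathfrak b$. Hence the image of $x_i\in R$ in $A$ equals $\bar u_i^{\,p}-\bar\alpha_i^{\,p-1}\bar u_i$. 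This shows that the $R$-algebra homomorphism $R[u_1,\dots,u_n]\to A$ sending $u_i\mapsto\bar u_i$ and restricting to the composite $R\subset\widehat B\twoheadrightarrow A$ on $R$ annihilates every generator $u_i^p-\alpha_i^{p-1}u_i-x_i$ of $I$, and therefore descends to $\bar\psi\colon C\to A$. It is $G$-equivariant: a generator of $G$ acts by $u_i\mapsto u_i+\alpha_i$ on $C$ and by $u_i\mapsto u_i+a_i=u_i+\alpha_i$ on $A$, and fixes $R$ in both, so equivariance holds on the generators $u_i$ and on $R$.

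Next I would verify that $\bar\psi$ is an isomorphism. The ring $A$ is regular local of dimension $n$ with $\maxid_A=(\bar u_1,\dots,\bar u_n)$, as established just before the lemma. The same argument as in Theorem~\ref{structure moderately ramified2}(a) — the fibre $C/\maxid_R C=k[u_1,\dots,u_n]/(u_1^p,\dots,u_n^p)$ is local, $C$ is finite free of rank $p^n$ over the complete ring $R$, and the classes of the $x_i$ die in $\maxid_C/\maxid_C^2$ — shows that $C$ is regular local of dimension $n$ with $\maxid_C=(u_1,\dots,u_n)$; this portion of that argument does not require $\alpha_i\neq 0$. Since $\bar\psi(u_i)=\bar u_i$, the map $\bar\psi$ carries $\maxid_C$ onto $\maxid_A$, hence induces a surjection on cotangent spaces, and so $\bar\psi$ is surjective because both rings are complete local. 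As $C$ and $A$ are regular of the same dimension $n$, they are domains, and a surjection of Noetherian local domains of equal dimension has trivial kernel (a nonzero prime kernel would strictly drop the dimension of the quotient). Thus $\bar\psi$ is an isomorphism, proving the first assertion.

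For the last sentence I would transport the conclusion of Theorem~\ref{structure moderately ramified2}(b) across the $G$-equivariant isomorphism $\bar\psi$. When $\alpha_1,\dots,\alpha_n$ form a system of parameters of $R$, that theorem — applied with its ``$a_i$'' taken to be $\alpha_i$ and its $\sigma$ equal to $\sigma_1\cdots\sigma_n$ — asserts that the $\langle\sigma\rangle$-action on $C$ is moderately ramified. Since moderate ramification is an intrinsic property of a $G$-action on a $k$-algebra (Definition~\ref{definition moderately ramified}), and $\bar\psi$ identifies $C$ with $A$ compatibly with the $G$-actions, the action on $A$ is moderately ramified as well.

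I expect the only genuine care to be required in the second paragraph. Matching the two presentations hinges on remembering that $x_i$ is not an independent indeterminate but the specific element $u_i^p-a_i^{p-1}u_i$ of $\widehat B$, so that its reduction modulo $\mathfrak b$ is forced rather than chosen; and the final step must promote the cotangent-level surjection to an honest isomorphism rather than a mere surjection of complete local rings. Neither point is deep, but the bookkeeping of which of $x_i,\alpha_i$ is being viewed inside $R$ versus inside $A$ is where an error would most easily arise.
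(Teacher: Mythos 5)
Your proof is correct, and it performs the same reduction as the paper: the paper's entire proof of this lemma reads ``Follows from the Implicit Function Theorem, the definition of a moderately ramified action and Theorem \ref{structure moderately ramified2}.'' The one place where you genuinely diverge is the identification of $A=\widehat{B}/\mathfrak{b}$ with the displayed quotient $C=k[[x_1,\dots,x_n]][u_1,\dots,u_n]/I$: where the paper appeals to the Implicit Function Theorem (in the spirit of Lemma \ref{new regular system}, solving for the $x_i$ as power series in the $u_i$ so as to recognize both sides as $k[[u_1,\dots,u_n]]$), you instead build the explicit comparison map $\bar\psi\colon C\to A$ from the relation $\bar x_i=\bar u_i^{\,p}-\bar\alpha_i^{\,p-1}\bar u_i$ forced by $b_i\in\mathfrak{b}$, and prove bijectivity by combining regularity of both sides (reusing the argument of Theorem \ref{structure moderately ramified2}(a), correctly noting it needs no nonvanishing hypothesis on the $\alpha_i$), surjectivity via cotangent spaces and completeness (legitimate, since $\maxid_A=(\bar u_1,\dots,\bar u_n)$ is established just before the lemma), and injectivity via the equal-dimension argument for a surjection of noetherian local domains. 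This substitute is slightly more elementary and self-contained than the IFT route, and it has the added virtue of making the $G$-equivariance of the identification explicit, which is exactly what licenses transporting the intrinsic property of Definition \ref{definition moderately ramified} from $C$ (where Theorem \ref{structure moderately ramified2}(b) gives it) to $A$ — a point the paper's citation-style proof leaves tacit.
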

\proof
Follows from the Implicit Function Theorem, the definition of a moderately ramified action and Theorem \ref{structure moderately ramified2}.
\qed

\smallskip
The elements $b_1,\ldots,b_n$ are $G$-invariant, so they define ideals in both  $\widehat{B}^G$ and $\widehat{B}$.
We have a natural homomorphism
$$
\varphi: \widehat{B}^G/(b_1,\dots, b_n)\widehat{B}^G \lra (\widehat{B}/ (b_1,\dots, b_n)\widehat{B})^G=A^G.
$$
Under suitable assumptions, this map is bijective, as we now show.

\begin{proposition}
\label{base-change map bijective}
Keep the above notation and assumptions.
Assume that $b_1,\ldots,b_n$ is a regular sequence in $\widehat{B}^G$, 
and that at least one $\alpha_i\in\maxid_R$ is non-zero. Then the following holds.
\begin{enumerate}[\rm (i)]
\item The  map $\varphi: \widehat{B}^G/(b_1,\ldots,b_n)\widehat{B}^G \ra A^G$ is an isomorphism.
\item We have  $
\edim(A^G)= n +\binom{n}{2} + p^n   - \binom{n+2p-2}{n}  + n\binom{n+p-2}{n}$ .
\end{enumerate}
\end{proposition}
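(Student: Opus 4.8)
I would prove part (ii) first, since it is a short consequence of (i). Granting the isomorphism $A^G\cong\widehat{B}^G/(b_1,\ldots,b_n)\widehat{B}^G$ from (i), the cotangent space of $A^G$ is
$$
\maxid_{A^G}/\maxid_{A^G}^2=\maxid_{\widehat{B}^G}\big/\big(\maxid_{\widehat{B}^G}^2+(b_1,\ldots,b_n)\big).
$$
Because $b_i=a_i-\alpha_i$ with $\alpha_i\in\maxid_R=(x_1,\ldots,x_n)$, the class $\overline{b}_i$ is $\overline{a}_i$ minus a $k$-linear combination of the $\overline{x}_j$. Proposition \ref{generators vector invariants} exhibits $a_1,\ldots,a_n,x_1,\ldots,x_n$ among a $k$-basis of $\maxid_{\widehat{B}^G}/\maxid_{\widehat{B}^G}^2$, so the $\overline{b}_i$ are linearly independent; hence killing them lowers the embedding dimension by exactly $n$. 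Substituting the value of $\edim(\widehat{B}^G)$ from \eqref{formula embedding dimension} then yields the asserted formula for $\edim(A^G)$.

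For part (i) the plan is to set up a base-change comparison of group cohomology. Write $R':=k[[x_1,\ldots,x_n,a_1,\ldots,a_n]]$; then $\widehat{B}$ is free over $R'$ on the monomials $u_1^{e_1}\cdots u_n^{e_n}$ with $0\le e_i\le p-1$, the $G$-action is $R'$-linear, and $\mathfrak b=(b_1,\ldots,b_n)\widehat{B}$ with $R'/(b_1,\ldots,b_n)=R$ and $\widehat{B}\otimes_{R'}R=A$. Consequently the $2$-periodic complex
$$
\widehat{B}\xrightarrow{\ \sigma-1\ }\widehat{B}\xrightarrow{\ N\ }\widehat{B}\xrightarrow{\ \sigma-1\ }\cdots
$$
computing $H^\ast(G,\widehat{B})$ is a complex of free $R'$-modules whose reduction modulo $\mathfrak b$ is the analogous complex computing $H^\ast(G,A)$. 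This gives a base-change (universal-coefficient) spectral sequence
$$
E_2^{-j,i}=\Tor^{R'}_{j}\big(H^i(G,\widehat{B}),R\big)\Longrightarrow H^{i-j}(G,A),
$$
in which the $\Tor$ groups are the Koszul homology of the $R'$-regular sequence $b_1,\ldots,b_n$.

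The edge term $E_2^{0,0}=\widehat{B}^G\otimes_{R'}R=\widehat{B}^G/(b_1,\ldots,b_n)\widehat{B}^G$ maps to $H^0(G,A)=A^G$ precisely by $\varphi$. The hypothesis that $b_1,\ldots,b_n$ is a regular sequence on $\widehat{B}^G=H^0(G,\widehat{B})$ collapses the bottom row, giving $\Tor^{R'}_j(\widehat{B}^G,R)=0$ for $j\ge1$. It then remains to kill the remaining contributions to total degree $0$ together with the differentials landing in $E^{0,0}$, all of which are of the form $\Tor^{R'}_j(H^i(G,\widehat{B}),R)$ with $i\ge1$; equivalently, one must show that $b_1,\ldots,b_n$ is also a regular sequence on the two Tate cohomology modules $\widehat{B}^G/N\widehat{B}$ and $\ker N/\operatorname{im}(\sigma-1)$.

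This last point is the step I expect to be the crux. These modules are supported on the fixed locus $V(a_1,\ldots,a_n)\subset\Spec R'$, so their regularity with respect to $b_1,\ldots,b_n$ is a genuine constraint rather than a formality, and it is here that the hypotheses must be used in full. I would attack it by combining the regular-sequence hypothesis on $\widehat{B}^G$ with the depth computation $\depth(\widehat{B}^G)=n+2$ of Proposition \ref{generators vector invariants}: the deviation of $\widehat{B}^G$ from Cohen--Macaulayness is measured exactly by the higher group cohomology, and this ought to pin down the depth and support of the obstructing modules tightly enough to force the needed regularity. Once that regularity is in place the spectral sequence degenerates along the relevant diagonal, the edge map $\varphi$ becomes an isomorphism, and (i) follows.
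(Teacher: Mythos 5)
Your part (ii) is fine and is essentially the paper's own argument: granting (i), one only needs the classes of $b_1,\ldots,b_n$ to be linearly independent in the cotangent space of $\widehat{B}^G$, which holds because $\alpha_i\in(x_1,\ldots,x_n)$ and the $a_i$, $x_j$ occur among a minimal generating set. Part (i), however, has a genuine gap, and it sits exactly where you flagged it. Your spectral-sequence setup is legitimate, but the statement you defer --- that $b_1,\ldots,b_n$ is a regular sequence on the two Tate cohomology modules $\hat{H}^0(G,\widehat{B})$ and $\hat{H}^{-1}(G,\widehat{B})$ --- is not a formality that depth bookkeeping can supply. These modules are supported on the fixed locus $V(a_1,\ldots,a_n)\subset\Spec R'$, which has dimension $n$; an $n$-term regular sequence on a nonzero finitely generated module of dimension at most $n$ forces that module to be Cohen--Macaulay of dimension exactly $n$ \emph{and} forces its support to meet $V(b_1,\ldots,b_n)$ only at the closed point. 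Since $(a_1,\ldots,a_n,b_1,\ldots,b_n)=(a_1,\ldots,a_n,\alpha_1,\ldots,\alpha_n)$, and since $\hat{H}^0(G,\widehat{B})$ is supported on all of $V(a_1,\ldots,a_n)$ (the action is ramified there), your intermediate claim forces $\alpha_1,\ldots,\alpha_n$ to be a system of parameters of $R$ --- strictly stronger than the proposition's hypothesis that some $\alpha_i\neq 0$. And even when the $\alpha_i$ do form a system of parameters, the Cohen--Macaulayness of these cohomology modules is precisely the hard input: in the paper it appears only for $n=3$ and polynomial $\alpha_i$, via Kemper's criterion, the Shank--Wehlau freeness of $H^1(G,B)$ over $k[x_1,x_2,x_3]$, and a filtration argument, in the proof of Theorem \ref{thm.n=3}. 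The hypothesis $\depth(\widehat{B}^G)=n+2$ together with the regular-sequence assumption does not yield it.

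The paper's proof of (i) avoids cohomology altogether, and the contrast is instructive. It forms the exact sequence $0\to K\to \widehat{B}^G/(b_1,\ldots,b_n)\xrightarrow{\ \varphi\ } A^G\to F\to 0$ and argues geometrically: the cokernel $F$ has finite length because the image of $\varphi$ contains the norm and minor elements and $A_\mnr\subseteq A^G$ is an isomorphism away from the closed points (Corollary \ref{minor subring}); the source has dimension $n$, depth $2$, and irreducible spectrum, so the kernel $K$ lies in the unique minimal prime; then a rank count kills $K$: away from $V(a_1,\ldots,a_n)$ the morphism $\Spec(B^G)\to\AA^{2n}$ is finite flat of degree $p^{n-1}$, and the hypothesis that some $\alpha_i\neq 0$ guarantees $V(b_1,\ldots,b_n)\not\subseteq V(a_1,\ldots,a_n)$, so both $\widehat{B}^G/(b_1,\ldots,b_n)$ and $A^G$ are generically of rank $p^{n-1}$ over $R$, whence $K\otimes_R\Frac(R)=0$ and $K=0$; finally $\varphi$ is an isomorphism away from the closed point, depth $2$ gives normality of the source, and Zariski's Main Theorem makes $\varphi$ bijective. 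Note that the hypothesis ``at least one $\alpha_i\neq 0$'' is doing real work there, whereas your outline never uses it --- a warning sign that the missing step cannot be closed by soft arguments.
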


\proof
%(a) Follows easily from  the Implicit Function Theorem.
(i) 
The  ring  $\widehat{B}^G$ 
has dimension $2n$ and, according to \ref{generators vector invariants}, it has  depth $n+2$. Hence, the quotient ring $\widehat{B}^G/(b_1,\ldots,b_n)$
acquires dimension $n$ and  depth $2$.
Since the ring extension $B^G\subset B$ is finite, the induced map on affine schemes
$\Spec(B)\ra\Spec(B^G)$ is surjective.  This continues to hold
for 
$$
\Spec(\widehat{B}/(b_1,\ldots,b_n))\lra\Spec(\widehat{B}^G/(b_1,\ldots,b_n)).
$$
The scheme on the left is irreducible, whence the same holds for the scheme on the right.
Now consider the exact sequence
$$
0\lra N\lra \widehat{B}^G/(b_1,\ldots,b_n) \xrightarrow{\ \varphi \ } A^G \lra F\lra 0
$$
defining the kernel $N$ and cokernel $F$ for our map in question.
Clearly, the image  of $\varphi$ contains the norm and minor elements, thus the cokernel
$F$ has finite length, by Proposition \ref{minor subring}.
Moreover, its kernel $N$ contains only nilpotent elements, because 
$\widehat{B}^G/(b_1,\ldots,b_n) $ is $n$-dimensional with only one 
minimal prime ideal. We conclude that $N$ equals this minimal prime ideal.
If we could show that $N=0$, then the morphism of schemes
$$
\Spec(A^G)\lra \Spec(\widehat{B}/(b_1,\ldots,b_n)) 
$$
is an isomorphism outside the closed points. Since $\widehat{B}/(b_1,\ldots,b_n)$ 
has depth $2$, this ring must be normal, and the map 
$\varphi$
%$\widehat{B}^G/(b_1,\ldots,b_n)\widehat{B}^G \ra A^G$  
is bijective by Zariski's Main Theorem.

To see that $N=0$, we regard the ring of invariants $B^G$ as a finite algebra over the polynomial ring
$k[x_1,\ldots,x_n,a_1,\ldots,a_n]$ generated by the norms $x_i$ and the variables $a_i$.
The scheme $\Spec(B^G)$ is regular outside the image of the fixed scheme in $\Spec(B)$,
the latter being  defined by the ideal generated by $a_1,\ldots,a_n$. It follows
that 
$$
\Spec(B^G)\ra\AA^{2n}=\Spec(k[x_1,\ldots,x_n,a_1,\ldots,a_n])
$$ 
is finite and flat, of degree $p^{n-1}$,
at least over the complement of the subscheme defined by $a_i$, $1\leq i\leq n$.
The subscheme defined by $b_i=a_i-\alpha_i$, $1\leq i\leq n$ is not contained in this, 
because some $\alpha_i$ is non-zero, and
it follows that $\widehat{B}^G/(b_1,\ldots,b_n)$ has degree $p^{n-1}$
as module over 
$$
R=k[[x_1,\ldots,x_n]]=k[[x_1,\ldots,x_n,a_1,\ldots,a_n]]/(b_1,\ldots,b_n).
$$
The same holds for $A^G$. Tensoring with the fraction field of $R$,
we deduce that $N\otimes_R\Frac(R)=0$ by counting vector space dimensions.
Since $\widehat{B}^G/(b_1,\ldots,b_n) $ has no embedded components, this
already ensures $N=0$.

(ii) In light of (i), we merely have to check that 
the $b_i=a_i-\alpha_i$, $1\leq i\leq n$ are linearly independent
in the cotangent space of $B^G$. This indeed holds, because  the $x_i=u_i^p - a_iu_i$ vanish 
and the $a_i$ are linearly independent in the cotangent space of the ring $B$.
\qed

\medskip
In dimension $n=3$, we obtain the following unconditional result
when the formal power series $\alpha_1, \alpha_2, \alpha_3,$ are polynomials.

\begin{theorem} \label{thm.n=3}
Let $n=3$.  Choose polynomials  $\alpha_i=\alpha_i(x_1,x_2,x_3) \in k[x_1,x_2,x_3]$, $i=1,2,3$,  such that $\alpha_1, \alpha_2, \alpha_3$ form a system of parameters in $R=k[[x_1,x_2,x_3]]$.
Then the induced $G$-action on $A:=\widehat{B}/\idealb$ is moderately ramified
and the natural homomorphism $\widehat{B}^G/(b_1,b_2,b_3)\ra A^G$ is bijective.
In particular, $A^G$ is generated
as a complete local $k$-algebra by the norm elements $x_1,x_2,x_3$, the minor elements $ z_{12},z_{13},z_{23}$
and the $(p^3-p)/6$   trace elements $t_\epsilon\in A^G$. 
Moreover, we have  $\edim(A^G)= 6 + (p^3-p)/6$.
\end{theorem}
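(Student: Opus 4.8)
The statement decomposes into four assertions: that the $G$-action on $A$ is moderately ramified, that $\varphi\colon\widehat{B}^G/(b_1,b_2,b_3)\to A^G$ is bijective, the explicit list of generators, and the formula for $\edim(A^G)$. The first is immediate from Lemma~\ref{lem. modram}, since $\alpha_1,\alpha_2,\alpha_3$ is a system of parameters in $R$. For the remaining three, the plan is to reduce everything to Proposition~\ref{base-change map bijective}, whose only non-formal hypothesis is that $b_1=a_1-\alpha_1$, $b_2=a_2-\alpha_2$, $b_3=a_3-\alpha_3$ form a regular sequence in $\widehat{B}^G$ (the requirement that some $\alpha_i$ be a nonzero element of $\maxid_R$ is automatic, as members of a system of parameters are nonzero nonunits). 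Thus the entire content beyond Proposition~\ref{base-change map bijective} is the verification of this regular-sequence property, which is the step I expect to be the main obstacle.

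To prove that $b_1,b_2,b_3$ is a regular sequence, set $M:=\widehat{B}^G$ and regard it as a finite module over the regular local ring $C:=k[[x_1,x_2,x_3,a_1,a_2,a_3]]$, inside which $b_1,b_2,b_3,x_1,x_2,x_3$ is a regular system of parameters; write $I:=(b_1,b_2,b_3)$ and $\maxid$ for the maximal ideal. Because $I$ is generated by three elements and $IM\neq M$, the sequence $b_1,b_2,b_3$ is $M$-regular as soon as $\depth_I(M)=3$, by the depth-sensitivity of the Koszul complex. I would establish $\depth_I(M)\ge 3$ from the two structural inputs available. First, Proposition~\ref{generators vector invariants} gives $\depth(M)=n+2=5$, so $H^q_{\maxid}(M)=0$ for $q\le 4$. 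Second, the finite map $\Spec(B^G)\to\AA^{6}$ is flat of degree $p^{2}$ away from $V(a_1,a_2,a_3)$; since $\alpha_1,\alpha_2,\alpha_3$ is a system of parameters, $V(I)\cap V(a_1,a_2,a_3)$ is the single closed point, so $V(I)\smallsetminus\{\maxid\}$ lies in the locus where $M$ is free over $C$. There $b_1,b_2,b_3$ is part of a regular system of parameters, hence $M_{\primid}$-regular, so $\bigl(H^q_I(M)\bigr)_{\primid}=0$ for $q<3$ and every non-maximal $\primid\in V(I)$. Consequently $H^q_I(M)$ is supported at $\maxid$ for $q<3$, and the spectral sequence $H^p_{\maxid}\bigl(H^q_I(M)\bigr)\Rightarrow H^{p+q}_{\maxid}(M)$ degenerates in the relevant range to give $H^q_I(M)\cong H^q_{\maxid}(M)=0$ for $q<3$. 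Hence $\depth_I(M)\ge 3$, as needed. To keep the bookkeeping clean I would also record that $B^G$ is a normal domain finite over the excellent ring $C$, so that $M=\widehat{B}^G$ is itself a normal local domain and $IM\neq M$ is clear.

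With the regular-sequence property in hand, Proposition~\ref{base-change map bijective}(i) yields the isomorphism $\varphi\colon\widehat{B}^G/(b_1,b_2,b_3)\xrightarrow{\ \sim\ }A^G$, and part (ii) gives $\edim(A^G)=n+\binom{n}{2}+p^{n}-\binom{n+2p-2}{n}+n\binom{n+p-2}{n}$. For the generators, Proposition~\ref{generators vector invariants} shows $\widehat{B}^G$ is generated by the $a_i$, the norms $x_i$, the minors $z_{ij}$ and the relevant traces $t_\epsilon$; modulo $(b_1,b_2,b_3)$ one has $a_i=\alpha_i(x_1,x_2,x_3)$, so the $a_i$ become redundant and $A^G$ is generated by $x_1,x_2,x_3$, $z_{12},z_{13},z_{23}$ and the $t_\epsilon$. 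Finally I would carry out the elementary binomial simplification for $n=3$: using $\binom{2p+1}{3}=\tfrac{4p^3-p}{3}$ and $\binom{p+1}{3}=\tfrac{p^3-p}{6}$ one finds $p^3-\binom{2p+1}{3}+3\binom{p+1}{3}=\tfrac{p^3-p}{6}$, whence $\edim(A^G)=6+\tfrac{p^3-p}{6}$; the same count shows there are exactly $(p^3-p)/6$ relevant tuples $\epsilon$, i.e.\ $(p^3-p)/6$ trace elements.

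The main obstacle is the regular-sequence step of the second paragraph: $\widehat{B}^G$ is genuinely non-Cohen--Macaulay (its depth $5$ is strictly below its dimension $6$), so one cannot simply invoke that a system of parameters is a regular sequence. The polynomial hypothesis on the $\alpha_i$ is what lets me exploit the global finite-flat structure of $\Spec(B^G)\to\AA^{6}$ to locate the non-free locus, and the system-of-parameters hypothesis is what confines $V(I)\cap V(a_1,a_2,a_3)$ to the closed point; the local-cohomology argument then plays these two facts off against the Ellingsrud--Skjelbred depth bound. Care is needed to justify that flatness over $\AA^{6}\smallsetminus V(a_1,a_2,a_3)$ descends to the completion $C$, and to handle the spectral-sequence convergence honestly in the low-degree range $q<3$.
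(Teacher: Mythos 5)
Your proposal is correct, and on the decisive step it takes a genuinely different route from the paper. Both arguments share the same skeleton: moderate ramification comes from Lemma \ref{lem. modram}, and the bijectivity of $\varphi$, the generator list, and the embedding-dimension formula all follow from Proposition \ref{base-change map bijective} and Proposition \ref{generators vector invariants} once one knows that $b_1,b_2,b_3$ is a regular sequence in $\widehat{B}^G$ (your binomial count for $n=3$ is also the right one). But where you establish the regular-sequence property by commutative algebra --- the depth bound $\depth\widehat{B}^G=n+2$ from Proposition \ref{generators vector invariants}, freeness of $\widehat{B}^G$ over $C=k[[x_1,x_2,x_3,a_1,a_2,a_3]]$ at the non-maximal points of $V(b_1,b_2,b_3)$ (these avoid $V(a_1,a_2,a_3)$ exactly because the $\alpha_i$ form a system of parameters), and then local cohomology plus Koszul depth-sensitivity --- the paper proves it by modular invariant theory: it invokes Kemper's criterion (Theorem 1.4 of \cite{Kemper 1999}), reducing the claim to injectivity of multiplication by $b_3$ on $H^1(G,B)$, and verifies that injectivity via the Shank--Wehlau isomorphism $k[x_1,x_2,x_3]\otimes_k H^1(G,B^\flat)\cong H^1(G,B)$ of \cite{Shank; Wehlau 2002} together with a filtration by total degree, in which the associated graded of multiplication by $b_3=a_3-\alpha_3$ is multiplication by $\alpha_3\otimes\id$. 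Your spectral-sequence step is sound but can be streamlined: it amounts to the standard formula $\mathrm{grade}(I,M)=\inf\{\depth M_\primid : \primid\in V(I)\}$ (Bruns and Herzog, \emph{Cohen--Macaulay rings}, Proposition 1.2.10), applied with $\depth M_{\maxid}=5$ at the closed point and $\depth M_\primid=\dim C_\primid\geq\height I=3$ elsewhere on $V(I)$.

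One discrepancy in your own accounting deserves attention. You claim the polynomial hypothesis is what lets you ``exploit the global finite-flat structure of $\Spec(B^G)\to\AA^{6}$,'' but that structure concerns only $B$ and $B^G$ and is independent of the $\alpha_i$; your argument uses only that the $\alpha_i$ lie in $\maxid_R$ and form a system of parameters, and it uses $n=3$ only through $\depth\widehat{B}^G=n+2\geq n$ and $\height(b_1,\ldots,b_n)=n$, which hold for every $n\geq 2$. So your proof, as written, establishes the regular-sequence hypothesis of Proposition \ref{base-change map bijective} for arbitrary formal power series $\alpha_i$ forming a system of parameters and in every dimension --- strictly more than the theorem asserts, and exactly what the paper's method cannot reach, since its filtration argument needs a top-degree form of the polynomial $\alpha_3$ and its cohomological inputs are tied to the graded polynomial situation. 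I find no gap: the two load-bearing points, namely the identification $\widehat{B}^G\cong B^G\otimes_{k[x,a]}C$ (so that flatness of $\Spec(B^G)\to\AA^{2n}$ away from $V(a_1,\ldots,a_n)$ base-changes to $C$) and the localization of local cohomology, both check out. But since they carry the weight of a claim stronger than the authors make, they are the places to scrutinize hardest before asserting the generalization.
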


\proof
In view of Proposition \ref{base-change map bijective}, we start by proving that   $b_1,b_2,b_3\in B^G$ form a regular sequence.
Using the case $r=1$ of Theorem 1.4 in \cite{Kemper 1999},
we have to check that the map 
$$
B\lra B^{\oplus 3},\quad x\longmapsto (b_1x,b_2x,b_3x)
$$
induces an injective map $H^1(G,B)\ra H^1(G,B^{\oplus 3})$ on group cohomology.
Using the decomposition $H^1(G,B^{\oplus 3})= H^1(G,B)^{\oplus 3}$ and symmetry, is suffices 
to verify that the map $b_3:H^1(G,B)\ra H^1(G,B)$ is injective.
 
Let $B^\flat\subset B$ be the  $k[a_1,a_2,a_3]$-submodule generated by 
all monomials $ u_1^{n_1}u_2^{n_2}u_3^{n_3}$ with exponents $n_1,n_2,n_3\leq p-1$.
Clearly, $B^\flat\subset B$ is $G$-invariant.
According to \cite{Shank; Wehlau 2002}, Proposition 6.2, the canonical map
$$
k[x_1,x_2,x_3]\otimes_k H^1(G,B^\flat)\lra H^1(G,B)
$$
is bijective (in loc.\ cit.\ the symbols $k[W]^\flat$ are defined at the end of Section 2 on page 310, 
and the symbol $B$ is defined on page 318 before Lemma 6.1).
Multiplication by the element $b_3=a_3-\alpha_3(x_1,x_2,x_3)$ on the right
corresponds to the $k$-linear mapping $f=\id\otimes a_3 - \alpha_3(x_1,x_2,x_3)\otimes \id$
on the left.
Let $\Fil_i\subset k[x_1,x_2,x_3]$ by the  vector subspace of all polynomials of
total degree $\leq i$. This is an ascending filtration that is exhaustive and discrete,
in the sense that  $\bigcup_i\Fil^i=k[x_1,x_2,x_3]$ and $\Fil_i=0$ for $i=-1$.
By  abuse of notation
we denote the induced filtration   $ \Fil^i\subset  k[x_1,x_2,x_3]\otimes_k H^1(G,B^\flat)$ 
by the same symbol.
Clearly, $f(\Fil^i)\subset \Fil^{i+d}$, where $d\geq 1$ is the total
degree of $\alpha_3(x_1,x_2,x_3)$.
Moreover, the induced map $f:\Gr^i\ra \Gr^{i+d}$ on the associated graded  coincides with the 
multiplication by $\alpha_3(x_1,x_2,x_3)\otimes\id$, and the latter is   injective because $\alpha_3\in k[x_1,x_2,x_3]$
is  a regular element.
It follows that our original multiplication map $b_3:H^1(G,B)\ra H^1(G,B)$ is injective
(compare for example \cite{Sjoedin 1973}, Lemma 1 (e)).
This shows that $b_1,b_2,b_3\in B^G$ form a regular sequence.
In turn, they form a regular sequence in the flat extension $\widehat{B}^G$.

Now it follows from  Proposition \ref{base-change map bijective} that the   map 
$\widehat{B}^G/(b_1,b_2,b_3)\widehat{B}^G \ra A^G$ is bijective, with
indicated embedding dimension. The generators of $\widehat{B}^G$ are given in Proposition \ref{generators vector invariants}. According to Lemma \ref{lem. modram}, the ring $A:=\widehat{B}/\mathfrak{b}$ equals  
$$
k[[x_1,x_2,x_3,u_1,u_2,u_3]]/(u_1^p-\alpha_1^{p-1}u_1-x_1,u_2^p-\alpha_2^{p-1}u_2-x_2,u_3^p-\alpha_3^{p-1}u_3-x_3),
$$
and the $G$-action given by $u_i\mapsto u_i+\alpha_i$ is moderately ramified.
\qed

\begin{example}\label{generalizedreflectionCM} 
Let  $A$ denote a  complete  noetherian regular local ring of dimension $n$ and  characteristic $p>0$,
endowed with a moderately ramified action of a cyclic group $G$ of order $p$.
{\it Then the ring $A^G$ is endowed with a group of automorphisms of order $p^{n-1}$ generated by  generalized reflections of order $p$.}
Indeed, $A$ is endowed with an elementary abelian group of automorphisms $H$, and by construction, $H/G$ acts on $A^G$. 
In the presentation of $A$ given in \ref{structure moderately ramified}, we find that the ideal $I(\sigma_i)$ of the fixed scheme of the pseudo-reflection $\sigma_i:A \to A$ is $a_iA$. 
Thus the image $\overline{\sigma}_i: A^G \to A^G$ of $\sigma_i$ in $H/G$ has an ideal $I(\overline{\sigma}_i)$ contained in $a_iA^G$ (since $a_i \in A^G$).
In particular, $\overline{\sigma}_i$ is a  generalized reflection of order $p$. We note in the example below that the ideal $I(\overline{\sigma}_i)$
is not principal in general.
%, as $I(\overline{\sigma}_i)$ must also be contained in $I(\overline{\sigma}_i)$

Consider the case where  $n=3$. In this case, $A^G$ is not Cohen--Macaulay (use \ref{properties invariant ring} (i)), and Theorem \ref{thm.n=3} gives us a set of generators that we can use to 
obtain information about the ideal $I(\overline{\sigma}_1)$. We find that $\overline{\sigma}_1(z_{12})-z_{12}=-a_1a_2$ and 
$\overline{\sigma}_1(z_{13})-z_{13}=-a_1a_3$, so that $(a_1a_2, a_1a_3) \subseteq I(\overline{\sigma}_1)$. 
From the relation $z^p_{23}-(a_2a_3)^{p-1}z_{23}-a_2^px_3+a_3^px_2$, we find that $a_1z^p_{23} \in (a_1a_2, a_1a_3)$. 
Note now that $I(\sigma_2\sigma_3)=(a_2,a_3)A$. %, and $(a_2,a_3)A \cap A^G \supseteq (a_2,a_3,z_{23})$.
Since $\overline{\sigma}_1= (\overline{\sigma}_2\overline{\sigma}_3)^{-1}$, we find that $I(\overline{\sigma}_1) \subseteq (a_1)A \cap (a_2,a_3)A \cap A^G$ 
(with $(a_2,a_3)A \cap A^G \supseteq (a_2,a_3,z_{23})A^G$).

Assume now that in addition $p=2$. By considering the last generator of $A^G $
found in \ref{thm.n=3}, namely $t:=u_1u_2u_3+\sigma(u_1u_2u_3)$, we find that $\overline{\sigma}_1(t)-t=a_1z_{23}+a_1a_2a_3$.
Thus in this case $I(\overline{\sigma}_1)=(a_1)(a_2,a_3,z_{23})$, which shows that $I(\overline{\sigma}_1)$ is not principal.  

We note that Theorem \ref{maximal subgroups} shows that 
the invariant ring $(A^G)^{\left< \overline{\sigma}_1\right>}$  is a complete intersection and, thus, is Cohen--Macaulay. 
Consider a Cohen--Macaulay local ring $R$ and a Galois extension $L/\Frac(R) $ of prime order. 
Let $B$ denote the integral closure of $R$ in $L$. When the Galois group of $L/\Frac(R) $ is generated by a generalized reflection $\sigma: B \to B$, 
one may wonder whether $B$ is 
always Cohen--Macaulay. This question is given a partial positive answer when the order of $\sigma$ is coprime to the residue characteristic of $R$ in \cite{H-E}, Proposition 15. The above example with $(A^G)^{\left< \overline{\sigma}_1\right>} \subset A^G$ shows that the answer to this question is negative in general.
We note however that  $(A^G)^{\left< \overline{\sigma}_1\right>} $ is Gorenstein, and that $ A^G$ is shown in 
Theorem \ref{canonical class trivial} to be quasi-Gorenstein.
\end{example}

% ===========================================================================================
%\newpage
% ===============================================================================================
\section{Class group and canonical class}
\label{class group}

Let $A$ be a complete local   noetherian ring that is regular of dimension $n\geq 2$
and  characteristic $p>0$, with a field of representatives $k$, and endowed with
an action of a cyclic group $G$
of order $p$ that is ramified precisely at the origin.  
Let $A^G$ be the ring of invariants, and denote by
$\Cl(A^G)$  its  \emph{class group} of Weil divisors modulo linear equivalence.
Since $A^G$ is normal, the group $\Cl(A^G)$ is trivial if and only if $A^G$ is factorial. 

The canonical module $K_{A^G}$ of $A^G$ is reflexive of rank one. Since $\Cl(A^G)$ is naturally isomorphic to the rank one reflexive class group, 
we obtain using this isomorphism a canonical class $[K_{A^G}]\in\Cl(A^G)$.
The main result of this section, Theorem \ref{canonical class trivial},  states that the canonical class $[K_{A^G}]$ is trivial  
when the $G$-action is moderately ramified.
%even though the group $\Cl(A^G)$ is expected to  be often non-trivial. 
 
Let us start with a description of the group $\Cl(A^G)$, 
which follows from a result of Waterhouse \cite{Waterhouse 1971}, 
generalizing work of Samuel \cite{Samuel 1964}, Th\'eor\`eme (1). 
%It follows in particular from this proposition that the group is killed by $p$.

\if false
This group can 
also be viewed as the group of isomorphism classes of finitely generated reflexive rank one
modules over $A^G$,
and the Picard group of the complement  $W$ of the closed point in $Y=\Spec(A^G)$.
Write $U=U(A)=A^\times$ for the multiplicative group of 
invertible power series, and $U_1\subset U$ for the subgroup of     power series
with constant term $1$. It turns out that the class group of $A^G$ can be expressed  
entirely in terms of the   $G$-module $U_1$:
\fi

\begin{proposition}
\label{description class group}
Let $A $ be as above, endowed with a $G$-action ramified precisely at the origin. 
Let $U_1$ denote the kernel of the map $A^* \to k^*$.
Then $\Cl(A^G)$ is isomorphic to $H^1(G,A^*)$, and there is a short exact sequence
$$
0\lra U_1^G/(U_1^G)^p\lra (U_1/U_1^p)^G \lra \Cl(A^G)\lra 0.
$$
\end{proposition}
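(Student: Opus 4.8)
The plan is to deduce the isomorphism $\Cl(A^G)\iso H^1(G,A^*)$ from the theory of divisor class groups of rings of invariants, and then to compute $H^1(G,A^*)$ by an elementary cohomological argument that exploits both that $A$ is a domain of characteristic $p$ and that $G$ has order $p$.

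First I would record the input from Samuel's theory. Since $A\iso k[[u_1,\ldots,u_n]]$ is regular local, it is a unique factorization domain, so $\Cl(A)=0$. The hypothesis that the $G$-action is ramified precisely at the origin means that $A^G\subset A$ is unramified in codimension one, the closed point having codimension $n\geq 2$; in other words no nontrivial element of $G$ fixes a height one prime, so the action is free in codimension one. Under these two conditions ($A$ factorial and the extension free in codimension one) the result of Waterhouse \cite{Waterhouse 1971}, generalizing Samuel \cite{Samuel 1964}, identifies the kernel of the pullback map $\Cl(A^G)\ra\Cl(A)$ with $H^1(G,A^*)$; since $\Cl(A)=0$ this pullback is the zero map, whence a canonical isomorphism $\Cl(A^G)\iso H^1(G,A^*)$. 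This settles the first assertion.

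Next I would reduce $A^*$ to the one-units. Because $k\subset A^G$ is a field of representatives, every unit decomposes uniquely as a constant times a one-unit, giving a $G$-equivariant splitting $A^*\iso k^*\times U_1$ in which $G$ acts trivially on $k^*$. Hence $H^1(G,A^*)\iso H^1(G,k^*)\times H^1(G,U_1)$. For the first factor, $H^1(G,k^*)=\Hom(G,k^*)$ classifies elements of $k^*$ of order dividing $p$; as $\operatorname{char}(k)=p$ the equation $x^p=1$ reads $(x-1)^p=0$ and forces $x=1$, so $H^1(G,k^*)=0$ and therefore $\Cl(A^G)\iso H^1(G,U_1)$. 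Finally I would extract the short exact sequence from the $p$-th power map. In characteristic $p$ one has $(1+m)^p=1+m^p$, so raising to the $p$-th power is a $G$-equivariant endomorphism of $U_1$, injective because $A$ is a domain, yielding a short exact sequence of $G$-modules $1\to U_1\xrightarrow{(\cdot)^p}U_1\to U_1/U_1^p\to 1$. The associated long exact cohomology sequence reads
$$
U_1^G\xrightarrow{(\cdot)^p}U_1^G\ra (U_1/U_1^p)^G\ra H^1(G,U_1)\xrightarrow{\ p\ }H^1(G,U_1),
$$
and the crucial point is that the rightmost arrow is multiplication by $p=|G|$, which annihilates $H^1(G,U_1)$ and is thus the zero map. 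Exactness then makes the penultimate arrow surjective with kernel equal to the cokernel $U_1^G/(U_1^G)^p$ of the first arrow, collapsing the sequence to $0\to U_1^G/(U_1^G)^p\to (U_1/U_1^p)^G\to H^1(G,U_1)\to 0$. Combined with $\Cl(A^G)\iso H^1(G,U_1)$ this is precisely the claimed sequence.

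The only genuinely nontrivial ingredient is the appeal to Samuel--Waterhouse in the first step; everything afterward is formal. The main things to get right are verifying freeness in codimension one so that theorem applies, checking the $G$-equivariance of the decomposition $A^*\iso k^*\times U_1$, and the bookkeeping that multiplication by $p$ kills $H^1$.
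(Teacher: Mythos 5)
Your proposal is correct and follows essentially the same route as the paper: the appeal to Waterhouse's theorem (with $\Cl(A)=0$ since $A$ is factorial), the reduction from $A^*$ to $U_1$ via triviality of $H^1(G,k^*)$ in characteristic $p$, and the long exact sequence of the $p$-th power map on $U_1$ with the observation that $|G|=p$ kills $H^1(G,U_1)$. The only cosmetic difference is that you use the $G$-equivariant splitting $A^*\iso k^*\times U_1$ where the paper runs the long exact sequence of $1\to U_1\to A^*\to k^*\to 1$; you also make explicit the injectivity of the $p$-power map, which the paper leaves implicit.
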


\proof
That $\Cl(A^G)$ is isomorphic to $H^1(G,A^*)$ follows from \cite{Waterhouse 1971}, Corollary 1, page 545, applied to the Galois extension 
${\rm Frac}(A)/{\rm Frac}(A^G)$ of degree $p$ with Galois group $G$. Indeed, it is clear that $A$ is the integral closure of $A^G$ in ${\rm Frac}(A)$, 
 and that every minimal prime of $A^G$ (and by this, \cite{Waterhouse 1971} means the primes in $A^G$ of height $1$, see proof of Theorem 1)
%see the proof of Theorem 1, part 1, where R is the intersection of R_P
are unramified in $A$, since we assume that the $G$-action is ramified precisely at the origin. The definition of ${\mathcal P}(A^G,A)$ as the kernel of 
the natural map $\Cl(A^G) \to \Cl(A)$ is given in the abstract of the paper, 
and we find that since $A$ is regular and, hence, factorial, ${\mathcal P}(A^G,A) =\Cl(A^G)$.

\iffalse
The arguments are similar to the linear case, which was treated by Benson and Webb (\cite{Benson; Webb 2008}, Section 2).
Let  $V\subset \Spec(A)$ be the complement of the closed point.
Consider the two short exact sequences
\begin{gather*}
0\lra H^1(G,H^0(V,\O_V^\times)) \lra H^1(V,G,\O_V^\times) \lra H^0(G,H^1(V,\O_V^\times),\\
0\lra H^1(W,\underline{H}^0(G,\O_V^\times)) \lra H^1(V,G,\O_V^\times) \lra H^0(W,\underline{H}^1(G,\O_V^\times))
\end{gather*}
coming form the   spectral sequences abutting to the \'etale $G$-cohomology of the abelian $G$-sheaf $\O_V^\times$.
We start by exploiting the second exact sequence:
The abelian sheaf $\underline{H}^0(G,\O_V^\times)$ is given by $\O_{W}^\times$.
The stalks of the sheaf $\underline{H}^1(G,\O_V^\times)$ are permutation modules, whence have trivial
$G$-cohomology, because for each $y\in W$, the strictly local ring  $\O_{V,y}^s$ is the $p$-fold
product of copies of $\O_{W,y}^s$.
From this we obtain  $\Cl(A^G)=H^1(V,G,\O_V^\times)$.

We now turn to the first exact sequence:
The group $H^1(V,\O_V^\times)$ vanishes, 
because the ring $A$ is factorial. Since $A$ is normal, and by Krull's Principal Ideal Theorem, 
$H^0(V,\O_V^\times)=A^\times$. Together with the preceding paragraph, we get
$\Cl(A^G)=H^1(G,U)$, with the multiplicative group $U=A^\times$.
\fi

Recall that  the $G$-action is $k$-linear. Hence $1\ra U_1 \ra A^*\ra k^* \ra 1$
is an exact sequence 
of $G$-modules, and induces an exact sequence
$$
1 \lra H^1(G,U_1) \lra H^1(G,A^*)\lra H^1(G,k^*).
$$
Since the action on $k^*$ is trivial, $H^1(G,k^*)={\rm Hom}(G,k^*)=(0)$.
It follows that $H^1(G,U_1) \ra H^1(G,A^*)$ is bijective. 
The  short exact sequence
$(1)\ra U_1\stackrel{p}{\ra} U_1\ra U_1/U_1^p\ra (1)$ of $G$-modules gives an exact sequence
$$
H^0(G,U_1) \stackrel{p}{\lra} H^0(G,U_1) \lra H^0(G,(U_1/U^p_1))\lra H^1(G,U_1)\stackrel{p}{\lra} H^1(G,U_1).
$$
The map on the right is the trivial map, because the cohomology group $H^1(G,U_1)$ is annihilated by $\ord(G)=p$. The assertion follows.
\qed
 
\medskip
%As mentioned above already,  
The group  $\Cl(A^G)$ is expected to be frequently  non-trivial.
When $n=2$, this expectation  is implied by Theorem 25.1 in \cite{Lip}. A direct proof that  $\Cl(A^G)$
is not trivial for certain moderately ramified actions is presented in our next lemma. When $n>2$, examples of non-Cohen--Macaulay unique factorization domains are known (see \cite{LipUFD}, section 4, for a survey on this matter, and \cite{Mori}). In the context of ring of invariants for an action of $G:={\mathbb Z}/p{\mathbb Z}$ on $A:=k[[x_1,\dots,x_n]]$, Theorem 1.3 in \cite{B-W} exhibits examples of complete local rings $A^G$ with $n\leq p$ which are not unique factorization domains. 
 
Let $n=3$.  Choose polynomials  $\alpha_1, \alpha_2 \in k[x_1,x_2]$, and $\alpha_3 \in k[x_1,x_2,x_3]$
such that $\alpha_1, \alpha_2, \alpha_3$ form a system of parameters in $R=k[[x_1,x_2,x_3]]$.
Consider the associated action of $G$ on $A:=k[[u_1,u_2,u_3]]$ as in Theorem \ref{thm.n=3}.
We obtain from Theorem \ref{thm.n=3} a description of a minimal set of generators $x_1,x_2,x_3,w_1,\dots, w_s$ for $A^G$, which includes
the minor element $z_{12}:=\alpha_1 u_2-\alpha_2 u_1$. Write $A^G=k[[x_1,x_2,x_3,w_1,\dots, w_s]]/I$, where $I$ is the ideal of relations
between the generators of $A^G$. Since we are considering a minimal set of generators, we find that $I \subset J^2$, where 
$J=(x_1,x_2,x_3,w_1,\dots, w_s)$.
Recall \eqref{minor relations} that $z_{12}$ satisfies the relation
$$ z_{12}^p-(\alpha_1 \alpha_2)^{p-1}z_{12} -\alpha_1^px_2+\alpha_2^px_1=0.$$

\begin{lemma} 
\label{not-factorial} Keep the above notation. 
Suppose that 
$\alpha_1^px_2-\alpha_2^px_1$ can be factored in $k[[x_1,x_2]]$ into a product of at least $p+1$ non-units. 
Then the ring  $A^G$ is not factorial.
\end{lemma}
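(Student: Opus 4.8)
The plan is to prove the equivalent statement that the normal domain $A^G$ has nontrivial divisor class group $\Cl(A^G)$, which by the remark recalled at the start of Section \ref{class group} is the same as saying $A^G$ is not factorial. First I would record the algebraic identity behind the hypothesis. Since $T^p-c^{p-1}T=\prod_{\ell\in\FF_p}(T-\ell c)$ in characteristic $p$, the relation displayed just before the lemma factors as
\[
\prod_{\ell=0}^{p-1}\bigl(z_{12}-\ell\,\alpha_1\alpha_2\bigr)=\alpha_1^px_2-\alpha_2^px_1=:d .
\]
Thus inside $A^G$ the element $d\in k[[x_1,x_2]]\subset R$ carries two factorizations: on one side $d=\prod_{\ell}w_\ell$ with $w_\ell:=z_{12}-\ell\,\alpha_1\alpha_2$, and on the other the hypothesized factorization $d=\prod_{j=1}^m\pi_j$ into $m\ge p+1$ nonunits, which after refinement I may take to consist of primes of the factorial ring $R=k[[x_1,x_2,x_3]]$ (an irreducible element of $k[[x_1,x_2]]$ stays prime in $R$).

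Next I would exploit the automorphism $\sigma_1$ from the presentation of $A$ in \ref{structure moderately ramified}, which acts on $A^G$ through its class $\bar\sigma_1\in H/G$. Since $\sigma_1(z_{12})=\alpha_1u_2-\alpha_2(u_1+\alpha_1)=z_{12}-\alpha_1\alpha_2$, this automorphism permutes the factors cyclically, $\bar\sigma_1(w_\ell)=w_{\ell+1}$. Over a prime $\pi=\pi_j$ of $R$ coprime to $\alpha_1\alpha_2$, the differences $w_\ell-w_{\ell'}=(\ell'-\ell)\alpha_1\alpha_2$ are units along any height-one prime $\mathfrak P$ of $A^G$ lying over $(\pi)$, so each such $\mathfrak P$ contains exactly one $w_\ell$; equivalently the rational function $h:=z_{12}/(\alpha_1\alpha_2)$, which satisfies the Artin--Schreier equation $h^p-h=d/(\alpha_1\alpha_2)^p$, has a well-defined residue $\ell(\mathfrak P)\in\FF_p$ that is shifted by $\bar\sigma_1$. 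The principal divisors $\Div(\pi_j)$ and $\Div(w_\ell)$ then translate into explicit relations among the classes $[\mathfrak P]$, and the aim is to show that the presence of $m\ge p+1$ distinct prime factors of $d$ forces these relations to leave a nonzero class, using the description $\Cl(A^G)\cong H^1(G,A^*)$ of Proposition \ref{description class group}.

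The hard part will be this final step: verifying that the principal relations coming from $\Div(\pi_j)$, from $\Div(w_\ell)$, and from the remaining generators of $A^G$ do not collapse all of these divisor classes to zero. Concretely, I would attach to the function $h$ a residue / Artin--Schreier symbol, i.e.\ a homomorphism $\Cl(A^G)\to\FF_p$ built from the labels $\ell(\mathfrak P)$, check that it annihilates every principal divisor, and show it is nonzero precisely because $m\ge p+1$; equivalently, one argues that the natural map $U_1^G/(U_1^G)^p\to(U_1/U_1^p)^G$ of Proposition \ref{description class group} fails to be surjective. A point needing separate care is the contribution of the prime factors $\pi_j$ that divide $\alpha_1\alpha_2$, where the splitting behaviour in $A^G$ is different and the simple labelling above breaks down. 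This computation is the higher-dimensional analogue of the two-dimensional statement \cite{Lip}, Theorem 25.1, and I expect it to be the technical core of the proof.
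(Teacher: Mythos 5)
There is a genuine gap: your proposal is a strategy outline whose decisive step is never carried out. After setting up the factorization $\prod_{\ell\in\FF_p}(z_{12}-\ell\,\alpha_1\alpha_2)=\alpha_1^px_2-\alpha_2^px_1$ and the labelling of height-one primes, everything that would actually exhibit a nonzero class is deferred: you say the aim is to build an Artin--Schreier symbol $\Cl(A^G)\to\FF_p$, check that it kills principal divisors, and show it is nonzero because $m\ge p+1$, but none of this is constructed or verified. Indeed it is not even clear the symbol is well defined, since your labels $\ell(\mathfrak P)$ exist only for height-one primes lying over divisors of $d$ that are prime to $\alpha_1\alpha_2$, and you explicitly leave open the behaviour at primes dividing $\alpha_1\alpha_2$, where the labelling breaks down. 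So what you call the technical core of the proof is exactly what is missing.

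You also missed that the paper's argument bypasses $\Cl(A^G)$ entirely and is far more elementary. The key point is that each factor $w_\ell=z_{12}-\ell\,\alpha_1\alpha_2$ is \emph{irreducible} in $A^G$: by Theorem \ref{thm.n=3} the elements $x_1,x_2,x_3$, the minors $z_{ij}$ and the relevant traces form a \emph{minimal} generating set, so writing $A^G=k[[x_1,x_2,x_3,w_1,\dots,w_s]]/I$ one has $I\subseteq J^2$ for $J$ the maximal ideal upstairs; if $w_\ell$ were a product of two non-units, then lifting gives $w_\ell=fg+h$ with $f,g\in J$ and $h\in I\subseteq J^2$, and since $\alpha_1\alpha_2\in J^2$ this forces $z_{12}\in J^2$, contradicting minimality of the generating set. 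Granting this, the lemma follows at once from uniqueness of factorization: $\alpha_1^px_2-\alpha_2^px_1$ is a product of exactly $p$ irreducibles in $A^G$, so if $A^G$ were factorial it could not also be a product of $p+1$ or more non-units (non-units of $k[[x_1,x_2]]$ remain non-units in $A^G$). No divisor-class computation, no residue symbol, and no analysis of splitting of primes is needed; this two-line uniqueness argument is precisely what your route would have to replace by the full construction and non-vanishing of your symbol.
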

\proof 
In the ring $A^G$, we have a factorization 
\begin{equation} \label{factorial}
\alpha_1^px_2-\alpha_2^px_1= \prod_{i \in {\mathbb F}_p} (z_{12}-i\alpha_1\alpha_2).
\end{equation}
We claim that each factor $z_{12}-i\alpha_1\alpha_2$ is irreducible in $A^G$. Indeed, if $z_{12}-i\alpha_1\alpha_2$ is reducible,
there exists power series $f,g,h\in k[[x_1,x_2,x_3,w_1,\dots, w_s]]$ such that 
$$
z_{12}-i\alpha_1\alpha_2 = f g + h,
$$
with $f,g \in J$ and $h \in I \subseteq J^2$, since the images of $x_1,x_2,x_3,w_1,\dots, w_s$ form a minimal system of generators. It follows from this expression for $z_{12}$ that $z_{12} \in  J^2$, which is a contradiction.  

Now  assume that $A^G$ is factorial. Then the elements $z_{12}-i\alpha_1\alpha_2 \in A^G$, $i\in\FF_p$, are prime elements,
and the existence and uniqueness of prime factorization shows that 
$\alpha_1^px_2-\alpha_2^px_1$
%$a^py-b^px$ 
does not factor into
more than $p$ non-units.
\qed

\medskip
It is easy to produce examples of elements $a,b\in k[[x,y]]$ such that $a^py-b^px$ factors into a
product of $p+1$ elements.
For instance, set $a=x$ and $b=y$, to obtain $a^py-b^px = yx^{p}-xy^{p}= xy\prod_{i \in {\mathbb F}_p^*} (x-iy) $.
When $k$ contains  a primitive $p+1$-root  of unity $\zeta$, we can take $a=y$ and $b=x$, 
to obtain $a^py-b^px = y^{p+1}-x^{p+1}= \prod_i( y-\zeta^i x) $. 

\bigskip
We now briefly review some  facts regarding   canonical modules.
Let $S$ be any   complete local noetherian ring of dimension $n$ with maximal ideal $\maxid$ and residue field $k:=S/\maxid$. 
Let $E_{S}(k)$ be the injective hull of the residue field $k$. If $M$ is any $S$-module, let $H^i_\maxid(M)$ denote the $i$-th local cohomology group.
Then the functor 
$$
M\longmapsto \Hom_{S}(H^n_\maxid(M),E_{S}(k))
$$
is representable by a module $K_S$. This module is called the \emph{canonical module} of $S$ \cite{Herzog; Kunz 1971},
or the \emph{dualizing module} \cite{BouChap10}. By Yoneda's Lemma, it is unique up to isomorphism.
The $S$-module $K_S$ is finitely generated, of dimension $\dim(K_S)=n$, and satisfies
Serre's Condition $(S_2)$.
These statements are proved for instance in \cite{Herzog; Kunz 1971}, 5.2, and 5.16. 
Note that these hold without the requirement that the ring be Cohen--Macaulay.

When $S$ is Cohen--Macaulay, the canonical module   is isomorphic to $S$
if and only if $S$ is Gorenstein (\cite{Herzog; Kunz 1971}, 5.9). 
In particular, if $S=k[[x_1,\dots,x_n]]$, then $K_S$ is isomorphic to $S$.

\begin{emp} \label{useful.fact}
Suppose that $T$ is a complete local ring and $\phi:S \to T$ is an injective  
local homomorphism  of local rings, such that $T$ is finite over $S$.
Then $K_T:={\rm Hom}_S(T, K_S)$ is the  canonical module of $T$ (\cite{Herzog; Kunz 1971}, 5.14). 
\end{emp}

Assume now that $P \in \Spec S$ is such that $\dim S= \dim S/P + {\rm height}(P)$. Then  $K_{S_P}$ is isomorphic to $(K_S)_P$ (\cite{Herzog; Kunz 1971}, 5.22).
The prime ideal $P$ belongs to the support of $K_S$ if and only if $\dim S= \dim S/P + {\rm height}(P)$ (\cite{Aoyama 1983}, 1.9). 
In particular, if $S$ is normal and $P$ has height one, we find that $S_P$ is regular and, thus, $K_{S_P}$ is trivial. Hence, 
$(K_S)_P$ is free of rank $1$ for every prime $P$ of height one.
The module $K_S$ satisfies Serre's condition $S_2$ (\cite{Aoyama 1983}, 1.10). 

Let $M$ be a finitely generated $S$-module, and recall the functor $M \mapsto M^*:={\rm Hom}_{S}(M,S)$. %When $S$ is a normal domain, 
The module $M$ is called {\it reflexive} if the natural map $M \to (M^*)^*$ is an isomorphism. 
\if false
When $S$ is normal, we can apply Theorem (1.4) in \cite{Vas}
and find that $M$ is reflexive if and only if every $R$-sequence of two or less elements be also an $M$-sequence. If $R$ is a domain, a torsion-free $S$-module $M$ is reflexive if and only if it is the intersection of all its localizations at height one primes (\cite{Vas}, bottom of page 1351).
\fi
It follows from the facts in the previous paragraph and \cite{Hartshorne 1994}, Theorem 1.9, that when $S$ is normal, then $K_S$ is reflexive.

Recall that a prime ideal ${\mathfrak p}$ of height $1$ in a normal ring $S$ is reflexive, and that there is a natural isomorphism of groups between the class group 
$\Cl(S)$ and the rank one reflexive class group, which sends the class of $\mathfrak p$ to the class of ${\mathfrak p}$ (\cite{Yua}, Theorem 1).

%The module $K_S$ is torsion-free if and only if, for all $P \in {\rm Ass}(S)$, dim$(S/P)=n$ (i.e., $S$ is unmixed) (\cite{Aoyama 1983}, 1.8).

We return now to the ring $A$ with a $G$-action as at the beginning of this section, and denote by $K_{A^G}$ the  canonical module for the complete local ring $A^G$. Since it follows from above that $K_{A^G}$ is reflexive, we can consider the element $$[K_{A^G}]\in\Cl(A^G),$$ called the \emph{canonical class} of $A^G$.
%In our situation, the canonical module is invertible in codimension one and has depth $\geq 2$, whence yields an  element $[K_{A^G}]\in\Cl(A^G)$ called the \emph{canonical class}.

\begin{remark} \label{rem.notGorenstein}
In general, the canonical class $[K_{A^G}]$ need not be trivial. 
Indeed, it is possible to exhibit in dimension $n=2$  diagonal actions on products of
ordinary algebraic curves over an algebraically closed field $k$ such that the associated quotient singularities are rational singularities (\cite{Lorenzini 2011c}, 4.1). In general, these singularities are   
not Gorenstein since the only Gorenstein rational singularities are the double points (\cite{Wah}, 2.5). 
\end{remark}

Our next theorem shows that when $A^G$ is obtained from a moderately ramified action, then its canonical class is trivial.

\begin{theorem}
\label{canonical class trivial} 
Let $A$ be a complete local regular noetherian ring of dimension $n\geq 2$,
characteristic $p>0$,  and endowed with
a moderately ramified action of a cyclic group $G$
of order $p$.
Then  the canonical class
$[K_{A^G}]$ is trivial in $ \Cl(A^G)$.
\end{theorem}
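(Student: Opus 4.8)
The plan is to compute $[K_{A^G}]$ inside $\Cl(A^G)\cong H^1(G,A^*)$ using the identification furnished by Proposition \ref{description class group}, and then to show that the cocycle representing the canonical class is a coboundary. First I would fix the normal form of Theorem \ref{structure moderately ramified}, so that $A=k[[u_1,\ldots,u_n]]$, $R=k[[x_1,\ldots,x_n]]$ with $x_i=N_{A/A^G}(u_i)$, and $\sigma(u_i)=u_i+a_i$ for a system of parameters $a_1,\ldots,a_n\in\maxid_R$. By \ref{useful.fact} applied to the finite extension $R\subset A^G$ (with $K_R\cong R$) one has $K_{A^G}\cong\Hom_R(A^G,R)$, a reflexive module of rank one whose class is exactly $[K_{A^G}]$. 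Equivalently, writing $V=\Spec(A)\setminus\{\maxid_A\}$ and $W=\Spec(A^G)\setminus\{\maxid_{A^G}\}$, the canonical class is the class in $\Pic(W)=\Cl(A^G)$ of the dualizing sheaf $\omega_W$.

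The second step is to make the descent explicit. By Proposition \ref{properties invariant ring} the morphism $V\to W$ is a $G$-torsor, so line bundles on $W$ are the same as $G$-linearized line bundles on $V$. Since $A$ is regular, $K_A\cong\Omega^n_{A/k}$ is free with generator $\omega_0=du_1\wedge\cdots\wedge du_n$, and $\omega_W|_V$ is this trivial bundle equipped with its natural $G$-action. Hence $[K_{A^G}]\in H^1(G,A^*)$ is represented by the Jacobian cocycle $\sigma\mapsto c_\sigma$, where $c_\sigma=\sigma^*\omega_0/\omega_0=\det\!\big(\delta_{ij}+\partial a_i/\partial u_j\big)$. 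As noted after Theorem \ref{structure moderately ramified} one has $a_i\in\maxid_A^p$, so $\partial a_i/\partial u_j\in\maxid_A^{p-1}$ and $c_\sigma\in 1+\maxid_A^{p-1}\subset A^*$; in particular $\sigma$ acts trivially on $\maxid_A/\maxid_A^2$, i.e. the leading determinant character of the action is trivial. It then remains to prove that this specific cocycle is a coboundary, namely that there is a unit $v\in A^*$ with $c_\sigma=\sigma(v)/v$.

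This last step is the heart of the matter, and the naive $\maxid_A$-adic approximation does not suffice: since $G$ acts trivially on each graded piece $\maxid_A^m/\maxid_A^{m+1}$, the groups $H^1(G,\gr U_1)$ are all nonzero, reflecting that $\Cl(A^G)$ is genuinely nontrivial in general (Lemma \ref{not-factorial}). One must therefore exploit that $c_\sigma$ is a Jacobian rather than an arbitrary element of $1+\maxid_A^{p-1}$. I would finish along one of two routes. The noncommutative route uses that the $G$-action is free in codimension one, so by Auslander's theorem the natural map $A\ast G\to\End_{A^G}(A)$ is an isomorphism; since the group algebra $k[\ZZ/p\ZZ]$ is a symmetric algebra and $A$ is a maximal Cohen--Macaulay reflexive $A^G$-module (because $\depth_{A^G}A=n=\dim A^G$), the algebra $A\ast G$ is a symmetric $A^G$-order, and I would upgrade the self-duality $\Hom_{A^G}(A,K_{A^G})\cong K_A\cong A$ of \ref{useful.fact} to an isomorphism of $A\ast G$-modules, forcing the linearization to be trivial once the cotangent character vanishes. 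The alternative, more hands-on route localizes at the height-one primes $\idealq\subset A^G$: away from the branch locus the codifferent $\Hom_R(A^G,R)$ is locally free, while over each divisor $V(a_i)$ the hypothesis (MR5) identifies the effective model with a Tate--Oort torsor $\shG_{a_i^{p-1},0}$ (as in \ref{characterization moderately ramified} and \ref{effective model torsor}), pinning down the different exponent; bookkeeping the resulting branch divisors $D_i$ against the ramification relations $p[D_i]=0$ then yields $[K_{A^G}]=0$.

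As a consistency check, when $n=2$ Theorem \ref{2-dimensional invariant ring} presents $A^G$ as a hypersurface, hence Gorenstein, so the canonical class vanishes automatically; the real content lies in the range $n\ge 3$, where $A^G$ is no longer Cohen--Macaulay and this divisor-class computation (or the symmetry of $A\ast G$) is indispensable. The main obstacle I anticipate is precisely the passage from "the cotangent/determinant character is trivial" to "the full Jacobian cocycle $c_\sigma$ is a coboundary": the triviality of the leading character is necessary but not formally sufficient, and closing this gap is what requires either the symmetric-order input or the codimension-one torsor structure supplied by moderate ramification.
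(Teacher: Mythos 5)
Your reduction is set up correctly as far as it goes: $K_{A^G}\cong\Hom_R(A^G,R)$ by \ref{useful.fact}, $\Cl(A^G)\cong H^1(G,A^*)$ by Proposition \ref{description class group}, and (granting the equivariant identification of $K_A$ with the top continuous differentials, which you assert without proof) the class $[K_{A^G}]$ is that of the Jacobian cocycle $c_\sigma$. You also locate the difficulty exactly where it lies. But the proposal then stops: by your own admission, the passage from ``the determinant character is trivial'' to ``$c_\sigma$ is a coboundary'' is left open, and the one concrete claim offered to close it is false. The symmetry of $A*G$ is \emph{not} a formal consequence of $k[\ZZ/p\ZZ]$ being a symmetric algebra together with $A$ being a reflexive maximal Cohen--Macaulay module. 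Indeed, those properties, the isomorphisms \eqref{iso1} and \eqref{iso2}, and the codimension-one projectivity of Proposition \ref{comparison maps bijective} all hold for \emph{every} $G$-action ramified precisely at the origin; if symmetry followed formally from them, the Iyama--Reiten transfer \ref{useful.fact2} would make every such $A^G$ quasi-Gorenstein, contradicting Remark \ref{rem.notGorenstein} (two-dimensional wild quotient singularities, ramified precisely at the origin, that are rational but not Gorenstein). Symmetry is precisely the point where moderate ramification must enter the argument. Note also that the symmetry you need is over the norm subring $R$, not over $A^G$: the goal is $A^G\cong\Hom_R(A^G,R)$, and it is the regular (hence Gorenstein) base $R$ that makes $\Hom_R(-,R)$ compute canonical modules, whereas ``symmetric as an $A^G$-order'' would only yield the tautology $A^G\cong\Hom_{A^G}(A^G,A^G)$.

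For comparison, the paper closes this gap with Proposition \ref{symmetric}: an explicit symmetrizing form $\phi$ on $A*G$, namely the coefficient functional of $u_1^{p-1}\cdots u_n^{p-1}$, whose trace property $\phi(xy)=\phi(yx)$ and nondegeneracy are verified by hand using the normal form $\sigma(u_i)=u_i+a_i$ with $a_i\in\maxid_R$; then \ref{useful.fact2}, applied with $\Lambda=A*G$ and $M=A$ over the base $R$, shows that $\End_{A*G}(A)\cong A^G$ is symmetric over $R$, which is the theorem. Alternatively, and closer to your cocycle framework, the coboundary statement can be proved directly, and the needed input is exactly the fact your write-up never exploits: the elements $a_i=\sigma(u_i)-u_i$ lie in $R$, hence are $G$-invariant. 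From $x_i=u_i^p-a_i^{p-1}u_i$ one computes in characteristic $p$ that $dx_i=a_i^{p-2}u_i\,da_i-a_i^{p-1}\,du_i$; substituting $da_i=\sum_k(\partial a_i/\partial x_k)\,dx_k$ and wedging gives
$$
\det(M)\,dx_1\wedge\cdots\wedge dx_n=(-1)^n\Bigl(\textstyle\prod_{i=1}^n a_i^{p-1}\Bigr)\,du_1\wedge\cdots\wedge du_n,
\qquad
M_{ik}:=\delta_{ik}-a_i^{p-2}u_i\,\frac{\partial a_i}{\partial x_k},
$$
where $\det(M)\in 1+\maxid_A$ is a unit. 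Applying $\sigma$ to this identity and using $\sigma(x_i)=x_i$ and $\sigma(a_i)=a_i$ yields $c_\sigma=\sigma(\det M)/\det M$, a coboundary, as required. Your divisor-theoretic sketch can be repaired in the same spirit: the correct mechanism is not a torsion relation $p[D_i]=0$ but the principality of $\mathrm{div}_{A^G}(a_i)$ together with the computation of the different of $A^G/R$ in codimension one. In the form submitted, however, the decisive computation is absent and the justification supplied in its place does not hold; that is the gap.
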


%\begin{emp} \label{skew}
The proof of Theorem \ref{canonical class trivial} uses some methods from non-commutative algebra
and is presented after the proof of Proposition \ref{symmetric}.
Recall that the free $A$-module on the elements of $G$ can be endowed  with 
an associative multiplication, turning it into a ring called the  \emph{skew group ring} $A*G$.
On elements of the form $a\sigma$ and $  b\eta$  
with $a,b\in A$ and $\sigma,\eta\in G$, the multiplication is
$$
a \sigma \cdot b \eta := (a\sigma(b) ) (\sigma\eta),
$$
and this multiplication is extended to all elements by bilinearity. 
Since $G$ is abelian, the center of this associative algebra is $A^G*G$. Using the inclusion $A^G e \subset A^G*G$, 
where $e\in G$ is the neutral element, we may
regard the associative ring $A*G$ as an  algebra over the central subring $A^G$, or over any subring $R$ of $A^G$. 

The ring $A$ is endowed with the structure of a left $A*G$-module as follows: for $c, a \in A$ and $\sigma \in G$, let $(c\sigma)a := c \sigma(a)$.
The following injective homomorphism of $A^G$-algebras
 \begin{equation}
A^G\lra\End_{A*G}(A),\quad a\longmapsto (x\mapsto ax), \label{iso2}
\end{equation}
is easily seen to be an isomorphism. Indeed, given $\varphi: A \to A$ in $\End_{A*G}(A)$, we have $\varphi(\sigma \cdot 1)=\sigma(\varphi(1))=\varphi(\sigma(1))$, so that $\varphi(1)\in A^G$. We also have $\varphi(a)=a \varphi(1)$ for all $a \in A$. 
We show below in Proposition \ref{comparison maps bijective}  that in the case of moderately ramified actions
 the natural  homomorphism  of $A^G$-algebras
\begin{equation}
A*G\lra\End_{A^G}(A),\quad a\sigma\longmapsto (x\mapsto a\sigma(x)), \label{iso1}
\end{equation}
is also an isomorphism.

Let $R\subset A^G$ be a normal noetherian  subring such that $A^G$ is a finite $R$-algebra.
Let $\primid\in \Spec R$. Let $M$ be a finitely generated left $A*G$-module. 
Then $M_\primid$ and $(A*G)_\primid$ are well-defined $R_\primid$-modules.
It turns out that $(A*G)_\primid$ is in fact an $R_\primid$-algebra 
when endowed with the natural multiplication $(\lambda/s) \cdot (\lambda'/s'):= (\lambda \lambda')/(ss')$, 
where $\lambda, \lambda' \in A*G$ and $s,s' \in R \setminus \primid$. In addition, $M_\primid$ is a left $(A*G)_\primid$-module.
Recall that the $A*G$-module $M$ is projective if the functor $N\mapsto {\rm Hom}_{A*G}(M,N)$ is exact, and that 
homological algebra in module categories is the same over any ring, commutative or not (see for instance  \cite{Rot}).
 
\begin{proposition} 
\label{comparison maps bijective} 
Keep the above notation. Assume that the $G$-action on $A$ is  unramified in codimension one.
Let $R\subset A^G$ be a normal noetherian  subring such that $A^G$ is a finite $R$-algebra.
\begin{enumerate}[ \rm (i)]
\item  The  $R$-modules $A^G$, $A$, $A*G$, and  $\End_{A^G}(A)$ %   and $\End_{A*G}(A)$  
are reflexive.
\item The  homomorphism  $A*G\ra\End_{A^G}(A)$ in \eqref{iso1}
% and \eqref{iso2}
is an isomorphism. % are bijective. 
\item For each  $\primid \subset R$ of height
one, the left $(A*G)_\primid$-module $A_\primid$ is projective.
\end{enumerate}
\end{proposition}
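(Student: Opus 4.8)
The plan is to prove the three parts in the order stated, using the reflexivity in (i) as the technical backbone for (ii) and (iii).

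For (i), I would invoke the standard criterion that over a normal noetherian domain $R$ a finitely generated module is reflexive if and only if it is torsion-free and satisfies Serre's Condition $(S_2)$ (the same circle of ideas as \cite{Hartshorne 1994}, Theorem 1.9). Since $A$ is finite over $R$ and, both being domains, $\dim A = \dim R = n$, the $(S_2)$-condition of a finite $A$-module can be checked over $A$ rather than over $R$: localization, depth, and dimension behave compatibly under the finite extension $R\subset A$, so $\min(2,\dim)$-bounds on depth transfer. The same remark applies to finite $A^G$-modules via $R\subset A^G$. Now $A$ is regular, hence Cohen--Macaulay, hence $(S_2)$ over itself, so $A$ is reflexive over $R$; the ring $A^G$ is normal (\ref{properties invariant ring}), hence $(S_2)$ over itself, so it too is reflexive over $R$. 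The skew group ring $A*G=\bigoplus_{\sigma\in G}A\sigma$ is free of rank $p$ as a left $A$-module, so as an $R$-module it is $A^{\oplus p}$ and thus reflexive. Finally $\End_{A^G}(A)=\Hom_{A^G}(A,A)$ is a Hom-module into the reflexive $A^G$-module $A$, hence reflexive over $A^G$, so it is $(S_2)$ and therefore reflexive over $R$.

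For (ii), both $A*G$ and $\End_{A^G}(A)$ are reflexive $R$-modules by (i), and \eqref{iso1} is a homomorphism of $R$-modules. A homomorphism between reflexive modules over a normal domain that becomes an isomorphism after localizing at every prime of height at most one is itself an isomorphism, since a reflexive module equals the intersection, inside its generic fibre, of its localizations at the height-one primes. It therefore suffices to verify that \eqref{iso1} is an isomorphism at the generic point and at each height-one prime $\primid\subset R$. Localization commutes with the finite Hom, so over such $\primid$ the map becomes $A_\primid*G\to\End_{(A^G)_\primid}(A_\primid)$, and over the generic point it involves the $G$-Galois field extension $\Frac(A)/\Frac(A^G)$. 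Because the $G$-action is unramified in codimension one, the torsor argument of \ref{properties invariant ring}(iii) (via \cite{SGA 1}, Expos\'e V, Proposition 2.6) upgrades, at each such $\primid$, to the statement that $(A^G)_\primid\subset A_\primid$ is $G$-Galois. For any $G$-Galois extension of commutative rings the canonical map from the skew group ring to the endomorphism ring of the top ring over the base is an isomorphism; hence \eqref{iso1} is an isomorphism in codimension one, and therefore everywhere.

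For (iii), fix a height-one prime $\primid\subset R$. As established in (ii), $(A^G)_\primid\subset A_\primid$ is $G$-Galois, so $A_\primid$ is a finitely generated, faithful, projective $(A^G)_\primid$-module, i.e. a progenerator, and its endomorphism ring is $(A*G)_\primid$. By Morita theory a progenerator $P$ over a ring $S$ is automatically projective as a module over $\End_S(P)$; applying this with $S=(A^G)_\primid$ and $P=A_\primid$ yields that $A_\primid$ is projective over $(A*G)_\primid$. I expect the main obstacle to lie in the codimension-one step of (ii): checking carefully that ``unramified in codimension one'' really does produce a genuine $G$-Galois (torsor) structure on $(A^G)_\primid\subset A_\primid$ at each height-one $\primid$ of $R$, so that the ring-theoretic Galois isomorphism $T*G\cong\End_S(T)$ applies. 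Once that local statement is secured, the reflexivity packaging from (i) and the Morita argument for (iii) are formal.
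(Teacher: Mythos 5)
Your proof is correct, and its skeleton agrees with the paper's: establish reflexivity via Serre's condition $(S_2)$, use reflexivity to reduce (ii) to height-one primes, and exploit the torsor structure in codimension one. The tools you use for the local steps, however, are genuinely different. In (i), for $\End_{A^G}(A)$ the paper embeds it into $\End_R(A)$ via a commutator exact sequence and argues with local cohomology, while you invoke the depth bound for $\Hom$ into an $(S_2)$-module; both work. The substantive divergence is in (ii) and (iii): after localizing in codimension one, the paper passes by faithfully flat descent to the strict henselization $C$, where freeness in codimension one trivializes the torsor, $A\otimes_{A^G}C\simeq C^p$ with permutation $G$-action, and then checks surjectivity of $(C\times G)*G\to\End_C(C^p)$ by hand (elementary matrices plus Nakayama); for (iii) it shows that $\Ext^1$ commutes with this flat base change and that $C^p$ is a left direct summand of $\End_C(C^p)$. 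You instead quote the Chase--Harrison--Rosenberg theorem that for a $G$-Galois extension of commutative rings $S\subset T$ the canonical map $T*G\to\End_S(T)$ is an isomorphism, and Morita theory (a progenerator is projective over its endomorphism ring) for (iii), avoiding strict henselization entirely. The one point you rightly flag as the potential obstacle --- that ``unramified in codimension one'' really produces a Galois (torsor) structure over $(A^G)_\primid$ --- does hold: the action is free at all points of codimension at most one, so the argument of \ref{properties invariant ring}(iii) (via \cite{SGA 1}, Expos\'e V, Proposition 2.6) applies over the semi-local ring $(A^G)_\primid$, and over an affine base the torsor condition for the constant group scheme is exactly the ring-theoretic Galois condition you need. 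In effect, the paper's explicit computations amount to self-contained proofs of the split cases of your two black boxes, with descent supplying the general case; your version is shorter and more conceptual at the cost of importing that standard theory.
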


\proof 
(i) 
According to \cite{Hartshorne 1994}, Theorem 1.9, a finitely generated $R$-module $M\neq 0$ is
reflexive if and only if it satisfies Serre's Condition $(S_2)$. Since $R$ is local, this means  $\depth(M)\geq 2$.
It follows that   the reflexivity of $M$ does not depend on the chosen subring $R$, as remarked in
\cite{Iyama; Reiten 2008}, page 1094.
Applying this principle to $R\subset A$, we see that the $R$-module $A$ is reflexive,
and it follows that  $A*G$ is reflexive as well.
To proceed, let  $g_1,\ldots,g_r\in A^G$ be a system of module generators over $R$.
Then we have a short exact sequence
$$
0\lra \End_{A^G}(A) \lra \End_R(A) \lra \bigoplus_{i=1}^r\End_R(A),
$$
where the map on the right sends an $R$-linear map  $f:A\ra A$ to the tuple of commutators 
$fg_i-g_if$.
The two terms on the right are reflexive $R$-modules, that is,  satisfy Serre's Condition $(S_2)$.
Using local cohomology, we infer that the term on the left
satisfies $(S_2)$, whence is reflexive. Analogous arguments apply to $\End_{A*G}(A)$.

(ii) For this, we regard $A*G$, $\End_{A^G}(A)$ and $A^G$ as modules over $A^G$.
These  modules are reflexive by (i). Hence Theorem 1.12 in \cite{Hartshorne 1994} shows that it suffices to check that the maps are bijective after 
localizing at each prime ideal $\primid\subset A^G$ of height one. 
So we may replace $A^G$ by the localization $(A^G)_\primid$.
By faithfully flat descent
we even may replace it by the strict henselization $(A^G)_\primid\subset C$.
Since the $G$-action is free in codimension one, we get 
an isomorphism $A\otimes_{A^G}C\simeq C\times G=C^p$ of $C$-algebras, with the permutation action of $G=\ZZ/p\ZZ$.

The first map in question takes the form
$(C\times G)*G \ra \End_C(C\times G)$. Since domain and range are free $C$-modules of rank $p^2$,
it suffices to check that the map is surjective, by Nakayama's Lemma.
Let $\sigma,\eta\in G$.
Since matrix rings are generated by elementary matrices, it suffices to check that
the endomorphism  that is zero on all standard basis vectors except that  $(1,\sigma)\mapsto (1,\eta)$ 
lies in the image. Indeed, it is the image of $(1,\eta)\sigma^{-1}\in (C\times G)*G$.

\if false
In light of the preceding paragraph, the second map in question, after tensoring with the strict henselization
$(A^G)_\primid\subset C$,
can be viewed as  $C\ra\End_{\Mat_p(C)}(C^p)$.
This map is bijective, because the matrices that commute with all matrices are 
exactly the scalar matrices.
\fi

(iii) We have to show that $\Ext^1(A_\primid,M)$ vanishes for each module $M$ over the ring $(A*G)_\primid$.
This Ext group can be computed with injective resolutions of $M$, or alternatively with 
free resolutions of $A_\primid$. Being a finitely generated $R_\primid$-module,
the associative ring $(A*G)_\primid$ is noetherian, whence we may choose a free resolution 
with finitely generated terms.
Consequently, the formation of $\Ext^1(A_\primid,M)$ commutes with flat base-change in $R_\primid$.
Thus we can reduce to the strict henselization   $R_\primid\subset D$.
Let $\primid_1,\ldots,\primid_r\subset A^G$ be the prime ideals lying over  $\primid\subset R$.
Then $C=(A^G)\otimes_RD$ decomposes as $C=C_1\times\ldots \times C_r$, where $(A^G)_{\primid_i}\subset C_i$ are 
the strict henselizations. As above, we get $A\otimes_RD=C\times G=C^p$,
and $(C\times G)*G=\End_C(C^p)$. As a left module, $C^p$ is a direct summand
of $\End_C(C^p)$, whence a projective module.
\qed

\medskip
Let $\SR$ be a commutative ring, and let $\Lambda$ be an associative $\SR$-algebra. Then the $\SR$-module
$\Lambda$ carries the structure of a  $\Lambda$-bimodule via $a\cdot x\cdot b :=axb$, for all $a, b,x \in \Lambda$.
Similarly, the  dual $\SR$-module $\Hom_\SR(\Lambda,\SR)$ becomes a $\Lambda$-bimodule, via
$$
(a\cdot \phi \cdot b)(x) := \phi(bxa),
$$
for all $\phi \in \Hom_\SR(\Lambda,\SR)$ and $a, b, x \in \Lambda$.
Any homomorphism of left $\Lambda$-modules
$\Lambda\ra\Hom_\SR(\Lambda,\SR)$ is of the form
$
y\longmapsto y\phi
$
for some $\phi\in\Hom_\SR(\Lambda,\SR)$. We leave the proof of our next lemma to the reader.

\if false 
Each element $\phi\in\Hom_\SR(\Lambda,\SR)$ induces two maps
$\Lambda\ra\Hom(\Lambda,\SR)$, given by
$$
y\longmapsto y\phi\quadand y\longmapsto \phi y.
$$
The first map is a homomorphism of left $\Lambda$-modules, and the second is a homomorphism
of right $\Lambda$-modules. Clearly, each left-module or right-module homomorphism
is of the above respective forms. This shows:
\fi

\begin{lemma} \label{bimodule}
Any homomorphism $\Lambda\ra\Hom_\SR(\Lambda,\SR)$ of $\Lambda$-bimodules
is of the form $y\mapsto y\phi$ for some element $\phi\in \Hom_\SR(\Lambda,\SR)$ with
$\phi(xy)=\phi(yx)$ for all $x,y\in\Lambda$.
\end{lemma}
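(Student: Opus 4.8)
The plan is to reduce everything to the left-module statement recalled immediately before the lemma, and then to extract the trace condition from right-linearity by a single direct evaluation.

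First I would fix a homomorphism $F\colon\Lambda\ra\Hom_\SR(\Lambda,\SR)$ of $\Lambda$-bimodules and set $\phi:=F(1)$. Since $F$ is in particular a homomorphism of left $\Lambda$-modules, the remark preceding the lemma applies: for every $y\in\Lambda$ we have $F(y)=F(y\cdot 1)=y\cdot F(1)=y\phi$, using that $1$ generates $\Lambda$ as a left module over itself. This already exhibits $F$ in the asserted form $y\mapsto y\phi$, so the only remaining point is to verify the trace identity $\phi(xy)=\phi(yx)$.

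Next I would exploit right-linearity. For the bimodule structure on $\Hom_\SR(\Lambda,\SR)$ given by $(a\cdot\phi\cdot b)(x)=\phi(bxa)$, the induced left and right actions are $(a\phi)(x)=\phi(xa)$ and $(\phi b)(x)=\phi(bx)$. Right-linearity of $F$ means $F(yb)=F(y)\cdot b$ for all $y,b\in\Lambda$. Evaluating the left-hand side at an arbitrary $x\in\Lambda$ gives $F(yb)(x)=\bigl((yb)\phi\bigr)(x)=\phi(x\,yb)=\phi(xyb)$, whereas the right-hand side gives $\bigl((y\phi)\cdot b\bigr)(x)=(y\phi)(bx)=\phi(bx\,y)=\phi(bxy)$. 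Hence $\phi(xyb)=\phi(bxy)$ for all $x,y,b\in\Lambda$, and specializing $y=1$ yields $\phi(xb)=\phi(bx)$, which is precisely the desired trace condition. Conversely, a trace $\phi$ recovers the full identity $\phi(xyb)=\phi(bxy)$, so $y\mapsto y\phi$ is a bimodule homomorphism exactly when $\phi$ is a trace; this confirms the characterization.

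There is no genuine obstacle here: the entire content is the bookkeeping of the left/right conventions on the dual bimodule $\Hom_\SR(\Lambda,\SR)$. Once those are pinned down as above, the argument collapses to the one-line evaluation in the previous paragraph, which is why the authors leave it to the reader.
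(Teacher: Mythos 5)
Your proof is correct and is essentially the argument the paper intends: the authors leave the proof to the reader after recalling that any \emph{left}-module homomorphism $\Lambda\ra\Hom_\SR(\Lambda,\SR)$ has the form $y\mapsto y\phi$ with $\phi=F(1)$, and your completion — unwinding the conventions $(a\phi)(x)=\phi(xa)$, $(\phi b)(x)=\phi(bx)$, deducing $\phi(xyb)=\phi(bxy)$ from right-linearity, and specializing $y=1$ — is exactly the intended bookkeeping. The converse direction you note (traces give bimodule maps) matches the paper's later use of the lemma as well, so there is nothing to add.
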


As in \cite{Iyama; Reiten 2008}, page 1095, the $\SR$-algebra $\Lambda$  is called \emph{symmetric} if the $\Lambda$-bimodules $\Hom_\SR(\Lambda,\SR)$ and
$\Lambda$ are isomorphic. In other words, the $\SR$-algebra $\Lambda$  is  symmetric if there exists some $\phi\in \Hom_\SR(\Lambda,\SR)$ such that
$\phi(xy)=\phi(yx)$ for all $x,y\in\Lambda$, and such that $\Lambda\ra \Hom_\SR(\Lambda,\SR)$, $y\mapsto y\phi$,
is bijective.

\begin{proposition}
\label{symmetric} Let $k$ be a field of characteristic $p>0$, and let $R:=k[[x_1,\ldots,x_n]]$. Let $a_1,\ldots,a_n\in\maxid_R$
be arbitrary elements, not all zero. Let $A$ be the      regular (see {\rm \ref{structure moderately ramified2}})  complete local noetherian ring  
given by 
$$
A:=R[u_1,\ldots,u_n]/(u_1^p-a_1^{p-1}u_1-x_1,\ldots,u_n^p-a_n^{p-1}u_n-x_n).
$$
Let $G$ denote the subgroup of automorphisms of $A$ fixing $R$ generated by 
the natural automorphism of order $p$ which sends $u_i$ to $u_i+a_i$ for $i=1,\dots, n$. Then the $R$-algebra $A*G$ is symmetric.
\end{proposition}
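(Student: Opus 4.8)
The plan is to exhibit an explicit $R$-linear form $\phi\colon A*G\ra R$ that witnesses symmetry, built from a $G$-invariant symmetrizing form on the commutative $R$-algebra $A$. Recall from \ref{structure moderately ramified2} that $A$ is free over $R$ on the monomials $u^\alpha=u_1^{\alpha_1}\cdots u_n^{\alpha_n}$ with $0\le\alpha_i\le p-1$; since the defining relations separate the variables, $A\simeq\bigotimes_{i=1}^n A_i$ with $A_i:=R[u_i]/(u_i^p-a_i^{p-1}u_i-x_i)$, and the generator $\sigma$ respects this factorization, acting as $u_i\mapsto u_i+a_i$ on the $i$-th tensor factor. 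First I would define $\phi_A\colon A\ra R$ to be the $R$-linear map extracting the coefficient of the top monomial, $\phi_A(\sum_\alpha c_\alpha u^\alpha):=c_{(p-1,\dots,p-1)}$. A short computation with the reduction rule $u_i^p=a_i^{p-1}u_i+x_i$ shows that in the monomial basis the Gram matrix of the pairing $(a,b)\mapsto\phi_A(ab)$ is anti-triangular, with $1$'s along the anti-diagonal $\alpha+\beta=(p-1,\dots,p-1)$ and all entries above it lying in $\maxid_R$; its determinant is therefore a unit, so $\phi_A$ is a symmetrizing form for the (commutative) algebra $A$. Moreover $\sigma_i(u_i^{p-1})=(u_i+a_i)^{p-1}$ still has leading coefficient $1$ in degree $p-1$, while each $u_i^k$ with $k<p-1$ is sent to a polynomial of degree $k<p-1$; hence the coefficient of the top monomial is unchanged and $\phi_A\circ\sigma=\phi_A$.

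Next I would set $\phi\bigl(\sum_{j=0}^{p-1}a_j\sigma^j\bigr):=\phi_A(a_0)$ and check the trace identity $\phi(xy)=\phi(yx)$. By $R$-bilinearity it is enough to take $x=a\sigma^i$ and $y=b\sigma^j$; both $\phi(xy)$ and $\phi(yx)$ vanish unless $i+j\equiv0\pmod p$, and in that case $\phi(xy)=\phi_A\bigl(a\,\sigma^i(b)\bigr)$ and $\phi(yx)=\phi_A\bigl(b\,\sigma^{-i}(a)\bigr)$. Applying the $\sigma$-invariance of $\phi_A$ and then the commutativity of $A$ gives $\phi_A\bigl(b\,\sigma^{-i}(a)\bigr)=\phi_A\bigl(\sigma^i(b)\,a\bigr)=\phi_A\bigl(a\,\sigma^i(b)\bigr)$, so the two sides agree. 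This is the step where both structural features---commutativity of $A$ and $G$-invariance of $\phi_A$---are genuinely used, and I expect the indexing here to be the most error-prone part of the write-up.

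It remains to prove that $y\mapsto y\phi$ is a bijection $A*G\ra\Hom_R(A*G,R)$; by Lemma \ref{bimodule} it is then automatically an isomorphism of $A*G$-bimodules, which is exactly the assertion that $A*G$ is symmetric. Both sides are free $R$-modules of the same finite rank $p^{n+1}$, so by Nakayama's Lemma it suffices to check bijectivity after applying $-\otimes_R k$. Reduction modulo $\maxid_R$ kills all the $a_i$ and $x_i$, so the $G$-action degenerates to the trivial one and $A*G\otimes_R k\simeq\overline A\otimes_k k[G]$ with $\overline A=k[u_1,\dots,u_n]/(u_1^p,\dots,u_n^p)$; under this identification $\phi$ reduces to the tensor product of the top-coefficient form on the commutative Frobenius algebra $\overline A$ with the coefficient-of-identity form on the group algebra $k[G]$. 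As group algebras and commutative Frobenius algebras are symmetric, and a tensor product of symmetric algebras is symmetric, this reduced form is symmetrizing, so the reduced map is bijective and Nakayama gives the result over $R$. Alternatively one can argue directly over $R$: in the basis $\{u^\alpha\sigma^i\}$ the Gram matrix of $\phi$ is block anti-diagonal along the $G$-grading, the block attached to $i$ being the Gram matrix of the twisted pairing $(a,b)\mapsto\phi_A(a\,\sigma^i(b))$, which is nondegenerate since $\sigma^i$ is an automorphism and $\phi_A$ is symmetrizing; hence the full determinant is a unit.
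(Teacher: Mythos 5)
Your proof is correct, and it is built around exactly the same linear form as the paper's: your $\phi\bigl(\sum_j a_j\sigma^j\bigr)=\phi_A(a_0)$ is precisely the form that is $1$ on the basis vector $u_1^{p-1}\cdots u_n^{p-1}\sigma^0$ and $0$ on all others, and you establish the same two facts (the trace identity and the bijectivity of $y\mapsto y\phi$). Where you genuinely depart from the paper is in how these two facts are verified. For the trace identity, the paper runs an induction on the total degree of a basis vector $y=u_1^{j_1}\cdots u_n^{j_n}\sigma^j$, reducing to the cases $y=\sigma$ and $y=u_s$ and expanding explicitly; you instead isolate the top-coefficient form $\phi_A$ on the commutative algebra $A$, prove it is $\sigma$-invariant (correctly: expanding $\prod_i(u_i+a_i)^{\alpha_i}$ with $\alpha_i\le p-1$ requires no reduction by the relations, so the top coefficient is untouched), and then deduce $\phi(xy)=\phi(yx)$ formally from invariance plus commutativity of $A$ -- a cleaner and more structural argument. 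For bijectivity, the paper, after the same Nakayama reduction to $\overline{A}*G$ with trivialized $G$-action, exhibits for each basis functional an explicit preimage (the basis vector with complementary exponents), whereas you identify the reduced form as the tensor product of the top-coefficient form on $\overline{A}$ with the coefficient-of-identity form on $k[G]$ and invoke nondegeneracy of Frobenius forms; your parenthetical Gram-matrix variant (block anti-diagonal along the $G$-grading, the block at $\sigma^i$ being $MS_i$ with $M$ the Gram matrix of $\phi_A$ and $S_i$ the matrix of the automorphism $\sigma^i$, both invertible over $R$) even avoids Nakayama altogether. What your organization buys is generality: it effectively proves that the skew group algebra of any commutative Frobenius $R$-algebra equipped with a $G$-invariant symmetrizing form is symmetric, while the paper's computation (following Braun) is more elementary and self-contained but specific to this $A$. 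One small imprecision: the Gram matrix of $(a,b)\mapsto\phi_A(ab)$ is not literally anti-triangular in the monomial basis, since nonzero entries of the form $\prod a_i^{p-1}$ occur whenever each $\alpha_i+\beta_i\in\{p-1,2p-2\}$; but this is harmless, because every entry off the anti-diagonal lies in $\maxid_R$, so the matrix is congruent to the anti-identity modulo $\maxid_R$ and its determinant is a unit, which is all your argument uses.
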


\proof We abused notation in the above statement and did not distinguish between $u_i \in R[u_1,\ldots,u_n]$ and its class in $A$. 
We  will continue to do so in the proof below to lighten our notation.
First observe that the skew group ring $A*G$ is a free $R$-algebra of rank $p^{n+1}$  because the group $G$ has order $p$ and the $R$-algebra
$A$ is free of rank $p^n$. In fact, the elements
$$
u_1^{i_1}   \cdots   u_n^{i_n}\sigma^i\in A*G,\quad 0\leq i_1,\ldots,i_n,i\leq p-1,
$$
form an $R$-basis. Following Braun \cite{Braun 2011}, who studied the linear case, we consider  the element $\phi\in\Hom_R(A*G,R)$ given on the basis elements by
$$
u_1^{i_1}\cdots   u_n^{i_n}\sigma^i\longmapsto
\begin{cases}
1 	& \text{if } i_1=\ldots=i_n=p-1 \text{  and } i=0;\\
0	& \text{else.}
\end{cases}
$$

Consider the map 
\begin{equation} \label{Hombimod}
A*G\lra\Hom_R(A*G,R),\quad y\longmapsto y\phi.
\end{equation}
We claim this map is an isomorphism of $A*G$-bimodules. Let us start by showing that
it is a homomorphism of  bimodules. For this, it suffices to verify that
 $\phi(xy)=\phi(yx)$ for all $x,y\in A*G$ (\ref{bimodule}). 
 
Obviously it suffices to show that for any $x \in A*G$ and any basis element $y=u_1^{j_1}\cdots u_n^{j_n}\sigma^j$, we have $\phi(xy)=\phi(yx)$.
We proceed  using   induction on $j_1+\dots+j_n+j$. Assume that the conclusion holds 
for $j_1+\cdots+j_n+j < t$ and let us prove it for $j_1+\dots+j_n+j = t$. When $t>1$, it is always possible to write 
$y=y_1 y_2$, where $y_1$ and $y_2$ are two basis vectors to which we can apply the induction hypothesis.
Then $\phi(xy)=\phi(x(y_1y_2))= \phi(y_2(xy_1))= \phi(y_1(y_2x))= \phi(yx)$. It remains to prove the case   
where $j_1+\dots+j_n+j=1$. 
This means $y=\sigma$ or $y=u_s$ for some $1\leq s\leq n$.
To show that $\phi(xy)=\phi(yx)$, 
it is easy to see that it suffices to treat  the case where  
$x:=u_1^{i_1}\cdots u_n^{i_n}\sigma^i$ is itself a basis vector.

Consider first the case $y=\sigma$. We have $\sigma(u_r)=u_r+a_r$, 
which implies that
$$
xy=u_1^{i_1}\cdots u_n^{i_n}\sigma^{i+1}\quadand 
yx=(u_1+a_1)^{i_1}\cdots (u_n+a_n)^{i_n}\sigma^{i+1}.
$$
Expanding the factors of $yx$ shows that $yx=xy + h$ with $h$ a linear combination of
basis vectors $u_1^{i'_1}\cdots u_n^{i'_n}\sigma^{i+1}$ with $0\leq i_r'<i_r$ for $r=1,\dots, n$.
In particular $0\leq i_1',\ldots,i_n'<p-1$.
Consequently $\phi(h)=0$, and we find that $\phi(yx)=\phi(xy)$.

Now let $y=u_s$ for some $1\leq s\leq n$. Using $\sigma^i(u_s)=u_s+ia_s$, we get
$$
xy=u_1^{i_1}\cdots u_n^{i_n}(u_s+ia_s)\sigma^i\quadand
yx=u_1^{i_1}\cdots u_s^{i_s+1}\cdots u_n^{i_n}\sigma^i.
$$
By definition, both  $\phi(xy)$ and $\phi(yx)$ vanish if $i\neq 0$.
In the remaining case where $i=0$, one gets $xy=yx$, and we again find that $\phi(xy)=\phi(yx)$.

It remains to check that the homomorphism \eqref{Hombimod}
%of $A*G$-bimodules %$$ A*G\lra\Hom_R(A*G,R),\quad y\longmapsto y\phi $$
is bijective. Since the domain and range are both finitely generated free $R$-modules of the same rank,
it suffices to verify that this map of $R$-modules is surjective.
By Nakayama's Lemma, it is enough to show that the induced map
\begin{equation} \label{Naka}
(A*G)\otimes_R R/\maxid_R \lra \Hom_R(A*G,R)\otimes_R R/\maxid_R
\end{equation}
is surjective. Let $\xoverline{A}:=A\otimes_R R/\maxid_R=k[[u_1,\ldots,u_n]]/(u_1^p,\ldots,u_n^p)$.
Since the action of $G$ is given by  $\sigma(u_s)=u_s+a_s$ with $a_s\in\maxid_R$, 
we find that $G$ acts trivially on $\xoverline{A}$, and $(A*G)\otimes_R R/\maxid_R$ is isomorphic to the commutative algebra 
$$
\xoverline{A}*G=k[u_1,\ldots,u_n,\sigma]/(u_1^p,\ldots,u_n^p,\sigma^p-1).
$$
% and the induced $G$-action is trivial.
The right-hand side of \eqref{Naka} is isomorphic to the $k$-vector space 
$\Hom_k(\xoverline{A}*G,k)$, because   the $R$-module $A*G$ is finitely generated.
Let $\bar{\phi}\in \Hom_k(\xoverline{A}*G,k)$ denote the class of $\phi$.

Let $x:=u_1^{i_1}\cdots u_n^{i_n}\sigma^i$ be any basis vector, and consider 
the linear form  $\varphi_x:\xoverline{A}*G\ra k$ such that
$\varphi_x(x)=1$ and  $\varphi_x(z)=0$ for any other basis vector $z$.
We exhibit now an element $y$ such that $y\bar{\phi} = \varphi_x$.
Consider the element $y:=u_1^{j_1}\cdots u_n^{j_n}\sigma^j$
with the  complementary exponents
$$
j_1:=p-1-i_1, \ \ldots, j_n:=p-1-i_n, \quadand j=p-i.
$$
Then 
$(y\bar{\phi})(x)=\bar{\phi}(xy)=1$ by definition of $\phi$. For every other basis vector $z=u_1^{j'_1}\cdots u_n^{j'_n}\sigma^{j'}$,
  we have $(y\bar{\phi})(z)=\bar{\phi}(zy) = 0$.
It follows that $\xoverline{A}*G\ra\Hom_k(\xoverline{A}*G,k)$ is surjective.
\qed

\medskip
%\label{proofcanonical}
\noindent
\emph{Proof of Theorem \ref{canonical class trivial}.}
By hypothesis,  $A$ is a complete regular local domain of dimension $n \geq 2$ with field of representatives $k$ and endowed with 
a moderately ramified action of the cyclic group $G$ of order $p$. 
Then Theorem \ref{structure moderately ramified} shows that the ring $A$, as a $k$-algebra with $G$-action, is isomorphic to 
$$
k[[x_1,\ldots,x_n]][u_1,\ldots,u_n]/(u_1^p-a_1^{p-1}u_1-x_1,\ldots,u_n^p-a_n^{p-1}u_n-x_n),
$$
where $x_1,\ldots,x_n$ are indeterminates, the elements  $a_1,\ldots,a_n\in k[[x_1,\ldots,x_n]]$
form a  system of parameters of $k[[x_1,\ldots,x_n]]$, and  the natural automorphism of order $p$ which sends $u_i$ to $u_i+a_i$ 
and fixes $k[[x_1,\ldots,x_n]]$ induces under this isomorphism a generator of $G$. As usual, we let $R:=k[[x_1,\ldots,x_n]]$.
Clearly $R \subset A^G \subset A$, and we may thus consider the skew group ring $A*G$ % introduced in \ref{skew} 
as an $R$-algebra.

Using \ref{useful.fact} and the fact that $K_R=R$ because $R$ is regular, we find that $K_{A^G}=\Hom_R(A^G,R)$. 
To prove Theorem \ref{canonical class trivial}, it thus suffices to show that there exists an isomorphism of $A^G$-modules  $A^G \to \Hom_R(A^G,R)$.
For this we need the following fact (Proposition 2.4 (3) in  \cite{Iyama; Reiten 2008}):

\begin{emp} \label{useful.fact2}
{\it Suppose that $\SR$ is a normal noetherian ring,
and $\Lambda$ is an associative $\SR$-algebra that is finitely generated as $\SR$-module.
Let $M$ be a finitely generated left $\Lambda$-module such that $M$ is reflexive as an $\SR$-module, and such that
given any prime ideal $\primid\subset \SR$ of height one, 
$M_\primid $ is a projective $\Lambda_\primid $-module.
If the $\SR$-algebra $\Lambda$ is symmetric, then the $\SR$-algebra
${\rm End}_{\Lambda}(M)$ is symmetric as well.}
\end{emp}

Let us now check that we can apply \ref{useful.fact2} to the case where $S:=R$, $\Lambda:=A*G$ and $M:=A$. 
Clearly $R$ is normal since it is regular, and $A*G$ is a finitely generated $R$-module.
According to Proposition \ref{comparison maps bijective}, the $R$-module $A$ is reflexive, 
and for each prime ideal $\primid\subset R$ of height one, the $(A*G)_\primid$-module $A_\primid$
is projective.
Finally,  Proposition \ref{symmetric} shows that $A*G$ is a symmetric $R$-algebra.
Thus \ref{useful.fact2}  can be applied and we find that 
the $R$-algebra
${\rm End}_{A*G}(A)$ is symmetric: In other words, we have an isomorphism of ${\rm End}_{A*G}(A)$-bimodules
between ${\rm End}_{A*G}(A)$ and $\Hom_R({\rm End}_{A*G}(A), R)$. 
Using  
%Proposition \ref{comparison maps bijective} (b),
\eqref{iso2}, we find an isomorphism of $A^G$-algebras $A^G \to {\rm End}_{A*G}(A)$.
Hence, the $A^G$-modules $A^G$ and $K_{A^G}=\Hom_R(A^G,R)$ are isomorphic, as desired.
\qed

\begin{remark} Recall that when a complete local ring is Cohen--Macaulay, its canonical module is free of rank $1$
if and only if the ring is Gorenstein (\cite{Herzog; Kunz 1971}, 5.9). 
Local rings with trivial canonical class are called \emph{quasi-Gorenstein}
by  Platte and Storch (\cite{Platte; Storch 1977}, page 5). %, to emphasize that they are not necessarily Cohen--Macaulay.
Theorem \ref{canonical class trivial} %shows that moderately ramified group actions 
provides a rich supply of quasi-Gorenstein rings
which are not  Cohen--Macaulay when they have dimension bigger than $2$ (use \ref{properties invariant ring} (i)).

Recall also that if a finite group $G$ acts on   a Cohen--Macaulay ring $A$
% by ring automorphisms 
and $|G|$ is invertible in $A$,
then the ring $A^G$ is Cohen--Macaulay (\cite{H-E}, Proposition 13). 
A necessary and sufficient condition for $A^G$ to be Gorenstein when $A$ is regular is given in \cite{WatII}, Theorem 2. 
Theorem 4 in \cite{WatI} shows
that $A^G$ is Gorenstein if the image of the associated map $\lambda: G \to {\rm GL}(\maxid_A/\maxid_A^2)$ is in ${\rm SL}(\maxid_A/\maxid_A^2)$.
%The ring $A^G$ is not Gorenstein when ${\rm char}(k) \neq 2$ and the action of ${\mathbb Z}/2{\mathbb Z}$ %=\left<\sigma\right>$ 
%on $A:=k[[u_1,u_2,u_3]]$ sends $u_i $ to $-u_i$ for $i=1,2,3$. 
See also Conjecture 5 in \cite{Kemperetal} when $|G|$ is not invertible in $A=k[V]$, where $V$ is a $k$-vector space with an action of $G$.
\end{remark}

% ===========================================================================================

\end{document}